\renewcommand{\subjclassname}{%
  \textup{1991} Mathematics Subject Classification}
\let\csname subjclassname@1991\endcsname \subjclassname
\definecolor{gr}{rgb}   {0.,   0.69,   0.23 }
\definecolor{bl}{rgb}   {0.,   0.5,   1. }
\definecolor{mg}{rgb}   {0.85,  0.,    0.85}
\definecolor{yl}{rgb}   {0.8,  0.7,   0.}
\definecolor{or}{rgb}  {0.7,0.2,0.2}
\newtheorem{theorem}{Theorem} [section]
\newtheorem{lemma}[theorem]{Lemma}
\newtheorem{proposition}[theorem]{Proposition}
\newtheorem{remark}[theorem]{Remark}
\newtheorem{corollary}[theorem]{Corollary}
\DeclareMathOperator*{\supp}{supp}
\newcommand{\1}{\hspace{0.5mm}\text{I}\hspace{0.5mm}}
\newcommand{\II}{\text{I \hspace{-2.8mm} I} }
\newcommand{\III}{\text{I \hspace{-2.9mm} I \hspace{-2.9mm} I}}
\renewcommand{\i}{\iota}
\newcommand{\noi}{\noindent}
\newcommand{\Z}{\mathbb{Z}}
\newcommand{\R}{\mathbb{R}}
\newcommand{\C}{\mathbb{C}}
\newcommand{\T}{\mathbb{T}}
\newcommand{\deff}{\stackrel{\textup{def}}{=}}
\let\Re=\undefined\DeclareMathOperator*{\Re}{Re}
\let\P= \undefined
\newcommand{\P}{\mathbf{P}}
\newcommand{\E}{\mathbb{E}}
\renewcommand{\L}{\mathcal{L}}
\newcommand{\RR}{\mathcal{R}}
\newcommand{\F}{\mathcal{F}}
\newcommand{\al}{\alpha}
\newcommand{\be}{\beta}
\newcommand{\dl}{\delta}
\newcommand{\nb}{\nabla}
\newcommand{\too}{\longrightarrow}
\newcommand{\Dl}{\Delta}
\newcommand{\eps}{\varepsilon}
\newcommand{\kk}{\kappa}
\newcommand{\G}{\Gamma}
\newcommand{\ld}{\lambda}
\newcommand{\Ld}{\Lambda}
\newcommand{\s}{\sigma}
\newcommand{\Si}{\Sigma}
\newcommand{\ft}{\widehat}
\newcommand{\wt}{\widetilde}
\newcommand{\cj}{\overline}
\newcommand{\dx}{\partial_x}
\newcommand{\dt}{\partial_t}
\newcommand{\dd}{\partial}
\newcommand{\Dlg}{\Delta_\gm}
\newcommand{\om}{\omega}
\renewcommand{\O}{\Omega}
\newcommand{\A}{\mathcal{A}}
\newcommand{\les}{\lesssim}
\newcommand{\jb}[1]
{\langle #1 \rangle}
\newcommand{\ind}{\mathbf 1}
\renewcommand{\S}{\mathcal{S}}
\newcommand{\J}{\mathcal{J}}
\newcommand{\gm}{\mathbf{\mathrm{g}}}
\newcommand{\gf}{\mathfrak{g}}
\newcommand{\PP}{\mathbb{P}}
\newcommand{\M}{\mathcal{M}}
\newcommand{\N}{\mathbb{N}}
\newcommand{\Y}{\mathcal{Y}}
\newcommand{\NN}{\mathcal{N}}
\newcommand{\ZZ}{\mathcal{Z}}
\newcommand{\GG}{\mathcal{G}}
\newcommand{\QQ}{\mathcal{Q}}
\renewcommand{\H}{\mathcal{H}}
\newcommand{\D}{\mathcal{D}}
\newcommand{\Id}{\textup{Id}}
\def\DeclareSymbol#1#2#3{\expandafter\gdef\csname MH@symb@#1\endcsname{\tikz[baseline=#2, scale=.18]{#3}}}
\def\<#1>{\ensuremath{\mathchoice{\tikzsetnextfilename{macros#1}{\color{black}\csname MH@symb@#1\endcsname}}{\tikzsetnextfilename{macros#1}{\color{black}\csname MH@symb@#1\endcsname}}{\tikzsetnextfilename{macros#1}\scalebox{.7}{\color{black}\csname MH@symb@#1\endcsname}}
{\tikzsetnextfilename{macros#1}\scalebox{.5}{\color{black}\csname MH@symb@#1\endcsname}}}} 
\newcommand{\ds}{\displaystyle}
\newcommand{\V}{\mathbb{V}}
\newcommand{\Law}{\mathrm{Law}}
\newcommand{\Ha}{\mathbb{H}_a}
\newcommand{\dr}{\theta}
\newcommand{\Dr}{\Theta}
\newcommand{\Vol}{\mathrm{Vol}}
\newcommand{\GF}{\mathfrak{G}}
\newcommand{\DF}{\mathfrak{D}}
\newcommand{\zz}{\mathfrak{z}}
\newcommand{\GGG}{\mathbb{G}}
\newtheorem*{ackno}{Acknowledgements}
\numberwithin{equation}{section}
\numberwithin{theorem}{section}
\begin{document}
\baselineskip = 15pt

\title[Invariant Gibbs measure for ExpNLS]
{Invariant Gibbs measure for a Schr\"odinger equation with exponential nonlinearity}

\author[T.~Robert]
{Tristan Robert}

\address{
Tristan Robert\\
Universit\'e Rennes 1\\ CNRS, IRMAR - UMR 6625\\ F-35000 Rennes\\ France}

\email{tristan.robert@ens-rennes.fr}

\subjclass[2020]{35Q41}

\keywords{dispersive equation; Schr\"odinger equation;
Gibbs measure}

\begin{abstract}
We investigate the invariance of the Gibbs measure for the fractional Schr\"odinger equation of exponential type (expNLS) $i\dt u + (-\Dl)^{\frac{\al}2} u = 2\gamma\be e^{\be|u|^2}u$ on $d$-dimensional compact Riemannian manifolds $\M_d$, for  a dispersion parameter $\al>d$, some coupling constant $\be>0$, and $\gamma\ne 0$. (i) We first study the construction of the Gibbs measure for (expNLS). We prove that in the defocusing case $\gamma>0$, the measure is well-defined in the whole regime $\al>d$ and $\be>0$ (Theorem~\ref{THM:Gibbs1}~(i)), while in the focusing case $\gamma<0$ its partition function is always infinite for any $\al>d$ and $\be>0$, even with a mass cut-off of arbitrary small size (Theorem~\ref{THM:Gibbs1}~(ii)). (ii) We then study the dynamics (expNLS) with random initial data of low regularity. We first use a compactness argument to prove weak invariance of the Gibbs measure in the whole regime $\al>d$ and $0<\be < \be^\star_\al$ for some natural parameter $0<\be^\star_\al\sim (\al-d)$ (Theorem~\ref{THM:GWP1}~(i)). In the large dispersion regime $\al>2d$, we can improve this result by constructing a local deterministic flow for (expNLS) for any $\be>0$. Using the Gibbs measure, we prove that solutions are almost surely global for $0<\be \ll\be^\star_\al$, and that the Gibbs measure is invariant (Theorem~\ref{THM:GWP1}~(ii)). (iii) Finally, in the particular case $d=1$ and $\M=\T$, we are able to exploit some probabilistic multilinear smoothing effects to build a probabilistic flow for (expNLS) for $1+\frac{\sqrt{2}}2<\al\le 2$, locally for arbitrary $\be>0$ and globally for $0<\be \ll \be^\star_\al$ (Theorem~\ref{THM:GWP2}).

\end{abstract}



\maketitle
%


\tableofcontents

\baselineskip = 14pt

\section{Introduction}
\subsection{The exponential NLS}
The purpose of this work is to investigate the construction and the invariance of the Gibbs measure for a nonlinear fractional Schr\"odinger equation of exponential type
\begin{align}\label{expNLS}
i\dt u  + (-\Dl)^{\frac{\al}2} u = 2\gamma \be e^{\be|u|^2}u, \hspace*{5mm} (t,x)\in\R\times\M_d,
\end{align}
where $(\M_d,\gm)$ is a closed (compact, boundaryless) Riemannian manifold of dimension $d\ge 1$, $\al>0$ is the dispersion parameter, $\be>0$ is the coupling constant, and $\gamma\in\R$, $\gamma\neq0$ encodes the nature of the nonlinear interaction (defocusing for $\gamma>0$, focusing when $\gamma<0$).

\noi The model \eqref{expNLS} appears naturally in the context of nonlinear optics \cite{optics}. From a mathematical perspective, the exponential nonlinearity was proposed as the natural energy critical nonlinearity for the Schr\"odinger equation in dimension $d=2$. Indeed, recall that in the Euclidean space $\R^d$, the pure power nonlinear fractional Schr\"odinger equation
\begin{align}\label{NLS}
i\dt u + (-\Dl)^{\frac{\al}2}u = \pm|u|^{p-1}u
\end{align} 
has scaling critical regularity 
\begin{align}\label{scaling}
s_c(p,\al) = \frac{d}2-\frac{\al}{p-1}.
\end{align}
 In particular, rewriting \eqref{expNLS} as
\begin{align*}
i\dt u + (-\Dlg)^{\frac{\al}2}u =2\gamma \be\sum_{k\ge 0}\frac{\be^k}{k!}|u|^{2k}u, 
\end{align*}
we see from \eqref{scaling} that for \eqref{expNLS} the scaling critical regularity is formally 
\begin{align}\label{scaling2}
s_c(\infty,\al)=\frac{d}2
\end{align}
  for any $\al$. The energy (or Hamiltonian) of \eqref{expNLS} being given by
\begin{align}\label{hamiltonian}
\H_{\al,\be}(u) = \frac12\int_{\M_d}\big|\jb{\nabla}^{\frac{\al}2} u\big|^2dx + \gamma \int_{\M_d} e^{\be|u|^2}dx,
\end{align}
we thus see that $d=\al=2$ makes \eqref{expNLS} energy critical. This motivated the study of \eqref{expNLS} for $\al=2$ in $\R^2$ \cite{Cazenave,NaOz,Coll,Na,IMMN,Wang1,Wang2}. In particular, building on the previous global well-posedness result for small data for the\footnote{In \cite{Coll}, the Schr\"odinger equation with nonlinearity $e^{4\pi|u|^2}u-u$ is considered, which is equivalent to \eqref{expNLS} via a gauge transform. In \cite{IMMN} the nonlinearity $e^{4\pi|u|^2}u-u-4\pi|u|^2u$ is treated.} defocusing (expNLS) of \cite{Coll}, Ibrahim, Majdoub, Masmoudi, and Nakanishi proved in \cite{IMMN} that scattering occurs for the small data global solutions of \eqref{expNLS} when $\al=d=2$ on $\R^2$, and with $\be=4\pi$ related to the sharp constant in Moser-Trudinger's inequality \cite{Trudinger,Moser}.

In the case of a compact domain, however, scattering cannot occur, so that one can ask what the long-time behaviour of the flow looks like. The pioneering works \cite{LRS,BO94,BO96} on the standard NLS \eqref{NLS} with $\al=2$ on $\T^d$, $d=1,2$, paved the way to a probabilistic approach to this question. Namely, in view of Poincar\'e's recurrence theorem, one can look for the construction of a probability measure invariant under the flow of \eqref{expNLS}, which then indicates that the flow is recurrent, at least at the level of regularity of the support of the invariant measure. The probabilistic approach to nonlinear dispersive PDEs has since then attracted a tremendous amount of interest. We refer e.g. to \cite{BOP3,BO94,BO96,BS,BTT2,BTT1,CO,LRS,NORS,STz1,STz2,TT,Tz0,Tz,Zhidkov} and references therein for further examples and discussion on the construction and invariance of Gibbs measure for NLS type equations. In the following, we study this approach for the model \eqref{expNLS}.

\subsection{Gibbs measure associated to the Hamiltonian}
The exponential NLS equation \eqref{expNLS} is Hamiltonian, with Hamiltonian given by $\H_{\al,\be}$ \eqref{hamiltonian}. In particular $\H_{\al,\be}$ is conserved along the flow, and so is the mass
\begin{align}\label{mass}
\J(u) = \frac12\int_{\M_d}|u|^2dx.
\end{align}

In view of the conservation of these two quantities, we expect that any measure given formally by
\begin{align*}
``F(\J,\H_{\al,\be})du"
\end{align*}
for some function $F : \R^2 \to \R$ will be conserved by the flow of \eqref{expNLS}. In particular, we aim at defining the Gibbs type measure
\begin{align}\label{Gibbs1}
``d\rho_{\al,\be}(u) = \exp\Big(-\J(u)-\H_{\al,\be}(u)\Big)du".
\end{align}
The usual way is to write $\J+\H_{\al,\be}$ as a quadratic part in $u$ (the kinetic energy) plus a potential term
\begin{align}\label{V}
V_\be(u) = \int_{\M_d}e^{\be|u|^2}dx,
\end{align}
and then to interpret
\begin{align*}
d\rho_{\al,\be}(u) = \exp(-\gamma V_\be(u))\times \exp\Big\{-\frac12\int_{\M_d}\big|\jb{\nabla}^{\frac{\al}2}u\big|^2dx\Big\}du,
\end{align*}
where $\jb{\cdot} = \sqrt{1+|\cdot|^2}$.

The last part
\begin{align}\label{mu}
``d\mu_\al(u)=\exp\Big\{-\int_{\M_d}\big|\jb{\nabla}^{\frac{\al}2}u\big|^2dx\Big\}du"
\end{align}
can be interpreted as a Gaussian measure on $L^2(\M_d)$. Indeed, expanding $u$ on an eigenbasis $\{\varphi_n\}_{n\ge 0}$ of $L^2(\M_d)$ associated with the eigenvalues $\{-\ld_n^2\}_{n\ge 0}$ of the Laplace-Beltrami operator $\Dlg$ on $(\M_d,\gm)$, we can formally write
\begin{align*}
d\mu_\al(u) = \exp\Big\{-\frac12\sum_{n\in\N}\jb{\ld_n}^\al|\ft u_n|^2\Big\}\Big(\prod_{n\ge 0}d\ft u_n\Big) = \prod_{n\ge 0}e^{-\frac{\jb{\ld_n}^\al|\ft u_n|^2}{2}}d\ft u_n,
\end{align*}
where $\ft u_n$ stands for the $n$-th Fourier coefficient of $u\in L^2(\M_d)$, $n\ge 0$, i.e. $\ft u_n = \langle u,\varphi_n\rangle$ where $\langle \cdot,\cdot\rangle$ stands for the usual inner product in $L^2(\M_d)$. In particular, up to a constant factor we can recognize $e^{-\frac{\jb{\ld_n}^\al|\ft u_n|^2}{2}}d\ft u_n$ as a standard centered (complex-valued) Gaussian measure with variance $\frac1{\jb{\ld_n}^\al}$. This allows us to define properly the measure $\mu_\al$ as the law of the random series
\begin{align}\label{init}
u_\al^\om = \sum_{n\ge 0}\frac{g_n(\om)}{\jb{\ld_n}^{\frac{\al}2}}\varphi_n,
\end{align}
where $\{g_n\}_{n\ge 0}$ is a family of independent standard complex-valued Gaussian random variables on some probability space $(\O,\PP)$. Here the convergence of the series in \eqref{init} holds in $L^2(\O;H^s(\M_d))$ if and only if 
\begin{align}\label{support}
s<\frac{\al-d}2;
\end{align}
see Lemma~\ref{LEM:init} below.


In view of the previous discussion, our first result is concerned with the rigorous definition of the Gibbs measure $\rho_{\al,\be}$ in \eqref{Gibbs1}. For $N\ge 1$, we first define the truncated version
\begin{align}\label{GibbsM}
d\rho_{\al,\be,N}(u)\deff \ZZ_{\al,\be,N}^{-1}e^{-\gamma V_\be(\Pi_{\le N}u)}d\mu_\al(u),
\end{align}
where $\Pi_{\le N}$ is as above, and $\ZZ_{\al,\be,N} = \int e^{-\gamma V_\be(\Pi_{\le N}u)}d\mu_\al(u)$ is the partition function of $\rho_{\al,\be,N}$.
\begin{theorem}\label{THM:Gibbs1}
The following hold for any $d\ge 1$, $\al>d$ and $\be >0$:\\
\textup{(i) (Defocusing case)} If $\gamma>0$, then ${\displaystyle e^{-\gamma V_\be(u)}\in L^1(\mu_\al)}$. In particular the Gibbs measure
\begin{align}\label{Gibbs}
d\rho_{\al,\be}(u)= \frac{e^{-\gamma V_\be(u)}}{\int e^{-\gamma V_\be(u)}d\mu_\al(u)}d\mu_\al(u)
\end{align}
is a well-defined probability measure on $H^s(\M_d)$, $s<\frac{\al-d}{2}$, which is absolutely continuous with respect to $\mu_\al$. Moreover, $\rho_{\al,\be,N}\to \rho_{\al,\be}$ in total variation, as $N\to\infty$.\\
\textup{(ii) (Focusing case)} If $\gamma <0$, then for any $K>0$, ${\displaystyle e^{-\gamma V_\be(u)}\mathbf{1}_{\|u\|_{L^2}\le K}\not\in L^1(\mu_\al)}$. In particular the Gibbs measure, even with an arbitrarily small mass cut-off, cannot be defined as a probability measure.
\end{theorem}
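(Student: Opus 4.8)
The plan is to base both parts on two ingredients: the almost sure regularity of the Gaussian field $u_\al^\om=\sum_{n\ge0}\jb{\ld_n}^{-\al/2}g_n\varphi_n$ when $\al>d$, and the Cameron--Martin structure of $\mu_\al$ (whose Cameron--Martin space is $H^{\al/2}(\M_d)$), pushed in opposite directions.

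For \textup{(i)}, the first step is to upgrade \eqref{support} to the statement that, when $\al>d$, $u_\al^\om$ is almost surely a bounded continuous function on $\M_d$ and $\Pi_{\le N}u_\al^\om\to u_\al^\om$ in $C^0(\M_d)$ a.s. I would get this from a Littlewood--Paley decomposition $u_\al^\om=\sum_\Lambda\Pi_\Lambda u_\al^\om$ into frequency blocks: by the local Weyl law $\sum_{\ld_n\sim\Lambda}|\varphi_n(x)|^2\les\Lambda^d$ uniformly in $x$, the block $\Pi_\Lambda u_\al^\om$ is a Gaussian field with pointwise variance $\les\Lambda^{d-\al}$, band-limited to frequency $\Lambda$, so a chaining/Dudley bound gives $\E\|\Pi_\Lambda u_\al^\om\|_{L^\infty}\les\Lambda^{(d-\al)/2}\sqrt{\log\Lambda}$; this is summable in dyadic $\Lambda$ exactly because $\al>d$, and Gaussian concentration converts it into a.s.\ $C^0$-convergence (this should be, or refine, Lemma~\ref{LEM:init}). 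Granting this, $V_\be(u_\al^\om)\le\Vol(\M_d)\,e^{\be\|u_\al^\om\|_{L^\infty}^2}<\infty$ a.s., hence $0<e^{-\gamma V_\be(u_\al^\om)}\le1$ a.s.\ when $\gamma>0$; so $e^{-\gamma V_\be}\in L^1(\mu_\al)$ with $\ZZ_{\al,\be}:=\E_{\mu_\al}\big[e^{-\gamma V_\be}\big]\in(0,1]$, and $\rho_{\al,\be}$ in \eqref{Gibbs} is a genuine probability measure with $\mu_\al$-density $\ZZ_{\al,\be}^{-1}e^{-\gamma V_\be}$, so $\rho_{\al,\be}\ll\mu_\al$ and $\rho_{\al,\be}\big(H^s(\M_d)\big)=1$ for $s<\tfrac{\al-d}2$. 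For the total variation statement, $\Pi_{\le N}u_\al^\om\to u_\al^\om$ in $C^0$ gives $V_\be(\Pi_{\le N}u_\al^\om)\to V_\be(u_\al^\om)$ a.s.\ (dominated convergence on $\M_d$), hence $e^{-\gamma V_\be(\Pi_{\le N}u)}\to e^{-\gamma V_\be(u)}$ a.s.; as these are bounded by $1$, bounded convergence yields $\ZZ_{\al,\be,N}\to\ZZ_{\al,\be}>0$ and $L^1(\mu_\al)$-convergence of the densities, i.e.\ $\|\rho_{\al,\be,N}-\rho_{\al,\be}\|_{\mathrm{TV}}\to0$.

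For \textup{(ii)}, $\gamma<0$ so $-\gamma V_\be=|\gamma|V_\be\ge0$, and it suffices to exhibit, for each fixed $K>0$, events of tiny $\mu_\al$-measure on which $V_\be(u)$ is gigantic while $\|u\|_{L^2}\le K$. Fix $x_0\in\M_d$ and, in geodesic normal coordinates, take a bump $h_\eps\in C_c^\infty$ supported in the geodesic ball $B_\eps(x_0)$ with $\|h_\eps\|_{L^2}=K/2$; then $\|h_\eps\|_{L^\infty}\sim\eps^{-d/2}$ and $\|h_\eps\|_{H^{\al/2}}^2\le C_K\eps^{-\al}$. Put $\delta=\tfrac12K\Vol(\M_d)^{-1/2}$ and $E_\eps=\{\|u-h_\eps\|_{L^\infty}\le\delta\}$. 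On $E_\eps$ one has $\|u\|_{L^2}\le\|h_\eps\|_{L^2}+\delta\Vol(\M_d)^{1/2}=K$, while for $\eps$ small enough $|u|\ge\tfrac12\|h_\eps\|_{L^\infty}$ on a set of volume at least $c\eps^d$, so $V_\be(u)\ges\eps^d e^{c\be K^2\eps^{-d}}$ for some constant $c>0$. Since $\{\|u\|_{L^\infty}\le\delta\}$ is symmetric and convex and has positive $\mu_\al$-measure ($u_\al^\om\in L^\infty$ a.s.), the Cameron--Martin/Anderson inequality gives $\mu_\al(E_\eps)\ge e^{-\frac12\|h_\eps\|_{H^{\al/2}}^2}\mu_\al\big(\|u\|_{L^\infty}\le\delta\big)\ges p_\delta e^{-C_K\eps^{-\al}}$ with $p_\delta>0$ independent of $\eps$. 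Combining the last two displays,
\[
\int e^{-\gamma V_\be(u)}\,\ind_{\{\|u\|_{L^2}\le K\}}\,d\mu_\al(u)\ \ges\ p_\delta\,\exp\!\Big(|\gamma|\,\eps^d e^{c\be K^2\eps^{-d}}-C_K\eps^{-\al}\Big)\ \xrightarrow[\eps\to0]{}\ +\infty ,
\]
since the doubly-exponential gain dominates the polynomial cost $\eps^{-\al}$ for every $\al$, $\be$, $K$. Hence $e^{-\gamma V_\be}\ind_{\{\|u\|_{L^2}\le K\}}\notin L^1(\mu_\al)$ for all $K>0$, so the focusing Gibbs measure cannot be normalized even with an arbitrarily small mass cut-off.

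The main obstacle in \textup{(i)} is the a.s.\ $L^\infty$-bound (equivalently, a.s.\ finiteness of $V_\be$): this is precisely where $\al>d$ enters, and on a general $(\M_d,\gm)$ it must be obtained via spectral cluster/local Weyl estimates rather than a Sobolev embedding, since $H^s$ with $s<\tfrac{\al-d}2$ need not embed in $L^\infty$ when $\al\le2d$. The heart of \textup{(ii)} is the competition in the displayed bound: concentrating at scale $\eps$ costs only $e^{\eps^{-\al}}$ in Gaussian probability (a polynomial exponent) but gains a factor $e^{\,e^{\eps^{-d}}}$ in $e^{|\gamma|V_\be}$, a doubly-exponential windfall special to the exponential nonlinearity that wins for all parameters — in contrast to a power nonlinearity, where the analogous trade-off is delicate and $(\al,d,p)$-dependent. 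The remaining items — constructing $h_\eps$ with the stated norms, checking that $E_\eps$ respects the mass constraint, and the Anderson inequality itself — are routine.
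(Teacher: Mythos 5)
Your part (i) is correct and follows essentially the same route as the paper: almost sure continuity of $u_\al$ for $\al>d$ (your dyadic local-Weyl/Dudley bound is a cosmetic variant of the paper's Sobolev--Minkowski--Khinchin proof of Lemma~\ref{LEM:init}), then boundedness $0\le e^{-\gamma V_\be}\le 1$, a.s.\ convergence of the truncated densities, and Scheff\'e for the total variation convergence.

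Part (ii) is where you genuinely diverge, and your argument appears to be correct -- indeed simpler than the paper's. The paper expresses $-\log\E_{\mu_\al}[e^{-\gamma\min(V_\be,L)\ind}]$ via the Bou\'e--Dupuis variational formula and chooses a \emph{random} drift $I(\dr_N)(1)=-Z_N(1)+\eta f_N$, with $f_N$ a spectrally concentrated function on $\ld_n\in(N,2N]$; the stated reason for this machinery is that the ``simple'' Cameron--Martin argument with a deterministic shift is claimed to handle only large mass cut-offs $K$. You sidestep that obstruction by localizing to the $L^\infty$-ball $E_\eps=\{\|u-h_\eps\|_{L^\infty}\le\delta\}$ with $\delta\sim K$: this simultaneously enforces $\|u\|_{L^2}\le K$ and pins $u$ to the concentrating profile, at the price of a fixed ($\eps$-independent) small-ball constant $p_\delta=\mu_\al(\|u\|_{L^\infty}\le\delta)$ on top of the Cameron--Martin cost $e^{-C_K\eps^{-\al}}$, both of which are obliterated by the doubly exponential gain $e^{|\gamma|\eps^d e^{c\be K^2\eps^{-d}}}$. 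Note that your bump $h_\eps$ (physical-space concentration, $\|h_\eps\|_{L^2}\sim K$, $\|h_\eps\|_{L^\infty}\sim\eps^{-d/2}$, $\|h_\eps\|_{H^{\al/2}}^2\sim\eps^{-\al}$) has exactly the same norm profile as the paper's $\eta f_N$ under $N\leftrightarrow\eps^{-1}$, so the two constructions test the same instability; only the probabilistic packaging differs. Two points in your sketch deserve to be made explicit: the positivity of $p_\delta$ does not follow merely from $u_\al\in L^\infty$ a.s.\ but from the support theorem for the Radon Gaussian measure $\mu_\al$ on $C(\M_d)$ (or from Anderson's inequality plus a covering argument), and the identification of the Cameron--Martin space of $\mu_\al$ with $H^{\al/2}(\M_d)$ (up to a harmless constant in the norm for the complex normalization) should be recorded. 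Both are standard. What your approach gives up is robustness: it relies on the field being an a.s.\ continuous function and on $\|u\|_{L^2}$ being an honest norm, so it does not survive in the critical regime $\al=d$ with Wick-ordered potential and Wick-ordered mass cut-off, which is precisely the setting the paper's variational argument (following Oh--Okamoto--Tolomeo and Oh--Seong--Tolomeo) is designed to reach.
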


\noi Let us comment on Theorem~\ref{THM:Gibbs1}. In the non-singular regime $\al>d$ which Theorem~\ref{THM:Gibbs1} is concerned with, the potential $V_\be$ \eqref{V} is almost surely well-defined on the support of the Gaussian measure $\mu_\al$ \eqref{mu} (see Lemma~\ref{LEM:init} below). Since it is bounded below, the construction of the Gibbs measure in the defocusing case in Theorem~\ref{THM:Gibbs1}~(i) is rather straightforward; see Subsection~\ref{SUBSEC:defoc}. In comparison, in the singular case $\al=d$ and for polynomial interactions, much effort has been made in constructive quantum field theory \cite{GF,Simon} to construct the Gibbs measure. Indeed in this latter case the potential needs to be renormalized through an approximation procedure and Wick ordering, which in particular makes it non bounded below. We refer to \cite{OT} for a pedestrian construction in the case $\al=d=2$ and polynomial interaction in place of $V_\be$.

The focusing case of Theorem~\ref{THM:Gibbs1}~(ii) is more subtle. In the seminal paper \cite{LRS}, Lebowitz, Rose, and Speer studied the construction of the Gibbs measure for the standard (NLS) \eqref{NLS} with $\al=2$ in dimension $d=1$ on $\M=\T$ for $p\le 6$, and showed that the measure is finite for arbitrary mass cut-off in the case $p<5$, and in the case $p=6$ for small enough mass cut-off depending on the size of the ground state associated to \eqref{NLS}. The same result as Theorem~\ref{THM:Gibbs1}~(ii) holds in particular for \eqref{NLS} with $d=1$, $\al=2$ and $p>6$. See also \cite{TW} for a simple proof of this result. Since $V_\be(u) \ge \frac{\be^k}{k!}|u|^{2k}$ for any $k\in\N$, the case $\al=2$, $d=1$ of Theorem~\ref{THM:Gibbs1}~(ii) is thus straightforward from \cite{LRS,TW}, at least for large $K$. Similarly, \cite{BS} proves that the partition function for the focusing Gibbs measure associated with the quartic potential $|u|^4$ in place of $V_\be$ in the case $\al=d=2$ is infinite. However, one could have hoped that taking $\al>d$ large enough and $K,\be>0$ small enough might allow the construction of the mass-truncated focusing Gibbs measure. Theorem~\ref{THM:Gibbs1}~(ii) shows that this is not the case. In the case of large $K$, a simple argument similar to that of \cite{LRS}, based on Cameron-Martin's theorem and a choice of deterministic drift\footnote{Here we use properly scaled functions supported at high frequency instead of concentrating solitons as in \cite{LRS}. See Lemma~\ref{LEM:fM} below.} with energy blowing-up to $-\infty$ suffices to prove Theorem~\ref{THM:Gibbs1}~(ii) for any $\al>d$ and any $\be>0$: see e.g. Proposition A.1 in \cite{ORTW} in the context of an exponential interaction. To show that Theorem~\ref{THM:Gibbs1}~(ii) also holds for arbitrarily small $K$, we use a more subtle construction introduced in \cite{OOT} and refined in \cite{OST}, based on a variational formulation to express the partition function of the Gibbs measure; see \cite{BD,Ust} and \eqref{var} below. This latter formulation proved to be an effective tool in the construction of singular Gibbs measure with polynomial interactions in the defocusing case mentioned above \cite{BG,OOT,Bringmann,ORSW2}. It allows to replace the deterministic drift in Cameron-Martin's formula with one built on the random variable $u_\al$ \eqref{init} with law $\mu_\al$. Taking formally $-u_\al$ plus a perturbation with large negative energy as a drift then allows to get the result.

\begin{remark}\label{REM:critical}
\rm~\\
\noi\textup{(i)} In the critical case $\al=d$, the measure $\mu_\al$ becomes supported on distributions rather than functions. This can be easily seen by computing for any $N\ge 0$ and $x\in\M_d$,
\begin{align}\label{sN}
\s_{\al,N}(x)&\deff \E\big|\Pi_{\le N}u_\al^\om(x)\big|^2 = \E\Big|\sum_{\ld_n\le N}\frac{g_n}{\jb{\lambda_n}^{\frac{\al}2}}\varphi_n(x)\Big|^2\notag \\&= \sum_{\ld_n\le N}\frac{|\varphi_n(x)|^2}{\jb{\lambda_n}^\al}\sim \sum_{\ld_n\le N}\frac1{\jb{\lambda_n}^\al},
\end{align} 
uniformly in $N\ge 1$, where $\Pi_{\le N}$ stands for the spectral projector on the set of frequencies $\{\ld_n\le N\}$, and where the last estimate follows from Corollary~\ref{LEM:avg eig} below. From Weyl's law
\begin{align}\label{Weyl}
\ld_n \sim n^{\frac1d}
\end{align}
we get that the partial sums $\s_{\al,N}$ in \eqref{sN} then converges in $L^\infty(\M)$ to
\begin{align}\label{sal}
\s_\al(x) = \sum_{n\ge 0}\frac{|\varphi_n(x)|^2}{\jb{\ld_n}^\al}
\end{align}
 if and only if $\al>d$. 
 
As a result, any nonlinear potential (and in particular $V_\be$) would need a renormalization to be able to define the Gibbs measure. As mentioned above, in the defocusing case, for $\al=d=2$ and a polynomial interaction, this is done in \cite{OT}. For the same regime and replacing the Gaussian measure $\mu_\al$ by the law $\mu_\al^\R$ of $\Re(u_\al)$, the defocusing Gibbs measure with (Wick renormalized) exponential interaction $e^{\be u}$ in place of $V_\be$ was proved to be constructible if and only if $\be^2< 8\pi$ in \cite{HK}; see also\cite{DKRV,DRV,GRV,HKK1,HKK2,ORW,ORTW} for recent results on this model, in particular the invariance of the Gibbs measure for the stochastic heat and wave dynamics. The point is that when $\al=d$, the random variable $u_\al$ (or $\Re(u_\al)$) is log-correlated, namely $G(x,y)\deff \E[u_\al(x)\cj u_\al(y)]\sim \frac1{2\pi}\log d(x,y)$. By property of the Wick renormalization, the renormalized exponential $\,:\!e^{\be u}\!:\,$ has then correlation function $\exp\big(\be G(x,y)\big)\sim d(x,y)^\frac{\be^2}{4\pi}$ which is locally integrable if and only if $\be^2<8\pi$. Note however that there is no Schr\"odinger analogue of the dynamics studied in \cite{HKK1,HKK2,ORW,ORTW} for the potential $e^{\be u}$, since this latter does not generate a Hamiltonian dynamics for NLS. \\

\noi \textup{(ii)} In order to get existence of the measure $\rho_{\al,\be}$ \eqref{Gibbs1} for some $\be>0$ in the critical case $\al=d$, a renormalization procedure would thus also be needed. This amounts to defining the renormalized truncated potential for $N\ge 1$ as
\begin{align}\label{renorm}
\V_{\be,N}(u) &\deff \int_\M\sum_{k\ge 0}\frac{\be^k}{k!}\,:\!|\Pi_{\le N}u|^{2k}\!:\,dx\notag\\
& = \int_\M\frac1{1+\be\s_{\al,N}(x)}\exp\Big(\frac{\be}{1+\be\s_{\al,N}(x)}|\Pi_{\le N} u(x)|^2\Big)dx.
\end{align}
Here $\,:\!|\Pi_{\le N}u|^{2k}\!:\, = (-1)^k(k!)L_k\big(|\Pi_{\le N}u(x)|^2;\s_N(x)\big)$ denotes the Wick renormalization of the potential $|u|^{2k}$, where $L_k(y;\s)$ are the generalized Laguerre polynomials. The last formula then follows from the expression for the generating function of the $L_k$, see e.g. \cite{OT}. Then one should prove convergence in measure of $\V_{\be,N}$ for some $\be>0$. One major difficulty here is that one deals with the exponential of $|u_\al|^2$ instead of $\Re(u_\al)$ as discussed above, which is not Gaussian anymore, and with correlation function behaving like $\big(\log d(x,y)\big)^2$. Thus it is not clear at the moment whether the Gibbs measure with renormalized potential \eqref{renorm} is well-defined, even in the defocusing case. We plan to pursue this construction in a follow-up work.\\

\noi \textup{(iii)} On the other hand, for $\al=d$, Oh, Seong, and Tolomeo showed in \cite{OST} that the Gibbs measure has again infinite mass in the focusing case for the renormalized quartic potential $\,:\!|u|^4\!:\,$ in place of $V_\be(u)$, even with a (Wick-ordered) mass cut-off of arbitrarily small size. Thus in the focusing case, we expect that the Gibbs measure associated with the renormalized potential \eqref{renorm} is also ill-defined.
\end{remark}

\subsection{Invariance of the Gibbs measure}

We now move on to the construction of solutions to \eqref{expNLS} leaving $\rho_{\al,\be}$ invariant.

\begin{theorem}\label{THM:GWP1}
Let $d\ge 1$ and $\al>d$. Then the following hold:\\
\textup{(i) (Weak invariance for subcritical dispersion)} For any $0<\be<\|\s_\al\|_{L^\infty}^{-1}$ and $0<s<\frac{\al-d}2$, there is a random variable $u$ in $C(\R;H^s(\M_d))$ which solves \eqref{expNLS} in the distributional sense. Moreover, $\rho_{\al,\be}$ is invariant under the map $u(0)\mapsto u(t)$, $t\in\R$.\\
\textup{(ii) (Strong invariance for higher dispersion)} If $\al>2d$, for any $0<\be\ll\|\s_\al\|_{L^\infty}^{-1}$, the exponential NLS \eqref{expNLS} on $(\M,\gm)$ is $\rho_{\al,\be}$-almost surely globally well-posed, and the Gibbs measure $\rho_{\al,\be}$ is invariant under the flow. More precisely: for $\frac{d}2<s<\frac{\al-d}2$ , any $\be>0$ and $u_0\in H^s(\M)$, there exists $T>0$ and a unique solution to the Cauchy problem for \eqref{expNLS} with data $u_0$ in $C([-T,T];H^s(\M))$. Moreover, if $\Phi(t) : u_0 \mapsto u(t)$ denotes the local flow map of \eqref{expNLS} on $H^s(\M)$, in the case $0<\be\ll\|\s_\al\|_{L^\infty}^{-1}$ there exists $\Si\subset H^s(\M)$ of full $\rho_{\al,\be}$-measure such that for any $u_0\in \Si$, $\Phi(t,u_0)$ is globally defined, and $\rho_{\al,\be}$ is invariant under $\Phi(t)$, in the sense that for any $\rho_{\al,\be}$-measurable $A\subset \Si$ it holds $\rho_{\al,\be}\big(\Phi(t)(A)\big)=\rho_{\al,\be}(A)$ for any $t\in\R$.
\end{theorem}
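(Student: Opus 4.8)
The two statements call for different machinery. For part (i) I would run a compactness argument in the spirit of Burq--Thomann--Tzvetkov: regularize the equation, exploit the invariance of the truncated Gibbs measures together with uniform exponential‑moment bounds, and extract a limiting distributional solution whose time‑marginals are all $\rho_{\al,\be}$. For part (ii), since $\al>2d$ leaves room for an exponent $\tfrac d2<s<\tfrac{\al-d}2$, I would first build a deterministic local flow on $H^s(\M)$ by a contraction argument, and then globalize $\rho_{\al,\be}$‑almost surely via Bourgain's invariant‑measure scheme. The common device is the truncated equation
\[
i\dt u_N+(-\Dl)^{\frac\al2}u_N=2\gamma\be\,\Pi_{\le N}\big(e^{\be|\Pi_{\le N}u_N|^2}\Pi_{\le N}u_N\big),
\]
for which, splitting into $\Pi_{\le N}u_N$ and $\Pi_{>N}u_N$, the high modes evolve linearly while the low modes solve a finite‑dimensional Hamiltonian system with conserved mass and truncated energy; hence $\Phi_N(t)$ is globally defined and, by Liouville's theorem and the conservation laws, the truncated Gibbs measure $\rho_{\al,\be,N}$ of \eqref{GibbsM} is invariant under $\Phi_N(t)$. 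The one probabilistic input used throughout is that for fixed $x\in\M$ the variable $\Pi_{\le N}u_\al^\om(x)$ is a centred complex Gaussian of variance $\s_{\al,N}(x)\le\|\s_\al\|_{L^\infty}$ (monotone in $N$, finite because $\al>d$; cf.\ \eqref{sal}), so that for $\be<\|\s_\al\|_{L^\infty}^{-1}$,
\[
\sup_{N,\,x\in\M}\ \E\big[e^{\be|\Pi_{\le N}u_\al^\om(x)|^2}\big(1+|\Pi_{\le N}u_\al^\om(x)|\big)\big]\le\frac{C_\be}{1-\be\|\s_\al\|_{L^\infty}}<\infty ,
\]
while in the defocusing case the densities $\tfrac{d\rho_{\al,\be,N}}{d\mu_\al}=\ZZ_{\al,\be,N}^{-1}e^{-\gamma V_\be(\Pi_{\le N}u)}$ are bounded above uniformly in $N$; thus all the moment bounds below reduce to this Gaussian computation.

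\textbf{Part (i).} Take $u_N(0)\sim\rho_{\al,\be,N}$, so $\Law(u_N(t))=\rho_{\al,\be,N}$ for all $t$. Integrating the moment bound over $\M$ yields $\sup_N\E_{\rho_{\al,\be,N}}\|e^{\be|\Pi_{\le N}u_N|^2}\Pi_{\le N}u_N\|_{L^1(\M)}<\infty$, hence, reading off the equation, $\sup_N\E_{\rho_{\al,\be,N}}\|\dt u_N(t)\|_{H^{s-\al}}<\infty$ (using that $\Pi_{\le N}$ maps $L^1(\M)$ into $H^{s-\al}(\M)$ uniformly in $N$, since $s-\al<-\tfrac d2$), while $\sup_N\E_{\rho_{\al,\be,N}}\|u_N(t)\|_{H^s}^2<\infty$ by invariance and Lemma~\ref{LEM:init}. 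By the Aubin--Lions--Simon lemma and the compactness of $H^s\hookrightarrow H^{s-\dl}$ on $\M$, the laws of $\{u_N\}_N$ are tight in $C([-T,T];H^{s-\dl})$ for every $T>0$ and small $\dl>0$; Prokhorov and Skorokhod's theorem then give, on a new probability space and along a subsequence, $\wt u_N\to u$ almost surely in $C_{\mathrm{loc}}(\R;H^{s-\dl})$ with $\Law(\wt u_N)=\Law(u_N)$. The uniform exponential integrability promotes this to convergence of the nonlinear term, so $u\in C(\R;H^s)$ solves \eqref{expNLS} in the distributional sense, and $\Law(u(t))=\lim_N\rho_{\al,\be,N}=\rho_{\al,\be}$ by Theorem~\ref{THM:Gibbs1}~(i); this is the asserted weak invariance (the ``map $u(0)\mapsto u(t)$'' being read through this constructed solution, no deterministic uniqueness being available at that regularity).

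\textbf{Part (ii).} For $\tfrac d2<s<\tfrac{\al-d}2$, $H^s(\M)$ is an algebra continuously embedded in $L^\infty$, so $w\mapsto e^{\be|w|^2}w$ is locally Lipschitz on $H^s$ for every $\be>0$; since $e^{it(-\Dl)^{\al/2}}$ is unitary on $H^s$, a contraction on the Duhamel formula in $C([-T,T];H^s)$ gives, for each $u_0\in H^s$, a unique solution on $[-T,T]$ with $T=T(\|u_0\|_{H^s})>0$, continuous in $u_0$, together with the blow‑up alternative. To globalize I would compare $\Phi(t)$ with $\Phi_N(t)$ through a quantitative stability estimate $\|\Phi(t)u_0-\Phi_N(t)u_0\|_{H^s}\to0$ as $N\to\infty$, uniform for $u_0$ in large‑$\rho_{\al,\be}$‑measure sets and $|t|$ in a local‑existence interval; here the smallness $\be\ll\|\s_\al\|_{L^\infty}^{-1}$ enters, to ensure $\E_{\rho_{\al,\be}}[e^{p\be\|u\|_{L^\infty}^2}]<\infty$ for a $p>1$ large enough to absorb the Moser‑type factors $e^{C\be\|u\|_{L^\infty}^2}$ coming from the $H^s$ bounds on $e^{\be|u|^2}u$. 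Together with the $\Phi_N$‑invariance of $\rho_{\al,\be,N}$ and the tail bound $\rho_{\al,\be}(\|u\|_{H^s}>\ld)\les e^{-c\ld^2}$ (from the bounded density and the Fernique tail of $\mu_\al$ in $H^s$, finite because $\al-2s>d$), the standard Bourgain iteration over intervals of length $\sim\ld^{-\theta}$ produces, for every $\eps,T>0$, a set $\Si_{\eps,T}$ with $\rho_{\al,\be}(\Si_{\eps,T}^c)<\eps$ on which the solution exists on $[-T,T]$ with $\sup_{|t|\le T}\|\Phi(t)u_0\|_{H^s}\le C(\eps,T)$; then $\Si=\bigcup_n\bigcap_k\Si_{2^{-k}/n,\,2^k}$ has full $\rho_{\al,\be}$‑measure and consists of global solutions. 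Finally invariance follows by passing to the limit in $\int F(\Phi_N(t)u)\,d\rho_{\al,\be,N}(u)=\int F(u)\,d\rho_{\al,\be,N}(u)$: the right‑hand side tends to $\int F\,d\rho_{\al,\be}$ by total‑variation convergence (Theorem~\ref{THM:Gibbs1}~(i)), and the left‑hand side to $\int F(\Phi(t)u)\,d\rho_{\al,\be}(u)$ by the stability estimate on $\Si$ and dominated convergence, for every bounded continuous $F$; this is equivalent to $\rho_{\al,\be}(\Phi(t)A)=\rho_{\al,\be}(A)$ for measurable $A\subset\Si$.

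\textbf{Main obstacle.} The heart of the matter, in both parts, is the uniform control of the exponential nonlinearity. In (i) this is precisely the pointwise Gaussian moment bound above, which also explains the sharp range $\be<\|\s_\al\|_{L^\infty}^{-1}$. In (ii) the harder point is the quantitative stability $\Phi_N\to\Phi$ in $H^s$: estimating $e^{\be|u|^2}u$ there forces an $L^\infty$‑based, rather than pointwise, exponential integrability, whose admissible range is only a fraction of $\|\s_\al\|_{L^\infty}^{-1}$ — the source of the more restrictive assumption $\be\ll\|\s_\al\|_{L^\infty}^{-1}$ for global well‑posedness and invariance.
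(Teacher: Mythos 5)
Your proposal is correct and follows essentially the same route as the paper: part (i) is the Burq--Thomann--Tzvetkov/Oh--Thomann compactness scheme (invariance of $\rho_{\al,\be,N}$ under the truncated flow, uniform moment bounds reducing to the pointwise Gaussian computation $\E e^{p\be|\Pi_{\le N}u(x)|^2}=(1-p\be\s_{\al,N}(x))^{-1/2}$, an Aubin--Lions-type equicontinuity estimate, then Prokhorov--Skorokhod), and part (ii) is deterministic local well-posedness via the algebra property of $H^s$ for $s>\frac d2$ followed by Bourgain's invariant measure argument, with the restriction $\be\ll\|\s_\al\|_{L^\infty}^{-1}$ arising exactly as you say from balancing the local time $e^{-C\be R^2}$ against the Gaussian tail $e^{-cR^2/\|\s_\al\|_{L^\infty}}$. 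The only cosmetic differences are that the paper proves tightness directly in $C_TH^s$ via a Hölder-in-time interpolation lemma (taking an auxiliary $s_1\in(s,\frac{\al-d}2)$) rather than in $C_TH^{s-\dl}$, and uses a smooth spectral truncation $\P_{\le N}$ in part (ii) for its better $L^p$ mapping properties.
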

\noi The first part of Theorem~\ref{THM:GWP1} deals with the full sub-critical regime $\al>d$, but we can only claim existence of weak solutions to \eqref{expNLS} having $\rho_{\al,\be}$ as an invariant measure, but not the construction of a flow (in particular, no uniqueness) on the support of $\mu_\al$. On the other hand, Theorem~\ref{THM:GWP1}~(ii) is concerned with strong invariance of the Gibbs measure, in the sense that the equation \eqref{expNLS} is (unconditionally) globally well-posed on the support of $\rho_{\al,\be}$, and in particular there is uniqueness of the solution. Theorem~\ref{THM:GWP1}~(ii) then improves this latter result to get the uniqueness of the weak solutions constructed in Theorem~\ref{THM:GWP1}~(i). In the general setting of the above theorem we can however only deal with the high dispersion regime $\al>2d$.

Theorem~\ref{THM:GWP1}~(i) is proved using a very flexible compactness argument first introduced in \cite{AC,DPD02} and then developed in the context of dispersive PDEs in \cite{BTT1}. This is based on the study of the approximate dynamics
\begin{align}\label{expNLSN}
(i\dt + (-\Dlg)^{\frac{\al}2})u_N =2\gamma \be\Pi_{\le N}\Big[e^{\be |\Pi_{\le N} u|^2}\Pi_{\le N}u\Big].
\end{align}
Since $\rho_{\al,\be}=\lim_{N\to\infty}\rho_{\al,\be,N}$, it is natural to look at the Hamiltonian dynamics \eqref{expNLSN} which leaves $\rho_{\al,\be,N}$ invariant. Then one proves that there is a subsequence $N_k\to\infty$ and a sequence $\wt u_{N_k}$ of $C(\R;H^s(\M))$-valued random variables with the same law $\rho_{\al,\be,N}$ as $u_{N_k}$ such that $\wt u_{N_k}$ converges almost surely in $C(\R;H^s(\M))$ to some random variable $u$, which is proved to be a distributional solution to \eqref{expNLS}. The convergence $\rho_{\al,\be,N}\to\rho_{\al,\be}$ then ensures that $u$ has $\rho_{\al,\be}$ has an invariant measure.

For Theorem~\ref{THM:GWP1}~(ii), the dispersion $\al>2d$ is large enough so that the support of the Gibbs measure lies at sub-critical regularity in view of \eqref{scaling2} and \eqref{support}. Thus local well-posedness can be proved in a purely deterministic manner. However, note that the support $H^s(\M)$, $s<\frac{\al-d}2$ of $\rho_{\al,\be}$ and $\mu_\al$ (note that $\rho_{\al,\be}\ll\mu_\al$) is always larger than the energy space $H^{\frac\al2}(\M)$, which makes global well-posedness for deterministic initial data non trivial since there are no conservation laws at this level of regularity. Here we rely on Bourgain's invariant measure argument \cite{BO94}, which exploits the invariance of the Gibbs measure as a replacement for a conservation law at low regularity, to get almost sure globalization of the local flow map. This argument allows us to construct a set of large $\rho_{\al,\be}$-measure for which we can iterate a great number of times the deterministic local Cauchy theory without the solution to grow too much. The restriction on $0<\be\ll \|\s_\al\|_{L^\infty}^{-1}$ thus comes from the smallness of the local time compared to the $\rho_{\al,\be}$-measure of balls of $H^s(\M)$ (see the tail estimate in Lemma~\ref{LEM:init} below).

\begin{remark}\label{Rk:BB}\rm~\\
In \cite{BB1,BB2,BB3}, Bourgain and Bulut developped a flexible argument leading to a stronger result than that provided by the compactness method. Namely, their method provides convergence of the whole sequence $u_N$ for the truncated dynamics and without changing the base probability space as in the compactness method. However this usually does not provide us with a flow map. See the discussion in \cite{STz1}. In particular, in \cite[Theorem 7]{STz1}, Sun and Tzvetkov proved the convergence of the solutions $u_N$ to the truncated dynamics associated with the defocusing fractional NLS \eqref{NLS}, in the whole range $\al>d$, and for any $p>1$. This argument relies on the observation that the nonlinear flow has, with high probability, the same enhanced integrability property than that of the linear flow (see \eqref{linear} below). This allows to obtain, with high probability, good controls of the truncated dynamics $u_N$ for times $\tau_N =O\big((\log N)^{-\frac1p}\big)$, related to the local time of existence in the Cauchy theory. However in our case \eqref{expNLSN} we have formally $p\to\infty$, and thus we should replace $\tau_N$ by $e^{-c(\log N)^2}$. This makes the number of iterations needed to control $u_N$ on times $O(1)$ too large so that we cannot ensure the summability of the probabilities to have bad controls for each $N$, even along dyadic sequences. So it seems that this argument fails in our context.
\end{remark}

When $\M_d=\T$, we can improve on the general result of Theorem~\ref{THM:GWP1} by using probabilistic multilinear smoothing effects to get existence and uniqueness of solutions for some $\al\le 2d$.
\begin{theorem}\label{THM:GWP2}
Let $d=1$ and $\M_d=\T$ with the standard metric. For any $1+\frac{\sqrt{2}}2<\al\le 2$, the following hold:\\
\textup{(i) (Almost sure local well-posedness)} For any $\gamma\in\R$ and $\be>0$, the Cauchy problem for \eqref{expNLS} is $\mu_\al$-almost surely locally well-posed. More precisely: for $0<s<\frac{\al-1}2$, there exists $\Si\subset H^{s}(\T)$ of full $\mu_\al$-measure such that for any $u_0\in \Si$, there exists $T>0$ and a solution to the Cauchy problem for \eqref{expNLS}  on $[-T,T]$ with data $u_0$, unique in the class 
\begin{align}\label{class}
\GGG^{-1}\big(e^{it(-\dx^2)^{\frac{\al}2}}u_0+X^{s_1,b}(T)\big),
\end{align} 
for some $s_1,b>\frac12$.\\
\textup{(ii) (Almost sure global well-posedness and invariance of the Gibbs measure)} If $\gamma>0$ and $0<\be\ll\|\s_\al\|_{L^\infty}^{-1}$, then the Cauchy problem for \eqref{expNLS} is $\rho_{\al,\be}$-almost surely globally well-posed, and the Gibbs measure $\rho_{\al,\be}$ is invariant under the flow. More precisely, if $\Si$ is as in \textup{(i)}, and if $\Phi(t) : u_0 \mapsto u(t)$ denotes the flow map of \eqref{expNLS} on $\Si$, then $\Phi(\Si)=\Si$, and $\rho_{\al,\be}$ is invariant under $\Phi(t)$ in the sense that for any $\rho_{\al,\be}$-measurable $A\subset \Si$ it holds $\rho_{\al,\be}\big(\Phi(t)(A)\big)=\rho_{\al,\be}(A)$ for any $t\in\R$.
\end{theorem}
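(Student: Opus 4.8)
We sketch a proof plan.

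\smallskip
\noi\emph{Local theory (part (i)).} The plan is to run a probabilistic local well-posedness argument in Fourier restriction spaces $X^{s,b}=X^{s,b}_\al$ adapted to $i\dt+(-\dx^2)^{\frac{\al}2}$ on $\T$. First I would record the relevant properties of the random data: by \eqref{init}--\eqref{support} and Lemma~\ref{LEM:init}, for $1<\al\le 2$ one has $u_0=u_\al^\om\in H^s(\T)$ a.s.\ for $s<\frac{\al-1}2$; moreover, since the covariance of $u_\al$ behaves like $|x-y|^{\al-1}$ near the diagonal (from Weyl's law \eqref{Weyl} and $\al>1$), a Kolmogorov continuity argument gives $u_\al\in C^0(\T)$ a.s., hence also the free evolution $z:=e^{it(-\dx^2)^{\frac{\al}2}}u_0\in C^0_{t,x}([-T,T]\times\T)$ with $\|z\|_{L^\infty_{t,x}}<\infty$ a.s. In particular $e^{\be|z|^2}$ is a.s.\ a bounded continuous function for \emph{every} $\be>0$, which is the reason part~(i) needs no smallness of $\be$. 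Next I would apply the gauge transform $\GGG$ to \eqref{expNLS}: writing $w=\GGG(u)$, the gauged equation keeps the same linear part but its nonlinearity $\mathcal N(w)$ has the non-dispersive part of $e^{\be|w|^2}w$ removed. One then seeks $w$ in the form $w=z+v$ with $v$ in a ball of $X^{s_1,b}(T)$, $\tfrac12<s_1$, $\tfrac12<b<1$, $T=T(\om)>0$, solving the Duhamel equation $v=-2i\gamma\be\int_0^t e^{i(t-t')(-\dx^2)^{\frac{\al}2}}\mathcal N(z+v)(t')\,dt'$, and runs a contraction.

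\smallskip
\noi\emph{Key estimates.} To close the fixed point I would expand $\mathcal N(z+v)$ (up to the gauge subtraction) as $e^{\be|z|^2}e^{2\be\Re(\bar z v)+\be|v|^2}(z+v)$, then in Taylor series, and group the resulting terms by the number of factors of $v$. Terms with two or more $v$'s are of the form (bounded function of $z$)$\,\cdot\,$(element of $X^{s_1,b}(T)$ at least quadratic in $v$); these are handled by the standard Duhamel estimate in $X^{s_1,b}$, the algebra property of $X^{s_1,b}(T)$ for $s_1,b>\tfrac12$, and the a.s.\ bound on $\|z\|_{L^\infty_{t,x}}$, and they carry a positive power of $T$, yielding the contraction. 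The terms with no $v$ (the ``first Picard iterate'' $e^{\be|z|^2}z=\sum_k\frac{\be^k}{k!}|z|^{2k}z$) and those with exactly one $v$ (schematically, (multilinear in $z,\bar z$)$\,\cdot\,v$) cannot be treated by a naive product bound since $z$ has regularity $<\tfrac12$; here the plan is to exploit probabilistic multilinear smoothing: the Duhamel operator applied to a $(2k+1)$-linear expression in $(z,\bar z)$, respectively to (multilinear in $z$)$\,\cdot\,v$, maps a.s.\ into $X^{s_1,b}(T)$ with $s_1>\tfrac12$, with source/operator norm bounded by $C^k T^\theta$ times a random constant with Gaussian tails (via Wiener chaos and hypercontractivity). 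The gain of regularity comes from the dispersive weight $\jb{\tau-|n|^\al}$ together with a resonance-function case analysis on $\T$; requiring this gain to beat the deficit $\tfrac12-s>\tfrac{2-\al}2$ one must overcome to pass from the support regularity of $\mu_\al$ to $s_1>\tfrac12$ is what forces the (quadratic in $\al-1$) restriction $\al>1+\frac{\sqrt2}2$. Summing the Taylor series against $\frac{\be^k}{k!}$ absorbs the constants $C^k$, so the series converges a.s.\ and the Picard map is a contraction on a small ball of $X^{s_1,b}(T(\om))$; undoing the gauge produces $u$, uniqueness in the class \eqref{class} follows by the usual continuity-in-time argument, and $\Si$ is the full-$\mu_\al$-measure set on which all the random constants are finite.

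\smallskip
\noi\emph{Globalization and invariance (part (ii)).} With the local flow $\Phi$ on $\Si$ in hand, I would run Bourgain's invariant measure argument. The truncated dynamics \eqref{expNLSN} (in gauged form) is globally well-posed and leaves $\rho_{\al,\be,N}$ invariant, by Liouville's theorem and conservation of $\J+\H_{\al,\be}$; since $\gamma>0$, Theorem~\ref{THM:Gibbs1}(i) gives $\rho_{\al,\be,N}\to\rho_{\al,\be}$ in total variation with densities uniformly bounded (as $\gamma V_\be\ge 0$). Combining a stability estimate for $\Phi_N\to\Phi$ on the sublevel sets $\{\|z\|_{L^\infty_{t,x}([0,T]\times\T)}\le\lambda\}$, the invariance of $\rho_{\al,\be,N}$, and the tail bound $\rho_{\al,\be}(\|u\|_{L^\infty}>\lambda)\les e^{-c\lambda^2}$ with $c\sim\|\s_\al\|_{L^\infty}^{-1}$ (from Lemma~\ref{LEM:init} and $\gamma V_\be\ge 0$), one constructs for each $t$ and $\eps$ a set $\Si_{t,\eps}\subset\Si$ with $\rho_{\al,\be}(\Si\setminus\Si_{t,\eps})<\eps$ on which $\Phi$ extends to $[-t,t]$: the local time on $\{\|z\|_{L^\infty_{t,x}}\le\lambda\}$ is $\ges e^{-C\be\lambda^2}$, so $\sim t\,e^{C\be\lambda^2}$ iterations of the Cauchy theory are needed, and the union bound over them costs $\sim t\,e^{C\be\lambda^2}e^{-c\lambda^2}$, which is $<\eps$ for $\lambda$ large provided $C\be<c$, i.e.\ $0<\be\ll\|\s_\al\|_{L^\infty}^{-1}$. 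Intersecting over $t\in\N$ and dyadic $\eps$ gives the full-measure invariant set, $\Phi(\Si)=\Si$, and the invariance $\rho_{\al,\be}(\Phi(t)A)=\rho_{\al,\be}(A)$ then follows by approximating $\Phi$ by $\Phi_N$ on these sets and passing to the limit, using the total-variation convergence $\rho_{\al,\be,N}\to\rho_{\al,\be}$.

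\smallskip
\noi\emph{Expected main obstacle.} The hard part is the probabilistic multilinear smoothing for the \emph{exponential} nonlinearity: one needs, uniformly in the Taylor order $k$ and summably against $\frac{\be^k}{k!}$, a gain of strictly more than $\frac{2-\al}2$ derivatives for the Duhamel integral of the $(2k+1)$-linear random expressions (and for the analogous (multilinear in $z$)$\,\times\,v$ operators) --- precisely what fails once $\al\le 1+\frac{\sqrt2}2$. Controlling the growth in $k$ of the multilinear constants (so that the factorial wins) and organizing the resonance-function case analysis for the fractional dispersion $\al<2$ on $\T$ are the technically heavy points; the gauge $\GGG$ is introduced precisely so that, after the non-dispersive interaction is removed, this smoothing becomes available.
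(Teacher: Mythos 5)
Your overall architecture matches the paper's: gauge transform removing the spatial-mean (non-dispersive) part of the nonlinearity, the ansatz ``random linear evolution plus smoother remainder in $X^{s_1,b}(T)$ with $s_1,b>\frac12$'', multilinear smoothing via the resonance function to handle the interactions where a high-frequency random factor dominates, and Bourgain's invariant measure argument for part~(ii), including the identification of where the threshold $\al>1+\frac{\sqrt2}2$ and the smallness condition on $\be$ come from. However, there is a genuine gap in the mechanism you propose for the key multilinear estimates. You plan to bound each $(2k+1)$-linear expression in the random linear evolution ``via Wiener chaos and hypercontractivity'', obtaining for each $k$ a random constant with Gaussian tails. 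For a fixed polynomial nonlinearity this is the standard route, but for the exponential nonlinearity it does not close: hypercontractivity for a chaos of order $2k+1$ gives moment growth $p^{(2k+1)/2}$, hence tails of size $\exp(-c_k\lambda^{2/(2k+1)})$ that are \emph{not} Gaussian and degenerate as $k\to\infty$. There is then no single full-measure set on which all the multilinear forms are simultaneously controlled with constants $C^k$ summable against $\be^k/k!$ --- the union bound over $k$ fails. This is exactly the difficulty the paper flags when comparing with the polynomial case and in Remark~\ref{REM:DNY}.

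The paper avoids this by making the randomness enter only once, pathwise: it introduces the norm $Y^{s}$ in \eqref{Ysb}--\eqref{Zb}, which encodes the enhanced space-time integrability of the free evolution of the data (frequency-localized $L^\infty_t L^{p_0}_x$ bounds with $s+\eps_0$ derivatives), proves a single Gaussian tail estimate $\mu_\al(\|u_0\|_{Y^s}>R)\les e^{-cR^2}$ (Lemma~\ref{LEM:devY}), and then establishes the multilinear estimates of Lemmas~\ref{LEM:RR} and~\ref{LEM:NN} \emph{deterministically} in terms of $\|\gf_j\|_{Y^s}$ for the factors indexed by $\GF$ and $\|z_j\|_{X^{s_1,b}}$ for those indexed by $\DF$, with constants $C^k k^{O(1)}$ that the factorial absorbs. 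Your sketch would need to be reorganized along these lines (or some equivalent pathwise norm on the data) before the summation over the Taylor expansion can be justified. Two smaller points: mere a.s.\ continuity of $z$ via Kolmogorov is not enough --- the case analysis in Lemma~\ref{LEM:NN} genuinely uses the dyadically localized $L^{p_0}_x$ bounds with $s+\eps_0$ derivatives built into $Y^s$, not just $\|z\|_{L^\infty_{t,x}}$; and in part~(ii) the iteration of the local theory must be carried out in the sum space $Y^s+H^{s_1}$, since after one local step the solution leaves $e^{it(-\dx^2)^{\al/2}}(Y^s)$, a point your sublevel-set formulation in terms of $\|z\|_{L^\infty_{t,x}}$ glosses over.
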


\noi Here the gauge transform $\GGG$ and the Bourgain space $X^{s_1,b}(T)$ are defined in Subsection~\ref{SUBSEC:gauge} below.

 When $\al\le 2d$, the support of the Gibbs measure \eqref{support} then lies in spaces of super-critical regularity, and thus a purely deterministic approach to the Cauchy problem for \eqref{expNLS} on the support of $\rho_{\al,\be}$ fails, so that the solutions obtained in Theorem~\ref{THM:GWP2} are a manifestation of a probabilistic smoothing effect.
 
 The proof of Theorem~\ref{THM:GWP2} relies on the now classical probabilistic approach of Bourgain \cite{BO96} and consists at looking for a solution to \eqref{expNLS} under the form
\begin{align}\label{ansatz}
u = \<1>(u_0) + \wt w \qquad \text{where} \qquad \<1>(t,u_0)=e^{it(-\dx^2)^{\frac{\al}2}}u_0.
\end{align}
On the one hand, the randomization provides the linear evolution $\<1>(u_0)$ with better integrability property than for a deterministic data $u_0\in H^{\frac{\al-1}2-}(\T)$. Indeed, we have that $\mu_\al$-almost surely,
\begin{align}\label{linear}
\<1>(u_0) \in C_b(\R;W^{\frac{\al-1}2-,\infty}(\T));
\end{align}
see \eqref{Ysb}-\eqref{Zb} and Lemma~\ref{LEM:devY} below. On the other hand, the remainder $\wt w$ in \eqref{ansatz} is assumed to be smoother, belonging to the space $X^{s_1,b}$ for some $s_1,b>\frac12$ (see \eqref{Xb} below for the definition of Bourgain's spaces $X^{s_1,b}$). Then $\wt w$ solves the perturbed nonlinear Schr\"odinger equation
\begin{align}\label{wt}
i\dt \wt w + (-\dx^2)^{\frac{\al}2}\wt w = \be\sum_{k\ge 0}\frac{\be^k}{k!}|\<1>+\wt w|^{2k}(\<1>+\wt w)
\end{align}
starting from $\wt w(0)=0$. Thus we need to control uniformly all the multilinear forms in the right-hand side of \eqref{wt}, at spatial regularity $s_1>\frac12$. The most problematic terms are the high-low interactions involving a high frequency piece of the random linear evolution $\<1>$ since this latter only has regularity $\frac{\al-1}2-<\frac12$ for $\al\le 2$. To be able to systematically exploit some multilinear smoothing effect uniformly in all the multilinear forms, we make a gauge transform
\begin{align*}
v(t)=\exp\Big\{-i\int_0^t\GG\big(|u(t')|\big)dt'\Big\}u(t)
\end{align*}
for some choice of gauge $\GG$; see \eqref{gauge2} below. In particular, note that this gauge transform does not affect the initial data. Then the new remainder $w=v-\<1>$ will solve the forced nonlinear Schr\"odinger equation
\begin{align}\label{w}
(i\dt + (-\dx^2)^{\frac{\al}2})w &= 2\gamma\be\Big[ e^{\be|\<1>+w|^2}-\GG\big(|\<1>+w|\big)\Big](\<1>+w)\notag\\
&=2\gamma\be \sum_{k\ge 1}\frac{\be^k}{k!}\QQ_{2k+1}(\<1>+w).
\end{align}
The point now is that in the expansion \eqref{w}, some resonances have been removed in the $(2k+1)$-linear forms $\QQ_{2k+1}$ compared to the original power nonlinearities $|u|^{2k}u$. This allows us to get some nonlinear smoothing effect and to run a fixed point argument in $X^{s_1,b}$ to construct $w$. This provides local well-posedness of the remainder $w$ in \eqref{w}, and thus of the solution $u$ to \eqref{expNLS} after inverting the gauge transform $u(t)=\exp\Big(i\int_0^t\GG\big(|v(t')|\big)dt'\Big)v(t)$, with $v=\<1>+w$. The global well-posedness and the invariance of the Gibbs measure are then established through Bourgain's invariant measure argument \cite{BO94,BO96}. Here, compared to the implementation of this argument for Theorem~\ref{THM:GWP1}~(ii), it is more involved due to the space of initial data $Y^s(\T)\not\subset H^{s_1}(\T)$ used to build the probabilistic local flow; see the definition of $Y^s(\T)$ in \eqref{Ysb} below. In order to iterate the local probabilistic Cauchy theory, we thus need to control the flow on the sum space $Y^s(\T)+H^{s_1}(\T)$ as in \cite{STz1}.

\bigskip
\begin{remark}\rm
Observe that even the softest argument of Theorem~\ref{THM:GWP1}~(i) requires $0<\be< \|\s_\al\|_{L^\infty}^{-1}\sim \al-d \too 0$ as $\al\searrow d$. This prevents us for example from proving that the family of measures $\{\rho_{\al,\be}\}_\al$ is tight for some fixed $\be>0$ in order to try and treat the critical case $\al=d$. It is not clear if this issue persists after renormalizing the potential as in Remark~\ref{REM:critical}~(ii).
\end{remark}
\bigskip
\begin{remark}[Exponential interaction in the wave case]\rm~\\
In \cite{STz0}, Sun and Tzvetkov considered the invariance of Gibbs measure for the fractional wave equation with exponential nonlinearity
\begin{align}\label{wave}
(\dt^2+(-\Dlg)^{\frac{\al}2}+1)u+\be e^{\be u}=0
\end{align}
in the whole range $\al>d$ and $\be\in\R$. Writing the Duhamel formulation
\begin{align*}
u(t)=\cos(t\jb{\nabla}^\al)u(0)+\frac{\sin(t\jb{\nabla}^\al)}{\jb{\nabla}^\al}\dt u(0) + \be\int_0^t\frac{\sin\big((t-t')\jb{\nabla}^\al\big)}{\jb{\nabla}^\al}e^{\be u(t')}dt',
\end{align*}
we see that the smoothing property of the propagator for the wave equation (i.e. the smoothing term $\jb{\nabla}^{-\al}$ in the Duhamel formula) allows for a direct gain of smoothness and thus there is no need for exploiting multilinear smoothing in this case. 

In \cite{ORW}, thanks to this gain of smoothness, the author with Oh and Wang was able to treat \eqref{wave} in the case $\al=d=2$ and some small $|\be|$. The argument relied on a refinement of the linear ansatz \eqref{ansatz}, in the spirit of the theory of para-controlled distributions used in the setting of singular parabolic stochastic equations \cite{GIP}, but did not exploit any dispersive multilinear smoothing effect either. 
\end{remark}
\bigskip
\begin{remark}[Invariant Gibbs measure for the cubic fractional NLS]\rm~\\
In \cite{STz1}, Sun and Tzvetkov considered the invariance of Gibbs measure for the cubic fractional NLS \eqref{NLS} for $d=1$, $p=3$ and some $\al<2$. They used the same ansatz \eqref{ansatz} to build a probabilistic flow map as in Theorem~\ref{THM:GWP2}, but in their case the multilinear estimates required to close the fixed point argument in $X^{s_1,b}$ for the remainder $w$ are more involved than the one used for Theorem~\ref{THM:GWP2}. Indeed in our case the sub-critical space in which we build $w$ contains $L^{\infty}(\R\times\T)$, which allows for more straightforward multilinear estimates. They also established analogue of Theorem~\ref{THM:GWP1}~(i) even for some $\al<1$, and proved a stronger convergence statement in the case $\al>1$; see Remark~\ref{Rk:BB} above. Our study is thus similar in spirit to that of \cite{STz1}, namely using $\al$ as a way to test the roughness of initial data allowed in each of the methods describes above. In our case we also use the coupling constant $\be>0$ as a way to quantify the moment bounds with respect to $\mu_\al$ needed in the arguments. Indeed, a major difference between the polynomial and the exponential cases is that in the former case, the various nonlinear expressions of the random initial data belong to Wiener chaoses of finite degree, from which exponential tail estimates follow directly thanks to the Wiener chaos estimate. In our case, even the integrability of $V_\be(u)$ requires limitation on the range of $\be>0$. See Lemma~\ref{LEM:V} below. 
\end{remark}

\medskip
\begin{remark}[Probabilistic scaling criticality and random averaging operators/random tensors]\label{REM:DNY}\rm~~\\
It might be possible to refine the argument below for the multilinear estimates involved in the proof of Theorem~\ref{THM:GWP2} in the case of High$\times$High$\to$High interactions to show that we can take $\al>\frac32$ in Theorem~\ref{THM:GWP2}, and that this threshold for the dispersion solely comes from the control on High$\times$Low$\to$High interactions with a linear evolution of the random initial data as a high regularity input. This is precisely the bad interaction that is amenable to the more sophisticated method of random averaging operators and random tensors developed recently in \cite{DNY1,DNY2}. Thus we believe that there is a chance that Theorem~\ref{THM:GWP2} can be improved all the way down to the range $\al>1$. See also the discussions in a similar context in \cite{STz1,STz2}. However, note that in the critical case $\al=d$, the exponential NLS appears to be probabilistic critical in the sense of \cite{DNY1,DNY2}, and it is not clear at all that the methods developed in these papers could handle this case due to combinatorics losses. Namely, after decomposing the (renormalized) nonlinearity in \eqref{renorm} as 
${\ds\sum_{k\ge 0}\big(\frac{-\be}{1-\be\s_\al}\big)^k\NN_{2k+1}(v)}$ where the $(2k+1)$-linear forms $\NN_{2k+1}$ are simple (see \cite{DNY1,DNY2}), it seems that the estimates in \cite{DNY1} for $\al=d=2$ are of the form
\[\|\NN_{2k+1}(v)\|_{X^{s,b-1}}\le C^k(k!)\|v\|_{X^{s,b}}^{2k+1}\]
which is not summable.
\end{remark}

\begin{ackno}\rm
The author is very thankful to Laurent Thomann for pointing out this problem and for interesting discussions. Part of this work was supported by the German Research Foundation (DFG) through the CRC1283 ``Taming uncertainty and profiting from randomness and low regularity in analysis, stochastics and their applications".
\end{ackno}

\section{Study of the Gibbs measure}\label{SEC:Gibbs}
In this section, we introduce the basic tools needed for the proofs of Theorems~\ref{THM:Gibbs1},~\ref{THM:GWP1} and ~\ref{THM:GWP2}. We start with recalling some basic tools from analysis on manifolds in Subsection~\ref{SUBSEC:mani}, and establish some standard large deviation estimates for $\mu_\al$ as well as the proof of Theorem~\ref{THM:Gibbs1}~(i) in Subsection~\ref{SUBSEC:defoc}. Subsections~\ref{SUBSEC:var},~\ref{SUBSEC:focus} and~\ref{SUBSEC:LEM} are then devoted to the proof of Theorem~\ref{THM:Gibbs1}~(ii).

\subsection{Background on analysis on manifolds}\label{SUBSEC:mani}

Let $(\M,\gm)$ be a closed (compact, boundaryless) smooth Riemannian manifold of dimension $d\ge 1$. In local coordinates, the metric $\gm$ is given by a smooth function $x\mapsto(\gm_{j,k}(x))_{j,k}$ taking value in the set of positive symmetric definite matrices. In particular $(\gm_{j,k})$ is invertible and its inverse is denoted by 
\begin{align*}
\big(\gm^{j,k}(x) \big) = \big(\gm_{j,k}(x)\big)^{-1}.
\end{align*}
We also write
\begin{align*}
|\gm(x)| = \det \big(\gm_{j,k}(x)\big)>0.
\end{align*} 
The volume form $\Vol$ can then be written locally as
\begin{align*}
d\Vol(x) = |\gm(x)|^{\frac12}dx,
\end{align*}
and abusing notations we will write $dx$ in place of $d\Vol$.

The Laplace-Beltrami operator is given in local coordinates by
\begin{align*}
\Dlg f = |\gm|^{-\frac12}\dd_j(|\gm|^{\frac12}\gm^{j,k}\dd_k f\big)
\end{align*}
for any smooth function $f\in C^{\infty}(\M)$. Here summation on repeated indices is implicit.

It is well-known (see e.g. \cite[Chapter 2]{Jost}) that there exists a basis $\{\varphi_n\}_{n\ge 0}\subset C^{\infty}(\M)$ of $L^2(\M)$ consisting of eigenfunctions of $\Dlg$ associated with the eigenvalue $-\ld_n^2$, assumed to be arranged in increasing order: $0=\ld_0 <\ld_1\le\ld_2\le...$ and satisfying $\ld_n\to\infty$. Moreover, any $f\in\D'(\M)$ can be expanded (in the sense of distributions) as
\begin{align*}
f = \sum_{n\ge 0}\langle f,\varphi_n\rangle \varphi_n,
\end{align*}
where $\langle\cdot,\cdot\rangle$ denotes the usual duality pairing between $\D'(\M)$ and $\D(\M)$ which coincides with the $L^2(\M)$-inner product when $f\in L^2(\M)$. This allows to define, for any $s\in\R$ and $1\le p\le \infty$, the Sobolev spaces
\begin{align*}
W^{s,p}(\M,\gm) \deff\Big\{u\in\D'(\M),~\|u\|_{W^{s,p}(\M)} <\infty\Big\}
\end{align*}
where
\begin{align*}
\|u\|_{W^{s,p}(\M)}  \deff \big\|(1-\Dlg)^{\frac{s}2}u\big\|_{L^p(\M)} = \Big\|\sum_{n\ge 0}\jb{\ld_n}^s\langle u,\varphi_n\rangle_\gm \varphi_n\Big\|_{L^p(\M)}.
\end{align*}
When $p=2$ we write $H^s(\M)\deff W^{s,2}(\M)$.

To get some information on the localization of the eigenvalues $\ld_n$ and bounds on the eigenfunctions $\varphi_n$, it is usual to consider the spectral function, which is defined as 
\begin{align}\label{eig}
e(x,y,\Ld^2) \deff \sum_{\ld_n^2 \leq \Ld^2} \varphi_n(x)\varphi(y),
\end{align}
for $(x,y)\in\M\times\M$ and $\Ld\in\R$. We have the following behaviour of the spectral function on the diagonal (see \cite[Theorem 1.1]{Hormander}; note that here the $\ld_n$'s are the square roots of the eigenvalues of $-\Dlg$).
\begin{lemma} \label{LEM:loc eig} 
The following asymptotics hold uniformly in $x\in\M$ as $\Ld\to\infty$:
\begin{equation} \label{loc eig}
e(x,x,\Ld^2) = c_d \Ld^d + O(\Ld^{d-1}),
\end{equation}
for some constant $c_d>0$ only depending on the dimension.
\end{lemma}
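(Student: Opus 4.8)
The plan is to prove \eqref{loc eig}---H\"ormander's sharp local Weyl law---via the half-wave parametrix together with a Tauberian argument, keeping track of uniformity in $x$ throughout. Write $P\deff(-\Dlg)^{\frac12}$ for the self-adjoint, first-order, elliptic square root of $-\Dlg$, with spectral data $\{\ld_n,\varphi_n\}_{n\ge0}$, and set
\begin{align*}
N_x(\Ld)\deff e(x,x,\Ld^2)=\sum_{\ld_n\le\Ld}|\varphi_n(x)|^2,
\end{align*}
a nondecreasing function of $\Ld$ vanishing for $\Ld<0$, with distributional derivative the spectral measure $dN_x=\sum_{n\ge0}|\varphi_n(x)|^2\delta_{\ld_n}$. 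I would fix $\chi\in\S(\R)$ with $\chi\ge0$, $\chi(0)>0$, whose Fourier transform $\ft\chi$ is even, supported in $(-\dl,\dl)$ and equal to $1$ at the origin, where $0<\dl$ is less than the injectivity radius of $(\M,\gm)$ (positive by compactness); here and below I suppress factors of $2\pi$ coming from the Fourier convention. Since the Schwartz kernel of the half-wave group $U(t)\deff e^{-itP}$ is $U(t,x,y)=\sum_{n\ge0}e^{-it\ld_n}\varphi_n(x)\varphi_n(y)$, one then has the exact identity
\begin{align*}
\int_\R\ft\chi(t)\,e^{it\Ld}\,U(t,x,x)\,dt=\sum_{n\ge0}\chi(\Ld-\ld_n)|\varphi_n(x)|^2=(\chi*dN_x)(\Ld).
\end{align*}

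The first main step is to evaluate the left-hand side asymptotically. By finite propagation speed, for $|t|<\dl$ the kernel $U(t,x,\cdot)$ is supported in the geodesic ball of radius $\dl$ about $x$, and on this time window $U(t)$ admits the H\"ormander--Lax parametrix: modulo a kernel smooth and compactly supported in $t$---hence contributing $O(\Ld^{-\infty})$ after pairing with $\ft\chi(t)e^{it\Ld}$---one has
\begin{align*}
U(t,x,y)\equiv(2\pi)^{-d}\int_{\R^d}e^{i(\phi(t,x,y,\xi)-t|\xi|_{\gm(x)})}\,a(t,x,y,\xi)\,d\xi,
\end{align*}
with $\phi$ the generating phase of the geodesic flow ($\phi(0,x,y,\xi)=\langle x-y,\xi\rangle$ in local coordinates) and $a$ a classical symbol of order $0$ with $a(0,x,x,\xi)=1$, all depending smoothly on the parameters. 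Specializing to $y=x$, using geodesic normal coordinates at $x$ so that $|\xi|_{\gm(x)}=|\xi|$, passing to polar coordinates $\xi=r\ta$, and carrying out the $t$-integration (rapid decay / stationary phase) produces
\begin{align*}
(\chi*dN_x)(\Ld)=(2\pi)^{-d}|\Sp^{d-1}|\int_0^\infty\chi(\Ld-r)\,r^{d-1}\,dr+O(\Ld^{d-2})=d\,c_d\,\Ld^{d-1}+O(\Ld^{d-2}),
\end{align*}
with $c_d\deff(2\pi)^{-d}|B_1^d|$ and, crucially, with the remainder uniform in $x\in\M$; this uniformity is forced by the smooth dependence of $\phi$, $a$, and the parametrix error on the base point, together with the compactness of $\M$.

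The second step is to deconvolve. Since $N_x$ is nondecreasing, vanishes on $(-\infty,0)$, is polynomially bounded, and its derivative convolved with $\chi$ satisfies the asymptotic just displayed (with $\chi\ge0$, $\chi(0)>0$, $\ft\chi\in C_c^\infty$), H\"ormander's Tauberian theorem for monotone functions upgrades this to
\begin{align*}
N_x(\Ld)=c_d\,\Ld^d+O(\Ld^{d-1}),
\end{align*}
uniformly in $x$, since the constants in the Tauberian estimate depend only on $\chi$ and on the (uniform) implied constant in the smoothed bound; this is \eqref{loc eig}. I expect the one genuinely delicate ingredient to be the construction of the Fourier integral parametrix for $e^{-itP}$ on $|t|<\dl$ together with the stationary-phase analysis on the diagonal, carried out so that every constant and error term is uniform in the base point; the deconvolution step and the monotonicity input are soft by comparison, and for a smooth compact $(\M,\gm)$ the required uniformities come for free. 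In practice one simply invokes \cite[Theorem~1.1]{Hormander}, which is already stated in the uniform form needed here.
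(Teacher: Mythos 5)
Your proposal is correct, and it matches the paper's treatment: the paper gives no proof of this lemma at all, simply invoking \cite[Theorem 1.1]{Hormander}, which is exactly what your closing sentence does, while your half-wave-parametrix-plus-Tauberian sketch is the standard proof of that cited result, with the uniformity in $x$ coming, as you say, from compactness of $\M$ and the smooth dependence of the parametrix on the base point. The one small imprecision is that $e^{-itP}$ does not have exactly finite propagation speed (only $\cos(tP)$ and $\sin(tP)P^{-1}$ do); its kernel is merely smooth off the light cone, but that contributes only $O(\Ld^{-\infty})$, which your "modulo a smooth kernel" clause already absorbs.
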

This implies in particular Weyl's law \eqref{Weyl} after integrating \eqref{loc eig}. As a direct consequence we obtain the boundedness in average of the eigenfunctions $\varphi_n$.
\begin{corollary}\label{LEM:avg eig}
For any $A\in\R$, there exists $C>0$ such that for any $\Ld>0$ and any $x\in\M$, it holds
\begin{align*}
\sum_{\ld_n\in (\Ld,\Ld+1]}\frac{|\varphi_n(x)|^2}{\jb{\ld_n}^A} \le C\sum_{\ld_n\in (\Ld,\Ld+1]}\frac1{\jb{\ld_n}^A}.
\end{align*}
\end{corollary}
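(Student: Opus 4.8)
The plan is to reduce the asserted inequality to a pointwise bound on the spectral function of Lemma~\ref{LEM:loc eig}, and then to supply a matching lower bound on the number of eigenvalues in the window via Weyl's law \eqref{Weyl}. First, every index $n$ occurring in either sum satisfies $\ld_n\in(\Ld,\Ld+1]$, so $\jb{\ld_n}$ is comparable to $\jb{\Ld}$ uniformly in $n$ and in $\Ld>0$, with constants depending only on $A$ (whatever the sign of $A$, since $\jb{\Ld+1}\lesssim\jb{\Ld}$). Pulling out this common factor $\jb{\Ld}^{-A}$ from both sides, it suffices to show
\[
\sum_{\ld_n\in(\Ld,\Ld+1]}|\varphi_n(x)|^2\ \le\ C\,\#\{n:\ \ld_n\in(\Ld,\Ld+1]\}
\]
uniformly in $x\in\M$ and $\Ld>0$. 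If the window contains no eigenvalue, both sides vanish, so we may assume it is non-empty.

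For the left-hand side, the definition \eqref{eig} gives $e(x,x,\Ld^2)=\sum_{\ld_n\le\Ld}|\varphi_n(x)|^2$, so the left-hand side equals $e(x,x,(\Ld+1)^2)-e(x,x,\Ld^2)$. For $\Ld\ge1$, Lemma~\ref{LEM:loc eig} gives $e(x,x,(\Ld+1)^2)=c_d(\Ld+1)^d+O(\Ld^{d-1})$ and $e(x,x,\Ld^2)=c_d\Ld^d+O(\Ld^{d-1})$, both uniformly in $x$, so the difference is $\le C\jb{\Ld}^{d-1}$; for $0<\Ld<1$ there are only finitely many eigenvalues below $2$ and the $\varphi_n$ are continuous on the compact manifold $\M$, so the left-hand side is bounded by an absolute constant, hence again $\le C\jb{\Ld}^{d-1}$. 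For the right-hand side, integrating \eqref{loc eig} over $\M$ yields Weyl's law with remainder, $\#\{n:\ld_n\le\Ld\}=c_d\Vol(\M)\Ld^d+O(\Ld^{d-1})$, from which a non-empty unit window $(\Ld,\Ld+1]$ contains $\gtrsim\jb{\Ld}^{d-1}$ eigenvalues. Combining the two estimates gives the left-hand side $\le C\jb{\Ld}^{d-1}\le C'\#\{n:\ld_n\in(\Ld,\Ld+1]\}$, and re-inserting the factor $\jb{\Ld}^{-A}\asymp\jb{\ld_n}^{-A}$ on both sides closes the argument.

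The step I expect to be delicate is the lower bound on the number of eigenvalues in a non-empty unit window: one is differencing two asymptotic expansions whose $O(\Ld^{d-1})$ remainders are of the same order as the quantity to be isolated, so the lower bound must be read off carefully (this is harmless on the manifolds of interest, e.g.\ $\T^d$ or round spheres). In any case, the only consequence actually used later, namely $\sum_{\ld_n\le N}|\varphi_n(x)|^2\jb{\ld_n}^{-A}\lesssim\sum_{\ld_n\le N}\jb{\ld_n}^{-A}$ (e.g.\ in \eqref{sN}), follows simply by summing the windowed bound over unit windows, using only the easy upper half of Lemma~\ref{LEM:loc eig}; the remaining bookkeeping is routine.
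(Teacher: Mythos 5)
Your proof follows essentially the same route as the paper's: pull out the common factor $\jb{\Ld}^{-A}$, identify the windowed sum of $|\varphi_n(x)|^2$ with the increment $e(x,x,(\Ld+1)^2)-e(x,x,\Ld^2)$ and bound it by $C\jb{\Ld}^{d-1}$ via Lemma~\ref{LEM:loc eig}, then lower-bound the number of eigenvalues in a non-empty window by $\jb{\Ld}^{d-1}$ via the integrated Weyl law. The delicate point you flag --- that differencing two one-term asymptotics whose $O(\Ld^{d-1})$ remainders are of the same order as the quantity to be isolated does not by itself yield the lower bound on the count in a non-empty unit window --- is genuine, but the paper's own proof has exactly the same issue (it simply asserts $\Ld\les\#\{k:\Ld<\ld_k\le\Ld+1\}$ without justification, and with the exponent $d-1$ apparently dropped), and your closing observation that every later use only needs the summed form over unit windows, which follows from the upper half of Lemma~\ref{LEM:loc eig} alone, is correct and is the honest way to repair the statement.
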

\begin{proof}
We just have to use Lemma \ref{LEM:loc eig} to bound
\begin{align*}
\sum_{\ld_n\in (\Ld,\Ld+1]}\frac{\varphi_n(x)^2}{\jb{\ld_n}^A} &\sim \jb{\Ld}^{-A}\Big[e\big(x,x,(\Ld+1)^2\big)-e(x,x,\Ld^2)\Big]\\
&\les \jb{\Ld}^{-A}\Ld \les \jb{\Ld}^{-A}\#\big\{k: \Ld<\ld_k \le \Ld+1\big\} \sim \sum_{\ld_n\in (\Ld,\Ld+1]}\jb{\ld_n}^{-A}.
\end{align*}
This proves Corollary~\ref{LEM:avg eig}.
\end{proof}

\subsection{Properties of the Gaussian and Gibbs measures}\label{SUBSEC:defoc}
In this subsection we state the basic properties of the Gaussian measure $\mu_\al$ \eqref{mu}, which allow us to prove Theorem~\ref{THM:Gibbs1}~(i).

\noi We start by recalling Khinchin's inequality.
\begin{lemma}\label{LEM:Khinchin}
Let $(a_n)\in\ell^2(\Z)$ and $g_n\sim\NN_\C(0,1)$ be standard iid complex Gaussian random variables on a probability space $(\O,\F,\PP)$. Then there exists $C>0$ such that for any $p\ge 2$ it holds
\begin{align*}
\big\|\sum_{n\in \Z}a_ng_n\big\|_{L^p(\O)} \le C\sqrt{p} \|a_n\|_{\ell^2}.
\end{align*}
\end{lemma}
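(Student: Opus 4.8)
The plan is to exploit the fact that a linear combination of independent Gaussians is again Gaussian, which collapses the whole statement to the $L^p$-norm of a single scalar Gaussian. First I would record that the partial sums $S_M := \sum_{|n|\le M} a_n g_n$ are Cauchy in $L^2(\O)$, since $\E|S_M-S_{M'}|^2 = \sum_{M<|n|\le M'}|a_n|^2\to 0$ by independence and the mean-zero, unit-variance, circular properties $\E g_n=0$, $\E|g_n|^2=1$, $\E g_n^2=0$; hence $S:=\sum_{n}a_ng_n$ converges in $L^2(\O)$, and being an $L^2$-limit of complex Gaussians it is itself a centered complex Gaussian. The same identities give $\E|S|^2=\sum_n|a_n|^2=\|a_n\|_{\ell^2}^2$ and $\E S^2=0$, so that (discarding the trivial case $a_n\equiv 0$) the normalized variable $S/\|a_n\|_{\ell^2}$ has the law of a standard complex Gaussian $g\sim\NN_\C(0,1)$.

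Second, I would use that for $g\sim\NN_\C(0,1)$ the real variable $|g|^2$ is exponentially distributed with parameter $1$, whence for every $p\ge 2$
\[
\E|S|^p = \|a_n\|_{\ell^2}^p\,\E\big[(|g|^2)^{p/2}\big] = \|a_n\|_{\ell^2}^p\int_0^\infty t^{p/2}e^{-t}\,dt = \|a_n\|_{\ell^2}^p\,\G\Big(\tfrac p2+1\Big).
\]
Taking $p$-th roots and invoking Stirling's asymptotics $\G(x+1)\sim\sqrt{2\pi x}\,(x/e)^x$ (or just the elementary bound $\G(x+1)\le C_0^{\,x}(x/e)^x$ for $x\ge 1$) with $x=p/2$ yields $\G(\tfrac p2+1)^{1/p}\le C\sqrt p$ for all $p\ge 2$, with $C$ absolute, which is exactly the asserted inequality.

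Should one prefer to avoid the explicit $\G$-evaluation — for instance to run the identical scheme with real Gaussians or more general sub-Gaussian coefficients — one may instead start from the Gaussian tail bound $\PP\big(|S|>t\|a_n\|_{\ell^2}\big)\le e^{-t^2/4}$ and integrate the layer-cake identity $\E\big|S/\|a_n\|_{\ell^2}\big|^p=\int_0^\infty p\,t^{p-1}\PP\big(|S|>t\|a_n\|_{\ell^2}\big)\,dt$, which again produces a Gamma integral of size $(C\sqrt p)^p$. Either way the substance of the proof is nothing more than the Gaussianity (more precisely, the closedness of the Gaussian family under $L^2$-limits) of $S$; the only mildly technical point, and the closest thing here to an obstacle, is the Stirling-type asymptotic estimate for $\G(\tfrac p2+1)^{1/p}$, which is entirely routine.
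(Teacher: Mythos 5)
Your proof is correct. Note that the paper does not actually prove this lemma: it is stated as a recalled standard fact (Khinchin's inequality for Gaussian sums), so there is no argument in the text to compare against. Your route — observing that $S=\sum_n a_n g_n$ converges in $L^2(\O)$ and, as an $L^2$-limit of centered circularly symmetric complex Gaussians, is itself such a Gaussian with variance $\|a_n\|_{\ell^2}^2$, then computing $\E|S|^p=\|a_n\|_{\ell^2}^p\,\Gamma(\tfrac p2+1)$ via the exponential law of $|g|^2$ and invoking Stirling to get $\Gamma(\tfrac p2+1)^{1/p}\le C\sqrt p$ — is the standard derivation and gives exactly the claimed constant growth $C\sqrt p$.
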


\noi Next, we turn to the properties of the random variable $u_\al$ in \eqref{init}.
\begin{lemma}\label{LEM:init}
For $\al\in\R$, the series defining $u_\al$ in \eqref{init} converges in $L^p(\Omega;W^{s,\infty}(\M))$ and almost surely in $W^{s,\infty}(\M)$, for any $p\ge 1$ and $s<\frac{\al-d}2$. Moreover, for any $1\le r<+\infty$, there exists $c,C,\delta>0$ such that for any any $\al\in\R$, $R>0$ and any $1\le N_1\le N_2\le\infty$ we have the tail estimates
\begin{align}\label{tail}
\mu_\al(\|\Pi_{\le N_2}\Pi_{>N_1}u\|_{W^{s,r}}>R)\le Ce^{-c\|\s_{\al-2s,N_2}-\s_{\al-2s,N_1}\|_{L^\infty}^{-1} R^2}.
\end{align} 
\end{lemma}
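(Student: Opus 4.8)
The plan is to reduce both assertions to a single $L^p(\O)$-moment estimate for the $W^{s,r}$-norm of the Gaussian block $w_{N_1,N_2}\deff\Pi_{\le N_2}\Pi_{>N_1}u_\al$, and then to optimize in $p$. Put $\beta=\al-2s$, so that $(1-\Dlg)^{\frac{s}{2}}w_{N_1,N_2}=\sum_{N_1<\ld_n\le N_2}\jb{\ld_n}^{-\frac{\beta}{2}}g_n\varphi_n$ and hence $\|w_{N_1,N_2}\|_{W^{s,r}}=\big\|\sum_{N_1<\ld_n\le N_2}\jb{\ld_n}^{-\frac{\beta}{2}}g_n\varphi_n\big\|_{L^r_x}$. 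For $p\ge\max(r,2)$, Minkowski's integral inequality interchanges the $L^r_x$ and $L^p(\O)$ norms, and Khinchin's inequality (Lemma~\ref{LEM:Khinchin}) applied pointwise in $x$ gives
\begin{align*}
\big\|\,\|w_{N_1,N_2}\|_{W^{s,r}}\,\big\|_{L^p(\O)}&\le\Big\|\,\big\|\sum_{N_1<\ld_n\le N_2}\jb{\ld_n}^{-\frac{\beta}{2}}g_n\varphi_n(x)\big\|_{L^p(\O)}\,\Big\|_{L^r_x}\\
&\le C\sqrt{p}\,\Big\|\Big(\sum_{N_1<\ld_n\le N_2}\frac{|\varphi_n(x)|^2}{\jb{\ld_n}^{\beta}}\Big)^{\frac{1}{2}}\Big\|_{L^r_x}.
\end{align*}
By the definition \eqref{sN} of $\s_{\beta,N}$, the expression under the square root is exactly $\s_{\beta,N_2}(x)-\s_{\beta,N_1}(x)$, so with $\Si\deff\|\s_{\beta,N_2}-\s_{\beta,N_1}\|_{L^\infty}$ we obtain the moment bound $\big\|\,\|w_{N_1,N_2}\|_{W^{s,r}}\,\big\|_{L^p(\O)}\le C\sqrt{p}\,\Si^{\frac{1}{2}}|\M|^{\frac{1}{r}}$ for all $p\ge\max(r,2)$.

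The tail estimate \eqref{tail} now follows from Chebyshev's inequality, $\mu_\al(\|w_{N_1,N_2}\|_{W^{s,r}}>R)\le\big(C\sqrt{p}\,\Si^{\frac{1}{2}}|\M|^{\frac{1}{r}}/R\big)^p$, by optimizing in $p$: taking $p\sim R^2/\Si$ (which is admissible, i.e.\ $\ge\max(r,2)$, as soon as $R^2\ges_r\Si$) turns the right-hand side into $\exp(-c\,\Si^{-1}R^2)$, while for smaller $R$ the claimed bound holds trivially once $C$ is chosen large enough depending on $r$. Since $\al,N_1,N_2$ enter only through the single scalar $\Si$, the constants $c,C$ depend on $r$ (and $\M$) alone, as required; and if $N_2=\infty$ with $\beta\le d$ then $\Si=+\infty$ and \eqref{tail} is vacuous.

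For the convergence statements, fix $s<s'<\frac{\al-d}{2}$ and $r<\infty$ large enough that $W^{s',r}(\M)\embeds W^{s,\infty}(\M)$ (possible since $\M$ is compact of dimension $d$). Applying the moment bound above with $s'$ in place of $s$ to the tails $w_{M,\infty}=\Pi_{>M}u_\al$ and then the embedding yields $\E\|u_\al-\Pi_{\le M}u_\al\|_{W^{s,\infty}}^p\les_{p,r}\|\s_{\al-2s'}-\s_{\al-2s',M}\|_{L^\infty}^{\frac{p}{2}}$, while Corollary~\ref{LEM:avg eig} and Weyl's law \eqref{Weyl} give $\|\s_{\al-2s'}-\s_{\al-2s',M}\|_{L^\infty}\les\sum_{\ld_n>M}\jb{\ld_n}^{-(\al-2s')}\les M^{-(\al-2s'-d)}$, which tends to $0$ because $\al-2s'-d>0$. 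Hence \eqref{init} converges in $L^p(\O;W^{s,\infty}(\M))$ for every $p$ (which also explains the threshold \eqref{support}). Almost sure convergence then follows by a standard argument --- e.g.\ Borel--Cantelli along the dyadic sequence $M=2^k$, using \eqref{tail} together with a union bound over the finitely many partial sums inside each dyadic frequency block, or the It\^o--Nisio theorem in the separable Banach space spanned by $\{\varphi_n\}_{n\ge0}$ in $W^{s,\infty}(\M)$, in which all partial sums of \eqref{init} live.

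There is no genuine obstacle here: the scheme is the standard Gaussian large-deviation estimate, and the only points needing attention are bookkeeping ones --- keeping every constant independent of $\al,N_1,N_2$ (automatic, since the Gaussian structure reduces the whole dependence to $\Si$) and passing from the finite-exponent spaces $W^{s,r}$ to the endpoint $W^{s,\infty}$, which is handled by the bump $s<s'<\frac{\al-d}{2}$ with $r<\infty$ large.
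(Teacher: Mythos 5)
Your proof is correct and follows essentially the same route as the paper's: Minkowski plus Khinchin to reduce the $p$-th moment of the $W^{s,r}$ (resp.\ $W^{s,\infty}$, via a Sobolev embedding $W^{s',r}\embeds W^{s,\infty}$ with $s<s'<\frac{\al-d}{2}$) norm to the spectral sums $\s_{\al-2s,N}$, followed by Chebyshev and optimization in $p$. Two small points in your favour: treating the finite-$r$ tail estimate directly (without the embedding) yields exactly the quantity $\s_{\al-2s}$ appearing in \eqref{tail} rather than the $\eps$-lossy $\s_{\al-2s-2\eps}$ that the paper's proof actually produces, and your choice $p\sim R^2/\Si$ is the correct optimization for the sub-Gaussian tail claimed (the paper's ``$p\sim\log(\cdots)$'' appears to be a slip).
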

\noi In particular, Lemma~\ref{LEM:init} and the trivial estimate
\begin{align*}
\int_\M e^{\be |u|^2}dx \le \Vol(\M)e^{\be\|u\|_{L^\infty}^2}
\end{align*} 
show that for $\al>d$, the potential $V_\be(u)$  in \eqref{V} is finite $\mu_\al$-a.s. and $V_\be(\P_Nu)\to V_\be(u)$ a.s. in $C(\M)$.
\begin{proof}
Let $\al\in\R$, $s<\frac{\al-d}2$, and $\eps>0$ such that $s+\eps<\frac{\al-d}2$. Note that it suffices to prove that the series in \eqref{init} converges in $L^p(\O;W^{s,\infty}(\M))$ for sufficiently large $p$. If $p\ge p_\eps > \frac{d}{\eps}$, we have by Sobolev embedding, Minkowki's inequality, Khinchin's inequality (Lemma~\ref{LEM:Khinchin}) and Corollary~\ref{LEM:avg eig}, that
\begin{align}
\E\big\|\Pi_{\le N_2}\Pi_{>N_1}u_\al\big\|_{W^{s,\infty}}^p &\le C\E\big\|\Pi_{\le N_2}\Pi_{>N_1}u_\al\big\|_{W^{s+\eps,p_\eps}}^p \le C\big\|\jb{\nabla}^{s+\eps}\Pi_{\le N_2}\Pi_{>N_1}u_\al\big\|_{L^{p_\eps}L^p_\omega}^p\notag\\
&\le C^p p^\frac{p}2\big\|\jb{\nabla}^{s+\eps}\Pi_{\le N_2}\Pi_{>N_1}u_\al\big\|_{L^{p_\eps}L^2_\omega}^p\notag\\
&\le \sup_x C^p p^\frac{p}2 \Big(\sum_{N_1<\ld_n\le N_2}\jb{\ld_n}^{2s+2\eps-\al}|\varphi_n(x)|^2\Big)^\frac{p}2\notag\\
& = C^p\|\s_{\al-2s-2\eps,N_2}-\s_{\al-2s-2\eps,N_1}\|_{L^\infty}^pp^\frac{p}2.\label{Lp}
\end{align}
This proves that $u_\al \in L^p(\O;W^{s,\infty}(\M))$. The tail estimate \eqref{tail} then follows from Chebychev's inequality and optimizing \eqref{Lp} by taking $$p\sim \log\big(c\|\s_{\al-2s-2\eps,N_2}-\s_{\al-2s-2\eps,N_1}\|_{L^\infty}^{-1}R^2\big).$$
\end{proof}

The results above are enough for the construction of the Gibbs measure in the defocusing case.

\begin{proof}[Proof of Theorem~\ref{THM:Gibbs1}~(i)]
Let us consider the defocusing case $\gamma>0$. Since $\al>d$, by Lemma~\ref{LEM:init}, $u_\al\in W^{s,\infty}(\M)$ a.s. for any $0\le s<\frac{\al-d}2$. In particular $u_\al\in C(\M)$ a.s. and so ${\displaystyle V_\be(u_\al) = \int_\M e^{\be |u_\al|^2}dx}$ is almost surely finite. Thus ${\displaystyle e^{-\gamma V_\be(u_\al)}}$ is almost surely positive, and since $V_\be(u_\al)\ge 0$ a.s. and $\gamma>0$ it also holds $e^{-\gamma V_\be(u)}\in L^{\infty}(\Omega)\subset L^1(\Omega)$. This shows that $\rho_{\al,\be}$ is a well-defined probability measure, absolutely continuous with respect to $\mu_\al$. As for the almost sure convergence of $V_\be(\Pi_{\le N}u)$, it follows from the mean value inequality through the estimate
\begin{align*}
\Big|V_\be(\Pi_{\le N}u)-V_\be(u)\Big| & \le \int_\M\Big|e^{\be|\Pi_{\le N}u|^2}-e^{\be|u|^2}\Big|dx \les \int_\M\sum_{k\ge 0}\frac{\be^k}{k!} \Big||\Pi_{\le N}u|^{2k}\Pi_{\le N}u-|u|^{2k}u\Big|dx\\
&\les \int_\M\sum_{k\ge 0}\frac{\be^k}{k!}(2k+1)C^k |\Pi_{\le N}u-u|\big(|\Pi_{\le N}u|^{2k}+|u|^{2k}\big)dx\\
&\les \|\Pi_{\le N}u-u\|_{L^2}e^{\be C(\|\Pi_{\le N}u\|_{L^\infty}^2+\|u\|_{L^\infty}^2)},
\end{align*}
combined with Lemma~\ref{LEM:init}, and that $\|\Pi_{\le N}u\|_{L^\infty}$ converges $\mu_\al$-a.s. and is thus $\mu_\al$-a.s. finite, uniformly in $N\in\N$. This last claim follows since $\Pi_{\le N}u(x)$ is a martingale in $N\in\N$ for all $x\in\M$, and so $\|\Pi_{\le N}u\|_{L^\infty}$ is a positive submartingale, uniformly integrable by \eqref{Lp}. Thus it converges $\mu_\al$-a.s.

The convergence of $\rho_{\al,\be,N}$ to $\rho_{\al,\be}$ in total variation thus follows from the convergence in probability of $V_\be(\Pi_{\le N}u_\al)$ to $V_\be(u_\al)$ along with the bound $e^{-\gamma V(\Pi_{\le N}u)}\le 1$, uniformly in $N\in\N$. See e.g. the discussion in \cite[Remark 3.8]{Tz}.
\end{proof}

\subsection{Variational formulation}
\label{SUBSEC:var}

In order to  prove Theorem~\ref{THM:Gibbs1}~(ii), 
we rely on a variational formula for the partition function
as in \cite{TW, OOT, OST}.
Let us first introduce some notations.
Let $W(t)$ be a cylindrical Brownian motion in $L^2(\M)$, given by
\begin{align}\label{W}
W(t) = \sum_{n \ge 0} B_n(t) \varphi,
\end{align}

\noi
where  
$\{B_n\}_{n \ge 0}$ is a sequence of mutually independent complex-valued\footnote{By convention, we normalize $B_n$ such that $\text{Var}(B_n(t)) = t$.} Brownian motions.
Then, define a centered Gaussian process $Y(t)$
by 
\begin{align}\label{Y}
Y(t) =  \jb{\nabla}^{-\frac\al2}W(t).
\end{align}

\noi
In particular we have $\Law(Y(1)) = \mu_\al$ where $\mu_\al$ is the Gaussian measure \eqref{mu}.

Next, let $\Ha$ denote the space of drifts, 
which are progressively measurable processes 
belonging to 
$L^2([0,1]; L^2(\T^d))$, $\PP$-almost surely. The following  Bou\'e-Dupuis formula \cite{BD} is proved in \cite[Theorem 7]{Ust}.

\begin{lemma}\label{LEM:var}
Let $\al>d$ and $Y$ be as in \eqref{Y}.
For any $0< s<\frac{\al-d}2$, let $V:W^{s,\infty}(\M) \to \R$ be measurable such that $\E\big[|V(Y(1))|^p\big] < \infty$
and $\E\big[e^{-qV(Y(1))}\big] < \infty$ for some $1 < p, q < \infty$ with $\frac 1p + \frac 1q = 1$.
Then, we have
\begin{align}\label{var}
- \log \E\Big[e^{-V(Y(1))}\Big]
= \inf_{\dr \in \mathbb H_a}
\E\bigg[ V(Y(1) + I(\dr)(1)) + \frac{1}{2} \int_0^1 \| \dr(t) \|_{L^2_x}^2 dt \bigg], 
\end{align}

\noi
where  $I(\dr)$ is  defined by 
\begin{align*}
 I(\dr)(t) = \int_0^t \jb{\nabla}^{-\frac\al2} \dr(t') dt'
\end{align*}

\noi
and the expectation $\E = \E_\PP$
is an 
expectation with respect to the underlying probability measure~$\PP$.

\end{lemma}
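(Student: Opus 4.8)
The plan is to obtain \eqref{var} from the classical Boué--Dupuis variational principle on Wiener space, in the version adapted to the infinite-dimensional cylindrical Brownian motion $W$ and to unbounded functionals with the stated two-sided integrability; this is essentially \cite[Theorem 7]{Ust}, and I would organize the argument in three reductions. \emph{Step 1: the bounded, path-space formula.} For a bounded measurable $F$ on the path space of $W$, the identity
\begin{align*}
-\log \E\big[e^{-F(W)}\big] = \inf_{v\in\Ha}\E\bigg[F\Big(W + \int_0^\cdot v(t')\,dt'\Big) + \frac12\int_0^1\|v(t)\|_{L^2_x}^2\,dt\bigg]
\end{align*}
follows by combining the entropy--duality (Gibbs variational) formula $-\log\E[e^{-F}] = \inf_{\Q\ll\PP}\{\E_\Q[F] + H(\Q\,|\,\PP)\}$ with Girsanov's theorem: measures $\Q$ with finite relative entropy are exactly laws of drifted Brownian motions $W+\int_0^\cdot v$ with adapted $v\in\Ha$, and $H(\Q\,|\,\PP) = \frac12\E_\Q\int_0^1\|v\|_{L^2_x}^2\,dt$. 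In the infinite-dimensional setting one first runs this on each spectral truncation $\Pi_{\le N}W$ and passes to the limit $N\to\infty$, using that progressive measurability of drifts is stable under projection and that the quadratic cost is lower semicontinuous; I would cite \cite{BD,Ust} for this step.

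\emph{Step 2: specialization to $Y(1)$.} Apply the bounded formula to $F(W) = V\big(\jb{\nabla}^{-\al/2}W(1)\big) = V(Y(1))$. Since $\jb{\nabla}^{-\al/2}$ is deterministic and linear, a drift $v\in\Ha$ is transported to $\jb{\nabla}^{-\al/2}\int_0^1 v(t')\,dt' = I(v)(1)$, and conversely every $\dr\in\Ha$ arises this way, so the right-hand side becomes exactly the infimum over $\dr\in\Ha$ appearing in \eqref{var}. Here one must check that the argument of $V$ is admissible: by Cauchy--Schwarz in time and Plancherel, $\|I(\dr)(1)\|_{H^{\al/2}}\le\|\dr\|_{L^2([0,1];L^2_x)}<\infty$ $\PP$-a.s., and the Sobolev embedding $H^{\al/2}(\M)\embeds W^{s,\infty}(\M)$ --- valid precisely because $s<\frac{\al-d}2$ --- together with $Y(1)\in W^{s,\infty}(\M)$ a.s. (Lemma~\ref{LEM:init}, since $\Law(Y(1))=\mu_\al$) shows $Y(1)+I(\dr)(1)\in W^{s,\infty}(\M)$, in the domain of $V$.

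\emph{Step 3: removing boundedness.} Apply Step 2 to the truncations $V_M \deff \max(\min(V,M),-M)$ and let $M\to\infty$. On the left, dominated convergence applies since $e^{-V_M(Y(1))}\le e^{|V(Y(1))|}$ and the latter is integrable after splitting on $\{V\ge0\}$ and $\{V<0\}$ and using H\"older with conjugate exponents $p,q$ on $\E[|V(Y(1))|^p]<\infty$ and $\E[e^{-qV(Y(1))}]<\infty$; hence $-\log\E[e^{-V_M(Y(1))}]\to-\log\E[e^{-V(Y(1))}]$. On the right, one argues that along a minimizing sequence of drifts the cost $\E\int_0^1\|\dr\|_{L^2_x}^2\,dt$ stays bounded, which through the Cameron--Martin shift $Y(1)\mapsto Y(1)+I(\dr)(1)$ and the same integrability bounds controls $\E|V_M(Y(1)+I(\dr)(1))|$ uniformly in $M$, yielding the matching limit by lower semicontinuity and uniform integrability.

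\emph{Main obstacle.} The delicate point is Step 3 on the variational side: securing the a priori $L^2_tL^2_x$ bound on near-minimizing drifts and the uniform integrability of $V(Y(1)+I(\dr)(1))$, which is exactly where the two-sided hypotheses $\E[|V(Y(1))|^p]<\infty$ and $\E[e^{-qV(Y(1))}]<\infty$ with $\tfrac1p+\tfrac1q=1$ are genuinely used. The infinite-dimensionality itself only adds routine bookkeeping, since everything reduces to the finite-dimensional Boué--Dupuis formula via the spectral truncations.
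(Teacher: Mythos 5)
The paper offers no proof of this lemma: it is quoted verbatim from \cite[Theorem 7]{Ust} (see also \cite{BD}), so there is no in-paper argument to compare against, and your proposal is best read as a sketch of the cited proof. The skeleton you give --- Gibbs variational formula plus Girsanov for bounded functionals, specialization through the deterministic map $\jb{\nabla}^{-\frac\al2}$, then truncation using the two-sided integrability hypotheses --- is the standard route and is consistent with the references. Two caveats. First, the naive entropy--Girsanov argument in your Step 1 most directly produces the version of the formula in which the right-hand side is an expectation under the drift-dependent \emph{shifted} measure; this is precisely the ``slightly weaker version'' the paper contrasts with in the remark following the lemma (cf.\ Proposition 4.4 in \cite{GOTW}). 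Converting it into an infimum of $\E_\PP$-expectations of $V\big(Y(1)+I(\dr)(1)\big)$ over $\PP$-progressively measurable drifts $\dr\in\Ha$ is the genuinely delicate content of \cite{BD,Ust} (the weak versus strong formulation of the stochastic control problem), and your sketch hides exactly this inside the citation. Second, in Step 3 the proposed dominating function $e^{|V(Y(1))|}$ is not integrable under the stated hypotheses: on $\{V\ge 0\}$ the moment bound $\E\big[|V(Y(1))|^p\big]<\infty$ gives no control on $\E\big[e^{V(Y(1))}\big]$. The correct and sufficient domination is $e^{-V_M}\le 1+e^{-V}$, with $e^{-V(Y(1))}\in L^q(\PP)\subset L^1(\PP)$ by hypothesis. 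Neither point derails the strategy, but neither is the proposal a self-contained proof; like the paper, it ultimately rests on \cite[Theorem 7]{Ust}.
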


In the recent works constructing Gibbs measure based on the study of a similar variational formula \cite{BG,GOTW,ORSW2}, a slightly weaker version is used (see e.g. Proposition 4.4 in \cite{GOTW}) where the expectation in \eqref{var} is replaced by an expectation  with respect to a shifted measure depending on the drift. Since in our case we want to construct a drift depending on $Y$ (heuristically, we want $I(\dr)(1) = -Y(1)+$ perturbation), the formula \eqref{var} is better suited for our purpose.

\subsection{Proof of Theorem~\ref{THM:Gibbs1}~(ii)}
\label{SUBSEC:focus}

In this subsection, we present the proof of  Theorem~\ref{THM:Gibbs1}~(ii) regarding the non-normalizability of the Gibbs measure ${\displaystyle e^{-\gamma V_\be(u)}\ind_{\{\|u\|_{L^2}\le K\}}d\mu_\al(u)}$ with mass cut-off in the focusing case, for any $\gamma<0$, $\be>0$, $K>0$ and $\al>d$.

Note that it suffices to prove that
\begin{align*}
\lim_{L\to+\infty}\E_{\mu_\al}\Big[\exp\big( -\gamma \min(V_\be(u),L) \big) \ind_{\{\|u\|_{L^2} \le K\}} \Big] =  +\infty.
\end{align*}

Since
\begin{align}\label{ind}
\E_{\mu_\al}\Big[e^{-\gamma \min(V_\be(u),L)}\ind_{\{\|u\|_{L^2}\le K\}}\Big] &\ge \E_{\mu_\al}\Big[e^{-\gamma \min(V_\be(u),L)\ind_{\{\|u\|_{L^2}\le K\}}}\Big]-\mu_\al\big(\|u\|_{L^2}>K\big) \notag\\
&\ge \E_{\mu_\al}\Big[e^{-\gamma \min(V_\be(u),L)\ind_{\{\|u\|_{L^2}\le K\}}}\Big] -1,
\end{align}

\noi
the  divergence follows once we prove
\begin{align}\label{div}
\lim_{L\to+\infty}\E_{\mu_\al}\Big[e^{-\gamma \min(V_\be(u),L)\ind_{\{\|u\|_{L^2}\le K\}}}\Big] =  +\infty
\end{align}
for any $\al>d$, $\be>0$, $\gamma<0$, and $K>0$.

As for any $L>0$ we have $\min(V_\be(u),L)\in L^{\infty}(\mu_\al)$ and $\exp\big(-\gamma\min(V_\be(u),L)\big)\in L^{\infty}(\mu_\al)$, we can apply the  Bou\'e-Dupuis variational formula 
(Lemma~\ref{LEM:var}). Thus we have for any $L>0$
\begin{align}\label{var2}
&- \log \E_{\mu_\al}\Big[e^{-\gamma \min(V_\be(u),L)\ind_{\{\|u\|_{L^2}\le K\}}}\Big]\notag\\
&\qquad = \inf_{\dr \in \mathbb H_a} \E\bigg[ \gamma \min\big\{V_\be\big(Y(1) + I(\dr)(1)\big),L\big\}  \ind_{\{\| Y(1)+I(\dr)(1)\|_{L^2}\le K\}} + \frac 12 \int_0^1 \| \dr(t)\|_{L^2_x} ^2 dt \bigg],
\end{align}

\noi
where $Y(1)$ is as in \eqref{Y}.
Here,  $\E_\mu$ and $\E$ denote expectations
with respect to the Gaussian measure~$\mu_\al$ in \eqref{mu}
and the underlying probability measure $\PP$, respectively.
In the following, we show that the right-hand side 
of \eqref{var2} goes to $-\infty$ as $L\to+\infty$.
As mentioned above, the idea is to construct a drift $\dr$
such that 
$I(\dr)\approx - Y(1) + $ perturbation. In \cite{LRS}, in the case of the focusing nonlinear Schrödinger equation on the circle, corresponding to $d=1$, $\al=2$, $\gamma<0$ and $V_\be(u)$ replaced with the potential $|u|^p$, $p> 6$, the authors used concentrating solitons as a perturbation having bounded $L^2$ norm but exploding $L^p$ norm. Here, as in \cite{OOT,OST} we rely instead on properly scaled functions supported at high frequencies.

\begin{lemma}\label{LEM:fM}
Let   $\alpha > d$, and fix $x_0\in\M$. For $N\ge 1$, define the function
\begin{align}\label{fM}
f_N : x\in\M\mapsto N^{-\frac{d}2}\sum_{\ld_n\in (N,2N]}\varphi_n(x_0)\varphi_n(x).
\end{align} 
Then, there are constants $c_1,c_2,c_3>0$ depending only on $\M$ such that for any $N\gg 1$ we have
\begin{align}
\|f_N\|_{L^2}^2 &= c_1 + O( N^{-1}), \label{fM2} \\
\|f_N\|_{H^{\frac\al2}} &\les N^{\frac\al2}, \label{fMal} \\
\|f_N\|_{L^{\infty}}&\les N^{\frac{d}2}, \label{fMinf}
\end{align}
and for all $x\in\M$ with $d(x,x_0)\le c_2 N^{-1}$, it holds
\begin{align}\label{fMinf2}
f_N(x)\ge c_3 N^{\frac{d}2}.
\end{align}

\end{lemma}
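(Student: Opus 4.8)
The plan is to compute each of the four quantities using the spectral function $e(x,y,\Ld^2)$ from \eqref{eig} together with the asymptotics of Lemma~\ref{LEM:loc eig}. First I would deal with \eqref{fM2}: by orthonormality of the $\{\varphi_n\}$ in $L^2(\M)$,
\begin{align*}
\|f_N\|_{L^2}^2 = N^{-d}\sum_{\ld_n\in(N,2N]}|\varphi_n(x_0)|^2 = N^{-d}\big[e(x_0,x_0,4N^2)-e(x_0,x_0,N^2)\big],
\end{align*}
and Lemma~\ref{LEM:loc eig} gives $e(x_0,x_0,4N^2)-e(x_0,x_0,N^2) = c_d(2N)^d - c_d N^d + O(N^{d-1}) = c_d(2^d-1)N^d + O(N^{d-1})$, so $c_1 = c_d(2^d-1)$ and the error is $O(N^{-1})$. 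For \eqref{fMal}, the same computation with the extra weight $\jb{\ld_n}^\al \les N^\al$ on the dyadic block $(N,2N]$ yields $\|f_N\|_{H^{\al/2}}^2 \les N^\al \cdot N^{-d}\sum_{\ld_n\in(N,2N]}|\varphi_n(x_0)|^2 \les N^\al \cdot \|f_N\|_{L^2}^2 \les N^\al$, hence \eqref{fMal}.

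Next, for the $L^\infty$ bounds \eqref{fMinf} and \eqref{fMinf2} I would pass to the kernel representation: for $x\in\M$,
\begin{align*}
f_N(x) = N^{-\frac d2}\sum_{\ld_n\in(N,2N]}\varphi_n(x_0)\varphi_n(x) = N^{-\frac d2}\big[e(x_0,x,4N^2)-e(x_0,x,N^2)\big].
\end{align*}
By Cauchy--Schwarz and the diagonal asymptotics, $|e(x_0,x,\Ld^2)| \le e(x_0,x_0,\Ld^2)^{1/2}e(x,x,\Ld^2)^{1/2} \les \Ld^d$ uniformly in $x$, so $|f_N(x)| \les N^{-d/2}\cdot N^d = N^{d/2}$, giving \eqref{fMinf}. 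The lower bound \eqref{fMinf2} near $x_0$ is the delicate point: I would use the fact that $e(x_0,x,\Ld^2)$ behaves, near the diagonal, like $\Ld^d$ times a fixed profile (a Bessel-type kernel in the local normal coordinates at $x_0$), i.e. $e(x_0,x,\Ld^2) = \Ld^d\big(b_d(\Ld\, d(x_0,x)) + O((1+\Ld\,d(x_0,x))^{-1})\big)$ for an explicit function $b_d$ with $b_d(0)>0$; this is the standard refinement of H\"ormander's pointwise Weyl law off the diagonal. Writing $f_N(x) = N^{-d/2}\big[(2N)^d b_d(2N\, d(x_0,x)) - N^d b_d(N\,d(x_0,x))\big] + (\text{error})$, for $d(x_0,x)\le c_2 N^{-1}$ with $c_2$ small both arguments of $b_d$ are $\le 2c_2$, so by continuity of $b_d$ at $0$ the bracket is $\ge (2^d-\tfrac12)b_d(0)N^d/2 \ges N^d$, and the error is $O(N^{d-1})$, whence $f_N(x)\ges N^{d/2}$, i.e. \eqref{fMinf2}.

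The main obstacle is precisely the off-diagonal lower bound \eqref{fMinf2}: unlike the on-diagonal statement, it requires a quantitative (uniform in $x_0$) description of the spectral kernel $e(x_0,x,\Ld^2)$ in a shrinking neighbourhood of the diagonal, together with the fact that subtracting the two dyadic truncations does not cause cancellation of the leading term at scale $d(x_0,x)\sim N^{-1}$. One clean way to organize this, avoiding the full off-diagonal parametrix, is to instead work with a smooth (rather than sharp) frequency cutoff: replace $\ind_{(N,2N]}$ by $\psi(\ld_n/N)$ for a fixed bump $\psi$ supported in $(1,2]$ with $\psi\ge 0$, $\psi(\tfrac32)>0$. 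Then $\widetilde f_N(x) = N^{-d/2}\sum_n \psi(\ld_n/N)\varphi_n(x_0)\varphi_n(x)$ is, up to lower order, $N^{d/2}$ times a rescaled convolution kernel whose Fourier transform is $\psi$, and standard stationary phase (or the known short-time heat/wave kernel asymptotics) gives both $\|\widetilde f_N\|_{L^\infty}\les N^{d/2}$ and $\widetilde f_N(x)\ges N^{d/2}$ for $d(x_0,x)\le c_2N^{-1}$, while \eqref{fM2} and \eqref{fMal} are proved exactly as above with $\psi$ in place of the indicator. Either route works; I would present the sharp-cutoff version if the needed off-diagonal Weyl asymptotics are available as a cited black box, and otherwise fall back on the smooth-cutoff version which only uses Lemma~\ref{LEM:loc eig} and elementary kernel estimates.
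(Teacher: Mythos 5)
Your argument is correct and essentially reproduces the paper's proof: \eqref{fM2}, \eqref{fMal} and \eqref{fMinf} are handled the same way (diagonal spectral function plus Lemma~\ref{LEM:loc eig}, Weyl-type summation, Cauchy--Schwarz), and for \eqref{fMinf2} your primary route --- the off-diagonal refinement of H\"ormander's pointwise Weyl law at distance $O(N^{-1})$ from the diagonal --- is exactly the black box the paper invokes (Lemma~\ref{LEM:spec}, i.e.\ \cite[Theorem 4.4]{Hormander}), used there via a mean-value comparison of $f_N(x)$ with $f_N(x_0)$ rather than your direct evaluation of the rescaled profile. One small caveat: the remainder in that off-diagonal asymptotic is $O(\Ld^{d-1})$ in absolute terms, not the relative form $O\big((1+\Ld\, d(x_0,x))^{-1}\big)$ you quote, but since your final error bookkeeping is the correct $O(N^{d-1})$, nothing breaks.
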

 
We present the proof of Lemma~\ref{LEM:fM} in the next subsection. The next lemma, whose proof is also postponed to the next subsection, provides us with an appropriate approximation $Z_N$ to $Y$ in \eqref{Y}. One should think as $Z_N$ to be $\Pi_{\le N}Y$, this latter naive choice being not suitable since $I(\theta)\approx -\Pi_{\le N}Y(1)$ means $\dr \approx -\frac{d}{dt}\Pi_{\le N}Y \not\in \Ha$.
\begin{lemma} \label{LEM:Z}
Given $ N\gg 1$,  define $Z_N=\sum_{\ld_n\le N}Z_{N,n}\varphi_n$, where for $n\ge 0$, $Z_{N,n}$ solves the following  differential equation\textup{:}
\begin{align}
\begin{cases}
d Z_{N,n} = \jb{\ld_n}^{-\frac \al2} N^{\frac\al2} (\jb{\ld_n}^{-\frac\al2}B_n- Z_{N,n}) dt \\
Z_{N,n}|_{t = 0} =0, 
\end{cases}
\label{Z}
\end{align}

Then, $Z_N(t)$ is a centered Gaussian process in $L^2(\M)$ which satisfies
\begin{align}
&\E\int_0^1\big\|\frac{d}{dt}Z_N(t)\big\|_{H^s}^2dt \les N^\frac\al2+N^{d+2s}, \label{Z1}\\
&\E|Y(1,x)-Z_N(1,x)|^2\les N^{-\frac\al2}+N^{d-\al}\text{ uniformly in }x\in\M, \label{Z2}\\
&\E\|Y(1)-Z_N(1)\|_{L^{\infty}}^p\les p^{\frac{p}2}\big(N^{-\frac\al2}+N^{d+2\eps-\al}\big)^{\frac{p}2} \text{ for }p\gg 1\text{ and }0<\eps<\frac{\al-d}2, \label{Z3}
\end{align}
	
\noi
for any $N \gg 1$.

\end{lemma}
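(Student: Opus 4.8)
The plan is to analyze the linear SDE \eqref{Z} explicitly: since each component $Z_{N,n}$ solves an Ornstein--Uhlenbeck-type equation driven by $B_n$, I can write the closed-form solution
\begin{align*}
Z_{N,n}(t) = \jb{\ld_n}^{-\frac\al2}N^\frac\al2\int_0^t e^{-\jb{\ld_n}^{-\al}N^\frac\al2(t-t')}\jb{\ld_n}^{-\frac\al2}B_n(t')\,dt',
\end{align*}
or equivalently integrate by parts to express $Z_{N,n}$ in terms of the stochastic integral against $dB_n$. From this formula everything is a Gaussian computation. First, for \eqref{Z1}, I would differentiate: $\frac{d}{dt}Z_{N,n} = \jb{\ld_n}^{-\al}N^\frac\al2 B_n - \jb{\ld_n}^{-\frac\al2}N^\frac\al2 Z_{N,n}$, compute the second moment of each term using $\E|B_n(t)|^2 = t$ and the variance of $Z_{N,n}(t)$ (which is bounded by $\jb{\ld_n}^{-\al}$ up to constants, since the process relaxes toward $\jb{\ld_n}^{-\frac\al2}B_n$), then sum over $\ld_n\le N$ with the weight $\jb{\ld_n}^{2s}$ and use Weyl's law \eqref{Weyl} together with Corollary~\ref{LEM:avg eig} to convert the sum $\sum_{\ld_n\le N}\jb{\ld_n}^{2s}\cdot(\text{stuff})$ into the claimed bound $N^{\frac\al2}+N^{d+2s}$. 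The factor $N^{d+2s}$ should come from the $\jb{\ld_n}^{-\al}N^\frac\al2 B_n$ term summed against $\jb{\ld_n}^{2s}$, and the $N^{\frac\al2}$ from the relaxation term; the time integral over $[0,1]$ is harmless.

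For \eqref{Z2}, the point is that $Z_{N,n}(1)$ is a good approximation to $\jb{\ld_n}^{-\frac\al2}B_n(1)$ for $\ld_n\le N$ (because the relaxation rate $\jb{\ld_n}^{-\al}N^\frac\al2\ge 1$ is fast), while $Y(1,x)-Z_N(1,x) = \sum_{\ld_n\le N}(\jb{\ld_n}^{-\frac\al2}B_n(1)-Z_{N,n}(1))\varphi_n(x) + \sum_{\ld_n>N}\jb{\ld_n}^{-\frac\al2}B_n(1)\varphi_n(x)$. The high-frequency tail contributes $\sum_{\ld_n>N}\jb{\ld_n}^{-\al}|\varphi_n(x)|^2\les N^{d-\al}$ by Corollary~\ref{LEM:avg eig} and Weyl's law (this is the $N^{d-\al}$ term, finite since $\al>d$). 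For the low-frequency part I would compute $\E|\jb{\ld_n}^{-\frac\al2}B_n(1)-Z_{N,n}(1)|^2$ from the explicit solution; a short calculation shows this is $\les \jb{\ld_n}^{-\al}\cdot\jb{\ld_n}^{\al}N^{-\frac\al2} = N^{-\frac\al2}$ pointwise in $n$ --- more precisely the approximation error at time $1$ for an OU process relaxing at rate $\kappa$ toward its target is $O(\kappa^{-1})$ in variance scaled appropriately, giving here the $N^{-\frac\al2}$ contribution after summing $\sum_{\ld_n\le N}(\cdots)|\varphi_n(x)|^2$ against Corollary~\ref{LEM:avg eig}. Care is needed because $B_n$ at intermediate times is correlated with $B_n(1)$, so I would keep the stochastic-integral representation and compute the $L^2(\PP)$ norm of the difference directly via It\^o isometry rather than bounding termwise.

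Finally \eqref{Z3} upgrades the pointwise-in-$x$ bound \eqref{Z2} to an $L^p_\om L^\infty_x$ bound. Here I would follow exactly the scheme of the proof of Lemma~\ref{LEM:init}: $Y(1)-Z_N(1)$ is a Gaussian random field, so by Sobolev embedding $W^{\eps,p_\eps}\hookrightarrow L^\infty$ for $p_\eps>d/\eps$, Minkowski's inequality, and Khinchin's inequality (Lemma~\ref{LEM:Khinchin}) I get $\E\|Y(1)-Z_N(1)\|_{L^\infty}^p \les C^p p^{\frac p2}\sup_x\big(\sum_n \jb{\ld_n}^{2\eps}\E|(Y(1)-Z_N(1))_n|^2|\varphi_n(x)|^2\big)^{\frac p2}$, and the inner sum is estimated by the same computation as in \eqref{Z2} but with the extra weight $\jb{\ld_n}^{2\eps}$, which turns $N^{d-\al}$ into $N^{d+2\eps-\al}$ and leaves $N^{-\frac\al2}$ essentially unchanged (the low-frequency sum picks up at most $N^{2\eps}$ but that is absorbed since the per-mode error already carries spare decay; one simply tracks it as $N^{2\eps-\al/2}\le N^{-\al/2}$ for small $\eps$, or else keeps it as the stated bound). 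The main obstacle I anticipate is the low-frequency error estimate in \eqref{Z2}: getting the clean rate $N^{-\al/2}$ requires handling the correlation structure of the OU approximation carefully --- in particular computing $\E\big[(\jb{\ld_n}^{-\frac\al2}B_n(1)-Z_{N,n}(1))^2\big]$ exactly and verifying it is uniformly $\les N^{-\al/2}$ over all $\ld_n\le N$, which is where the specific choice of relaxation rate $\jb{\ld_n}^{-\al}N^{\al/2}$ in \eqref{Z} (rather than, say, a rate independent of $n$) is used in an essential way.
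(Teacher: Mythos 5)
Your overall strategy is the paper's: solve the Ornstein--Uhlenbeck equation explicitly, reduce everything to Gaussian/It\^o-isometry computations, sum over modes with Corollary~\ref{LEM:avg eig} and Weyl's law \eqref{Weyl}, and upgrade to an $L^\infty_x$ bound via the Sobolev--Minkowski--Khinchin scheme of Lemma~\ref{LEM:init}; your treatment of \eqref{Z3} and of the high-frequency tail in \eqref{Z2} is correct. However, two steps as written would fail. First, you consistently take the relaxation rate of \eqref{Z} to be $\jb{\ld_n}^{-\al}N^{\al/2}$, whereas it is $\kappa_n=\jb{\ld_n}^{-\al/2}N^{\al/2}=(N/\jb{\ld_n})^{\al/2}$; this is precisely what makes $\kappa_n\ge 1$ for all $\ld_n\le N$, while your rate is $\ll 1$ for $\ld_n$ near $N$, so the claim that the relaxation is uniformly fast is false for your rate. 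With the correct rate, It\^o's isometry applied to $X_n(1)=\jb{\ld_n}^{-\al/2}\int_0^1e^{-\kappa_n(1-s)}dB_n(s)$ gives the per-mode error $\E|X_n(1)|^2\les\jb{\ld_n}^{-\al}\kappa_n^{-1}=\jb{\ld_n}^{-\al/2}N^{-\al/2}$, which decays in $n$; your computation produces the $n$-independent bound $N^{-\al/2}$. This matters at the summation step: $\sum_{\ld_n\le N}N^{-\al/2}|\varphi_n(x)|^2\sim N^{d-\al/2}$, which is not $\les N^{-\al/2}+N^{d-\al}$ (and does not even tend to $0$ when $\al<2d$), whereas $N^{-\al/2}\sum_{\ld_n\le N}\jb{\ld_n}^{-\al/2}|\varphi_n(x)|^2\les N^{-\al/2}\max(1,N^{d-\al/2})$ is exactly the stated bound. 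So the assertion that the low-frequency sum ``gives the $N^{-\al/2}$ contribution'' does not follow from your per-mode estimate.

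Second, for \eqref{Z1} you propose to bound the two terms of $\frac{d}{dt}Z_{N,n}=\jb{\ld_n}^{-\al}N^{\al/2}B_n-\jb{\ld_n}^{-\al/2}N^{\al/2}Z_{N,n}$ separately. Each term has second moment of order $\jb{\ld_n}^{-2\al}N^{\al}$, so the weighted sum is $\gtrsim N^{\al}\sum_{\ld_n\le N}\jb{\ld_n}^{2s-2\al}\gtrsim N^{\al}$, which exceeds $N^{\al/2}+N^{d+2s}$ whenever $d+2s<\al$; the attribution of $N^{d+2s}$ to the $B_n$-term and of $N^{\al/2}$ to the relaxation term is also not what this computation yields. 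The point of the construction is the cancellation $\frac{d}{dt}Z_{N,n}=\kappa_nX_n$ with $X_n=\jb{\ld_n}^{-\al/2}B_n-Z_{N,n}$ the small stochastic convolution above; one then uses $\int_0^1\int_0^te^{-2\kappa_n(t-t')}dt'\,dt\les\kappa_n^{-1}$ to gain the factor $N^{-\al/2}\jb{\ld_n}^{\al/2}\le1$ per mode, and both terms of the stated bound come from the single quantity $N^{\al/2}\sum_{\ld_n\le N}\jb{\ld_n}^{2s-\al/2}$. Both issues are repaired by working throughout with $X_n$ and its explicit It\^o representation, which is exactly what the paper does.
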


We are now ready to prove the non-normalizability of the focusing Gibbs measure.

\begin{proof}[Proof of Theorem~\ref{THM:Gibbs1}~(ii)]
In view of the previous discussion, fix $\al>d$, $\be>0$, $\gamma<0$ and $K>0$. For $N \gg 1$, let $f_N$ and $Z_N$ be given by Lemmas~\ref{LEM:fM} and~\ref{LEM:Z}, respectively. For the minimization problem \eqref{var2},
we take the drift 
\begin{align}\label{dr}
\dr_N (t)  = \jb{\nb}^{\frac \al2} \bigg( -\frac{d}{dt} Z_N(t) + \eta f_N \bigg)
\end{align}
for some constant $\eta=\eta(K)>0$ independent of $N$ to be chosen later, and
\begin{align}\label{Dr}
\Dr_N = I(\dr_N)(1) 
= \int_0^1 \jb{\nb}^{-\frac \al2} \dr(t) \, dt = - Z_N(1) + \eta f_N.
\end{align}
	
Then the right-hand side of \eqref{var2} is bounded by
\begin{align}\label{var3}
\E\bigg[ \gamma \min\big\{V_\be\big(Y(1) + \Dr_N\big),L\big\}  \ind_{\{\| Y(1)+\Dr_N\|_{L^2}\le K\}} + \frac 12 \int_0^1 \| \dr_N(t)\|_{L^2_x} ^2 dt \bigg].
\end{align}

Note that, as in \eqref{ind} we have for $L\ge 1$
\begin{align}
\E\Big[\min\big(V_\be(u),L\big)\ind_{\{\|u\|_{L^2}\le K\}}\Big] &= \int_\M\E\Big[\exp\big\{\min(\be|u(x)|^2,\log L)\big\}\ind_{\{\|u\|_{L^2}\le K\}}\Big]dx\label{ind2}\\
&\ge \int_\M\E\Big[\exp\big\{\min(\be|u(x)|^2,\log L)\ind_{\{\|u\|_{L^2}\le K\}}\big\}\Big]dx -\Vol(\M).\notag
\end{align}

Thus, using \eqref{ind2} and Jensen's inequality (with $\gamma<0$), we can bound
\begin{align}
\eqref{var3}&\le \gamma\int_\M\exp\Big[\E\big\{\min(\be|Y(1,x)+\Dr_N(x)|^2,\log L)\ind_{\{\|Y(1)+\Dr_N\|_{L^2}\le K\}}\big\}\Big]dx\notag\\
&\qquad+\Vol(\M)+\frac12\E\int_0^1\|\dr_N(t)\|_{L^2}^2dt.\label{var4}
\end{align}

In view of the definitions of $\dr_N$ \eqref{dr}, $Z_N$ \eqref{Z}, and $f_N$ \eqref{fM}, we have
\begin{align*}
\E\int_0^1\|\dr_N(t)\|_{L^2}^2dt &= \E\int_0^1\big\|-\frac{d}{dt}Z_N+\eta f_N\big\|_{H^{\frac\al2}}^2dt \notag\\
&\les \int_0^1\E\big\|\frac{d}{dt}Z_N\big\|_{H^\frac\al2}^2dt+\eta^2 \|f_N\|_{H^{\frac\al2}}^2.
\end{align*}
Using the bounds \eqref{fM2} and \eqref{Z1} of Lemmas~\ref{LEM:fM} and~\ref{LEM:Z}, we end up with
\begin{align}\label{dr2}
\E\int_0^1\|\dr_N(t)\|_{L^2}^2dt\les N^{d+\frac\al2}+N^{\al}.
\end{align}
As for the first term in the right-hand side of \eqref{var4}, we first note that
\begin{align*}
|Y(1,x)+\Dr_N(x)|^2&=|Y(1,x)-Z_N(1,x)+\eta f_N(x)|^2 \les N^d +|Y(1,x)-Z_N(1,x)|^2
\end{align*}
in view of \eqref{fMinf}. Thus provided that $L\gg N^d$ we have for any $0<\dl\ll1$
\begin{align}
&\E\big\{\min(\be|Y(1,x)+\Dr_N(x)|^2,\log L)\ind_{\{\|Y(1)+\Dr_N\|_{L^2}\le K\}}\big\}\notag\\
&\qquad\ge \E\big\{\be|Y(1,x)-Z_N(1,x)+\eta f_N(x)|^2\ind_{\{\|Y(1)-Z_N(1)+\eta f_N\|_{L^2}\le K,\|Y(1)-Z_N\|_{L^\infty}\les \log L\}}\big\}\notag\\
&\qquad\ge \be(1-\dl)\eta^2|f_N(x)|^2\PP\big(\|Y(1)-Z_N(1)+\eta f_N\|_{L^2}\le K,\|Y(1)-Z_N\|_{L^\infty}\les \log L\big)\notag\\
&\qquad\qquad-C(\dl)\E|Y(1,x)-Z_N(1,x)|^2,\label{var5}
\end{align}
for some constant $C(\dl)>0$.

Assume that for any $K>0$, there exists $\eta=\eta(K)>0$ such that
\begin{align}\label{prob}
\PP\big(\|Y(1)-Z_N(1)+\eta f_N\|_{L^2}\le K,\|Y(1)-Z_N\|_{L^\infty}\les \log L\big) \ge \frac12,
\end{align}
\noi
uniformly in in $L\gg N^d\gg 1$.

Then we can put \eqref{dr2}-\eqref{var5}-\eqref{prob} into \eqref{var4} to get (recall $\gamma<0$)
\begin{align}
\eqref{var3}&\le \gamma \int_\M\exp\Big[\frac{\be(1-\dl)\eta^2}2|f_N(x)|^2-C(\dl)\E|Y(1,x)-Z_N(1,x)|^2)\Big]dx+O(N^{d+\frac\al2}+N^{\al})\notag\\
&\le \gamma \int_{\{d(x,x_0)\le c_\M N^{-1}\}}\exp\Big[\frac{\be(1-\dl)\eta^2}2|f_N(x)|^2-C(\dl)\E|Y(1,x)-Z_N(1,x)|^2)\Big]dx\notag\\
&\qquad\qquad\qquad\qquad+O(N^{d+\frac\al2}+N^{\al})\notag\\
&\les \gamma N^{-d}\exp\Big[\frac{\be(1-\dl)\eta^2}4N^{\frac{d}2}-o(1))\Big]dx+O(N^{d+\frac\al2}+N^{\al}),\label{var6}
\end{align}
where $x_0$ is as in Lemma~\ref{LEM:fM}, and the last estimate comes from \eqref{fMinf2} in Lemma~\ref{LEM:fM} and \eqref{Z2} in Lemma~\ref{LEM:Z}.

Combining \eqref{var2}-\eqref{var3}-\eqref{var6}, we infer that there are constants $C_1,C_2,C_3>0$ such that
\begin{align*}
-\log\E_{\mu_\al}\Big[\exp\Big(\min\big(V_\be(u),L\big)\ind_{\{\|u\|_{L^2}\le K\}}\Big)\Big] \le \gamma C_1N^{-d}e^{C_2N^{\frac{d}2}}+C_3(N^{d+\frac\al2}+N^{\al}),
\end{align*}
for any $L\gg N^d\gg 1$. Therefore,
\begin{align*}
&\lim_{L\to+\infty}\E_{\mu_\al}\Big[-\gamma\min(V_\be(u),L)\ind_{\{\|u\|_{L^2}\le K\}}\Big]\\
&\qquad \ge \exp\Big(-\gamma C_1N^{-d}e^{C_2N^{\frac{d}2}}-C_3(N^{d+\frac\al2}+N^{\al})\Big)\to +\infty
\end{align*}
as $N\to+\infty$ since $\gamma<0$. This finally proves \eqref{div} by assuming \eqref{prob}.

To conclude, we prove that \eqref{prob} holds for any $K>0$ and $L\gg N^d\gg1$, provided that $\eta(K)$ is large enough. Indeed, it holds
\begin{align*}
&\PP\big(\|Y(1)-Z_N(1)+\eta f_N\|_{L^2}> K\text{ or }\|Y(1)-Z_N\|_{L^\infty}\gg \log L\big)\\
&\qquad \le \PP\big(\|Y(1)-Z_N(1)+\eta f_N\|_{L^2}> K)+\PP\big(\|Y(1)-Z_N(1)\|_{L^\infty}\gg \log L\big)\\
&\qquad \le K^{-2}\E\|Y(1)-Z_N(1)+\eta f_N\|_{L^2}+C(\log L)^{-p}\E\|Y(1)-Z_N(1)\|_{L^\infty}^p\intertext{by using Chebychev's inequality with some large $p\gg1$. Using then \eqref{Z1}-\eqref{Z3} in Lemma~\ref{LEM:Z} and \eqref{fM2} in Lemma~\ref{LEM:fM}, we get the bound}
&\qquad \le CK^{-2}\big(N^{-\frac\al2}+N^{d-\al} + c_\M\eta\big) + (\log L)^{-p}p^{\frac{p}2}(N^{-\frac\al2}+N^{d+2\eps-\al})^\frac{p}2,
\end{align*}
for any $p\gg1$ (independent of $L,N$) and $0<\eps<\frac{\al-d}2$, and some constant $C>0$. Since $L\gg N^d\gg 1$, taking $\eta\ll K^2$ ensures that
\begin{align*}
CK^{-2}\big(N^{-\frac\al2}+N^{d-\al} + c_\M\eta\big) + (\log L)^{-p}p^{\frac{p}2}(N^{-\frac\al2}+N^{d+2\eps-\al})^\frac{p}2<\frac12.
\end{align*} 
\noi
This finally proves \eqref{prob}.

\end{proof}

\subsection{Some technical lemmas}
\label{SUBSEC:LEM}

In this subsection, we present the proofs of 
Lemmas~\ref{LEM:fM} and \ref{LEM:Z}.

We start with the approximation lemma (Lemma~\ref{LEM:Z}).

\begin{proof}[Proof of Lemma \ref{LEM:Z}]
For $\ld_n\le N$, let
\begin{align}
X_n(t)=\jb{\ld_n}^{-\frac\al2}B_n(t)- Z_{N,n}(t), 
\label{X} 
\end{align}
Then,  from \eqref{Z}, 
we see that $X_n(t)$ satisfies 
the following stochastic differential equation:
\begin{align}\label{X2}
\begin{cases}
dX_n(t)=-\jb{\ld_n}^{-\frac \al 2}N^\frac \al2 X_n(t) dt +\jb{\ld_n}^{-\frac \al2}dB_n(t)\\
X_n(0)=0
\end{cases}
\end{align}	

\noi
for $\ld_n\le N$.
The solution to this stochastic differential equation is thus given by
\begin{align}
X_n(t)=\jb{\ld_n}^{-\frac\al2}\int_0^t e^{-\jb{\ld_n}^{-\frac \al 2}N^{\frac \al2}(t-s)}dB_n(s).
\label{X3}
\end{align}

\noi
for $\ld_n\le N$. 

To show \eqref{Z1}, observe that $\frac{d}{dt}Z_N(t)=N^{\frac\al2}\jb{\nabla}^{-\frac\al2}X_N$ due to \eqref{Z} and \eqref{X}, where
\begin{align*}
X_N(t)=\Pi_{\le N}Y(t)-Z_N(t)=\sum_{\ld_n\le N}X_{n}(t)\varphi_n.
\end{align*} 
Hence, from \eqref{X2}, the independence of $\{B_n \}_{n \ge 0}$, and Ito's isometry with \eqref{X3}, it holds for any $s\in\R$
\begin{align}
\int_0^1\E \Big\|\frac{d}{dt}Z_N(t)\Big\|_{H^{s}}^2dt & = N^\al\int_0^1\E\big\|X_N(t)\big\|_{H^{s-\frac\al2}}^2dt\\
&\le N^\al \sum_{\ld_n \le N}\jb{\ld_n}^{2s-\al}\int_0^1\int_0^te^{-2\jb{\ld_n}^{-\frac\al2}N^\frac\al2(t-t')}dt'dt\\
& \les N^\al\sum_{\ld_n\le N}\jb{\ld_n}^{2s-\al}N^{-\frac\al2}\jb{\ld_n}^\frac\al2\\
&\les N^\frac\al2\max\big(1,N^{d+2s-\frac\al2}\big),
\end{align}

\noi
for any $N\gg 1$, where the last estimate follows from Weyl's law \eqref{Weyl}. This is enough for \eqref{Z1}.

For \eqref{Z2}, we estimate similarly as above
\begin{align*}
\E\big|Y(1,x)-Z_N(1,x)\big|^2 &\les \E|X_N(1,x)|^2 +\E|(1-\Pi_{\le N}Y)(1,x)|^2 \\
& \les \sum_{\ld_n\le N}|\varphi_n(x)|^2\jb{\ld_n}^{-\al}\int_0^1e^{-2\jb{\ld_n}^{-\frac\al2}N^{\frac\al2}(1-t)}dt + \sum_{\ld_n>N}|\varphi_n(x)|^2\jb{\ld_n}^{-\al}\\
&\les N^{-\frac\al2}\sum_{m\le N}\sum_{\ld_n\in(m,m+1]}\frac{|\varphi_n(x)|^2}{\jb{\ld_n}^{\frac\al2}} + \sum_{m>N}\sum_{\ld_n\in(m,m+1]}\frac{|\varphi_n(x)|^2}{\jb{\ld_n}^\al}\intertext{Applying then Corollary~\ref{LEM:avg eig}, we can continue with}
&\les N^{-\frac\al2}\sum_{m\le N}\sum_{\ld_n\in(m,m+1]}\jb{\ld_n}^{-\frac\al2} + \sum_{m>N}\sum_{\ld_n\in(m,m+1]}\jb{\ld_n}^{-\al}\\
&\les N^{-\frac\al2}\max\big(1,N^{d-\frac\al2}\big)+ N^{d-\al},
\end{align*}
where the last estimate follows again from Weyl's law \eqref{Weyl}. This proves \eqref{Z2}.

Finally, to prove \eqref{Z3}, we use a standard combination of Sobolev and Minkowski inequalities along with Khinchin's inequality (Lemma~\ref{LEM:Khinchin}): thus for any $0<\eps<\frac{\al-d}2$ and $p\ge p_\eps\gg1$, we have
\begin{align*}
\E\big\|Y(1)-Z_N(1)\big\|_{L^{\infty}}^p &\les \E\big\|Y(1)-Z_N(1)\big\|_{W^{\eps,p_\eps}}^p \les \int_\M\E\big|\jb{\nabla}^\eps(Y(1)-Z_N(1))\big|^pdx\\
&\les \sup_{x\in\M}p^\frac{p}2\Big(\E\big|\jb{\nabla}^{\eps}(Y(1,x)-Z_N(1,x))\big|^2\Big)^\frac{p}2\intertext{A computation similar to the one made above for \eqref{Z2} finally yields}
&\les p^\frac{p}2\Big(N^{-\frac\al2}\sum_{\ld_n\le N}\jb{\ld_n}^{2\eps-\frac\al2} + \sum_{\ld_n>N}\jb{\ld_n}^{2\eps-\al}\Big)^\frac{p}2\\
&\les p^\frac{p}2\Big(N^{-\frac\al2}+N^{d+2\eps-\al}\Big)^\frac{p}2.
\end{align*}
This proves \eqref{Z3}.

\end{proof}	

To conclude this subsection, we present the proof of Lemma~\ref{LEM:fM}.

\begin{proof}[Proof of Lemma~\ref{LEM:fM}]
Recall that $f_N$ is given by \eqref{fM}. In particular a straightforward computation using Corollary~\ref{LEM:avg eig} and Weyl's law \eqref{Weyl} yields
\begin{align*}
\|f_N\|_{H^{\frac\al2}}^2 = N^{-d}\sum_{\ld\in(N,2N]}\jb{\ld_n}^\al|\varphi_n(x_0)|^2 \les N^{-d}\sum_{\ld_n\sim N}\jb{\ld_n}^{\al}\les N^\al,
\end{align*}
and, using Cauchy-Schwarz inequality and repeating the previous computation,
\begin{align*}
\|f_N\|_{L^\infty}&\le \sup_{x\in\M}N^{-\frac{d}2}\sum_{\ld\in(N,2N]}|\varphi_n(x)\varphi_n(x_0)|\\
& \le \sup_{x\in\M}N^{-\frac{d}2}\Big(\sum_{\ld_n\in(N,2N]}|\varphi_n(x)|^2\Big)^\frac12\Big(\sum_{\ld_n\in(N,2N]}|\varphi_n(x_0)|^2\Big)^\frac12\\
&\les N^{\frac{d}2}.
\end{align*}
This proves \eqref{fMal} and \eqref{fMinf}. 

For \eqref{fM2}, we observe that
\begin{align*}
\|f_N\|_{L^2}^2 &= N^{-d}\sum_{\ld_n\in (N,2N]}|\varphi_n(x_0)|^2 = N^{-d}\big(e(x_0,x_0,(2N)^2)-e(x_0,x_0,N^2)\big)\intertext{where $e(x,y,\Ld^2)$ is the spectral function of $\Dlg$, defined in \eqref{eig} above. In view of Lemma~\ref{LEM:loc eig}, we can thus continue with}
&=N^{-d}\big(c_d(2N)^d-c_dN^d+O(N^{d-1})\big) = (2^d-1)c_d+O(N^{-1}).
\end{align*}
This proves \eqref{fM2}.

To show the remaining bound \eqref{fMinf2}, observe as above that
\begin{align}\label{fMspec}
f_N(x) = N^{-\frac{d}2}\big(e(x,x_0,(2N)^2)-e(x,x_0,N^2)\big).
\end{align}
In particular with Lemma~\ref{LEM:loc eig} this implies
\begin{align}\label{fMspec2}
f_N(x_0) = N^{-\frac{d}2}\big(c_d(2N)^d-c_dN^d + O(N^{d-1})\big) = (2^d-1)c_dN^{\frac{d}2} + O(N^{\frac{d}2-1}).
\end{align}
In order to control the difference $f_N(x)-f_N(x_0)$, we recall the following refined version of Lemma~\ref{LEM:loc eig} proved in \cite[Theorem 4.4]{Hormander}.
\begin{lemma}\label{LEM:spec}
There are some constants $r,R,C>0$ such that for any $x\in B_r(x_0)=\{d(x,x_0)< r\}$, it holds
\begin{align}\label{spec}
\Big|e(x,x_0,\Ld^2)-(2\pi)^{-d}\int_{\{|\xi|\le\Ld\}}e^{i\psi(\kk(x),\xi)}d\xi\Big|\le C\jb{\Ld}^{d-1},
\end{align}
where $\kk : B_r(x_0) \to B_{R}(0)\subset \R^d$ are normal coordinates centred at $x_0$, and the phase $\psi\in C^\infty(B_R(0)\times\R^d)$ satisfies 
\begin{align}
\psi(z,\xi) &= z\cdot\xi +O(|z|^2|\xi|) \text{ as }z\to 0.\label{psi}
\end{align}
\end{lemma}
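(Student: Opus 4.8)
The plan is to prove Lemma~\ref{LEM:spec} by invoking the parametrix construction for the wave propagator $\cos(t\sqrt{-\Dlg})$, following H\"ormander's classical analysis of the Fourier integral operator representing the spectral projector. The starting point is that the spectral function $e(x,y,\Ld^2)$ can be recovered from the half-wave group $U(t)=e^{it\sqrt{-\Dlg}}$ via a Tauberian/Fourier-inversion argument: writing the kernel of $\chi(\sqrt{-\Dlg}-\Ld)$ for a suitable Schwartz function $\chi$ with compactly supported Fourier transform, one reduces to understanding the Schwartz kernel of $U(t)$ for small $|t|$.

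First I would recall that for $|t|$ smaller than the injectivity radius of $(\M,\gm)$, the operator $U(t)$ is, microlocally near a point $x_0$, a Fourier integral operator whose Schwartz kernel admits a representation
\begin{align*}
K_t(x,y) = (2\pi)^{-d}\int_{\R^d} e^{i\varphi(x,y,\xi)+it|\xi|_{\gm(y)}}\, a(t,x,y,\xi)\, d\xi + (\text{smoothing}),
\end{align*}
where $\varphi$ is the generating phase (equal to $\langle \kk(x)-\kk(y),\xi\rangle$ to leading order in normal coordinates) and $a$ is a symbol of order $0$ with $a(0,x,x,\xi)=1$. Working in geodesic normal coordinates $\kk$ centred at $x_0$ and setting $y=x_0$, so $\kk(x_0)=0$, the phase becomes $\psi(\kk(x),\xi) = \kk(x)\cdot\xi + O(|\kk(x)|^2|\xi|)$, which is exactly the form \eqref{psi} asserted in the statement; the $O(|z|^2|\xi|)$ error encodes the deviation of the metric from the Euclidean one, which vanishes to second order at the centre of normal coordinates. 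Then one integrates in $t$ against the Fourier transform of a cutoff localizing $\sqrt{-\Dlg}$ to frequencies $\le \Ld$, and a stationary-phase / integration-by-parts analysis in the $t$ variable produces the main term $(2\pi)^{-d}\int_{|\xi|\le\Ld} e^{i\psi(\kk(x),\xi)}\,d\xi$ together with a remainder. The key point for the uniformity over $x\in B_r(x_0)$ is that all the symbol estimates on $a$ and the phase $\psi$ are locally uniform in $(x,y)$ on a fixed small ball, so the remainder is controlled by $C\jb{\Ld}^{d-1}$ with $C$ independent of $x\in B_r(x_0)$.

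The main obstacle, and the part requiring the most care, is the bookkeeping of the remainder term: one must track the contributions of the subprincipal symbol $a$ (which contributes an $O(\Ld^{d-1})$ term), the smoothing part of the FIO (which contributes $O(\Ld^{-\infty})$), and the error from replacing the true phase $\varphi$ with its normal-form $\psi$ — the latter requires showing that the $O(|z|^2|\xi|)$ correction in the phase, after the $\xi$-integration over the ball of radius $\Ld$, still only costs a power $\Ld^{d-1}$ rather than $\Ld^d$. This is where the quadratic vanishing at $z=0$ is essential, and where one uses that $|z|=d(x,x_0)$ is small (shrinking $r$ if necessary). Rather than reproving all of this, I would simply cite \cite[Theorem 4.4]{Hormander}, which is precisely this refined parametrix estimate for the spectral function near a point, with the normal-coordinate phase written in the form \eqref{psi}; the statement of Lemma~\ref{LEM:spec} is a direct transcription of that result into the notation of the present paper.
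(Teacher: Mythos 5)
Your proposal is correct and ultimately takes exactly the same route as the paper, which states Lemma~\ref{LEM:spec} without proof as a direct citation of \cite[Theorem 4.4]{Hormander}; your preliminary sketch of the wave-parametrix/FIO argument is a faithful description of how that cited result is actually established. Nothing further is needed.
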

Thus, putting \eqref{spec} into \eqref{fMspec} and using the mean value inequality and the property \eqref{psi} of $\psi$, we get for any $d(x,x_0)\ll N^{-1}$:
\begin{align}\label{fMspec3}
\big|f_N(x)-f_N(x_0)\big|&\les N^{-\frac{d}2}\Big|\int_{\{N<|\xi|\le 2N\}}\big[e^{i\psi(\kk(x),\xi)}-1\big]d\xi\Big|+O(N^{\frac{d}2-1})\notag\\
&\les N^{-\frac{d}2}\int_{\{|\xi|\sim N\}}|\psi(\kk(x),\xi)|d\xi+O(N^{\frac{d}2-1})\notag\\
& \les N^{-\frac{d}2}d(x,x_0)\int_{\{|\xi|\sim N\}}|\xi|d\xi +O(N^{\frac{d}2-1}) \notag\\
&\les N^{\frac{d}2+1}d(x,x_0)+O(N^{\frac{d}2-1})\ll N^{\frac{d}2}.
\end{align}
Thus \eqref{fMinf2} follows from \eqref{fMspec2} and \eqref{fMspec3}.

\end{proof}

\section{Weak invariance of the Gibbs measure for lower dispersion}\label{SEC:weak}
In this section, we give the proof of Theorem~\ref{THM:GWP1}~(i). We mainly follow \cite{BTT1,OT}. In this section we thus fix $\gamma>0$, $\al>d$ and $0<s<\frac{\al-d}2$.

\subsection{Preliminary estimates}
We start by establishing the following moment bound for the potential $V_\be$.
\begin{lemma}\label{LEM:V}
Let $\be>0$. Then the following hold:\\
\textup{(i)} If $0<\be<\|\s_\al\|_{L^{\infty}}^{-1}$ and $1\le p < (\be\|\s_\al\|_{L^{\infty}})^{-1}$, then $V_\be\in L^p(\mu_\al)$. In particular $V_\be(\Pi_{\le N}u)\to V_\be(u)$ in $L^p(\mu_\al)$ as $N\to\infty$.\\
\textup{(ii)} If $\be>\|\s_\al\|_{L^{\infty}}$ then $V_\be\not\in L^1(\mu_\al)$.
\end{lemma}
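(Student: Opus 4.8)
The plan is to reduce the whole statement to one elementary Gaussian moment computation. Under $\mu_\al$, for $\mu_\al$-a.e.\ realization $u=u_\al$ and every fixed $x\in\M$, the value $u(x)=\sum_{n\ge 0}\jb{\ld_n}^{-\al/2}g_n\varphi_n(x)$ is a centered complex Gaussian with variance $\E_{\mu_\al}|u(x)|^2=\sum_{n\ge 0}\jb{\ld_n}^{-\al}|\varphi_n(x)|^2=\s_\al(x)$, and likewise $\Pi_{\le N}u(x)$ is a centered complex Gaussian with variance $\s_{\al,N}(x)$. The first step is therefore to record that, for every $\lambda>0$,
\begin{align*}
\E_{\mu_\al}\big[e^{\lambda|u(x)|^2}\big]=(1-\lambda\s_\al(x))^{-1}\ \ \text{if }\ \lambda\s_\al(x)<1,\qquad\text{and}\ \ \E_{\mu_\al}\big[e^{\lambda|u(x)|^2}\big]=+\infty\ \ \text{if }\ \lambda\s_\al(x)\ge1,
\end{align*}
together with the same identity for $\Pi_{\le N}u$ with $\s_{\al,N}$ in place of $\s_\al$. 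I would also recall from Remark~\ref{REM:critical}~(i) and Corollary~\ref{LEM:avg eig} that, since $\al>d$, the partial sums $\s_{\al,N}$ converge uniformly to $\s_\al$; in particular $\s_\al\in C(\M)$, the norm $\|\s_\al\|_{L^\infty}$ is attained, and $\s_{\al,N}\le\s_\al$ pointwise, so that $\|\s_{\al,N}\|_{L^\infty}\le\|\s_\al\|_{L^\infty}$ for all $N$.

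For part (i): since $p\ge 1$, Minkowski's integral inequality gives
\begin{align*}
\|V_\be(u)\|_{L^p(\mu_\al)}=\Big\|\int_\M e^{\be|u(x)|^2}\,dx\Big\|_{L^p(\mu_\al)}\le\int_\M\big(\E_{\mu_\al}e^{p\be|u(x)|^2}\big)^{1/p}\,dx,
\end{align*}
and under the hypothesis $p\be\|\s_\al\|_{L^\infty}<1$ one has $p\be\s_\al(x)<1$ for every $x$, so the Gaussian identity bounds the right-hand side by $\Vol(\M)(1-p\be\|\s_\al\|_{L^\infty})^{-1/p}<\infty$; this already gives $V_\be\in L^p(\mu_\al)$. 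For the convergence $V_\be(\Pi_{\le N}u)\to V_\be(u)$ in $L^p(\mu_\al)$, I would combine the $\mu_\al$-a.s.\ convergence — already established in the proof of Theorem~\ref{THM:Gibbs1}~(i) via the mean value inequality and the a.s.\ convergence of $\|\Pi_{\le N}u\|_{L^\infty}$ — with uniform integrability of $\{V_\be(\Pi_{\le N}u)^p\}_N$. The latter follows by running the same Minkowski computation with $\Pi_{\le N}u$ and using $\|\s_{\al,N}\|_{L^\infty}\le\|\s_\al\|_{L^\infty}$, which yields $\sup_N\E_{\mu_\al}V_\be(\Pi_{\le N}u)^q\le\Vol(\M)^q(1-q\be\|\s_\al\|_{L^\infty})^{-1}<\infty$ for any $q$ with $q\be\|\s_\al\|_{L^\infty}<1$; since $p<(\be\|\s_\al\|_{L^\infty})^{-1}$ strictly, one may pick $q\in(p,(\be\|\s_\al\|_{L^\infty})^{-1})$, so that $\{V_\be(\Pi_{\le N}u)^p\}_N$ is bounded in $L^{q/p}(\mu_\al)$ with $q/p>1$, hence uniformly integrable. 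Vitali's convergence theorem then closes this part.

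For part (ii): here $\be\|\s_\al\|_{L^\infty}>1$, so by continuity of $\s_\al$ there is a nonempty open set $B\subset\M$ on which $\be\s_\al(x)>1$, and hence $\E_{\mu_\al}e^{\be|u(x)|^2}=+\infty$ for every $x\in B$. Since $(\om,x)\mapsto e^{\be|u_\al^\om(x)|^2}$ is nonnegative and jointly measurable — on the full $\mu_\al$-measure set where $u_\al^\om\in C(\M)$, by Lemma~\ref{LEM:init} — Tonelli's theorem gives $\E_{\mu_\al}V_\be(u)=\int_\M\E_{\mu_\al}e^{\be|u(x)|^2}\,dx\ge\int_B\E_{\mu_\al}e^{\be|u(x)|^2}\,dx=+\infty$ because $|B|>0$; that is, $V_\be\notin L^1(\mu_\al)$.

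I do not anticipate a genuine obstacle here: everything comes from the explicit Gaussian moment formula together with Minkowski's and Tonelli's theorems. The one point that needs a little care is the strictness of the inequality $p<(\be\|\s_\al\|_{L^\infty})^{-1}$ in (i): it is exactly what leaves room to choose an exponent $q>p$ for which $V_\be(\Pi_{\le N}u)$ is uniformly bounded in $L^q$, and thus to obtain the uniform integrability needed to upgrade the almost sure convergence to convergence in $L^p$.
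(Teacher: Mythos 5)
Your proof is correct, and the core of it is the same as the paper's: reduce to the pointwise exponential moment $\E_{\mu_\al}[e^{\lambda|u(x)|^2}]=(1-\lambda\s_\al(x))^{-1}$ for $\lambda\s_\al(x)<1$ (with divergence otherwise) and then integrate over $\M$. The difference is only in how that identity and the conclusion are reached. The paper derives the Gaussian moment by writing out the finite-dimensional integral for $\Pi_{\le N}u(x)$ and diagonalizing the rank-one matrix $A(x)$ of \eqref{Ax}, whose trace is $\s_{\al,N}(x)$; you instead observe directly that $u(x)$ (resp.\ $\Pi_{\le N}u(x)$) is a single circularly symmetric complex Gaussian of variance $\s_\al(x)$ (resp.\ $\s_{\al,N}(x)$), which is a cleaner route to the same formula, and you use Minkowski's integral inequality where the paper uses Jensen — both are fine for $p\ge1$. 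Two places where your write-up is actually more complete than the paper's: for the $L^p$-convergence in (i), the paper only records the uniform bound $\sup_N\E[V_\be(\Pi_{\le N}u)^p]<\infty$ and invokes Fatou, whereas you make explicit the uniform integrability coming from the strict inequality $p<(\be\|\s_\al\|_{L^\infty})^{-1}$ (choice of $q\in(p,(\be\|\s_\al\|_{L^\infty})^{-1})$ plus Vitali), which is exactly the missing step; and for (ii) you apply Tonelli directly to $u$ rather than showing $\E[V_\be(\Pi_{\le N}u)]=\infty$ for the truncations, which avoids having to pass to the limit in the wrong direction. (You also correctly read the hypothesis of (ii) as $\be\|\s_\al\|_{L^\infty}>1$, consistent with the threshold in (i) and with the paper's own proof.)
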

\begin{proof}
We start by proving (i). We thus assume that $0<\be<\|\s_\al\|_{L^{\infty}}^{-1}$ and fix $1\le p < (\be\|\s_\al\|_{L^{\infty}})^{-1}$. In view of the almost sure convergence of $V_\be(\Pi_{\le N}u)$ (Lemma~\ref{LEM:init}) and Fatou's lemma, it suffices to show that
\begin{align}\label{V1}
\sup_{N\in\N}\E_{\mu_\al}\big[V_\be(\Pi_{\le N}u)^p\big] <+\infty.
\end{align}
\noi
Recall that $\Pi_{\le N}$ is the (sharp) spectral projector onto the frequencies $\{\ld_n\le N\}$. For any $N\in\N$, we then write 
\begin{align}\label{dN}
d_N=\#\Ld_N=\#\{n,~\ld_n\le N\},
\end{align}
 and using that $\mu_\al$ is the law of $u_\al$ in \eqref{init}, we estimate using Jensen's inequality in $x$ with the compactness of $\M$ and Fubini-Tonelli's theorem:
\begin{align}
\E_{\mu_\al}\big[V_\be(\Pi_{\le N}u)^p\big] &= \int_\Omega \Big[\int_\M\exp\Big(\be\big|\sum_{\ld_n\le N}\frac{g_n}{\jb{\ld_n}^\frac{\al}2}\varphi_n(x)\big|^2\Big)dx\Big]^pd\PP\notag\\
&\les \int_\M\int_\Omega\exp\Big(p\be\big|\sum_{\ld_n\le N}\frac{g_n}{\jb{\ld_n}^\frac{\al}2}\varphi_n(x)\big|^2\Big)d\PP dx\notag\\
& \les \int_\M\int_\Omega\exp\Big(p\be\sum_{n,m\in\Ld_N}\frac{g_n\cj g_m}{\jb{\ld_n}^\frac{\al}2\jb{\ld_m}^\frac{\al}2}\varphi_n(x)\varphi_m(x)\Big)d\PP dx\notag\intertext{Recalling that $g_n$ are iid standard complex Gaussian random variables, $g_n = \frac{h_n+i\ell_n}{\sqrt{2}}$ with $h_n,\ell_n$ iid standard real-valued Gaussian random variables, we can continue with}
& \les \int_\M\int_{\R^{2d_N}}\exp\Big(p\be\sum_{n,m=1}^{d_N}\frac{y_ny_m+z_nz_m +i(z_ny_m-y_nz_m)}{2\jb{\ld_n}^\frac{\al}2\jb{\ld_m}^\frac{\al}2}\varphi_n(x)\varphi_m(x)\Big)\notag\\
&\qquad\qquad\times\prod_{n=1}^{d_N}e^{-\frac12y_n^2}e^{-\frac12z_n^2}\frac{dy_ndz_n}{2\pi}dx\notag\\
&\les\int_\M\bigg(\int_{\R^{d_N}}\exp\Big(p\be\sum_{n,m=1}^{d_N}\frac{y_ny_m}{2\jb{\ld_n}^\frac{\al}2\jb{\ld_m}^\frac{\al}2}\varphi_n(x)\varphi_m(x)-\frac{y_n^2}2\Big)\prod_{n=1}^{d_N}\frac{dy_n}{\sqrt{2\pi}}\bigg)^2dx,\label{V2}
\end{align}
where we used the symmetry in $n$ and $m$. Defining the symmetric matrix 
\begin{align}\label{Ax}
A(x) = \Big(\frac{\varphi_n(x)\varphi_m(x)}{\jb{\ld_n}^\frac{\al}2\jb{\ld_m}^\frac{\al}2}\Big)_{n,m=1}^{d_N}\in\S_{d_N}(\R),
\end{align}
 we can write the inner integral as
\begin{align}
&\int_{\R^{d_N}}\exp\Big(p\be\sum_{n,m=1}^{d_N}\frac{y_ny_m}{2\jb{\ld_n}^\frac{\al}2\jb{\ld_m}^\frac{\al}2}\varphi_n(x)\varphi_m(x)-\frac{y_n^2}2\Big)\prod_{n=1}^{d_N}\frac{dy_n}{\sqrt{2\pi}}\notag\\
&\qquad= \int_{\R^{d_N}}e^{-\frac12y\cdot(I_{d_N}-p\be A(x))y}\frac{dy}{(2\pi)^{\frac{d_N}2}}.\label{V3}
\end{align}
Then, since $\mathrm{rank} A(x)\le 1$ in view of \eqref{Ax}, we have that the only non zero eigenvalue of $A(x)$ is
$$\mathrm{tr} A(x) = \sum_{\ld_n\le N}\frac{|\varphi_n(x)|^2}{\jb{\ld_n}^\al} = \s_{\al,N}(x).$$
Thus, since $A(x)$ is symmetric, we find that the integral in \eqref{V3} is finite if and only if $I_{d_N}-p\be A(x)$ is positive definite, which corresponds to 
\begin{align}\label{V4}
p\be \s_{\al,N}(x)<1.
\end{align}
In that case,
\begin{align}\label{V5}
\int_{\R^{d_N}}e^{-\frac12y\cdot(I_{d_N}-p\be A(x))y}\frac{dy}{(2\pi)^{\frac{d_N}2}} &= \mathrm{det}\big(I_{d_N}-p\be A(x)\big)^{-\frac12} = (1-p\be\s_{\al,N}(x))^{-\frac12}.
\end{align}
Since for any $x\in\M$ and $N\in\N$ it holds $\s_{\al,N}(x)\le \|\s_\al\|_{L^{\infty}}$, under the conditions $0<\be<\|\s_\al\|_{L^{\infty}}^{-1}$ and $1\le p < (\be\|\s_\al\|_{L^\infty})^{-1}$, we see that \eqref{V4} holds uniformly in $x\in\M$ and $N\in\N$, and putting \eqref{V2}-\eqref{V3}-\eqref{V5} together and using the compactness of $\M$ we get
\begin{align*}
\sup_{N\in\N}\E_{\mu_\al}\big[V_\be(\Pi_{\le N}u)^p\big] &\les \sup_{N\in\N}\frac1{1-p\be\|\s_{\al,N}\|_{L^\infty}} = \frac1{1-p\be\|\s_\al\|_{L^\infty}} <+\infty.
\end{align*}
This proves \eqref{V1}, hence Lemma~\ref{LEM:V}~(i). This even provides the slightly stronger bound
\begin{align}\label{V6}
\sup_{N\in\N}\Big\|\E_{\mu_\al}\big[e^{p\be|\Pi_{\le N}u|^2}\big]\Big\|_{L^\infty}<+\infty.
\end{align}

As for (ii) in the case $\be >\|\s_\al\|_{L^\infty}^{-1}$, since $\s_\al\in C(\M)$, there exists $x_0\in\M$ and $r>0$ such that $\be > \s_{\al,N}(x)$ for all $N\in\N$ and $x\in B_r(x_0)$. Thus the computation above shows that
\begin{align*}
\E_{\mu_\al}\big[V_\be(\Pi_{\le N}u)\big] \ge \Vol(B_r(x_0))\cdot \infty = +\infty,
\end{align*}
for all $N\in\N$. This proves Lemma~\ref{LEM:V}~(ii).
\end{proof}

In the proof of Theorem~\ref{THM:GWP1}~(i), we will also make use of the following result, which is a straightforward adaptation of \cite[Lemma 3.3]{BTT1}.

\begin{lemma}\label{LEM:BTT1}
Let $T > 0$ and $1< p,q < \infty$, $\theta_0\in [0,1]$ be such that $(1-\theta_0)(\frac1q-1)+\frac{\theta_0}p=0$.
Suppose that 
$u \in L^p_T H^{s_1}$ and $\dt u \in L^q_T H^{s_2}$
for some $s_2 \leq s_1$.
Then, for $ \eps > (1-\theta_0)(s_1 - s_2)$, we have 
\begin{align}\label{interp1}
 \| u \|_{L^\infty_TH^{s_1}} \les \| u \|_{L^p_T H^{s_1+\eps}}^{\theta_0}
\| u \|_{W^{1, q}_T H^{s_2}}^{ 1-\theta_0}.
\end{align}

\noi
Moreover, writing $\theta_1 = \frac{s_1-s_2}{s_1-s_2+\eps}\in[0,1]$, it holds for all $t_1, t_2 \in [-T, T]$
\begin{align}\label{interp2}
 \| u(t_2) - u(t_1)  \|_{H^{s_1}} \les |t_2 - t_1|^{(1-\theta_1)(1-\frac1q)}  \| u \|_{L^p_T H^{s_1+\eps}}^{\theta_1}
\| u \|_{W^{1, q}_T H^{s_2}}^{ 1-\theta_1}.
\end{align}

\end{lemma}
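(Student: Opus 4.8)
The plan is to obtain both \eqref{interp1} and \eqref{interp2} from a single interpolation-type inequality in the time variable. First I would establish the pointwise-in-time bound: for a.e. $t\in[-T,T]$ and any $h$ with $t+h\in[-T,T]$, write, using the fundamental theorem of calculus, $u(t) = u(t+h) - \int_t^{t+h}\dt u(\tau)\,d\tau$, so that
\begin{align*}
\|u(t)\|_{H^{s_2}} \le \|u(t+h)\|_{H^{s_2}} + |h|^{1-\frac1q}\|u\|_{L^q_TH^{s_2}}.
\end{align*}
Averaging the first term over $h$ in an interval of length $\asymp\tau$ (adjusting endpoints near $\pm T$, which only costs constants) and applying H\"older in $h$ gives
\begin{align*}
\|u(t)\|_{H^{s_2}} \lesssim \tau^{-\frac1p}\|u\|_{L^p_TH^{s_2}} + \tau^{1-\frac1q}\|u\|_{W^{1,q}_TH^{s_2}},
\end{align*}
valid uniformly in $t$; then I would optimize in $\tau>0$. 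The exponent bookkeeping here is exactly the condition $(1-\theta_0)(\frac1q-1)+\frac{\theta_0}p=0$, which makes the two powers of $\tau$ balance, yielding $\|u\|_{L^\infty_TH^{s_2}}\lesssim \|u\|_{L^p_TH^{s_2}}^{\theta_0}\|u\|_{W^{1,q}_TH^{s_2}}^{1-\theta_0}$.

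Next I would upgrade from $H^{s_2}$ to $H^{s_1}$ by a frequency decomposition. For a dyadic threshold $\Lambda$, split $u = \Pi_{\le\Lambda}u + \Pi_{>\Lambda}u$. On low frequencies, $\|\Pi_{\le\Lambda}u(t)\|_{H^{s_1}}\lesssim \Lambda^{s_1-s_2}\|u(t)\|_{H^{s_2}}$, to which the previous bound applies; on high frequencies, $\|\Pi_{>\Lambda}u(t)\|_{H^{s_1}}\lesssim \Lambda^{-\eps}\|u(t)\|_{H^{s_1+\eps}}\le \Lambda^{-\eps}\|u\|_{L^p_TH^{s_1+\eps}}$ after also using $L^\infty_t\hookleftarrow$ — more precisely, one repeats the averaging argument above with $H^{s_2}$ replaced by $H^{s_1}$ on the left but keeping the data split, so that the term one must control is $\tau^{-1/p}\|\Pi_{\le\Lambda}u\|_{L^p_TH^{s_1}} + \tau^{1-1/q}\|u\|_{W^{1,q}_TH^{s_2}} + \Lambda^{-\eps}\|u\|_{L^p_TH^{s_1+\eps}}$. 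Choosing $\Lambda$ so that $\Lambda^{s_1-s_2}$ times the $H^{s_2}$-bound matches $\Lambda^{-\eps}\|u\|_{L^p_TH^{s_1+\eps}}$, the requirement $\eps>(1-\theta_0)(s_1-s_2)$ is precisely what is needed for the resulting power of $\|u\|_{W^{1,q}_TH^{s_2}}$ to stay $\le 1$ and for the geometric series in $\Lambda$ to converge; this gives \eqref{interp1}.

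For the H\"older-in-time estimate \eqref{interp2}, I would start from $u(t_2)-u(t_1) = \int_{t_1}^{t_2}\dt u(\tau)\,d\tau$ and interpolate: at regularity $H^{s_2}$ one has directly $\|u(t_2)-u(t_1)\|_{H^{s_2}}\le |t_2-t_1|^{1-\frac1q}\|u\|_{W^{1,q}_TH^{s_2}}$, while at regularity $H^{s_1+\eps}$ one has the trivial bound $\|u(t_2)-u(t_1)\|_{H^{s_1+\eps}}\le 2\|u\|_{L^\infty_TH^{s_1+\eps}}$ — and here I would invoke \eqref{interp1} applied with $s_1$ replaced by $s_1+\eps$ (or simply note the needed norm appears), or more cleanly just bound it by $2\|u\|_{L^p_TH^{s_1+\eps}}^{\theta'}\cdots$; in any case interpolating between $H^{s_2}$ and $H^{s_1+\eps}$ with weight $\theta_1 = \frac{s_1-s_2}{s_1-s_2+\eps}$ lands at $H^{s_1}$ and produces the claimed exponent $(1-\theta_1)(1-\frac1q)$ on $|t_2-t_1|$ together with the stated powers of the two norms.

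The main obstacle is purely the exponent arithmetic: one must check that the two algebraic conditions in the hypothesis — namely $(1-\theta_0)(\frac1q-1)+\frac{\theta_0}p=0$ and $\eps>(1-\theta_0)(s_1-s_2)$ — are exactly what make, respectively, the time-scale optimization and the frequency-truncation optimization produce a bound with total homogeneity one in $u$ and no residual positive power of the auxiliary norm. Since the statement asserts this is a straightforward adaptation of \cite[Lemma 3.3]{BTT1}, I would simply cite that reference for the detailed bookkeeping rather than reproduce it, noting that the only change is that $\Pi_{\le N}$ here is the spectral projector of $\Dlg$ on $\M$ rather than a Fourier projector on $\T^d$, which does not affect any of the estimates used.
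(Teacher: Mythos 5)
Your averaging/optimization derivation of the one--dimensional Gagliardo--Nirenberg inequality \eqref{GaNi}, and your reduction of \eqref{interp2} to interpolating between $H^{s_1+\eps}$ and $H^{s_2}$ with weight $\theta_1$ combined with the fundamental theorem of calculus at regularity $H^{s_2}$, both match the paper's route. The gap is in the frequency decomposition you use for \eqref{interp1}: a single threshold $\Lambda$ does not close. The chain $\|\Pi_{>\Lambda}u(t)\|_{H^{s_1}}\les\Lambda^{-\eps}\|u(t)\|_{H^{s_1+\eps}}\le\Lambda^{-\eps}\|u\|_{L^p_TH^{s_1+\eps}}$ is false for a.e.\ $t$ (an $L^p_T$ bound gives no pointwise-in-time control), and the repair you sketch --- rerunning the FTC/averaging argument on the split data --- cannot produce the term $\Lambda^{-\eps}\|u\|_{L^p_TH^{s_1+\eps}}$ you display. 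Indeed, writing $\Pi_{>\Lambda}u(t)=\Pi_{>\Lambda}u(t+h)-\int_t^{t+h}\Pi_{>\Lambda}\dt u$, averaging the first term in $h$ does give $\Lambda^{-\eps}\tau^{-1/p}\|u\|_{L^p_TH^{s_1+\eps}}$, but the second term must be measured in $H^{s_1}$ using only $\dt u\in L^q_TH^{s_2}$, and on $\{\jb{n}>\Lambda\}$ the multiplier $\jb{n}^{s_1-s_2}$ is unbounded, so that term is not controlled at all; the factor $\Lambda^{s_1-s_2}$ that tames the derivative term is available only on the low frequencies (which is also why your displayed middle term is missing it).

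The missing idea is that the time-scale optimization must be performed frequency by frequency, not once globally. This is what the paper's proof does: it takes a full Littlewood--Paley decomposition $u=\sum_M u_M$, applies \eqref{GaNi} to each scalar function $w(t)=\langle u_M(t),v\rangle$ after dualizing the $L^2_x$ norm --- so each block is optimized at its own time scale, determined by the ratio $\|u_M\|_{L^p_TL^2}/\|u_M\|_{W^{1,q}_TL^2}$, which depends on $M$ because the two norms carry different spatial weights --- and then sums the geometric series $\sum_M M^{(1-\theta_0)(s_1-s_2)-\eps}$. The hypothesis $\eps>(1-\theta_0)(s_1-s_2)$ is precisely this summability condition, not the outcome of a single $(\tau,\Lambda)$ balancing. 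Replacing your single threshold by the dyadic decomposition (equivalently, letting $\Lambda$ and $\tau$ run over dyadic scales and summing) repairs the argument; the remainder of your proposal is consistent with the paper.
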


\begin{proof}
As in \cite[Lemmas 3.2 and 3.3]{BTT1}, we perform a Littlewood-Paley decomposition $$u = \sum_{M\in 2^{\N}}\psi_M(-\Dl)u=\sum_{M\in 2^\N}u_M,$$ where $\psi\in C^{\infty}_0(\R)$, $\supp\psi\subset [\frac14,4]$, $\psi\equiv 1$ on $[-\frac12,2]$, and $\psi_M(-\Dlg)=\psi(-M^{-2}\Dlg)$ is defined via the functional calculus:
\begin{align*}
u_M = \sum_{n\ge 0}\psi(M^{-2}\ld_n^2)\langle u,\varphi_n\rangle_{L^2}\varphi_n.
\end{align*} 
Then we have
\begin{align*}
\|u\|_{L^\infty_TH^{s_1-\eps}}& \les \sum_{M\in 2^\N}L^{s_1-\eps}\|u_M\|_{L^\infty_TL^2} = \sum_{M\in 2^\N}M^{s_1-\eps}\sup_{\|v\|_{L^2}\le 1}\sup_{t\in[0,T]}\langle u_M(t),v\rangle_{L^2}.
\end{align*}
Then we apply the Gagliardo-Nirenberg interpolation inequality (see for example Theorem 12.83 in \cite{Leoni})
\begin{align}\label{GaNi}
\|w\|_{L^\infty_T}\les \|w\|_{L^p_T}^{\theta_0}\|w\|_{W^{1,q}_T}^{1-\theta_0},
\end{align}
which holds for any $1\le p,q\le \infty$ and $\theta_0\in [0,1]$ such that $(1-\theta_0)(\frac1q-1)+\frac\theta{p}=0$, to the function $w(t)=\langle \psi_M(-\Dlg)u(t),v\rangle_{L^2}$. Thus
\begin{align*}
\sup_{t\in[0,T]}\langle u_L(t),v\rangle_{L^2}& \les \big\|\langle u_M(t),v\rangle_{L^2}\big\|_{L^p_T}^{\theta_0}\big\|\langle u_M(t),v\rangle_{L^2}\big\|_{W^{1,q}_T}^{1-\theta_0}\\
&\les \|u_M\|_{L^p_TL^2}^{\theta_0}\|u_M\|_{W^{1,q}_tL^2}^{1-\theta_0}.
\end{align*}
This yields
\begin{align*}
\|u\|_{L^\infty_TH^{s_1-\eps}}& \les \sum_{M\in 2^\N}M^{s_1-\eps}\|u_M\|_{L^q_TL^2}^{\theta_0}\|u_M\|_{W^{1,p}_tL^2}^{1-\theta_0}\\
& \les \sum_{M\in 2^\N}M^{s_1-\eps-\theta_0 s_1 -(1-\theta_0)s_2}\|u_M\|_{L^p_TH^{s_1}}^{\theta_0}\|u_M\|_{W^{1,q}_tH^{s_2}}^{1-\theta_0}\\
&\les \sum_{M\in 2^\N}M^{s_1-\eps-\theta_0 s_1 -(1-\theta_0)s_2}\|u\|_{L^p_TH^{s_1}}^{\theta_0}\|u\|_{W^{1,q}_tH^{s_2}}^{1-\theta_0}\\
&\les \|u\|_{L^p_TH^{s_1}}^{\theta_0}\|u\|_{W^{1,q}_tH^{s_2}}^{1-\theta_0}
\end{align*}
 since $\eps>(1-\theta_0)(s_1-s_2)$. This proves \eqref{interp1}. The proof of \eqref{interp2} follows similarly as in \cite[Lemma 3.3]{BTT1}: combine the interpolation inequality
 \begin{align*}
 \|u(t_1)-u(t_2)\|_{H^{s_1}}&\les \|u(t_1)-u(t_2)\|_{H^{s_1+\eps}}^{\theta_1}\|u(t_1)-u(t_2)\|_{H^{s_2}}^{1-\theta_1}
 \end{align*}
with the bound
 \begin{align*}
 \|u(t_1)-u(t_2)\|_{H^{s_2}} = \Big\|\int_{t_1}^{t_2}\dt u dt\Big\|_{H^{s_2}} \les |t_1-t_2|^{1-\frac1q}\|u\|_{W^{1,q}_TH^{s_2}}
 \end{align*}
Combining the two previous estimates finally proves \eqref{interp2}.
 \end{proof}

\subsection{From the Gibbs measure to measures on paths}\label{SUBSEC:5.1} 

The general strategy of \cite{BTT1,OT} to get weak invariance of the Gibbs measure is to look for stationary solutions with law $\rho_{\al,\be}$ to \eqref{expNLS} as limits of solutions to \eqref{expNLSN} having law $\rho_{\al,\be ,N}$. Thus we start by proving global well-posedness for the truncated equation \eqref{expNLSN}
\begin{align*}
i\dt u_N + (-\Dlg)^{\frac{\al}2}u_N =2\gamma \be\Pi_{\le N}\Big[e^{\be |\Pi_{\le N} u|^2}\Pi_{\le N}u\Big].
\end{align*}

\begin{lemma}\label{LEM:global}
Let  $N \in \N$.
Then, 
the truncated exponential NLS~\eqref{expNLSN}
is globally well-posed in $H^s(\M)$.
Moreover, the truncated Gibbs measure $\rho_{\al,\be,N}$  
is invariant under the dynamics of 
\eqref{expNLSN}.

\end{lemma}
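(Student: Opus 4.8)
\medskip
\noindent\emph{Proof strategy.}\quad The plan is to exploit the frequency-localized structure of~\eqref{expNLSN}. Since the outermost projection $\Pi_{\le N}$ forces the whole nonlinearity to lie in the finite-dimensional space $E_N\deff\mathrm{span}_\C\{\varphi_n:\ld_n\le N\}$, applying $\Pi_{>N}$ to~\eqref{expNLSN} annihilates the nonlinear term: $w\deff\Pi_{>N}u_N$ solves the free equation $i\dt w+(-\Dlg)^{\frac\al2}w=0$, hence is defined for all times by $w(t)=e^{it(-\Dlg)^{\frac\al2}}w(0)$, with $\|w(t)\|_{H^s}=\|w(0)\|_{H^s}$ for every $t\in\R$. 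Meanwhile $v\deff\Pi_{\le N}u_N$ satisfies the finite-dimensional ODE
\begin{align*}
i\dt v+(-\Dlg)^{\frac\al2}v=2\gamma\be\,\Pi_{\le N}\big[e^{\be|v|^2}v\big]\qquad\text{on }E_N.
\end{align*}
The right-hand side is a smooth (indeed real-analytic) vector field of $v\in E_N$ --- the infinite power-series expansion of $e^{\be|v|^2}v$ is harmless since one works on the \emph{finite-dimensional} $E_N$ --- hence locally Lipschitz, and the Cauchy--Lipschitz theorem yields a unique maximal $C^1$ solution, depending smoothly on $v(0)$. Testing this ODE against $\cj v$, integrating over $\M$, and taking imaginary parts --- using that $(-\Dlg)^{\frac\al2}$ is self-adjoint and that $\int_\M\Pi_{\le N}\big[e^{\be|v|^2}v\big]\cj v\,dx=\int_\M e^{\be|v|^2}|v|^2\,dx\in\R$ --- gives $\tfrac{d}{dt}\|v(t)\|_{L^2}^2=0$. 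Since all norms on the finite-dimensional $E_N$ are equivalent, $\|v(t)\|_{H^s}\les_N\|v(0)\|_{L^2}$ uniformly in $t$, so the maximal solution is global. Reassembling $u_N=v+w$ gives global well-posedness in $H^s(\M)$, with the flow $\Phi_N(t)$ of~\eqref{expNLSN} splitting as $\Phi_N(t)(v_0+w_0)=\Phi_N^{\mathrm{lo}}(t)v_0+e^{it(-\Dlg)^{\frac\al2}}w_0$, where $\Phi_N^{\mathrm{lo}}(t)$ is the flow of the above ODE on $E_N$.

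For the invariance statement I would use that $\rho_{\al,\be,N}$ factorizes along the orthogonal splitting $L^2(\M)=E_N\oplus\Pi_{>N}L^2(\M)$. Writing $\mu_\al=\mu_\al^{\le N}\otimes\mu_\al^{>N}$ for the induced Gaussian measures --- the low- and high-frequency parts of $u_\al$ in~\eqref{init} being independent --- and noting that $V_\be(\Pi_{\le N}u)$ depends only on $v=\Pi_{\le N}u$, the definition~\eqref{GibbsM} reads $\rho_{\al,\be,N}=\nu_N\otimes\mu_\al^{>N}$ with $d\nu_N(v)=\ZZ_{\al,\be,N}^{-1}e^{-\gamma V_\be(v)}\,d\mu_\al^{\le N}(v)$. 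It then suffices to show that each factor is preserved by the corresponding piece of $\Phi_N(t)$. The propagator $e^{it(-\Dlg)^{\frac\al2}}$ acts on each Fourier coefficient $\ft u_n$ ($\ld_n>N$) by multiplication by the unit-modulus scalar $e^{it\ld_n^\al}$, so, $\mu_\al^{>N}$ being a product of rotation-invariant complex Gaussians, it is invariant under this diagonal unitary map. For $\nu_N$, pass to the Lebesgue reference measure $dL_N$ on $E_N\cong\C^{d_N}$: by~\eqref{mu}, $\frac{d\nu_N}{dL_N}(v)$ equals, up to the normalizing constant, $\exp\big(-\gamma V_\be(v)-\tfrac12\sum_{\ld_n\le N}\jb{\ld_n}^\al|\ft v_n|^2\big)$, and the exponent here is, up to sign, a conserved quantity of~\eqref{expNLSN} on $E_N$ (the truncated Hamiltonian). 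Liouville's theorem for the Hamiltonian flow $\Phi_N^{\mathrm{lo}}(t)$ then gives $\big(\Phi_N^{\mathrm{lo}}(t)\big)_*L_N=L_N$, and conservation of that exponent makes the density invariant; hence $\nu_N$, and therefore $\rho_{\al,\be,N}=\nu_N\otimes\mu_\al^{>N}$, is invariant. This is Bourgain's invariant-measure argument~\cite{BO94,BO96}; cf.\ also~\cite{BTT1,OT}.

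The scheme is entirely standard, and I do not expect a genuine obstacle; only two points deserve a line of care: (i) checking that $v\mapsto e^{\be|v|^2}v$, in spite of its infinite power-series expansion, really defines a smooth map on the finite-dimensional $E_N$, so that the local Cauchy theory and the Hamiltonian/Liouville structure genuinely apply; and (ii) correctly singling out Lebesgue measure --- not the Gaussian $\mu_\al^{\le N}$ --- as the reference measure for Liouville's theorem, i.e.\ recombining the Gaussian density with the weight $e^{-\gamma V_\be}$ into $\exp$ of the truncated Hamiltonian before invoking energy conservation.
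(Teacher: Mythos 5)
Your proof is correct, and the invariance half is essentially identical to the paper's: the same factorization $\rho_{\al,\be,N}=\nu_N\otimes\mu_\al^{>N}$ along $E_N\oplus\Pi_{>N}L^2$, invariance of the high-frequency Gaussian under the diagonal unitary linear flow, and Liouville's theorem plus conservation of the truncated Hamiltonian and mass for the finite-dimensional factor (your point (ii) about recombining the Gaussian density with $e^{-\gamma V_\be}$ into $e^{-\J-\H_{\al,\be}}$ relative to Lebesgue measure is exactly what the paper does). The only divergence is in the well-posedness half: you project first and treat $\Pi_{\le N}u_N$ as a finite-dimensional ODE via Cauchy--Lipschitz, with the high frequencies evolving freely, whereas the paper runs a contraction argument on the full equation in $C([-T,T];L^2(\M))$ using the Duhamel formula and the Bernstein-type bound $\|\Pi_{\le N}u\|_{L^\infty}\les N^{d/2}\|u\|_{L^2}$, and only afterwards observes the splitting $u_N=\Pi_{\le N}u_N+(1-\Pi_{\le N})e^{it(-\Dlg)^{\al/2}}u_N(0)$ to upgrade to $H^s$. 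The two routes are equivalent here; yours is slightly more economical for this lemma (no need to track the $N$-dependent local time $T_N\sim e^{-c\be N^d R^2}$), while the paper's Duhamel/contraction setup has the advantage of producing quantitative Lipschitz bounds on the flow map in the same form that is reused later (e.g.\ in the compactness and approximation arguments of Sections 3--4). Both globalizations rest on the same $L^2$ conservation computation.
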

\begin{proof}
We write \eqref{expNLSN} under the Duhamel formulation
\begin{align}\label{DuhamelN}
u_N(t) = e^{it(-\Dlg)^\frac\al2}u_N(0)-2i\gamma\be \int_0^te^{i(t-t')(-\Dlg)^\frac\al2}\Pi_{\le N}\Big[e^{\be |\Pi_{\le N} u|^2}\Pi_{\le N}u\Big](t')dt'.
\end{align}
Write $\G_N(u_N)$ for the right-hand side of \eqref{DuhamelN}. Then for $T>0$ and $u\in C([-T,T];L^2(\M))$, using the compactness of $\M$, it holds
\begin{align*}
\big\|\G_N(u)\big\|_{C_TL^2} &\le \|u(0)\|_{L^2}+\Big\|\Pi_{\le N}\Big[e^{\be |\Pi_{\le N} u|^2}\Pi_{\le N}u\Big]\Big\|_{L^1_TL^2}\\
&\le \|u(0)\|_{H^s}+CT\Big\|\Pi_{\le N}\Big[e^{\be |\Pi_{\le N} u|^2}\Pi_{\le N}u\Big]\Big\|_{L^\infty_TL^2}\\
&\le \|u(0)\|_{H^s}+CTe^{\be \|\Pi_{\le N} u\|_{L^\infty_TL^\infty}^2}\big\|\Pi_{\le N}u\big\|_{L^\infty_TL^2}\\
&\le \|u(0)\|_{H^s}+CTe^{\be N^{d}\|u\|_{C_TL^2}^2}\|u\|_{C_TL^2}.
\end{align*}
Here we used the Bernstein type inequality $$\|\Pi_{\le N}u\|_{L^\infty}\les N^\frac{d}2 \|u\|_{L^2},$$ which follows from writing ${\displaystyle \Pi_{\le N}u = \int_\M e(x,y,N^2)u(y)dy}$ with $e(x,y,\Ld)$ the spectral function \eqref{spec}, and from the estimate
\begin{align*}
\|e(x,y,N^2)\|_{L^\infty_xL^2_y} = \sup_{x\in\M}\Big(\sum_{\ld_n\le N}|\varphi_n(x)|^2\Big)^\frac12 = \sup_{x\in\M}e(x,x,N^2)^\frac12 \les N^\frac{d}2
\end{align*}
which is a consequence of Lemma~\ref{LEM:loc eig}.

Thus for $T=T_N(R)\sim e^{-\be N^{d}R^2}$, $\G_N$ maps the ball $B_{R,T}$ of radius $R=2\|u_0\|$ of $C([-T,T];L^2(\M))$ in itself. Similarly, using the mean value theorem, it holds for any $u_1,u_2\in B_{R,T}$
\begin{align*}
&\big\|\G_N(u_1)-\G_N(u_2)\big\|_{C_TL^2}\\
&\le \|u_1(0)-u_2(0)\|_{L^2} + CT\Big\|\Pi_{\le N}\Big[e^{\be |\Pi_{\le N} u_1|^2}\Pi_{\le N}u_1\Big]-\Pi_{\le N}\Big[e^{\be |\Pi_{\le N} u_2|^2}\Pi_{\le N}u_2\Big]\Big\|_{L^\infty_TL^2}\\
&\le \|u_1(0)-u_2(0)\|_{L^2} + CT\Big\|e^{\be |\Pi_{\le N} u_1|^2}\Pi_{\le N}(u_1-u_2)\Big\|_{L^\infty_TL^2}\\
&\qquad+CT\Big\|\big|\Pi_{\le N}(u_1-u_2)\big|\big(|\Pi_{\le N}u_1|+|\Pi_{\le N}u_2|\big)\big(e^{\be |\Pi_{\le N} u_1|^2}+e^{\be |\Pi_{\le N} u_2|^2}\big)\Pi_{\le N}u_2\Big\|_{L^\infty_TL^2}\\
&\le \|u_1(0)-u_2(0)\|_{L^2} + CT\|u_1-u_2\|_{C_TL^2}e^{\be CN^d\|u_1\|_{C_TL^2}^2}\\
&\qquad+CTN^{2d}\|u_1-u_2\|_{C_TL^2}\big(\|u_1\|_{C_TL^2}+\|u_2\|_{C_TL^2}\big)\\
&\qquad\qquad\qquad\qquad\times\big(e^{\be CN^d\|u_1\|_{C_TL^2}^2}+e^{\be CN^d\|u_2\|_{C_TL^2}^2}\big)\|u_2\|_{C_TL^2}.
\end{align*}
This shows that for $T_N\sim N^{-2d}R^{-2}e^{-\be CN^dR^2}$, $\G_N$ is also a contraction on $B_{R,T_N}$. Thus by Banach fixed point theorem, there is a solution $u_N$ to \eqref{DuhamelN}, unique in $B_{R,T_N}$. Standard iteration argument shows that uniqueness holds in the whole of $C([-T_N,T_N];L^2(\M))$. Moreover,
\begin{align*}
\frac{d}{dt}\|u_N(t)\|_{L^2}^2&=2\langle u_N(t),\dt u_N(t)\rangle \\
&= 2\Re\Big\{\int_\M \cj u_N\cdot i\Big((-\Dlg)^\frac\al2 u_N(t) -2\gamma\be \Pi_{\le N}\big[e^{\be |\Pi_{\le N} u|^2}\Pi_{\le N}u\big]\Big)\Big\}dx\\
&=2\int_\M\Re i\Big\{\big|(-\Dlg)^\frac\al2u_N(t)\big|^2-2\gamma\be|\Pi_{\le N}u|^2e^{\be|\Pi_{\le N}u|^2}\Big\}dx = 0,
\end{align*}
after integrating by parts. The conservation of the $L^2$ norm of $u_N$ thus shows that \eqref{expNLSN} is actually globally well-posed in $C(\R;L^2(\M))$. At last, since $\Pi_{\le N}^2=\Pi_{\le N}$, in view of the equation \eqref{expNLSN} it holds $$u_N = \Pi_{\le N}u_N + (1-\Pi_{\le N})e^{it(-\Dlg)^\frac\al2}u_N(0),$$ which shows that actually $u_N\in C(\R;H^s(\M))$ if $u_N(0)\in H^s(\M)$. 

In the following we write $\Phi_N : u_0\in H^s(\M)\mapsto u_N \in C(\R;H^s(\M))$ the global flow of the truncated equation \eqref{expNLSN} on $H^s(\M)$, and for $t \in \R$ we use $\Phi_N(t): H^s(\M) \to H^s(\M)$ to denote the map defined by $\Phi_N(t)(\phi) = \big(\Phi_N(\phi)\big)(t)$. From the previous remark we have 
\begin{align*}
\Phi_N(t) = \Pi_{\le N}\Phi_N(t)+(1-\Pi_{\le N})e^{it(-\Dlg)^\frac\al2}.
\end{align*}
We also have
\begin{align*}
\rho_{\al,\be,N} = \ZZ_{\al,\be,N}^{-1}e^{-\gamma V_\be(\Pi_{\le N}u)}(\Pi_{\le N})_\#\mu_\al\otimes(1-\Pi_{\le N})_\#\mu_\al
\end{align*}
by definition of $\rho_{\al,\be,N}$ \eqref{GibbsM}. Note that the linear flow $e^{it(-\Dlg)^\frac\al2}$ preserves the Gaussian measure $\mu_\al$, since $e^{it(-\Dlg)^\frac\al2}$ acts on Fourier modes by multiplication by a complex number of modulus one, and this operation does not change the law of the standard complex-valued Gaussian random variables. As for the nonlinear part $\Pi_{\le N}\Phi_N(t)$, let $\Ld_N$ and $d_N$ be as in \eqref{dN}. Then writing $$\Pi_{\le N}\Phi_N(t)=\sum_{n\in\Ld_N}U_n(t)\varphi_n,$$ we have that $\{U_n\}_{n\in\Ld_N}$ solves the $d_N$-dimensional system of ODEs
\begin{align}\label{ODE}
\frac{d}{dt}U_n(t) = i\ld_n^\al U_n(t) -i2\gamma\be \Big\langle e^{\be|\sum_{m\in\Ld_n}U_m\varphi_m|^2}\big(\sum_{m\in\Ld_N}U_m\varphi_m\big),\varphi_n\Big\rangle_{L^2},~~\text{for all }n\in\Ld_N.
\end{align}
The system \eqref{ODE} is Hamiltonian, with Hamiltonian given by $\H_{\al,\be}(\sum_{n\in\Ld_n}U_n\varphi_n)$, where the energy $\H_{\al,\be}$ is as in \eqref{hamiltonian}. By Liouville's theorem, the Lebesgue measure ${\displaystyle \prod_{n=0}^{d_N-1}dU_n}$ is invariant by the flow of \eqref{ODE}. The invariance of the Hamiltonian and of the mass $\J(\sum_{n\in\Ld_N}U_n\varphi_n)$ with $\J$ in \eqref{mass} then ensures that the finite dimensional measure
\begin{align*}
\ZZ_{\al,\be,N}^{-1}e^{-(\J+\H_{\al,\be})(\sum_{n\in\Ld_N}U_n\varphi_n)}\Big(\prod_{n=0}^{d_N-1}dU_n\Big)
\end{align*}
is invariant under \eqref{ODE}. This proves the invariance of $\ZZ_{\al,\be,N}^{-1}e^{-\gamma V_\be(\Pi_{\le N}u)}(\Pi_{\le N})_\#\mu_\al$ under $\Pi_{\le N}\Phi_N(t)$, hence that of $\rho_{\al,\be,N}$ under $\Phi_N(t)$ by the previous discussion.
\end{proof}

As in \cite{OT}, we endow $ C(\R; H^s(\M))$ with the compact-open topology, and from the local Lipschitz continuity of $\Phi_N(\cdot)$ provided by the fixed point argument, it follows that $\Phi_N$ is continuous from $H^s (\M)$ into $C(\R; H^s(\M))  $.

Next we extend $\rho_{\al,\be,N}$ on $H^s$ to a probability measure $\nu_N$
on $C(\R; H^s(\M))  $ by setting
\[ \nu_N = \rho_{\al,\be,N} \circ \Phi_N^{-1}.\]

\noi
Namely, $\nu_N$ is the induced probability measure of $\rho_{\al,\be,N}$
under the map $\Phi_N$.
In particular, we have
\begin{align}
 \int_{C(\R;H^s(\M))}  F(u) d\nu_N (u) = \int_{H^s} F(\Phi_N(\phi)) d \rho_{\al,\be,N}(\phi)
\label{Y1}
 \end{align}

\noi
for any measurable function $F :C(\R; H^s(\M))\to  \R$.

\subsection{Tightness of the measures $\nu_N$}
\label{SUBSEC:5.2}

In the following, we prove that the sequence
$\{\nu_N\}_{N\in \N}$ of probability measures on $C(\R; H^s(\M))$ has a convergent subsequence. This will follow from Prokhorov's theorem (see for example \cite{Bass}) with the following proposition.

\begin{proposition}\label{PROP:tight}
The family $\{ \nu_N\}_{N \in \N}$ of probability measures on $C(\R; H^s(\M))$ is tight.
\end{proposition}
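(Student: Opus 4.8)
The plan is to invoke Prokhorov's theorem: since $C(\R;H^s(\M))$ (with the compact-open topology) is a Polish space, it suffices to exhibit, for every $\eps>0$, a compact set $K_\eps\subset C(\R;H^s(\M))$ with $\nu_N(K_\eps)\ge 1-\eps$ uniformly in $N$. Because the compact-open topology is metrized by a countable family of sup-norms over the time intervals $[-T,T]$, $T\in\N$, it is enough to produce, for each $T$, a set $K_{\eps,T}\subset C([-T,T];H^s(\M))$ which is precompact and of $\nu_N$-measure $\ge 1-\eps2^{-T}$; intersecting over $T$ and taking closures then yields $K_\eps$. For the precompactness of $K_{\eps,T}$ I would rely on the Aubin--Lions--Simon type compactness embedding encoded in Lemma~\ref{LEM:BTT1}: a bounded subset of $\{u\in L^p_TH^{s+\eps_0}: \dt u\in L^q_TH^{s_2}\}$ (with the parameters $\theta_0,\theta_1$ as in that lemma, $s_2<s$, and $\eps_0>(1-\theta_0)(s+\eps_0-s_2)$) maps into a set that is bounded in $C([-T,T];H^s(\M))$ and equicontinuous in time with a $|t_2-t_1|^{(1-\theta_1)(1-1/q)}$ modulus via \eqref{interp2}; this forces precompactness in $C([-T,T];H^s(\M))$ by Arzel\`a--Ascoli together with the compact Sobolev embedding $H^{s+\eps_0/2}(\M)\embeds H^s(\M)$ on the compact manifold $\M$. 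Concretely I take
\begin{align*}
K_{\eps,T}=\Big\{u\in C([-T,T];H^s(\M)):\ \|u\|_{L^p_TH^{s+\eps_0}}\le R,\ \|\dt u\|_{L^q_TH^{s_2}}\le R\Big\}
\end{align*}
for a large $R=R(\eps,T)$ to be fixed, and I must bound $\nu_N$ of its complement.

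The key step is therefore a uniform-in-$N$ moment bound for the two quantities defining $K_{\eps,T}$, computed under $\nu_N$, i.e.\ for $u_N=\Phi_N(\phi)$ with $\phi$ distributed according to $\rho_{\al,\be,N}$. For the first quantity I use the invariance of $\rho_{\al,\be,N}$ under $\Phi_N(t)$ (Lemma~\ref{LEM:global}) together with $\rho_{\al,\be,N}\ll\mu_\al$ with density bounded by $\ZZ_{\al,\be,N}^{-1}\le C$ uniformly in $N$ (since $e^{-\gamma V_\be(\Pi_{\le N}u)}\le 1$ in the defocusing case and $\ZZ_{\al,\be,N}\to\int e^{-\gamma V_\be}d\mu_\al>0$): this gives
\begin{align*}
\E_{\nu_N}\|u_N\|_{L^p_TH^{s+\eps_0}}^p=\int_{-T}^T\E_{\rho_{\al,\be,N}}\|\Phi_N(t)\phi\|_{H^{s+\eps_0}}^p\,dt\le C\,(2T)\,\E_{\mu_\al}\|\phi\|_{H^{s+\eps_0}}^p,
\end{align*}
which is finite provided $s+\eps_0<\frac{\al-d}2$, by Lemma~\ref{LEM:init} (tail estimate \eqref{tail}). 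For the time-derivative bound I go back to the equation \eqref{expNLSN}: $\dt u_N=i(-\Dlg)^{\al/2}u_N-2i\gamma\be\Pi_{\le N}[e^{\be|\Pi_{\le N}u_N|^2}\Pi_{\le N}u_N]$. Choosing $s_2=s-\al$ handles the linear term directly (it costs $\al$ derivatives, matched by the loss $s_2\le s_1-\al$), while for the nonlinear term one uses that $\Pi_{\le N}$ is a bounded operator on $L^2$, the pointwise bound $e^{\be|u|^2}|u|\le C_\be\sum_k\frac{(2\be)^k}{k!}|u|^{2k+1}$, the algebra/moment estimate for $\|\,\Pi_{\le N}[e^{\be|\Pi_{\le N}u|^2}\Pi_{\le N}u]\,\|_{L^2}$ controlled by $e^{C\be\|u\|_{L^\infty}^2}\|u\|_{L^2}$ (exactly as in the proof of Lemma~\ref{LEM:global}), and then integrates in time after taking expectation under $\rho_{\al,\be,N}$, using invariance again and $\rho_{\al,\be,N}\ll\mu_\al$. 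The resulting expectation is $\E_{\mu_\al}[e^{C'\be\|u\|_{L^\infty}^2}\|u\|_{L^2}]$, which is finite exactly in the regime $\be<\|\s_\al\|_{L^\infty}^{-1}$ — this is where Lemma~\ref{LEM:V}~(i), or more precisely the Gaussian chaos computation \eqref{V6}, enters; note the hypothesis of Theorem~\ref{THM:GWP1}~(i) is precisely $0<\be<\|\s_\al\|_{L^\infty}^{-1}$, so after possibly shrinking $p,q$ close to $1$ the exponential moment is integrable. Hence both moments are finite and uniform in $N$.

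With these two uniform moment bounds in hand, Chebyshev gives $\nu_N\big(C([-T,T];H^s)\setminus K_{\eps,T}\big)\le C(T)R^{-\min(p,q)}$, so choosing $R=R(\eps,T)$ large makes this $\le\eps2^{-T-1}$ for all $N$; then $K_\eps=\bigcap_{T\in\N}\overline{K_{\eps,T}}$ (interpreting each $K_{\eps,T}$ as a constraint on the restriction to $[-T,T]$) is compact in $C(\R;H^s(\M))$ and $\nu_N(K_\eps)\ge 1-\eps$, which is the claim. The main obstacle I expect is the nonlinear time-derivative estimate: unlike the polynomial case, the nonlinearity is the full exponential, so the $L^2$ bound on $\Pi_{\le N}[e^{\be|\Pi_{\le N}u|^2}\Pi_{\le N}u]$ only closes after passing to expectation and invoking the exponential integrability $e^{C\be\|u\|_{L^\infty}^2}\in L^1(\mu_\al)$, which forces the smallness restriction $\be<\|\s_\al\|_{L^\infty}^{-1}$ and is the reason one cannot reach the whole range $\al>d$ for arbitrary $\be$; one must also be careful that all constants in the nonlinear bound are genuinely $N$-independent, which they are because the only $N$-dependence, the Bernstein factor $N^{d/2}$, is never used here — we keep $\|\Pi_{\le N}u\|_{L^\infty}\le\|u\|_{L^\infty}$ thanks to the sharp spectral projector bound on $L^\infty$ following from Lemma~\ref{LEM:loc eig}.
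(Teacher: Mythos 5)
Your overall strategy is the same as the paper's: use the invariance of $\rho_{\al,\be,N}$ under $\Phi_N$ together with $d\rho_{\al,\be,N}\le C\,d\mu_\al$ (uniformly in $N$) to reduce everything to uniform-in-$N$ moment bounds for $\|u\|_{L^p_TH^{s+\eps_0}}$ and $\|\dt u\|_{L^q_TH^{s-\al}}$ under $\mu_\al$, then feed these into Lemma~\ref{LEM:BTT1} and an Arzel\`a--Ascoli/compact-embedding argument to produce precompact sets of uniformly large $\nu_N$-measure on each $[-T,T]$, intersected over a countable family of $T$'s. The paper packages the moment bounds as Lemma~\ref{LEM:bd1} and phrases the compactness through the H\"older norm $C^a_{T_j}H^{s_1}$ with a dyadic sum over $T_j=2^j$, but this is the same argument.

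Two points in your treatment of the nonlinear term need repair. First, the closing claim that $\|\Pi_{\le N}u\|_{L^\infty}\le\|u\|_{L^\infty}$ is false: the sharp spectral projector is not a contraction on $L^\infty$, and is not even uniformly bounded there (already on $\T$ the Dirichlet kernel has $L^1$-norm growing like $\log N$). Lemma~\ref{LEM:loc eig} only yields the $L^2\to L^\infty$ Bernstein bound with the factor $N^{d/2}$, which, as you note, you cannot afford. Second, bounding the nonlinearity by $e^{C\be\|u\|_{L^\infty}^2}\|u\|_{L^2}$ and then requiring $\E_{\mu_\al}\big[e^{C'\be\|u\|_{L^\infty}^2}\big]<\infty$ is too crude for the full range $0<\be<\|\s_\al\|_{L^\infty}^{-1}$: the tail estimate \eqref{tail} gives integrability of $e^{c\ld\|u\|_{L^\infty}^2}$ only for $\ld$ below a non-explicit constant times $\|\s_\al\|_{L^\infty}^{-1}$ (the Khinchin optimization does not produce the sharp Gaussian constant for the supremum), so this route would force $\be$ small rather than cover the stated range. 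The paper instead uses the embedding $H^{s-\al}(\M)\hookrightarrow L^1(\M)$ to reduce the $H^{s-\al}$ norm of the nonlinearity to $\E_{\mu_\al}\int_\M e^{q\be|\Pi_{\le N}\phi|^2}|\Pi_{\le N}\phi|^q\,dx$, splits off the polynomial factor by Young's inequality, and then invokes the \emph{pointwise} exponential moment \eqref{V6}, namely $\sup_N\big\|\E_{\mu_\al}e^{p\be|\Pi_{\le N}u(x)|^2}\big\|_{L^\infty_x}<\infty$ for $p\be\|\s_\al\|_{L^\infty}<1$ — which is precisely the Gaussian computation behind Lemma~\ref{LEM:V}~(i) that you cite. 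Running your estimate through that pointwise bound rather than through $\|u\|_{L^\infty}$ closes the argument exactly as in the paper.
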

To establish Proposition~\ref{PROP:tight}, we will mainly follow \cite{OT} (see also \cite{BTT1}). Thus we start with the following bounds.

\begin{lemma}\label{LEM:bd1}
Assume that $0<\be <\|\s_\al\|_{L^\infty}^{-1}$. Let  $p \ge 1$ and $1\le q <(\be\|\s_\al\|_{L^\infty})^{-1}$.
Then, there exists $C_p,C_q > 0$ such that 
\begin{align}
\big\| \| u\|_{L^p_T H^s} \big\|_{L^p(\nu_N)} & \leq C_p T^\frac{1}{p}, \label{Y2}\\
\big\| \| u\|_{W^{1, q}_T H^{s-\al}} \big\|_{L^q(\nu_N)} & \leq C_qT^\frac{1}{q}, 
\label{Y3}
\end{align}

\noi
uniformly in $N \in \N$.
	
\end{lemma}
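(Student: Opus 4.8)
\emph{Proof strategy for Lemma~\textup{\ref{LEM:bd1}}.}
The plan is to use the invariance of $\rho_{\al,\be,N}$ under the truncated flow (Lemma~\ref{LEM:global}) to reduce the space--time norms under $\nu_N$ to single-time moments under $\rho_{\al,\be,N}$, and then to dominate those by moments of the Gaussian measure $\mu_\al$ provided by Lemmas~\ref{LEM:init} and~\ref{LEM:V}. Since $\nu_N=\rho_{\al,\be,N}\circ\Phi_N^{-1}$ and $\Phi_N(t)_\#\rho_{\al,\be,N}=\rho_{\al,\be,N}$ for every $t$, for any nonnegative measurable $G$ on $H^s(\M)$ we have $\E_{\nu_N}[G(u(t))]=\E_{\rho_{\al,\be,N}}[G(\Phi_N(t)\phi)]=\E_{\rho_{\al,\be,N}}[G(\phi)]$. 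Taking $G=\|\cdot\|_{H^s}^p$ and using Tonelli to exchange the time integral with the expectation gives $\E_{\nu_N}\|u\|_{L^p_TH^s}^p\les T\,\E_{\rho_{\al,\be,N}}\|\phi\|_{H^s}^p$. Because $\gamma>0$ and $V_\be\ge\Vol(\M)>0$ we have $e^{-\gamma V_\be(\Pi_{\le N}u)}\le1$, hence $\rho_{\al,\be,N}\le\ZZ_{\al,\be,N}^{-1}\mu_\al$ as measures; moreover $\inf_N\ZZ_{\al,\be,N}>0$ since $\ZZ_{\al,\be,N}\to\ZZ_{\al,\be}>0$ (dominated convergence, as in the proof of Theorem~\ref{THM:Gibbs1}~(i)). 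Combined with $\E_{\mu_\al}\|\phi\|_{H^s}^p<\infty$ for all $p\ge1$ (Lemma~\ref{LEM:init} together with $W^{s,\infty}(\M)\hookrightarrow H^s(\M)$ by compactness of $\M$), this yields \eqref{Y2}.

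For \eqref{Y3}, the $L^q_TH^{s-\al}$ part of the $W^{1,q}_TH^{s-\al}$ norm is handled exactly as above via $\|\phi\|_{H^{s-\al}}\le\|\phi\|_{H^s}$. For the time derivative, note that $\nu_N$-a.e. $u$ solves \eqref{expNLSN}, so that in $H^{s-\al}(\M)$ and for a.e. $t$ one has $\dt u(t)=i(-\Dlg)^{\frac\al2}u(t)-2i\gamma\be\,\Pi_{\le N}\big[e^{\be|\Pi_{\le N}u(t)|^2}\Pi_{\le N}u(t)\big]$; thus $\|\dt u(t)\|_{H^{s-\al}}$ is a fixed measurable functional of $u(t)$, and invariance again gives $\E_{\nu_N}\|\dt u\|_{L^q_TH^{s-\al}}^q\les T\,\big(\E_{\rho_{\al,\be,N}}\|\phi\|_{H^s}^q+\E_{\rho_{\al,\be,N}}\big\|\Pi_{\le N}[e^{\be|\Pi_{\le N}\phi|^2}\Pi_{\le N}\phi]\big\|_{H^{s-\al}}^q\big)$, where the first term controls the linear part using $\|(-\Dlg)^{\frac\al2}\phi\|_{H^{s-\al}}\le\|\phi\|_{H^s}$ (as $\ld_n^{2\al}\le\jb{\ld_n}^{2\al}$) and is uniformly bounded in $N$ exactly as before. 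It remains to bound the second term uniformly in $N$.

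This is the crux. Since $0<s<\frac{\al-d}2$ we have $s-\al<-\frac{\al+d}2<-\frac d2$, hence $L^1(\M)\hookrightarrow H^{s-\al}(\M)$; together with $\|\Pi_{\le N}\|_{H^{s-\al}\to H^{s-\al}}\le1$ this gives $\big\|\Pi_{\le N}[e^{\be|\Pi_{\le N}\phi|^2}\Pi_{\le N}\phi]\big\|_{H^{s-\al}}\les\int_\M e^{\be|\Pi_{\le N}\phi|^2}|\Pi_{\le N}\phi|\,dx$. Using the $L^1$ rather than the $L^2$ embedding here is what avoids a factor $2$ in the exponent: for any $\eps>0$ one has $\sqrt t\,e^{\be t}\le C_\eps e^{(\be+\eps)t}$ for $t\ge0$, so the last integral is $\le C_\eps\, V_{\be+\eps}(\Pi_{\le N}\phi)$. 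As $q<(\be\|\s_\al\|_{L^\infty})^{-1}$ strictly, we may fix $\eps>0$ so small that $(\be+\eps)\|\s_\al\|_{L^\infty}<1$ and $q<\big((\be+\eps)\|\s_\al\|_{L^\infty}\big)^{-1}$; then Lemma~\ref{LEM:V}~(i), applied with $\be+\eps$ in place of $\be$ and exponent $q$, yields $\sup_N\E_{\mu_\al}[V_{\be+\eps}(\Pi_{\le N}\phi)^q]<\infty$. Using once more $\rho_{\al,\be,N}\le\ZZ_{\al,\be,N}^{-1}\mu_\al$ with $\inf_N\ZZ_{\al,\be,N}>0$, then integrating in $t$ and taking $q$-th roots, gives \eqref{Y3}. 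The only delicate point, and the main obstacle, is precisely this choice of function space for the nonlinear term so that the resulting exponential-moment requirement lands on $V_{\be+\eps}$ rather than $V_{2\be}$, which keeps us inside the range of $q$ covered by Lemma~\ref{LEM:V}~(i); a naive estimate through $L^2$ would only allow $q<\tfrac12(\be\|\s_\al\|_{L^\infty})^{-1}$.
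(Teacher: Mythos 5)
Your proposal is correct and follows essentially the same route as the paper: invariance of $\rho_{\al,\be,N}$ under $\Phi_N$ plus Fubini to reduce to single-time moments, domination of $\rho_{\al,\be,N}$ by $\ZZ_{\al,\be,N}^{-1}\mu_\al$, the embedding $L^1(\M)\hookrightarrow H^{s-\al}(\M)$ for the nonlinear term, and then Lemma~\ref{LEM:V}~(i) with a coupling constant slightly enlarged but still below the threshold $(q\|\s_\al\|_{L^\infty})^{-1}$. The only (cosmetic) difference is in absorbing the polynomial factor $|\Pi_{\le N}\phi|$: you use the pointwise bound $\sqrt{t}\,e^{\be t}\le C_\eps e^{(\be+\eps)t}$, whereas the paper separates it via Young's inequality into a polynomial $L^\infty$-moment (handled by Lemma~\ref{LEM:init}) plus $V_{q_2\be}$ with $q<q_2<(\be\|\s_\al\|_{L^\infty})^{-1}$; both land in the same place.
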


\begin{proof}

By Fubini's theorem, the invariance of $\rho_{\al,\be,N}$ under $\Phi_N$ (Lemma~\ref{LEM:global}), 
and that $e^{-\gamma V_\be}\in L^\infty(\mu_\al)$ (recall $\gamma>0$), we have 
 \begin{align}
\big\| \| u\|_{L^p_T H^s} \big\|_{L^p(\nu_N)}  
& = \big\| \| \Phi_N(t) (\phi)  \|_{L^p_T H^s}\big\|_{L^p(\rho_{\al,\be,N})}  = \big\| \| \Phi_N(t) (\phi)  \|_{L^p(\rho_{\al,\be,N}) H^s}\big\|_{L^p_T}  \notag \\
& = (2T)^{\frac{1}{p}} \| \phi \|_{L^p(\rho_{\al,\be,N}) H^s}\leq (2T)^\frac{1}{p}\|e^{-\gamma V_\be(\Pi_{\le N}\phi)} \|_{L^{\infty}(\mu_\al)} \| \phi \|_{L^{p}(\mu_\al) H^s}\label{Y4}\\
& \le C(2T)^{\frac{1}{p}} \| \phi \|_{L^{p}(\mu_\al) H^s}.\notag
\end{align}

\noi
Then,~\eqref{Y2}
follows from~\eqref{Y4} with \eqref{Lp} in the proof of Lemma~\ref{LEM:init}.

From the truncated equation~\eqref{expNLSN} we also have
\begin{align}
\big\| \| \dt u\|_{L^q_T H^{s-\al}} \big\|_{L^q(\nu_N)} 
\les
\big\| \| u\|_{L^q_T H^{s}} \big\|_{L^q(\nu_N)} 
+ \bigg\| \Big\|\Pi_{\le N}\Big[e^{\be |\Pi_{\le N} u|^2}\Pi_{\le N}u\Big]\Big\|_{L^q_T H^{s-\al}} \bigg\|_{L^q(\nu_N)} .
\label{Y5}
\end{align}

\noi
The first term is estimated by~\eqref{Y2}.
Proceeding as in~\eqref{Y4}, we have for the second term
 \begin{align*}
&\bigg\| \Big\|\Pi_{\le N}\Big[e^{\be |\Pi_{\le N} u|^2}\Pi_{\le N}u\Big]\Big\|_{L^q_T H^{s-\al}} \bigg\|_{L^q(\nu_N)}\\
&\qquad \les T^\frac{1}{q} \Big\|\Pi_{\le N}\Big[e^{\be |\Pi_{\le N} \phi|^2}\Pi_{\le N}\phi\Big]\Big\|_{L^q(\mu_\al) H^{s-\al}}\intertext{Since $\al>d$ and $s<\frac{\al-d}2$ and since $\M$ is compact it holds $H^{s-\al}(\M)\hookrightarrow L^1(\M)$, so that with Jensen's inequality in $x$ by compactness of $\M$ and Young's inequality we can continue with}
&\qquad\les T^\frac1q \Big\|\int_\M e^{\be|\Pi_{\le N}\phi|^2}|\Pi_{\le N}\phi|dx\Big\|_{L^q(\mu_\al)}\\
&\qquad\les (CT)^\frac1q \Big(\E_{\mu_\al}\Big[\int_\M e^{q\be|\Pi_{\le N}u_\al|^2}|\Pi_{\le N}u_\al|^qdx\Big]\Big)^\frac1q\\
&\qquad\les (CT)^\frac1q \Big(\E_{\mu_\al}\big[\|\Pi_{\le N}u_\al\|_{L^\infty}^qV_{q\be}(\Pi_{\le N}u_\al)\big]\Big)^\frac1q\\
&\qquad\les (CT)^\frac1q \Big(\E_{\mu_\al}\big[\|\Pi_{\le N}u_\al\|_{L^\infty}^\frac{q_2q}{q_2-q}\big]+\E_{\mu_\al}\big[V_{q_2\be}(\Pi_{\le N}u_\al)\big]\Big)^\frac1q,
\end{align*}
where $q<q_2<(\be\|\s_\al\|_{L^\infty})^{-1}$. This together with Lemmas~\ref{LEM:init} and~\ref{LEM:V}~(i) proves~\eqref{Y3}.
\end{proof}

We can finally move on to the proof of Proposition~\ref{PROP:tight}.

\begin{proof}[Proof of Proposition~\ref{PROP:tight}]
Let $0<s < s_1 < \frac{\al-d}2$.
Recall that for $a \in (0, 1)$, the Lipschitz space $C^a_TH^{s_1} = C^a([-T, T]; H^{s_1}(\M))$ is defined by the norm
\[ \| u \|_{C^a_T H^{s_1}} = \sup_{\substack{t_1, t_2 \in [-T, T]\\t_1 \ne t_2}}
\frac{\| u(t_1) - u(t_2) \|_{H^{s_1}}}{|t_1 - t_2|^a} + \|u \|_{L^\infty_T H^{s_1}},
\]
\noi
and $C^a_T H^{s_1}$ is compactly embedded in $C_T H^{s}$ for each $T>0$.

Let $p\gg 1$ and $1\le q<(\be\|\s_\al\|_{L^\infty})^{-1}$. Take  ${\displaystyle \theta_0=\frac{1-\frac1q}{\frac1p+1-\frac1q}}$, $\eps=(1-\theta_0)\al >(1-\theta_0)\frac\al2$ and $\theta_1 = \frac{\al}{\al+\eps}$ as in Lemma~\ref{LEM:BTT1}. Note that $\theta_0\to 1-$ as $q\to 1+$, thus we can take $q$ close enough to 1 so that $s_1+\eps<\frac{\al-d}2$. Then applying \eqref{interp1}-\eqref{interp2} in Lemma~\ref{LEM:BTT1} and Young's inequality, we have for $0<a<(1-\theta_1)(1-\frac1q)$
\begin{align}
\| u \|_{C^a_T H^{s_1}}
&\les \|u \|_{L^p_TH^{s_2}}^{1-\theta_0} \|u \|_{W^{1, q}_TH^{s_2 - \al}}^{\theta_0}\notag\\
&\qquad\qquad + \sup_{\substack{t_1,t_2\in[-T,T]\\t_1\neq t_2}}|t_1-t_2|^{(1-\theta_1)(1-\frac1q)-a}\|u\|_{L^\infty_TH^{s_1+\eps}}^{\theta_1}\|u\|_{W^{1,q}_TH^{s_1-\al}}^{1-\theta_1}\notag\\
&\les \|u \|_{L^p_TH^{s_2}}+ T^{1-\frac1q-\frac{a}{1-\theta_1}}\|u \|_{W^{1, q}_TH^{s_2 - \al}}.
\label{Y6}
\end{align}
Then, it follows from~\eqref{Y2} and 
\eqref{Y3} in Lemma~\ref{LEM:bd1} that 
\begin{align}
\big\|\| u \|_{C^\al_T H^{s_1}}\big\|_{L^p(\nu_N)}
\leq C_p T^\frac{1}{p}+C_qT^{1-\frac{a}{1-\theta_1}} \le C_{p,q}T^{1-\frac{a}{1-\theta_1}}
\label{Y7}
\end{align}
	
From there, we can conclude as in \cite{OT} by letting $T_j = 2^j$ for $j\in\N$ and defining
\[ K_\dl = \big\{ u \in C(\R;H^s(\M)):\, \| u \|_{C^a_{T_j} H^{s_1}} \leq c_0 \dl^{-1} T_j 
\text{ for all }
j \in \N \big\}\]
for $\dl>0$. Markov's inequality and \eqref{Y7} ensure that
\[ \nu_N(K_\dl^c) 
\leq c^{-1}_0 C_1 \dl T_j^{-1}
\big\|\| u \|_{C^a_{T_j} H^{s_1}}\big\|_{L^p(\nu_N)}
\leq c^{-1}_0 C_p \dl \sum_{j = 1}^\infty T_j^{-\frac{a}{1-\theta_1}}
\le c^{-1}_0 C_p c \dl < \dl\]
by taking $c_0$ large enough (recall $a>0$ and $\theta_1\in (0,1)$). The compactness of $K_\dl$ then follows from the same argument as in \cite{OT}.
\end{proof}

\subsection{Proof of Theorem~\ref{THM:GWP1}~(i)}\label{SUBSEC:5.3}
We finally move on to the proof of Theorem~\ref{THM:GWP1}~(i). We will construct the random variable $u$ in Theorem~\ref{THM:GWP1}~(i) as a limit of solutions $(\wt u_{N_j})$ to the truncated equation \eqref{expNLSN}. For this purpose we apply Proposition~\ref{PROP:tight} with Prokhorov's theorem. This provides us with a subsequence
$\nu_{N_j}$ which converges weakly to some probability measure $\nu $
on $C(\R; H^s(\M))$ for any $s< \frac{\al-d}2$.
By Skorokhod's theorem (see \cite{Bass}), there is a new probability space $(\wt \O, \F, \wt \PP)$
and a sequence of new random variables $\wt {u^{N_j}}$ with the same distribution $\nu_{N_j}$ as $u^{N_j}$, 
such that $\wt {u^{N_j}}$ converges almost surely to $u$. 

Moreover, they have the following pointwise (in time) distribution.

\begin{lemma}\label{LEM:end1}
Let $\wt u_{N_j}$ and $u$ be as above.
Then, we have
\begin{align*}
\L\big(\wt {u^{N_j}}(t)\big)  = \rho_{\al,\be,N_j}
\quad \text{and} \quad
\L(u(t)) = \rho_{\al,\be} 
\end{align*}

\noi
for any $t \in \R$.
\end{lemma}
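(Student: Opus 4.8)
The statement asserts that for the Skorokhod-realized limit $u$ and the approximating sequence $\wt{u^{N_j}}$, the time-$t$ marginal of $\wt{u^{N_j}}$ is $\rho_{\al,\be,N_j}$ and that of $u$ is $\rho_{\al,\be}$, for every $t\in\R$. I would establish the first identity directly from the construction and the invariance lemma, and the second by passing to the limit using weak convergence together with the total variation convergence $\rho_{\al,\be,N}\to\rho_{\al,\be}$ from Theorem~\ref{THM:Gibbs1}~(i).

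First I would recall that $\wt{u^{N_j}}$ has the same law on $C(\R;H^s(\M))$ as $u^{N_j}=\Phi_{N_j}(\phi)$ with $\phi\sim\rho_{\al,\be,N_j}$, by the definition $\nu_{N_j}=\rho_{\al,\be,N_j}\circ\Phi_{N_j}^{-1}$ and Skorokhod's theorem. Since, for each fixed $t$, the evaluation map $\mathrm{ev}_t:C(\R;H^s(\M))\to H^s(\M)$, $v\mapsto v(t)$, is continuous hence measurable, the law of $\wt{u^{N_j}}(t)$ equals the pushforward of $\nu_{N_j}$ under $\mathrm{ev}_t$, which is the law of $\Phi_{N_j}(t)(\phi)$ with $\phi\sim\rho_{\al,\be,N_j}$. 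By Lemma~\ref{LEM:global}, $\rho_{\al,\be,N_j}$ is invariant under $\Phi_{N_j}(t)$, so $\L(\wt{u^{N_j}}(t))=\rho_{\al,\be,N_j}$ for all $t\in\R$. This gives the first claim.

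For the second claim, fix $t\in\R$ and a bounded continuous $F:H^s(\M)\to\R$. Almost sure convergence $\wt{u^{N_j}}\to u$ in $C(\R;H^s(\M))$ forces $\wt{u^{N_j}}(t)\to u(t)$ in $H^s(\M)$ $\wt\PP$-a.s., so by dominated convergence $\E_{\wt\PP}[F(\wt{u^{N_j}}(t))]\to\E_{\wt\PP}[F(u(t))]$. On the other hand, by the first claim, $\E_{\wt\PP}[F(\wt{u^{N_j}}(t))]=\int_{H^s} F\,d\rho_{\al,\be,N_j}$, and since $\rho_{\al,\be,N_j}\to\rho_{\al,\be}$ in total variation (Theorem~\ref{THM:Gibbs1}~(i)) and $F$ is bounded, $\int F\,d\rho_{\al,\be,N_j}\to\int F\,d\rho_{\al,\be}$. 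Hence $\E_{\wt\PP}[F(u(t))]=\int_{H^s} F\,d\rho_{\al,\be}$ for all bounded continuous $F$, which identifies $\L(u(t))=\rho_{\al,\be}$.

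The only delicate point—hardly an obstacle—is making sure the measurability and topology are set up correctly: $C(\R;H^s(\M))$ carries the compact-open topology, $\mathrm{ev}_t$ is continuous for this topology, and weak convergence $\nu_{N_j}\rightharpoonup\nu$ together with continuity of $\mathrm{ev}_t$ gives $\mathrm{ev}_t{}_\#\nu_{N_j}\rightharpoonup\mathrm{ev}_t{}_\#\nu$; combined with the above this also shows $\nu$ is supported on paths with time-$t$ marginal $\rho_{\al,\be}$, consistent with the Skorokhod realization. One should note that total variation convergence is strictly stronger than weak convergence, so the limit identification is unambiguous; alternatively one can argue purely with weak convergence of $\rho_{\al,\be,N_j}$ (which follows a fortiori) since $F$ is continuous and bounded.
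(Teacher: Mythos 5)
Your proof is correct and is precisely the standard argument the paper invokes by reference (it cites \cite[Proposition 4.11]{BTT1} and \cite[Lemma 5.8]{OT} and omits the details): the first identity from the definition of $\nu_{N_j}$, continuity of the evaluation map, and the invariance in Lemma~\ref{LEM:global}; the second from almost sure convergence plus the total variation convergence $\rho_{\al,\be,N}\to\rho_{\al,\be}$ of Theorem~\ref{THM:Gibbs1}~(i). Nothing is missing.
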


\begin{proof}
This follows exactly as in \cite[Proposition 4.11]{BTT1} or \cite[Lemma 5.8]{OT} so we omit details.
\end{proof}

To conclude the proof of Theorem~\ref{THM:GWP1} it remains to show that the random variable $u$ is indeed a global-in-time distributional solution to the exponential NLS \eqref{expNLS}.

\begin{lemma}\label{LEM:end2}
Let $\wt {u^{N_j}}$ and $u$ be as above.
Then, 
$\wt {u^{N_j}}$ and $u$
are global-in-time distributional 
solutions to 
the truncated exponential NLS~\eqref{expNLSN} for each $j \in \N$
and
to the exponential NLS~\eqref{expNLS}, respectively.
\end{lemma}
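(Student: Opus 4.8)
The plan is to first transfer the truncated equation from $u^{N_j}$ to its Skorokhod copy $\wt{u^{N_j}}$, and then to pass to the limit $j\to\infty$ in the Duhamel formulation \eqref{DuhamelN}; the delicate point will be the convergence of the truncated exponential nonlinearity, which I would control using the uniform-in-$N$ moment bounds of Lemmas~\ref{LEM:init} and~\ref{LEM:V}~(i) together with the invariance of $\rho_{\al,\be,N}$ under $\Phi_N$ (Lemma~\ref{LEM:global}). For fixed $T>0$, I would introduce the map $F_N\colon C([-T,T];H^s(\M))\to C([-T,T];H^s(\M))$,
\begin{equation*}
F_N(v)(t)=v(t)-e^{it(-\Dlg)^{\frac\al2}}v(0)+2i\gamma\be\int_0^t e^{i(t-t')(-\Dlg)^{\frac\al2}}\Pi_{\le N}\Big[e^{\be|\Pi_{\le N}v|^2}\Pi_{\le N}v\Big](t')\,dt',
\end{equation*}
measuring the failure of \eqref{DuhamelN}. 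Since $\Pi_{\le N}$ has finite-dimensional range, on which all Sobolev norms are equivalent, and $z\mapsto e^{\be|z|^2}z$ is smooth, the estimates in the proof of Lemma~\ref{LEM:global} (the Bernstein bound $\|\Pi_{\le N}v\|_{L^\infty}\les N^{\frac d2}\|v\|_{L^2}$ and the mean value theorem) show $F_N$ is continuous, so $\{F_N=0\}$ is Borel. By Lemma~\ref{LEM:global}, $u^{N_j}=\Phi_{N_j}(\phi)$ satisfies \eqref{DuhamelN}, hence $F_{N_j}(u^{N_j})\equiv0$ $\PP$-a.s.; since $\wt{u^{N_j}}$ has the same law $\nu_{N_j}$ on $C(\R;H^s(\M))$, also $F_{N_j}(\wt{u^{N_j}})\equiv0$ $\wt\PP$-a.s., for all $T>0$. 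Thus $\wt{u^{N_j}}$ solves \eqref{DuhamelN}, i.e.\ is a global-in-time distributional solution of \eqref{expNLSN}.

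To pass to the limit, fix $T>0$ and write $G_j=e^{\be|\Pi_{\le N_j}\wt{u^{N_j}}|^2}\Pi_{\le N_j}\wt{u^{N_j}}$ and $G=e^{\be|u|^2}u$; the latter makes sense because, pointwise in $t$, $u(t)\in C(\M)$ $\wt\PP$-a.s.\ (as $\rho_{\al,\be}\ll\mu_\al$ and $\mu_\al$ charges $W^{0+,\infty}(\M)$ by Lemma~\ref{LEM:init}). Recalling the embedding $L^1(\M)\hookrightarrow H^{s-\al}(\M)$, valid for $\al>d$ and $s<\frac{\al-d}2$ (as in the proof of Lemma~\ref{LEM:bd1}), I would subtract the putative Duhamel identity for $u$ from \eqref{DuhamelN} for $\wt{u^{N_j}}$. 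Since $\wt{u^{N_j}}\to u$ $\wt\PP$-a.s.\ in $C_{\mathrm{loc}}(\R;H^s(\M))\hookrightarrow C_{\mathrm{loc}}(\R;H^{s-\al}(\M))$ and $e^{it(-\Dlg)^{\frac\al2}}$ is unitary on $H^{s-\al}(\M)$, the linear terms converge $\wt\PP$-a.s., and everything reduces to showing $\|\Pi_{\le N_j}[G_j]-G\|_{L^1((-T,T);H^{s-\al})}\to0$ in probability. Writing $\Pi_{\le N_j}[G_j]-G=(\Pi_{\le N_j}-\Id)G_j+(G_j-G)$, the first term is $O(N_j^{-\eps_0}\|G_j\|_{L^1(\M)})$ in $H^{s-\al}$ for small $\eps_0>0$ (by $L^1\hookrightarrow H^{s-\al+\eps_0}$), which vanishes in $L^1(\wt\O)$ once $\sup_j\E_{\wt\PP}\|G_j\|_{L^1_TL^1_x}<\infty$.

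The main obstacle is to prove $G_j\to G$ in $L^1(\wt\O\times(-T,T)\times\M)$, which I would do via convergence in measure plus a uniform $L^r$ bound with $r>1$. First, $\Pi_{\le N_j}\wt{u^{N_j}}-u=\Pi_{\le N_j}(\wt{u^{N_j}}-u)+(\Pi_{\le N_j}-\Id)u\to0$ in $C([-T,T];L^2(\M))$ $\wt\PP$-a.s.\ (using $s>0$), hence in $L^2((-T,T)\times\M)$; since $z\mapsto e^{\be|z|^2}z$ is continuous, dominated convergence then yields $G_j\to G$ in $(\wt\PP\otimes dt\otimes dx)$-measure. Second, by Lemma~\ref{LEM:end1} the law of $\wt{u^{N_j}}(t)$ is $\rho_{\al,\be,N_j}$ for every $t$, with density $\frac{d\rho_{\al,\be,N}}{d\mu_\al}=\ZZ_{\al,\be,N}^{-1}e^{-\gamma V_\be(\Pi_{\le N}\cdot)}\le\ZZ_{\al,\be,N}^{-1}\le C$ uniformly in $N$ (as $\inf_N\ZZ_{\al,\be,N}>0$, e.g.\ by Jensen and $\sup_N\E_{\mu_\al}[V_\be(\Pi_{\le N}u)]<\infty$ from Lemma~\ref{LEM:V}~(i), using $\gamma>0$ and $0<\be<\|\s_\al\|_{L^\infty}^{-1}$). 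Choosing $1<r<(\be\|\s_\al\|_{L^\infty})^{-1}$ — possible precisely because $\be<\|\s_\al\|_{L^\infty}^{-1}$ — and rerunning the Gaussian computation \eqref{V3}--\eqref{V5} with $p$ replaced by $r$ gives
\begin{equation*}
\sup_j\E_{\wt\PP}\|G_j\|_{L^r((-T,T)\times\M)}^r\les T\,\sup_N\int_\M\E_{\mu_\al}\big[e^{r\be|\Pi_{\le N}u(x)|^2}|\Pi_{\le N}u(x)|^r\big]\,dx<\infty,
\end{equation*}
using $\s_{\al,N}(x)\le\|\s_\al\|_{L^\infty}$. Hence $\{G_j\}_j$ is bounded in $L^r$, $r>1$, of the product space, so uniformly integrable there, and Vitali's theorem gives $G_j\to G$ in $L^1(\wt\O\times(-T,T)\times\M)$. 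Finally, in the difference of the two Duhamel formulas all terms converge (the nonlinear one in $L^1(\wt\O)$, the others $\wt\PP$-a.s.) while the left-hand side is identically zero, so letting $T$ range over $\N$ shows $u$ is $\wt\PP$-a.s.\ a global-in-time distributional solution of \eqref{expNLS}. I expect this last step — the passage to the limit in the nonlinearity — to be the real difficulty: without the restriction $\be<\|\s_\al\|_{L^\infty}^{-1}$ one loses even the $L^1$ (let alone $L^r$ with $r>1$) bound on $G_j$, so the uniform integrability, and with it the whole limiting argument, collapses.
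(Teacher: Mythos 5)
Your proposal is correct, and while the overall skeleton (transfer of the truncated Duhamel identity to the Skorokhod copies via equality of laws, then passage to the limit in the Duhamel formulation, with the nonlinearity measured in $H^{s-\al}\supset L^1$) necessarily matches the paper's, you handle the one genuinely delicate step — the convergence of $e^{\be|\Pi_{\le N_j}\wt{u^{N_j}}|^2}\Pi_{\le N_j}\wt{u^{N_j}}$ to $e^{\be|u|^2}u$ — by a different mechanism. The paper introduces an auxiliary truncation parameter $M$, writes $F_j(u_j)-F(u)$ as a four-term telescoping sum, and proves quantitative $O(M^{-\eps})$ bounds, uniform in $j$, for the terms $F-F_M$ by combining the invariance of $\rho_{\al,\be,N_j}$, H\"older, the uniform exponential moment bound \eqref{V6}, and the tail decay of $\s_\al-\s_{\al,M}$; the fixed-$M$ term is handled by continuity. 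You instead prove convergence in $(\wt\PP\otimes dt\otimes dx)$-measure directly from the almost sure convergence in $C_TL^2$ and continuity of $z\mapsto e^{\be|z|^2}z$, and upgrade it to $L^1$ convergence via Vitali, the uniform integrability coming from an $L^r$ bound, $r>1$, obtained from Lemma~\ref{LEM:end1}, the uniform lower bound on $\ZZ_{\al,\be,N}$, and the Gaussian computation of Lemma~\ref{LEM:V}. Your route avoids the auxiliary parameter and the rates and is arguably cleaner; the paper's yields an explicit convergence rate in $M$ and stays closer to the template of \cite{OT}. Both hinge on exactly the same moment input and hence on the same restriction $\be<\|\s_\al\|_{L^\infty}^{-1}$, and your closing remark correctly identifies this as the binding constraint. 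Two minor points worth making explicit in a write-up: the estimate $\E_{\mu_\al}[e^{r\be|v|^2}|v|^r]<\infty$ needs a hair of extra room ($|v|^r\le C_\eps e^{\eps|v|^2}$ with $(r+\eps')\be\|\s_\al\|_{L^\infty}<1$), and the final Duhamel identity for $u$ holds $\wt\PP$-a.s.\ only after extracting a further subsequence from the $L^1(\wt\O)$ convergence of the nonlinear term, exactly as the paper does.
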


\begin{proof}
The only difference compared to the proof of Theorem 1.5 in \cite{OT} is in checking the almost sure convergence of 
\begin{align*}
F_{N_j}\big(\wt{u^{N_j}}\big)=\Pi_{\le N_j}\Big[e^{\be|\Pi_{\le N_j}\wt u^{N_j}|^2}\Pi_{\le N_j}\wt u^{N_j}\Big]
\end{align*}
to $F(u) = e^{\be|u|^2}u$.

In the following, we simply write $F_j = F_{N_j}$
and $u_j =  \wt{u^{N_j}}$.
For any $M \in \N$, we decompose
\begin{align}
F_j (u_j) - F(u) 
& = \big( F_j(u_j) - F(u_j) \big) +\big( F (u_j) - F_M(u_j) \big)\notag \\
& \hphantom{XXX}
+  \big( F_M (u_j) - F_M(u) \big)
+  \big( F_M (u) - F(u) \big).
\label{Y13}
\end{align}
	
\noi
Then, 
for each fixed $M \geq 1$, 
it follows from 
 the almost sure convergence
of  $\wt {u^{N_j}}$  to $u$ in $C(\R;H^s(\M))$
and the continuity of $F_M$
that the third term on the right-hand side of~\eqref{Y13}
converges to 0 in  $C(\R;H^s(\M))$ as $j \to \infty$, almost surely with respect to $\wt \PP$.

Fix $T>0$ and $1<p<(\be\|\s_\al\|_{L^\infty})^{-1}$, and let $\frac1{p'}=1-\frac1p$. Then similar computations as in~\eqref{Y4} yield
\begin{align}
\big\|\|F (u_j) - F_M(u_j)\|_{L^1_T H^{s-\al}}\big\|_{L^1(\nu_{N_j})}
& = 
\big\|\|F (\Phi_{N_j} \phi ) - F_M(\Phi_{N_j} \phi)\|_{L^1(\rho_{\al,\be,N_j}) H^{s-\al}}\big\|_{L^1_T} \notag \\
& = 2T
\|F ( \phi ) - F_M( \phi)\|_{L^1(\rho_{\al,\be,N_j}) H^{s-\al}} \notag \\
& \les T \|F ( \phi ) - F_M( \phi)\|_{L^1(\mu_\al) H^{s-\al}},
\label{Y14}
\end{align}
and with \eqref{V4},
\begin{align}
&\|F ( \phi ) - F_M( \phi)\|_{L^1(\mu_\al) H^{s-\al}}\\
& = \Big\|\Pi_M\Big[e^{\be|\Pi_M\phi|^2}\Pi_M\phi\Big]-e^{\be|\phi|^2}\phi\Big\|_{L^p(\mu_\al) H^{s-\al}}\notag\\
& \les \Big\|(1-\Pi_M)\Big[e^{\be|\phi|^2}\phi\Big]\Big\|_{L^1(\mu_\al) H^{s-\al}}+\Big\|e^{\be|\Pi_M\phi|^2}(1-\Pi_M)\phi\Big\|_{L^1(\mu_\al) L^1}\notag\\
&\qquad\qquad+\Big\|\Big[e^{\be|\Pi_M\phi|^2}-e^{\be|\phi|^2}\Big]\phi\Big\|_{L^1(\mu_\al) L^1}\notag\\
& \les M^{-\eps}\Big\|e^{\be|\phi|^2}\phi\Big\|_{L^1(\mu_\al) L^1}+\Big\|e^{\be|\Pi_M\phi|^2}\Big\|_{L^p(\mu_\al)L^p}\big\|(1-\Pi_M)\phi\big\|_{L^{p'}(\mu_\al)L^{p'}}\notag\\
&\qquad+\Big\|(1-\Pi_M)\phi(|\Pi_M\phi|+|\phi|)\big(e^{\be|\Pi_M\phi|^2}+e^{\be|\phi|^2}\big)\phi\Big\|_{L^1(\mu_\al) L^1}\notag\\
& \les M^{-\eps}\big\|V_{\be}(\phi)\big\|_{L^1(\mu_\al)}+M^{-\eps}\Big\|\E_{\mu_\al}\big[e^{p\be|\Pi_M\phi|^2}\big]\Big\|_{L^\infty}\notag\\
&\qquad+M^{-\eps}\big(\|\Pi_M\phi\|_{L^{3p'}(\mu_\al)L^{3p'}}+\|\phi\|_{L^{3p'}(\mu_\al)L^{3p'}}\big)\Big(\Big\|\E_{\mu_\al}\big[e^{p\be|\Pi_M\phi|^2}\big]\Big\|_{L^\infty}+\Big\|\E_{\mu_\al}\big[e^{p\be|\phi|^2}\big]\Big\|_{L^\infty}\Big)\notag\\
&\les M^{-\eps}\label{Y15}
\end{align}
\noi
uniformly in $j\in\N$, for some small $\eps > 0$ such that $s-\al+\eps<-d$ (recall that $\al>d$ and $s<\frac{\al-d}2$) so that $H^{s-\al+\eps}(\M)\hookrightarrow L^1(\M)$ by compactness of $\M$. We also used H\"older's inequality, the mean value theorem, and that for any $q\ge 1$
\begin{align*}
\big\|(1-\Pi_M)\phi\big\|_{L^{q}(\mu_\al)L^{q}}&\le C\sqrt{q}\big\|(1-\Pi_M)\phi\big\|_{L^{q}L^{2}(\mu_\al)} =C\sqrt{q}\big\|\s_\al-\s_{M,\al}\big\|_{L^{q}} \les M^{-\eps}
\end{align*}
by Minkowski inequality and Khinchin's inequality (Lemma~\ref{LEM:Khinchin}), since $0<\eps<\al-d$, the estimate on the tail $\s_\al-\s_{M,\al}$ being ensured again by Corollary~\ref{LEM:avg eig}.

Thus the first term in the right-hand side of \eqref{Y13} is controlled by \eqref{Y14}-\eqref{Y15}. The first and last terms are estimated similarly. leading to the convergence of $F_j(u_j)$ towards $F(u)$ in $L^1(\nu_{N_j};L^1([-T;T];H^{s-\al}(\M)))$. Thus after passing to a subsequence, $F_j(u_j)$ converges $\wt\PP$-almost surely to $F(u)$ in $L^1([-T, T]; H^{s-\al}(\M))$. The rest of the proof is identical to \cite[Lemma 5.9]{OT}.
\end{proof}

Finally, combining Lemma~\ref{LEM:end1} and~\ref{LEM:end2} through the same argument as in \cite{OT} finishes the proof of Theorem~\ref{THM:GWP1}.

\section{Strong invariance of the Gibbs measure for higher dispersion}\label{SEC:GWP1}
In this section, we give the proof of Theorem~\ref{THM:GWP1}~(ii). We thus fix $\gamma>0$, $\be>0$, $\al>2d$, and $\frac{d}2<s<\frac{\al-d}2$. We first establish a general local well-posedness result in Subsection~\ref{SUBSEC:LWP1} and then implement Bourgain's invariant measure argument in the context of \eqref{expNLS} in Subsection~\ref{SUBSEC:GWP}.

\subsection{Local well-posedness for high dispersion}\label{SUBSEC:LWP1}
We start with a deterministic local well-posedness result for the exponential NLS \eqref{expNLS}.
\begin{proposition}\label{PROP:LWP1}
There exists $c>0$ and $C_1=C_1(s,d)>0$ independent of $\al$ such that for any $\be>0$ and any $R\ge \theta>0$, if we set $T= c\theta (|\gamma|\be R)^{-1}e^{-C_1\be R^2}$, then for any $u_0\in H^s(\M)$ with $\|u_0\|_{H^s}\le R$, the Cauchy problem for \eqref{expNLS} with initial data $u_0$ has a unique solution $u$ in $C([-T;T];H^s(\M))$, which satisfies
\begin{align}\label{bdPhi}
\|u\|_{C_TH^s}\le R+\theta.
\end{align}
Moreover, the flow map $\Phi:u_0\in \{v_0\in H^s(\M),~\|v_0\|_{H^s}\le R\}\mapsto u\in C([-T,T];H^s(\M))$ is Lipschitz continuous.
\end{proposition}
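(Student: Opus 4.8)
The plan is to solve the Cauchy problem by a contraction argument on the Duhamel map
\begin{align*}
\Gamma_{u_0}(u)(t) = e^{it(-\Dlg)^{\frac\al2}}u_0 - 2i\gamma\be\int_0^t e^{i(t-t')(-\Dlg)^{\frac\al2}}\big[e^{\be|u|^2}u\big](t')\,dt'
\end{align*}
on a ball of $C([-T,T];H^s(\M))$. The one structural fact used is that, since $s>\frac d2$, $H^s(\M)$ is a Banach algebra continuously embedded in $L^{\infty}(\M)$; I will write $C_s=C_s(s,d)\ge1$ for the algebra constant, $\|vw\|_{H^s}\le C_s\|v\|_{H^s}\|w\|_{H^s}$. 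Note this constant, and hence the final $C_1$, depends only on $(s,d)$: the propagator will enter only through its unitarity on $H^s(\M)$, which is the reason $C_1$ is independent of $\al$.

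First I would record the pointwise-in-time nonlinear estimates, obtained by expanding $e^{\be|u|^2}u=\sum_{k\ge0}\frac{\be^k}{k!}|u|^{2k}u$, using the algebra property (a product of $2k+1$ factors in $H^s(\M)$ costs $C_s^{2k}$) and the fact that complex conjugation is an $H^s(\M)$-isometry. From $\||u|^{2k}u\|_{H^s}=\|u^{k+1}\cj{u}^{\,k}\|_{H^s}\le C_s^{2k}\|u\|_{H^s}^{2k+1}$ and summing,
\begin{align*}
\big\|e^{\be|u|^2}u\big\|_{H^s}\le\sum_{k\ge0}\frac{\be^k}{k!}C_s^{2k}\|u\|_{H^s}^{2k+1}=\|u\|_{H^s}\,e^{\be C_s^2\|u\|_{H^s}^2},
\end{align*}
while writing $u_1^{k+1}\cj{u_1}^{\,k}-u_2^{k+1}\cj{u_2}^{\,k}$ as a telescoping sum of $2k+1$ terms, each carrying one factor $u_1-u_2$ or $\cj{u_1}-\cj{u_2}$ and $2k$ factors bounded by $M\deff\max(\|u_1\|_{H^s},\|u_2\|_{H^s})$, gives
\begin{align*}
\big\|e^{\be|u_1|^2}u_1-e^{\be|u_2|^2}u_2\big\|_{H^s}\le\|u_1-u_2\|_{H^s}\sum_{k\ge0}\frac{\be^k}{k!}(2k+1)C_s^{2k}M^{2k}\le\|u_1-u_2\|_{H^s}\,e^{3\be C_s^2M^2},
\end{align*}
where I used $\sum_{k\ge0}\frac{\be^k}{k!}(2k+1)x^k=(1+2\be x)e^{\be x}\le e^{3\be x}$ with $x=C_s^2M^2$. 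In particular $u\mapsto e^{\be|u|^2}u$ is locally Lipschitz on $H^s(\M)$, so $\Gamma_{u_0}$ indeed sends $C([-T,T];H^s(\M))$ into itself.

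Next, since $e^{it(-\Dlg)^{\frac\al2}}$ is a Fourier multiplier of modulus $1$ and hence unitary on $H^s(\M)$, for $\|u_0\|_{H^s}\le R$ and $u,u_1,u_2$ in the closed ball $B=\{v\in C([-T,T];H^s(\M)):\|v\|_{C_TH^s}\le R+\ta\}$ (so $R+\ta\le 2R$ by the hypothesis $R\ge\ta$) the two estimates above give
\begin{align*}
\|\Gamma_{u_0}(u)\|_{C_TH^s}\le R+2|\gamma|\be\,T\,(R+\ta)\,e^{\be C_s^2(R+\ta)^2}\le R+4|\gamma|\be R\,T\,e^{4\be C_s^2R^2}
\end{align*}
and
\begin{align*}
\|\Gamma_{u_0}(u_1)-\Gamma_{u_0}(u_2)\|_{C_TH^s}\le 2|\gamma|\be\,T\,e^{12\be C_s^2R^2}\,\|u_1-u_2\|_{C_TH^s}.
\end{align*}
Setting $C_1\deff12C_s^2$ and $c\deff\frac14$, the choice $T=c\,\ta\,(|\gamma|\be R)^{-1}e^{-C_1\be R^2}$ makes the first right-hand side $\le R+\ta$ — which is \eqref{bdPhi} — and the Lipschitz constant $\le\frac12$; thus $\Gamma_{u_0}$ is a contraction of $B$, and the Banach fixed point theorem produces the unique solution $u\in B$. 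It then remains to upgrade uniqueness to the full space $C([-T,T];H^s(\M))$ and to get the Lipschitz bound, both routine: a second solution $v$ with $v(0)=u_0$ is bounded on $[-T,T]$, say by $\widetilde R$, and the difference estimate on successive subintervals of length $\sim(|\gamma|\be\widetilde R)^{-1}e^{-C_1\be\widetilde R^2}$ forces $u=v$ on each, with finitely many covering $[-T,T]$; and if $u,v$ solve \eqref{expNLS} with data $u_0,v_0$ of $H^s(\M)$-norm $\le R$ they both lie in $B$, whence the difference estimate with the above $T$ yields $\|u-v\|_{C_TH^s}\le\|u_0-v_0\|_{H^s}+\frac12\|u-v\|_{C_TH^s}$, i.e.\ $\|u-v\|_{C_TH^s}\le2\|u_0-v_0\|_{H^s}$. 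I do not expect a genuine obstacle here; the only delicate point is the bookkeeping of constants needed to land on exactly the stated form of $T$, and it succeeds precisely because the exponential nonlinearity is controlled solely by the algebra structure of $H^s(\M)$ while the propagator contributes only an $\al$-blind unitary factor.
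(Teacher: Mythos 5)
Your proposal is correct and follows essentially the same route as the paper's proof: a contraction argument on the Duhamel map in a ball of $C([-T,T];H^s(\M))$, using the algebra property of $H^s(\M)$ for $s>\frac d2$ (which the paper derives from the fractional Leibniz rule of Lemma~\ref{LEM:Leibniz}), the unitarity of the propagator on $H^s(\M)$, term-by-term summation of the exponential series, and the same iteration on subintervals for unconditional uniqueness. Your bookkeeping of the algebra constant ($C_s^{2k}$ for a $(2k+1)$-fold product) and of the ball radius $R+\theta$ is in fact slightly more careful than the paper's.
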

The proof of Proposition~\ref{PROP:LWP1} is straightforward from the following fractional Leibniz rule, which implies the algebra property of $H^s(\M)$ for $s>\frac{d}2$ due to Sobolev inequality.
\begin{lemma}\label{LEM:Leibniz}
Let $s>\frac{d}2$. Then there exists $C_0>0$ such that for any $u,v\in H^s(\M)$, it holds
\begin{align*}
\|uv\|_{H^s}\le C_0\big(\|u\|_{H^s}\|v\|_{L^\infty}+\|u\|_{L^\infty}\|v\|_{H^s}\big).
\end{align*}
\end{lemma}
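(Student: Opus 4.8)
The plan is to deduce Lemma~\ref{LEM:Leibniz} from the classical fractional Leibniz rule on $\R^d$ by means of a partition of unity and a passage to local coordinates. Recall that the Kato--Ponce inequality provides, for every $s>0$, a constant $C=C(s,d)$ such that $\|fg\|_{H^s(\R^d)}\le C\big(\|f\|_{H^s(\R^d)}\|g\|_{L^\infty(\R^d)}+\|f\|_{L^\infty(\R^d)}\|g\|_{H^s(\R^d)}\big)$ for all $f,g\in H^s(\R^d)\cap L^\infty(\R^d)$; this follows by combining the homogeneous estimate $\||\nabla|^s(fg)\|_{L^2}\lesssim\||\nabla|^s f\|_{L^2}\|g\|_{L^\infty}+\|f\|_{L^\infty}\||\nabla|^s g\|_{L^2}$ (valid up to the $L^\infty$ endpoint by the work of Grafakos and Oh, or directly via Bony's paraproduct decomposition) with the trivial bound $\|fg\|_{L^2}\le\|f\|_{L^\infty}\|g\|_{L^2}$.

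To transfer this to $\M$, I would fix a finite atlas $\{(U_i,\kappa_i)\}_{i=1}^I$, a subordinate smooth partition of unity $\{\chi_i\}$ with $\sum_i\chi_i\equiv1$, and auxiliary cutoffs $\psi_i\in C^\infty_c(U_i)$ with $\psi_i\equiv1$ on $\supp\chi_i$. The two structural facts I would invoke are both standard: (a) multiplication by a fixed smooth function is a pseudodifferential operator of order $0$, hence bounded on $H^s(\M)$ for every $s\in\R$; and (b) the intrinsic norm $\|w\|_{H^s(\M)}=\|(1-\Dlg)^{s/2}w\|_{L^2}$ is equivalent to $\big(\sum_i\|(\chi_i w)\circ\kappa_i^{-1}\|_{H^s(\R^d)}^2\big)^{1/2}$, a consequence of elliptic regularity for $\Dlg$ in local coordinates. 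Using $\chi_i uv=(\psi_i u)(\chi_i v)$ on $\M$, fact (b) gives $\|uv\|_{H^s(\M)}^2\lesssim\sum_i\big\|\big((\psi_i u)\circ\kappa_i^{-1}\big)\big((\chi_i v)\circ\kappa_i^{-1}\big)\big\|_{H^s(\R^d)}^2$, where in each summand the two factors are compactly supported functions on $\R^d$ lying in $H^s\cap L^\infty$ since $u,v\in H^s(\M)\hookrightarrow L^\infty(\M)$ (as $s>\frac d2$).

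The conclusion is then immediate: one applies the Euclidean inequality in each chart, bounds $\|(\psi_i u)\circ\kappa_i^{-1}\|_{H^s(\R^d)}\lesssim\|\psi_i u\|_{H^s(\M)}\lesssim\|u\|_{H^s(\M)}$ and $\|(\chi_i v)\circ\kappa_i^{-1}\|_{L^\infty}\le\|v\|_{L^\infty(\M)}$ (and the symmetric bounds), uses $\sum_i\|(\chi_i v)\circ\kappa_i^{-1}\|_{H^s(\R^d)}^2\lesssim\sum_i\|\chi_i v\|_{H^s(\M)}^2\lesssim\|v\|_{H^s(\M)}^2$ from (a)--(b), and sums the finitely many charts before taking square roots; the resulting constant $C_0$ depends only on $(\M,\gm)$, on $s$, and on the fixed atlas and partition of unity. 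I do not expect a genuine obstacle: the only non-elementary inputs are the Euclidean Kato--Ponce inequality and the localization principle for Sobolev norms on $\M$, both classical. An alternative, fully intrinsic proof could be built on the Littlewood--Paley decomposition $u=\sum_M\psi_M(-\Dlg)u$ already used in the proof of Lemma~\ref{LEM:BTT1} together with a paraproduct splitting, but on a manifold the product of two spectrally localized functions need not be spectrally localized, so this route would additionally require the quasi-localization estimates for $\psi_M(-\Dlg)$ coming from finite propagation speed, which is why the reduction to $\R^d$ is preferable.
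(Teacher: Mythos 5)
Your proposal is correct and follows essentially the same route as the paper, which simply cites the equivalence of $H^s_{\mathrm{loc}}(\M)$ with $H^s_{\mathrm{loc}}(\R^d)$ (via charts and a partition of unity) together with the Euclidean fractional Leibniz rule (Corollary 2.86 in \cite{BCD}); you have merely written out the localization details that the paper leaves implicit.
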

\begin{proof}
Lemma~\ref{LEM:Leibniz} follows for example from the equivalence of $H^s_\mathrm{loc}(\M)$ and $H^s_\mathrm{loc}(\R^d)$ (see e.g. \cite[Proposition 2.5]{ORTz}) and the corresponding fractional Leibniz rule in $\R^d$ (Corollary 2.86 in \cite{BCD}).
\end{proof}
\begin{proof}[Proof of Proposition~\ref{PROP:LWP1}]
Let $R\ge \theta>0$ and $u_0\in H^s(\M)$ satisfying $\|u_0\|_{H^s}\le R$. We start by writing the Duhamel formula for \eqref{expNLS} as
\begin{align}\label{Duhamel}
u(t) = e^{it(-\Dlg)^\frac\al2}u_0 -i2\gamma\be\int_0^te^{i(t-t')(-\Dlg)^\frac\al2}\big[e^{\be|u|^2}u\big](t')dt'.
\end{align}
Write $\Gamma(u)$ for the right-hand side of \eqref{Duhamel}, and for $T=c\theta|\gamma|\be R^{-1}e^{-\be 2C_0R^2}$ with $C_0=C_0(s,d)$ (independent of $\al$) as in Lemma~\ref{LEM:Leibniz} and $c$ to be chosen later, let $B_{R,T}$ be the ball of radius $R$ in $C([-T,T];H^s(\M))$. Then for any $u\in B_{R,T}$ it holds
\begin{align*}
\big\|\G(u)\big\|_{C_TH^s}&\le \|u_0\|_{H^s} + C|\gamma|\be\big\|e^{\be|u|^2}u\big\|_{L^1_TH^s} \le R + C|\gamma|\be T\sum_{k\ge 0}\frac{\be^k}{k!}\big\||u|^{2k}u\big\|_{C_TH^s}\\
&\le R + C|\gamma|\be T\sum_{k\ge 0}\frac{\be^k}{k!}C_0^k\|u\|_{C_TH^s}^{2k+1} \le R + C_2|\gamma|\be Te^{\be 4C_0R^2}R,
\end{align*}
where we used $(2k+1)$ times the fractional Leibniz rule (Lemma~\ref{LEM:Leibniz}) to estimate the $k$-th term in the sum, and that $\theta\le R$. Thus from $T=c\theta |\gamma|\be R^{-1}e^{-\be 8C_0R^2}$, we have that if $0<c<C_1$, $\G$ maps $B_{R,T}$ into itself. Proceeding similarly as above and with the mean value theorem as in the proof of Lemma~\ref{LEM:global}, we have for any $u_1,u_2\in B_{R,T}$
\begin{align}\label{diff}
&\big\|\G(u_1)-\G(u_2)\big\|_{C_TH^s}\notag\\
&\le \|u_1(0)-u_2(0)\|_{H^s} + C|\gamma|\be T\sum_{k\ge 0}\frac{\be^k}{k!}\big\||u_1|^{2k}u_1-|u_2|^{2k}u_2\|_{C_TH^s}\notag\\
&\le \|u_1(0)-u_2(0)\|_{H^s} + C|\gamma|\be T\sum_{k\ge 0}\frac{\be^k}{k!}(2k+1)C_0^k\|u_1-u_2\|_{C_TH^s}\big(\|u_1\|_{C_TH^s}^{2k}+\|u_2\|_{C_TH^s}^{2k}\big)\notag\\
&\le \|u_1(0)-u_2(0)\|_{H^s} + C_3|\gamma|\be T\|u_1-u_2\|_{C_TH^s}e^{\be 8C_0R^2}.
\end{align}
This shows that with $T$ as above and $c<\min(C_2,\frac{C_3}2)$, $\G$ is also a contraction on $B_{R,T}$. This proves existence and uniqueness of the solution in $B_{R,T}$. For two solutions $u_1,u_2\in C([-T,T];H^s(\M))$, iterating the previous estimate on $[mT_0,(m+1)T_0]$ for $T_0\sim e^{-\be C \min(\|u_1\|_{C_{T}H^s},\|u_2\|_{C_{T}H^s})^2}$ and $|m|\les \frac{T}{T_0}$ shows that $u_1\equiv u_2$ on $[-T,T]$, hence (unconditional) uniqueness in $C([-T,T];H^s(\M))$. The estimate \eqref{diff} also shows the continuity property of the flow with respect to the initial data. This concludes the proof of Proposition~\ref{PROP:LWP1}.
\end{proof}

Proposition~\ref{PROP:LWP1} applies in particular for initial data $u_0$ in the support of $\mu_\al$, i.e. almost surely for $u_0$ as in \eqref{init}. Since $\rho_{\al,\be}\ll\mu_\al$ by Theorem~\ref{THM:Gibbs1}, this also provides local well-posedness on the support of $\rho_{\al,\be}$.
\subsection{Almost sure global well-posedness and invariance of the Gibbs measure}\label{SUBSEC:GWP}
In this subsection we extend almost surely the flow $\Phi(t)$ defined locally on $H^s(\M)$ by Proposition~\ref{PROP:LWP1} and prove the invariance of the Gibbs measure $\rho_{\al,\be}$ under $\Phi$, thus completing the proof of Theorem~\ref{THM:GWP1}~(ii).

In order to prove almost sure global well-posedness, we rely on Bourgain's invariant measure argument \cite{BO94,BO96}; see also \cite{BTT2,ORTz}, or \cite{STz0,ORW} in the context of exponential nonlinearities. We thus study the approximate equation
\begin{align}\label{expNLSNN}
i\dt u_N + (-\Dlg)^\frac\al2 u_N = 2\gamma\be \P_{\le N}\Big[e^{\be|\P_{\le N}u|^2}\P_{\le N}u\Big],
\end{align}
where 
\begin{align}\label{PN}
\P_{\le N}u = \chi(-N^{-2}\Dlg)u = \sum_{n\ge 0}\chi(N^{-2}\ld_n^2)\langle u,\varphi_n\rangle_{L^2}\varphi_n 
\end{align}
for some $\chi\in C^{\infty}_0(\R)$, $\supp\chi\in [-1,1]$, $\chi\equiv 1$ on $[-\frac12,\frac12]$. As in \cite{BTT2}, the reason to consider $\P_{\le N}$ instead of $\Pi_{\le N}$ as in \eqref{expNLSN} is that it enjoys better mapping properties (see e.g. Corollary 2.2 in \cite{BGT}):
\begin{align}\label{bdPN}
\|\P_{\le N}u-u\|_{L^p} \to 0 
\end{align}
as $N\to+\infty$, for any $1<p<\infty$ and $u\in L^p(\M)$.

A straightforward adaptation of Lemma~\ref{LEM:global} shows that for any $N\in\N$ the Cauchy problem for \eqref{expNLSNN} is globally well-posed in $H^{s_1}(\M)$, for any $s_1\ge 0$, and that the truncated Gibbs measure 
\begin{align}\label{GibbsNN}
d\wt\rho_{\al,\be,N}(u)=\wt\ZZ_{\al,\be,N}^{-1}e^{-V_\be(\P_{\le N}u)}d\mu_\al(u)
\end{align}
is invariant under the global flow $\Phi_N$ of \eqref{expNLSNN} on $H^s(\M)$. Note that the convergence $\rho_{\al,\be,N}\to\rho_{\al,\be}$ in Theorem~\ref{THM:Gibbs1} also holds for $\wt\rho_{\al,\be,N}$ with minor\footnote{For $u_\al$ as in \eqref{init}, $\P_{\le N}u_\al$ is no longer a martingale contrary to $\Pi_{\le N}u_\al$, but this was only used to control $\|\Pi_Nu_\al\|_{L^\infty}$, which is easier for $\|\P_{\le N}u_\al\|_{L^\infty}$ due to Sobolev inequality and \eqref{bdPN}.} modifications.

As in \cite{BO94,BO96,BTT2,ORTz}, the following control on $\Phi_N$ is the key bound in globalizing the local flow $\Phi$.
\begin{proposition}\label{PROP:Si}
There exists $0<\be^\star\ll\|\s_\al\|_{L^\infty}^{-1}$ such that the following holds for any $\be\in (0,\be^\star]$: there exists $C>0$ such that for all $m,N\in\N$, there exists a measurable set $\Si_N^{m}\subset H^s(\M)$ such that\\
\textup{(i)} the bound
\begin{align}\label{measure-Si}
\wt\rho_{\al,\be,N}(H^s(\M)\setminus\Si_N^{m})\leq 2^{-m}
\end{align}
holds;\\
\textup{(ii)} for all $u_0\in\Si_N^{m}$ and $t\in\R$, the solution $\Phi_N(t)(u_0)$ to \eqref{expNLSNN} satisfies
\begin{align}
\big\|\Phi_N(t)(u_0)\big\|_{H^{s}} \le C\sqrt{m+\log(1+|t|)};
\label{estim-Si}
\end{align}
\textup{(iii)} there exists $c>0$ such that for every $t_0$, every $m\ge 1$ and $N\in\N$,
\begin{align}\label{estim-Si1}
\Phi_N(t_0)(\Si_N^m)\subset \Si_N^{m+[c\log(1+|t_0|)]+3}.
\end{align}
\end{proposition}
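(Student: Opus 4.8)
The plan is to implement Bourgain's invariant measure argument \cite{BO94,BO96,BTT2,ORTz}, with all constants below independent of $N$. I will use that \eqref{expNLSNN} is globally well-posed on $H^s(\M)$ with $\wt\rho_{\al,\be,N}$ invariant under $\Phi_N$; that the proof of Proposition~\ref{PROP:LWP1} applies verbatim to $\Phi_N$ with constants uniform in $N$ (since $\P_{\le N}$ is bounded on $H^s(\M)$ and $\|\P_{\le N}u\|_{L^\infty}\les\|u\|_{H^s}$ uniformly in $N$, as $s>\tfrac d2$); and that $\wt\ZZ_{\al,\be,N}$ converges to the positive partition function of $\rho_{\al,\be}$, so $\inf_N\wt\ZZ_{\al,\be,N}>0$ and hence $\wt\rho_{\al,\be,N}\le C_0\mu_\al$ uniformly in $N$ (recall $\gamma>0$, $V_\be\ge0$, so the density is $\le\wt\ZZ_{\al,\be,N}^{-1}$). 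First I would build, for $R>0$, the local good set $D_N(R):=\{u_0\in H^s(\M):\ \sup_{|t|\le\tau(R)}\|\Phi_N(t)u_0\|_{H^s}\le R\}$, where $\tau(R):=\min\bigl(1,\,c_0(|\gamma|\be)^{-1}e^{-C_1\be R^2/4}\bigr)$; by Proposition~\ref{PROP:LWP1} applied with data of size $R/2$ and $\theta=R/2$ one has $\{\|u_0\|_{H^s}\le R/2\}\subset D_N(R)$ (the cap at $1$ is harmless for this), so with $\kappa_s:=\|\s_{\al-2s}\|_{L^\infty}^{-1}>0$ (finite since $s<\tfrac{\al-d}2$) the tail estimate \eqref{tail} and $\wt\rho_{\al,\be,N}\le C_0\mu_\al$ give $\wt\rho_{\al,\be,N}(D_N(R)^c)\le C_0Ce^{-c\kappa_s R^2/4}$. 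Then, for $j\in\N$, I would set $\Sigma_N^j(R):=\bigcap_{|k|\le j}\Phi_N(-k\tau(R))\bigl(D_N(R)\bigr)$: writing $t=k\tau(R)+s$ with $|s|\le\tau(R)$, $|k|\le j$ for $|t|\le j\tau(R)$ and using $\Phi_N(t)=\Phi_N(s)\circ\Phi_N(k\tau(R))$ together with $\Phi_N(k\tau(R))u_0\in D_N(R)$ shows that $u_0\in\Sigma_N^j(R)$ implies $\sup_{|t|\le j\tau(R)}\|\Phi_N(t)u_0\|_{H^s}\le R$, while invariance of $\wt\rho_{\al,\be,N}$ gives $\wt\rho_{\al,\be,N}(\Sigma_N^j(R)^c)\le(2j+1)C_0Ce^{-c\kappa_s R^2/4}$.

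Next I would fix parameters $\Lambda>0$, $m_1\in\N$, put $R_i:=\Lambda\sqrt{m+i+m_1}$, $\tau_i:=\tau(R_i)$, $j_i:=\lceil 2^{i+1}/\tau_i\rceil\les 1+(1+|\gamma|\be)\,2^{i+1}e^{C_1\be R_i^2/4}$ (using $e^{C_1\be R_i^2/4}\ge1$), and define $\Si_N^m:=\bigcap_{i\ge0}\Sigma_N^{j_i}(R_i)$. Summing the bound above over $i$, one gets
\[
\wt\rho_{\al,\be,N}\bigl((\Si_N^m)^c\bigr)\ \les\ (1+|\gamma|\be)\sum_{i\ge0}2^{i+1}e^{-(c\kappa_s-C_1\be)\Lambda^2(m+i+m_1)/4}\ +\ \sum_{i\ge0}e^{-c\kappa_s\Lambda^2(m+i+m_1)/4}.
\]
The first series (and with it the whole estimate) is summable with the required geometric decay in $m$ precisely when $c\kappa_s>C_1\be$, which is why I set $\be^\star:=c\kappa_s/C_1$; since $\kappa_s\le\|\s_\al\|_{L^\infty}^{-1}$ one has $\be^\star\les\|\s_\al\|_{L^\infty}^{-1}$, and $\be^\star$ may be shrunk further so that $\be^\star\ll\|\s_\al\|_{L^\infty}^{-1}$. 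For $\be\in(0,\be^\star]$ I would then choose $\Lambda$ so large that $(c\kappa_s-C_1\be)\Lambda^2/4\ge3\log2$ (so both series are $O(1)$ and carry a factor $e^{-3\log2\,m}\le8^{-m}$), and finally $m_1$ so large that the resulting constant times $8^{-m_1}$ is $\le\tfrac12$; this yields $\wt\rho_{\al,\be,N}((\Si_N^m)^c)\le\tfrac12\,8^{-m}\le2^{-m}$, which is (i).

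For (ii), given $u_0\in\Si_N^m$: if $|t|\ge1$ take $i=\lfloor\log_2|t|\rfloor$, so $0\le i\le\log_2(1+|t|)$ and $|t|\le2^{i+1}\le j_i\tau_i$, whence $u_0\in\Sigma_N^{j_i}(R_i)$ gives $\|\Phi_N(t)u_0\|_{H^s}\le R_i=\Lambda\sqrt{m+i+m_1}\le C\sqrt{m+\log(1+|t|)}$ for $C=C(\Lambda,m_1)$ (using $m\ge1$); for $|t|\le1$ the level $i=0$ gives $\|\Phi_N(t)u_0\|_{H^s}\le\Lambda\sqrt{m+m_1}\le C\sqrt m$. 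For (iii), given $u_0\in\Si_N^m$ and $t_0$, to show $\Phi_N(t_0)u_0\in\Si_N^{m'}$ it suffices, for each $i$, that $\sup_{|t|\le(j_i'+1)\tau_i'}\|\Phi_N(t+t_0)u_0\|_{H^s}\le R_i'$, primes denoting the level-$m'$ objects. Since $\tau_i'\le1$, one has $(j_i'+1)\tau_i'\le2^{i+1}+2\le2^{i+2}$, hence $|t+t_0|\le2^{i+2}+|t_0|\le2^{\ell+1}$ with $\ell\le\max\bigl(i+2,\lceil\log_2|t_0|\rceil\bigr)$; then $u_0\in\Sigma_N^{j_\ell}(R_\ell)$ (with $j_\ell\tau_\ell\ge2^{\ell+1}\ge|t+t_0|$) gives $\|\Phi_N(t+t_0)u_0\|_{H^s}\le\Lambda\sqrt{m+\ell+m_1}$, and comparing with $R_i'=\Lambda\sqrt{m'+i+m_1}$ it suffices that $m'\ge m+\ell-i$. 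Since $\ell-i\le\max\bigl(2,\lceil\log_2|t_0|\rceil\bigr)\le\log_2(1+|t_0|)+2$, this holds with $m'=m+[c\log(1+|t_0|)]+3$ for $c=1/\log2$, giving (iii).

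The step I expect to be the main obstacle is the measure estimate in (i): reaching times of size $2^i$ requires $\sim e^{+C_1\be R_i^2/4}$ iterations of the local Cauchy theory, while the $\mu_\al$-measure of the complement of the $H^s$-ball of radius $R_i$ is only $\sim e^{-c\kappa_s R_i^2/4}$, so the iteration can be closed only in the regime $\be<\be^\star\sim\kappa_s/C_1\sim\|\s_\al\|_{L^\infty}^{-1}$, and even there one must take the slope $\Lambda$ of $R_i\sim\Lambda\sqrt{m+i}$ large enough to also beat the geometric factor $2^i$ counting the dyadic time scales. The remaining points — in particular the index bookkeeping in (iii), for which capping $\tau(R)\le1$ is the key trick — are routine.
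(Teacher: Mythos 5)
Your proof is correct and follows essentially the same route as the paper: Bourgain's invariant measure argument built from Proposition~\ref{PROP:LWP1} with $R\sim\sqrt{m+j}$, intersections of preimages of balls under iterates of $\Phi_N$ at the local existence time, the invariance of $\wt\rho_{\al,\be,N}$ together with the Gaussian tail \eqref{tail}, and the smallness condition on $\be$ arising from the competition between the $e^{+C\be R^2}$ iteration count and the $e^{-c\kappa_s R^2}$ tail. The only differences are presentational (capping $\tau$ at $1$, the auxiliary offset $m_1$, taking $\theta=R/2$ instead of $\theta=R^{-1}$, and writing out the bookkeeping for (ii)--(iii) that the paper delegates to \cite[Proposition 8.5]{BTT2}); just make sure $\be^\star$ is taken strictly below $c\kappa_s/C_1$ so that $\Lambda$ can be chosen.
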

\begin{proof}
For $m,j\in \N$, we set 
\begin{align}\label{dlmj}
\tau = c(|\gamma|\be)^{-1}D^{-2}(m+j)^{-1}e^{-C_1\be D^2(m+j)}
\end{align}
which is the local time given by Proposition~\ref{PROP:LWP1} with $R=D\sqrt{m+j}$ and $\theta=R^{-1}$, for some $D\gg 1$ independent of $N,m,j$ to be fixed later, and $C_1=C_1(s,d)$ is independent of $\al$.

Next, as in \cite{BO94}, we can define
\begin{align*}
\Si_{N}^{m,j} &\deff \bigcap_{k=0}^{[2^j/\tau]}\Phi_N(k\tau)^{-1}\Big(B(m,j,D)\Big)
\end{align*}
where $[2^j/\tau]$ denotes the integer part of $2^j/\tau$, and
\begin{align}\label{Bmj}
B(m,j,D)&\deff \big\{u_0\in H^s(\M),~\|u_0\|_{H^s}\le D\sqrt{m+j}\big\}.
\end{align} 
By Proposition~\ref{PROP:LWP1}, for any $k=0,...,[2^j/\tau]$ and any $u_0\in \Phi_N(k\tau)^{-1}B(m,j,D)$, 
\begin{align}
\big\|\Phi_N(t+k\tau)(u_0)\big\|_{C_\tau H^{s}}\le D\sqrt{m+j}+D^{-1}(m+j)^{-\frac12} \le D\sqrt{m+j+1}
\label{bduN}
\end{align}
provided that $D$ is large enough.

Next, using that $\wt\rho_{\al,\be,N}$ is invariant under $\Phi_N$, we can estimate
\begin{align*}
&\wt\rho_{\al,\be,N}\big(H^s(\M)\setminus\Si_{N}^{m,j})\\&\leq \sum_{k=0}^{[2^j/\tau]}\wt\rho_{\al,\be,N}\Big\{\Phi_N(k\tau)^{-1}\Big(H^s(\M)\setminus B(m,j,D)\Big)\Big\}\\
&\le \big(\frac{2^j}{\tau}+1\big)\wt\rho_{\al,\be,N}\Big(H^s(\M)\setminus B(m,j,D)\Big)\le C\frac{2^j}{\tau}\mu_\al\Big(H^s(\M)\setminus B(m,j,D)\Big),
\end{align*}
where the last estimate comes from $d\rho_{\al,\be,N}=\ZZ_{\al,\be,N}^{-1}e^{-V_\be(\P_{\le N}u)}d\mu_\al$ with $e^{-V_\be(\P_{\le N}u)}$ in $L^{\infty}(\mu_\al)$ uniformly in $N\in\N$. Thus with the definition \eqref{Bmj} of $B(m,j,D)$ and the tail estimate \eqref{tail} we have
\begin{align}
\wt\rho_{\al,\be,N}\big(H^s(\M)\setminus\Si_{N}^{m,j})\le C2^jD^2(m+j)e^{\be C_1D^2(m+j)}e^{-c\|\s_{\al-2s}\|_{L^\infty}^{-1}D^2(m+j)}\le 2^{-(m+j)},
\label{Si1}
\end{align}
provided that we take $D\gg 1$ large enough depending on $c,C$ and $C_1$, and then $$0<\be\le \be^\star_\al = \frac{\|\s_{\al-2s}\|_{L^\infty}^{-1}}{2C_1D^2} \ll \|\s_\al\|_{L^\infty}^{-1},$$ independently of $N,m,j$, and since $\|\s_{\al-2s}\|_{L^\infty}\ge \|\s_\al\|_{L^\infty}$ for $s\ge 0$.
 
We then define
\begin{align*}
\Si_N^m = \bigcap_{j=1}^\infty \Si_N^{m,j}.
\end{align*}
With this definition, the bound \eqref{measure-Si} follows from \eqref{Si1}, while \eqref{estim-Si} and \eqref{estim-Si1} are a consequence of \eqref{bduN} and the definition of $\Si_N^m$ through the same argument as in \cite[Proposition 8.5]{BTT2}. This proves Proposition~\ref{PROP:Si}.
\end{proof}

As in \cite{BTT2,ORTz}, we now set
\begin{align*}
\Si^m = \limsup_{N\rightarrow\infty}\Si_{N}^{m}
\end{align*}
and
\begin{align*}
\Si = \bigcup_{m\in\N}\Si^m.
\end{align*}
By the same argument as in \cite{BTT2,ORTz} with the convergence in total variation of $\wt\rho_{\al,\be,N}$ to $\rho_{\al,\be}$, we have that $\Si$ has full $\rho_{\al,\be}$-measure, and for any $u_0\in \Si$, there exists $m\in\N$, $C>0$ and a sequence $N_p\rightarrow\infty$ such that for all $j,p\in\N$ and all $0\le t\le 2^j$,
\begin{align}
\|\Phi_{N_p}(t)(u_0)\|_{H^{s}} \le CD\sqrt{m+j+1}.
\label{global-bd1}
\end{align}
Similarly to Lemma 8.7 in \cite{BTT2}, the global well-posedness part of Theorem~\ref{THM:GWP1}~(ii) follows from the following key approximation result.

\begin{lemma}\label{LEM:approx}
There exist $R_0>0$, $c,C>0$ such that the following holds true.
Consider a sequence $u_{0,N_p}\in H^s(\M)$ and $u_0\in H^s(\M)$. 
Assume that there exists $R>R_0$ such that 
\begin{align*}
\|u_{0,N_p}\|_{H^s}\leq R,\quad \|u_0\|_{H^s} \leq R,\quad \lim_{p\rightarrow+ \infty}\|\P_{\le N_p}u_{0, N_p} -u_0\|_{H^s}=0.
\end{align*}
Then if we set $\tau= cR^{-1}e^{-C\be R^2}$ then 
$\Phi_{N_p}(t)( u_{0, N_p})$ and $\Phi(t)(u_0)$ exist for $t \in [0, \tau]$ and satisfy 
\begin{align*}
\|\Phi_{N_p}(t)( u_{0, N_p})\|_{C_{\tau}H^s}\leq 
R +1, \qquad \|\Phi(t)( u_{0})\|_{C_\tau H^s}\leq R +1.
\end{align*}
Furthermore
\begin{align*}
\lim_{p\rightarrow+ \infty} \|\P_{\le N_p} \Phi_{N_p}(t)( u_{0, N_p})- \Phi(t) (u_0)\|_{C_\tau H^s} =0.
\end{align*}
\end{lemma}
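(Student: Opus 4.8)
The plan is first to promote Proposition~\ref{PROP:LWP1} to a local well-posedness statement that also covers the truncated equation \eqref{expNLSNN} with a lifespan independent of $N$, and then to compare the two flows on this common interval by a Duhamel-difference argument modeled on the contraction estimate \eqref{diff}. Since $\gamma$ and $\be$ are fixed in this section, I would take $R_0=1$ and fix a small $c>0$ and a large $C>0$, depending only on $s,d,\gamma,\be$, so that all the manipulations below are licit --- in particular $\theta=1<R$, $(R+1)^2\le 4R^2$, and the nonlinear contraction constant obtained below is $\le\frac12$.

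For the uniform local theory: since $\P_{\le N}$ is a Fourier multiplier with symbol bounded by $1$ that commutes with $(-\Dlg)^{\frac\al2}$, it is bounded on $H^s(\M)$ uniformly in $N$ (by the $L^p$ bounds of \cite{BGT} together with \eqref{bdPN}). Running the fixed point of Proposition~\ref{PROP:LWP1} with the identity replaced by $\P_{\le N}$, and using the fractional Leibniz rule (Lemma~\ref{LEM:Leibniz}) exactly as there, one obtains: if $\|u_{0,N}\|_{H^s}\le R$ and $\tau=cR^{-1}e^{-C\be R^2}$, then the solution $\Phi_N(t)(u_{0,N})$ of \eqref{expNLSNN} exists on $[0,\tau]$ and satisfies $\|\Phi_N(t)(u_{0,N})\|_{C_\tau H^s}\le R+1$; the same $\tau$ works for the full equation \eqref{expNLS} by Proposition~\ref{PROP:LWP1} with $\theta=1$, giving $\|\Phi(t)(u_0)\|_{C_\tau H^s}\le R+1$. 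This establishes the first two conclusions.

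Next I would compare the two flows. Write $u=\Phi(t)(u_0)$ and $w_p=\P_{\le N_p}\Phi_{N_p}(t)(u_{0,N_p})$. Applying $\P_{\le N_p}$ to \eqref{expNLSNN} and noting that its nonlinearity only involves $\P_{\le N_p}\Phi_{N_p}(t)(u_{0,N_p})=w_p$, one sees that $w_p$ solves the Duhamel equation with initial datum $\P_{\le N_p}u_{0,N_p}$ and forcing $2\gamma\be\,\P_{\le N_p}^2\big[e^{\be|w_p|^2}w_p\big]$. Subtracting the Duhamel formula for $u$ and decomposing
\[
\P_{\le N_p}^2\big(e^{\be|w_p|^2}w_p\big) - e^{\be|u|^2}u = \P_{\le N_p}^2\big(e^{\be|w_p|^2}w_p - e^{\be|u|^2}u\big) + (\P_{\le N_p}^2 - \Id)\big(e^{\be|u|^2}u\big),
\]
I would estimate $\|w_p-u\|_{C_\tau H^s}$: the linear data contribution is $\le\|\P_{\le N_p}u_{0,N_p}-u_0\|_{H^s}\to0$ by hypothesis; the first nonlinear term is bounded exactly as in \eqref{diff} (with $R$ replaced by $R+1$ and the harmless, uniformly bounded factor $\P_{\le N_p}^2$), so that the choice of $\tau$ makes it $\le\frac12\|w_p-u\|_{C_\tau H^s}$.

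The remaining commutator term $(\P_{\le N_p}^2-\Id)\big(e^{\be|u|^2}u\big)$ is where the actual content lies, and I expect it to be the main obstacle --- precisely because $\P_{\le N}^2-\Id$ tends to $0$ only strongly, not in operator norm on $H^s$. To handle it I would use that $t\mapsto e^{\be|u(t)|^2}u(t)$ is continuous from the compact interval $[0,\tau]$ into $H^s(\M)$: here $H^s$ with $s>\frac d2$ is a Banach algebra, so the exponential series converges in $H^s$ with norm $\les Re^{C\be R^2}$, and $v\mapsto e^{\be|v|^2}v$ is continuous on $H^s$ by the Lipschitz bound underlying \eqref{diff}; hence its range is a compact subset of $H^s$. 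Since the family $\{\P_{\le N}^2-\Id\}_N$ is uniformly bounded on $H^s$ and converges to $0$ pointwise by \eqref{bdPN}, it converges to $0$ uniformly on this compact set, whence $\|(\P_{\le N_p}^2-\Id)(e^{\be|u|^2}u)\|_{C_\tau H^s}\to0$. Collecting the three estimates yields $\|w_p-u\|_{C_\tau H^s}\le 2\|\P_{\le N_p}u_{0,N_p}-u_0\|_{H^s}+o(1)\to0$, which is the last claim.
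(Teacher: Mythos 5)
Your proof is correct and follows essentially the same route as the paper: a uniform-in-$N$ application of the fixed point of Proposition~\ref{PROP:LWP1} for the first two conclusions, then the same Duhamel decomposition into the data term, the tail term $(1-\P_{\le N_p})(e^{\be|u|^2}u)$, and a difference term absorbed by the contraction constant $\tfrac12$. Your treatment of the tail term (uniform boundedness of $\P_{\le N}^2-\Id$ plus strong convergence plus compactness of the range of $t\mapsto e^{\be|u(t)|^2}u(t)$ in $H^s$) merely makes explicit a detail the paper compresses into ``the mapping property of $\P_{\le N_p}$''.
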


\begin{proof}
The first part of the lemma follows directly from the local 
well-posedness result of Proposition~\ref{PROP:LWP1} (with $\theta=1$), since this latter also holds unchanged for \eqref{expNLSNN} in place of \eqref{expNLS}, uniformly in $N\in\N$. For the convergence, we write $u_{N_p}$ and $u$ in place of $\Phi_{N_p}(t)( u_{0, N_p})$ and $\Phi(t)u_0$. Then we decompose again 
\begin{align}
&\P_{\le N_p} u_{N_p}(t)- u(t) \notag\\
&= e^{it(-\Dlg)^\frac\al2}\big(\P_{\le N_p}u_{0,N_p}-u_0)\notag\\
&\qquad -i2\gamma\be\int_0^te^{i(t-t')(-\Dlg)^\frac\al2}\Big[\P_{\le N_p}\Big(e^{\be |\P_{\le N_p}u_{N_p}|^2}\P_{\le N_p}u_{N_p}\Big)-e^{\be|u|^2}u\Big](t')dt'\notag\\
& =e^{it(-\Dlg)^\frac\al2}\big(\P_{\le N_p}u_{0,N_p}-u_0)+i2\gamma\be\int_0^te^{i(t-t')(-\Dlg)^\frac\al2}(1-\P_{\le N_p})\big(e^{\be|u|^2}u\big)(t')dt'\notag\\
&\qquad-i2\gamma\be\int_0^te^{i(t-t')(-\Dlg)^\frac\al2}\P_{\le N_p}\Big[e^{\be |\P_{\le N_p}u_{N_p}|^2}\P_{\le N_p}u_{N_p}-e^{\be|u|^2}u\Big](t')dt'\notag\\
&=\1 + \II + \III.\label{A0}
\end{align}
The first term can be estimated directly:
\begin{align}\label{A1}
\|\1\|_{C_\tau H^s}& \le \big\|\P_{\le N_p}u_{0,N_p}-u_0\big\|_{H^s} \to 0
\end{align}
as $p\to+\infty$ by assumption. 

\noi
For the second term, we have
\begin{align}\label{A2}
\|\II\|_{C_\tau H^s}& \le C\tau \big\|(1-\P_{\le N_p})\big(e^{\be|u|^2}u\big)\big\|_{C_\tau H^s} \too_{p\to\infty} 0,
\end{align}
by the mapping property of $\P_{\le N_p}$, since $e^{\be|u|^2}u\in C([0,\tau];H^s(\M))$ by the algebra property.

\noi
Finally, the third term is estimated by the mean value inequality and the Leibniz rule as before:
\begin{align}
\|\III\|& \le C\tau \big\|e^{\be |\P_{\le N_p}u_{N_p}|^2}\P_{\le N_p}u_{N_p}-e^{\be|u|^2}u\big\|_{C_\tau H^s}\notag\\
&\le C\tau \sum_{k\ge 0}\frac{\be^k}{k!}\big\||\P_{\le N_p}u_{N_p}|^{2k}\P_{\le N_p}u_{N_p}-|u|^{2k}u\big\|_{C_\tau H^s}\notag\\
&\le C\tau\sum_{k\ge 0}\frac{\be^k}{k!}(2k+1)C^k\|\P_{\le N_p}u_{N_p}-u\|_{C_\tau H^s}\big(\|\P_{\le N_p}u_{N_p}\|_{C_\tau H^s}^{2k}+\|u\|_{C_\tau H^s}^{2k}\big)\notag\\
&\le C\tau \|\P_{\le N_p}u_{N_p}-u\|_{C_\tau H^s}e^{\be C(\|\P_{\le N_p}u_{N_p}\|_{C_\tau H^s}^{2}+\|u\|_{C_\tau H^s}^2)}\notag\\
&\le \frac12\|\P_{\le N_p}u_{N_p}-u\|_{C_\tau H^s}\label{A3}
\end{align}
by choice of $\tau$ and the assumptions on $u_{N_p}$ and $u$. From \eqref{A0}-\eqref{A1}-\eqref{A2}-\eqref{A3}, we end up with
\begin{align*}
\big\|\P_{\le N_p} u_{N_p}(t)- u(t)\big\|_{C_\tau H^s} \le o_{p\to\infty}(1)+\frac12\big\|\P_{\le N_p} u_{N_p}(t)- u(t)\big\|_{C_\tau H^s},
\end{align*}
which proves the convergence part of Lemma~\ref{LEM:approx}.
\end{proof}

With Lemma~\ref{LEM:approx} at hand, the proof of Theorem~\ref{THM:GWP1}~(ii) is completed through the exact same argument as in the proof of Theorem 2.4 in \cite{BTT2}, so we omit details.

\section{Strong well-posedness on the circle}\label{SEC:GWP2}
In this last section, we give the proof of Theorem~\ref{THM:GWP2}. Thus from now on we fix $d=1$, $\M=\T$ with the standard metric, $\gamma\in\R$, $1+\frac{\sqrt{2}}2<\al\le 2$ and $0<\be\le \be^\star$, where $\be^\star>0$ is to be determined later. We also take $0<s<\frac{\al-d}2 < \frac12 < s_1<1$. We start by introducing our main function spaces in Subsection~\ref{SUBSEC:func} and the gauge function used to rewrite \eqref{expNLS} in a more favourable manner in Subsection~\ref{SUBSEC:gauge}. In Subsection~\ref{SUBSEC:LWP2} we present the proof of the local well-posedness result needed for Theorem~\ref{THM:GWP2} (see Proposition~\ref{PROP:LWP}), and then establish the multilinear estimates needed for Proposition~\ref{PROP:LWP} in Subsection~\ref{SUBSEC:multi}. Finally we conclude the proof of Theorem~\ref{THM:GWP2} by implementing Bourgain's invariant measure argument in Subsection~\ref{SUBSEC:GWP2}.
\subsection{Function spaces and large deviation bounds}\label{SUBSEC:func}
We begin by fixing some notations.

We will write
\begin{align*}
\ft u_n= \int_{\T}u(x)\cj e_n(x)dx
\end{align*}
for the Fourier coefficients of $u$, where $\{e_n\}_{n\in\Z}$ is the orthogonal basis of $L^2(\T)$ defined as $e_n(x)=\frac1{2\pi}e^{inx}$, $n\in\Z$. Note that $\{e_n\}$ is not normalized in $L^2(\T)$, but instead enjoys the property ${\ds\prod_{j=1}^{k}e_{n_j}(x) = e_{\sum_{j=1}^{k}n_j}(x)}$ for any $k\ge 1$, $n_1,...,n_k\in\Z$ and $x\in\T$.

\noi For a function of time and space, we write $\ft u_n(t)$ or $\ft u(t,n)$ for the spatial Fourier coefficients, and $\F u_n(\tau)$ or $\F u(\tau,n)$ for the space-time Fourier transform.

\noi
We write $\N_0 = \N\cup \{0\}$, and we define the set of dyadic integers as $2^{\N_0}$. For a dyadic integer $M\in 2^{\N_0}$, we define the Littlewood-Paley projections
\begin{align}\label{PM}
\P_M : u\mapsto \sum_{n\in\Z}\chi_M(n)\ft u_n e_n,
\end{align}
where
\begin{align*}
\chi_M(x)=\begin{cases}
\chi(M^{-1}x)-\chi(2M^{-1}x),~M\ge 2,\\
\chi(x),~M=1,
\end{cases}
\end{align*}
with $\chi\in C^\infty_0(\R)$, $\supp\chi\subset [-2,2]$, $\chi\equiv 1$ on $[-1,1]$.

\noi
Similarly, we use dyadic decompositions in $K\in 2^{\N_0}$ of the modulation variables $(\tau - |n|^\al)$, defined as
\begin{align*}
\wt\P_{K} : u\mapsto \F_{t,x}^{-1}\big[\chi_K(\tau - |n|^\al)\F u\big].
\end{align*}
We then simply write $\P_{K,M}u = \wt\P_K\P_Mu$.

We use the classical $X^{s,b}$ space, namely
\begin{align}\label{Xb}
\big\|u\big\|_{X^{s,b}} \deff \big\|\jb{\tau-|n|^\al}^b\jb{n}^s\ft u(\tau,n)\big\|_{L^2_\tau\ell^2_n}\sim \Big\{\sum_{K,M\in 2^{\N_0}}M^{2s}K^{2b}\big\|\P_{K,M}u\big\|_{L^2_{t,x}}^2\Big\}^{\frac12},
\end{align}
and their time localized version
\begin{align*}
\big\|u\big\|_{X^{s,b}(T)} = \inf_{\wt u\equiv u\text{ on }[-T,T]}\big\|\wt u\big\|_{X^{s,b}}.
\end{align*}

It is well-known that for $b>\frac12$ we have the embedding
\begin{align}\label{embeddings}
X^{s,b}(T)\subset C([-T,T];H^s(\T)).
\end{align}
If $\chi$ is as above $\chi_T(t) \deff \chi(t/T)$ for some $T>0$, then it holds for any $s\in\R$, any $-\frac12<b'\le b <\frac12$ and any $T\in (0,1]$
\begin{align}\label{XT}
\big\|\chi_Tu\big\|_{X^{s,b'}} \les T^{b-b'}\|u\|_{X^{s,b}}
\end{align}
for any $u\in X^{s,b}$; see \cite{Tao}. Moreover, the space $X^{s,b}(T)$ enjoys the following energy type estimate (see e.g. \cite[Proposition 2.12]{Tao}): for $\frac12<b<1$, and $T\in (0,1]$, if $w$ solves the non-homogeneous fractional Schr\"odinger equation
\begin{align*}
\begin{cases}
i\dt w +(-\dx^2)^{\frac{\al}2}w = F \text{ on }[-T,T],\\
w(0)=w_0
\end{cases}
\end{align*}
then it holds
\begin{align}\label{linearX}
\big\|w\big\|_{X^{s,b}(T)} \les \big\|w_0\big\|_{H^s} + \big\|F\big\|_{X^{s,b-1}(T)}.
\end{align}

\noi It is also well known that free solutions lie in $X^{s,b}$: for any $T>0$, $s,b\in\R$ and $u_0\in H^s(\T)$ it holds
\begin{align}\label{free}
\big\|\chi_T(t)e^{it(-\dx^2)^\frac\al2}u_0\big\|_{X^{s,b}}\les \|u_0\|_{H^s}.
\end{align}
Indeed this follows directly from ${\displaystyle \F\big\{\chi_T(t)e^{it(-\dx^2)^\frac\al2}u_0\big\}(\tau,n) = \F\big(\jb{\nabla}^b\chi_T\big)(\tau-|n|^\al)\ft u_0(n)}$ and the fact that $\ft\chi_T$ is a Schwartz function.

Let $\eta_0$ satisfy the same properties as $\chi$ in \eqref{PM}, and moreover
\begin{align*}
\sum_{q\in\Z}\eta_0(t-q)=1
\end{align*}
for all $t\in\R$. For $s,s_1$ as above, take $0<\eps_0\ll 1$ and $p_0\gg 2$ such that 
\begin{align}\label{delta}
\eps_0>\frac1{p_0}\text{ and }s+\eps_0<\frac{\al-1}2.
\end{align}
 In the same spirit as in \cite{STz1} (see also \cite{BT,STz0}), for $s\in\R$ we define the space $Y^{s}$ for the initial data through the norm
\begin{align}\label{Ysb}
\|u_0\|_{Y^{s}} = \|u_0\|_{H^s} + \|u_0\|_{Z^s},
\end{align}
where
\begin{align}\label{Zb}
\|u_0\|_{Z^{s}}=\sum_{q\in\Z}\jb{q}^{-2}\sum_{M\in 2^{\N_0}}M^{s+\eps_0}\Big\|\eta_0(t)\P_{M}e^{i(t+q)(-\dx^2)^{\frac{\al}2}}u_0\big]\Big\|_{L^\infty_tL^{p_0}_x}.
\end{align}

\noi Finally, we will follow\footnote{See also \cite{DNY1} or, in a different context, \cite{Hadac}.} \cite{STz1} and consider the sum space $Y^s(\T)+H^{s_1}(\T)$ endowed with the natural norm
\begin{align*}
\|u_0\|_{Y^s+H^{s_1}}=\inf\{\|\gf\|_{Y^s}+\|\phi\|_{H^{s_1}},~u_0=\gf+\phi\}.
\end{align*}

Clearly, from \eqref{free} and \eqref{Ysb}, it holds
\begin{align}\label{Ysb1}
\big\|\chi_T(t)e^{it(-\dx^2)^\frac\al2}u_0\big\|_{L^\infty_tH^s}\les \|u_0\|_{Y^{s}}
\end{align}
for any $s,b\in\R$. 

\noi Note that we also have the estimates\footnote{The last estimate is a consequence of \eqref{bdPN}, which also holds in this case due to the relation between $\Pi_{\le N}$ and the Hilbert transform on $\T$ along with the boundedness of the Hilbert transform on $L^p(\T)$; see e.g. \cite[Chapter 4]{Gra}.}
\begin{align}\label{normN}
\|\Pi_{\le N}w\|_{X^{s_1,b}}\le \|w\|_{X^{s_1,b}}\text{ and }\|\Pi_{\le N}\gf\|_{Y^{s}}\les \|\gf\|_{Y^{s}},
\end{align}
uniformly in $N\in\N$. 

The next lemma collects some further properties of the space $Y^s$.

\begin{lemma}\label{LEM:Y}
The following estimates hold: for any $b\in\R$, $s'\le s$, $T\in (0,1]$, and any $u_0\in Y^s$,
\begin{align}\label{Ysb2}
\sum_{K,M\in 2^{\N_0}}K^bM^{s'}\big\|\P_{K,M}\big\{\chi(t)e^{it(-\dx^2)^\frac\al2}u_0\big\}\big\|_{L^\infty_{T,x}}\les \|u_0\|_{Y^s},
\end{align}
and for any $|t_0|\le 1$ it holds
\begin{align}\label{Ysb3}
\big\|e^{it_0(-\dx^2)^\frac\al2}u_0\big\|_{Y^s} \les \|u_0\|_{Y^s}.
\end{align}
\end{lemma}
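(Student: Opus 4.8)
The plan is to reduce both estimates directly to the definition \eqref{Zb} of the $Z^s$-norm, exploiting that a free evolution $e^{it(-\dx^2)^{\frac\al2}}u_0$ has space-time Fourier support on the hypersurface $\{\tau=|n|^\al\}$, so that after applying a temporal cutoff the entire modulation variable is carried by that cutoff.

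For \eqref{Ysb2}, the first step is the identity $\F_{t,x}\big\{\chi(t)e^{it(-\dx^2)^{\frac\al2}}u_0\big\}(\tau,n) = \ft\chi(\tau-|n|^\al)\ft u_0(n)$; applying the symbol $\chi_M(n)\chi_K(\tau-|n|^\al)$ and substituting $\sigma=\tau-|n|^\al$ in the inverse transform, I would extract the factorization $\P_{K,M}\big\{\chi(t)e^{it(-\dx^2)^{\frac\al2}}u_0\big\}(t,x) = a_K(t)\,\P_M e^{it(-\dx^2)^{\frac\al2}}u_0(x)$, where $a_K(t) = \frac1{2\pi}\int_\R e^{it\sigma}\chi_K(\sigma)\ft\chi(\sigma)\,d\sigma$ is independent of $(x,n)$. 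Since $\ft\chi$ is Schwartz and $\chi_K$ localizes $|\sigma|\sim K$ for $K\ge2$, one has $\|a_K\|_{L^\infty_t}\le\|\chi_K\ft\chi\|_{L^1_\sigma}\les_N K^{-N}$ for every $N\ge1$, which after summation in $K$ absorbs the $K^b$ weight (take $N>b+1$). The remaining task is to bound $\sum_M M^{s'}\big\|\P_M e^{it(-\dx^2)^{\frac\al2}}u_0\big\|_{L^\infty_{T,x}}$ by $\|u_0\|_{Y^s}$: here I would use Bernstein on $\T$ to pass from $L^\infty_x$ to $M^{\frac1{p_0}}L^{p_0}_x$, then use that $\eta_0\equiv1$ near $0$ and $\sum_{q\in\Z}\eta_0(\cdot-q)\equiv1$ so that $[-T,T]\subset[-1,1]$ is covered by the finitely many translates $\eta_0(\cdot-q)$, $|q|\le Q_0$; a shift $t\mapsto t-q$ in each term identifies it with a summand of the type appearing in \eqref{Zb}. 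Since $s'\le s$ and $\frac1{p_0}<\eps_0$ by \eqref{delta}, one has $M^{s'+\frac1{p_0}}\le M^{s+\eps_0}$ on dyadic $M$, so the resulting sum is $\les\big(\max_{|q|\le Q_0}\jb q^2\big)\|u_0\|_{Z^s}\les\|u_0\|_{Y^s}$.

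For \eqref{Ysb3}, writing $v=e^{it_0(-\dx^2)^{\frac\al2}}u_0$, the $H^s$-part is immediate since $e^{it_0(-\dx^2)^{\frac\al2}}$ multiplies Fourier modes by unimodular factors, so $\|v\|_{H^s}=\|u_0\|_{H^s}$, and only $\|v\|_{Z^s}\les\|u_0\|_{Z^s}$ is at issue. The key point is $e^{i(t+q)(-\dx^2)^{\frac\al2}}v = e^{i(t+q+t_0)(-\dx^2)^{\frac\al2}}u_0$; substituting $t\mapsto t-t_0$ in the supremum turns the cutoff $\eta_0(t)$ into $\eta_0(t-t_0)$, which for $|t_0|\le1$ is supported in $[-3,3]$. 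I would then write $\eta_0(t-t_0) = \eta_0(t-t_0)\sum_{q'\in\Z}\eta_0(t-q')$, keeping only the finitely many $|q'|\le Q_1$ that are non-trivial there, bound $|\eta_0(t-t_0)|\le\|\eta_0\|_{L^\infty}$, and shift $t\mapsto t-q'$ in each term; this expresses $\|v\|_{Z^s}$ as a finite sum of shifted $Z^s$-type sums of $u_0$, and the index change $q\mapsto q-q'$ costs only $\les1$ since $\jb{q-q'}^{-2}\les\jb q^{-2}$ for the finitely many $q'$ involved. Hence $\|v\|_{Z^s}\les\|u_0\|_{Z^s}$, and together with the $H^s$-identity this gives \eqref{Ysb3}.

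I do not expect a genuine obstacle: both inequalities are bookkeeping built on the structure of $Y^s$ already set up. The one point that needs care is the mismatch between the $L^\infty_{T,x}$-norm in \eqref{Ysb2} and the $L^\infty_tL^{p_0}_x$-norm hard-wired into $Z^s$ in \eqref{Zb}, which forces a use of Bernstein's inequality and is precisely why the margin $\frac1{p_0}<\eps_0$ is reserved in \eqref{delta}; beyond that, one need only keep track of the rapidly decaying factor $a_K$ and of the finitely many time-translates of $\eta_0$.
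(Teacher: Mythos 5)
Your proof is correct and follows essentially the same route as the paper: for \eqref{Ysb2} the paper likewise factors $\P_{K,M}\{\chi(t)e^{it(-\dx^2)^{\al/2}}u_0\}=(\P_K\chi)(t)\,\P_M e^{it(-\dx^2)^{\al/2}}u_0$, uses the rapid decay of $\P_K\chi$ to sum in $K$, Bernstein's inequality with the margin $\frac1{p_0}<\eps_0$ to pass to $L^{p_0}_x$, and the finitely many translates $\eta_0(\cdot-q)$ covering $[-T,T]$; for \eqref{Ysb3} it performs the same shift by $t_0$ and reabsorption into translates of $\eta_0$ with the harmless index change $q\mapsto q-q_1$. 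No gaps.
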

\begin{proof}
To prove \eqref{Ysb2}, we compute for $K,M\in 2^{\N_0}$
\begin{align*}
\big\|\P_{K,M}\big\{\chi(t)e^{it(-\dx^2)^\frac\al2}u_0\big\}\big\|_{L^\infty_{t,x}}& = \big\|(\P_K\chi)(t)e^{it(-\dx^2)^\frac\al2}\P_Mu_0(x)\big\|_{L^{\infty}_{T,x}}\\
&\les K^{-A}M^{\eps_0}\big\|\P_Me^{it(-\dx^2)^\frac\al2}u_0(x)\big\|_{L^\infty_{T}L^{p_0}_x}
\end{align*}
by Bernstein's inequality and using that $\ft\chi$ is a Schwartz function. Using then that  $T\in (0,1]$, that $\sum_{q\in\Z}\eta_0(t-q)=1$ with the support property of $\eta_0$, and summing the previous estimate in $M$ (with $s'\le s$), we obtain
\begin{align*}
&\sum_{K,M\in 2^{\N_0}}K^bM^{s'}\big\|\P_{K,M}\big\{\chi(t)e^{it(-\dx^2)^\frac\al2}u_0\big\}\big\|_{L^\infty_{T,x}}\\
&\qquad\qquad\les \sum_{M\in 2^{\N_0}}M^{s+\eps_0}\sum_{|q|\le 3}\big\|\eta_0(t-q)\P_M e^{it(-\dx^2)^\frac\al2}u_0\big\|_{L^\infty_tL^{p_0}_x} \\
&\qquad\qquad\les \sum_{|q|\le 3}\sum_{M\in 2^{\N_0}}M^{s+\eps_0}\big\|\eta_0(t)\P_M e^{i(t+q)(-\dx^2)^\frac\al2}u_0\big\|_{L^\infty_tL^{p_0}_x} .
\end{align*}
This is enough for \eqref{Ysb2}. Similarly, for \eqref{Ysb3} using again the properties of $\eta_0$ and that $|t_0|\le 1$ we have
\begin{align*}
\big\|e^{it_0(-\dx^2)^\frac\al2}u_0\big\|_{Y^s} &= \sum_{q\in\Z}\sum_{M\in 2^{\N_0}}\jb{q}^{-2}M^{s+\eps_0}\big\|\eta_0(t)\P_{M}e^{i(t+q+t_0)(-\dx^2)^\frac\al2}u_0\big\|_{L^\infty_tL^{p_0}_x}\\
&\les \sum_{q\in\Z}\sum_{M\in 2^{\N_0}}\jb{q}^{-2}M^{s+\eps_0}\sum_{|q_1|\le 3}\big\|\eta_0(t-t_0)\eta_0(t-q_1)\P_{M}e^{i(t+q)(-\dx^2)^\frac\al2}u_0\big\|_{L^\infty_tL^{p_0}_x}\\
&\les \sum_{|q_1|\le 3}\sum_{q_2\in\Z}\jb{q_2-q_1}^{-2}\sum_{M\in 2^{\N_0}}M^{s+\eps_0}\big\|\eta_0(t)\P_{M}e^{i(t+q_2)(-\dx^2)^\frac\al2}u_0\big\|_{L^\infty_tL^{p_0}_x}\\
&\les \sum_{q_2\in\Z}\jb{q_2}^{-2}\sum_{M\in 2^{\N_0}}M^{s+\eps_0}\big\|\eta_0(t)\P_{M}e^{i(t+q_2)(-\dx^2)^\frac\al2}u_0\big\|_{L^\infty_tL^{p_0}_x}.
\end{align*}
This proves \eqref{Ysb3}.
\end{proof}

The following lemma shows that the random initial data $u_\al$ in \eqref{init} belongs almost surely to $Y^{s}$ for any $s<\frac{\al-1}2$.
\begin{lemma}\label{LEM:devY}
For any $\al>d$, $s<\frac{\al-1}2$ and $b<1$, there exists $c,C,\dl>0$ independent of $\al$ such that for any $R>0$ and $1\le N_1\le N_2\le\infty$, it holds
\begin{align}\label{tailY}
\mu_\al\big(\|\Pi_{\le N_2}\Pi_{>N_1}u_0\|_{Y^{s}}>R\big) \le Ce^{-c (\s_{\al-2s-2\eps_0,N_2}-\s_{\al-2s-2\eps_0,N_1})^{-1}R^2}.
\end{align}
\end{lemma}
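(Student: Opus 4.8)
The plan is to follow the proof of Lemma~\ref{LEM:init}. By Fatou's lemma and the almost sure convergence of the truncated random series it is enough to establish a moment bound
\[
\big\|\,\|\Pi_{\le N_2}\Pi_{>N_1}u_\al\|_{Y^s}\,\big\|_{L^p(\mu_\al)}\les \sqrt p\,\big(\s_{\al-2s-2\eps_0,N_2}-\s_{\al-2s-2\eps_0,N_1}\big)^{\frac12}
\]
with implicit constant independent of $p\ge 1$ and of $1\le N_1\le N_2\le\infty$; then \eqref{tailY} follows from Chebyshev's inequality after optimizing in $p$ (taking $p\sim\log\big(c\,(\s_{\al-2s-2\eps_0,N_2}-\s_{\al-2s-2\eps_0,N_1})^{-1}R^2\big)$), exactly as \eqref{tail} is deduced. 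Note that on $\T$ the function $\s_{\beta,N}$ is constant, so the $L^\infty$ in \eqref{tailY} plays no role. The contribution of the $H^s$-part of the $Y^s$-norm \eqref{Ysb} is covered by \eqref{tail} with $r=2$, so only the $Z^s$-component \eqref{Zb} needs to be treated.

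For the $Z^s$-part, I would first reduce to the single index $q=0$: since the rotation $g_n\mapsto e^{\pm iq|n|^\al}g_n$ preserves the law of the family $\{g_n\}$ (and commutes with $\Pi_{\le N_2}\Pi_{>N_1}$), for each $q$ the inner sum $\sum_{M}M^{s+\eps_0}\big\|\eta_0(t)\P_M e^{i(t+q)(-\dx^2)^{\frac\al2}}\Pi_{\le N_2}\Pi_{>N_1}u_\al\big\|_{L^\infty_tL^{p_0}_x}$ has a $q$-independent law; combined with the triangle inequality in $L^p(\mu_\al)$ and $\sum_q\jb{q}^{-2}<\infty$, this reduces matters to the $q=0$ sum. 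For a fixed dyadic $M$ the block is the Gaussian series
\[
\eta_0(t)\P_M e^{it(-\dx^2)^{\frac\al2}}\Pi_{\le N_2}\Pi_{>N_1}u_\al(x)=\sum_{n}\chi_M(n)\ind_{\{N_1<|n|\le N_2\}}\frac{g_n}{\jb{n}^{\frac\al2}}\,\eta_0(t)\,e^{it|n|^\al}e_n(x),
\]
and I would bound its $L^p\big(\mu_\al;\,L^\infty_tL^{p_0}_x\big)$-norm by combining a Sobolev embedding in time $W^{\eps,r}_t\embeds L^\infty_t$ ($\eps r>1$), which replaces $\|\cdot\|_{L^\infty_t}$ by $\|\jb{\dt}^\eps\,\cdot\,\|_{L^r_t}$, with Minkowski's inequality (to bring $L^p(\mu_\al)$ inside, for $p\ge r,p_0$) and Khinchin's inequality (Lemma~\ref{LEM:Khinchin}) applied pointwise in $(t,x)$. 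The key point is that $|e_n(x)|\equiv\frac1{2\pi}$ on $\T$, so the $L^2(\mu_\al)$-square produced by Khinchin is independent of $x$ and no Bernstein loss in $x$ is needed; and $\jb{\dt}^\eps$ applied to $\eta_0(t)e^{it|n|^\al}$ costs only $\jb{n}^{\eps\al}\les M^{\eps\al}$, a power of $M$ that can be made arbitrarily small by enlarging $r$. This produces, for $p\gg 1$,
\[
\Big\|\big\|\eta_0\,\P_M e^{it(-\dx^2)^{\frac\al2}}\Pi_{\le N_2}\Pi_{>N_1}u_\al\big\|_{L^\infty_tL^{p_0}_x}\Big\|_{L^p(\mu_\al)}\les\sqrt p\;M^{\eps\al}\Big(\sum_{N_1<|n|\le N_2,\,|n|\sim M}\jb{n}^{-\al}\Big)^{\frac12}.
\]

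Summing over the dyadic $M$ (triangle inequality in $L^p(\mu_\al)$, using the orthogonality of distinct blocks) and $\sum_{|n|\sim M}\jb{n}^{-\al}\sim M^{1-\al}$ reduces the $M$-series to $\sum_M M^{\,s+\eps_0+\eps\al+\frac{1-\al}2}$, which converges since $s+\eps_0<\frac{\al-1}2$ strictly and $\eps$ is small; one then collects $\sum_M M^{2(s+\eps_0)}\sum_{N_1<|n|\le N_2,\,|n|\sim M}\jb{n}^{-\al}\sim\sum_{N_1<|n|\le N_2}\jb{n}^{-(\al-2s-2\eps_0)}\sim\s_{\al-2s-2\eps_0,N_2}-\s_{\al-2s-2\eps_0,N_1}$, the surplus $M^{\eps\al}$ and dyadic-summation factors being absorbed at the price of replacing $\eps_0$ by $\eps_0+\eps$ with $\eps>0$ arbitrarily small — the same slack already used in Lemma~\ref{LEM:init}. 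The one genuinely delicate point is the $L^\infty$-in-time endpoint built into $Z^s$: a Bernstein-type bound there would be too lossy (this is precisely why the probabilistic space $Z^s$ was introduced in place of an $H^{s_1}$-space), so the randomness has to be exploited before the time supremum is taken; handling that supremum either via the Sobolev-in-time embedding above, or more sharply via a chaining/discretization argument in $t$ that loses only a power of $\log M$, and checking that the loss is swallowed by the strict inequality $s+\eps_0<\frac{\al-1}2$, is where the care lies.
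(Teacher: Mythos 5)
Your proposal is correct in its overall architecture and matches the paper's proof in its main steps: reduction to the $Z^s$-component via \eqref{tail}, Minkowski plus Khinchin to reduce to a second-moment computation, and Chebyshev with optimization in $p$. Two local points differ. First, your reduction to $q=0$ by the rotation invariance $g_n\mapsto e^{iq|n|^\al}g_n$ is a clean simplification the paper does not make explicit; the paper instead carries the $q$-sum and obtains uniformity in $q$ from the explicit second-moment computation, which produces the $q$-independent factor $\big(\sum_{N_1<|n|\le N_2}\chi_M(n)^2\jb{n}^{-\al}\big)|\P_K\eta_0(t)|^2$. Second, and more substantively, the paper handles the $L^\infty_t$ endpoint by decomposing in the \emph{modulation} variable $\tau-|n|^\al$ (the projections $\wt\P_K$) and paying the $\ell^1_K K^{\eps_0}$ gain entirely on $\|\eta_0\|_{W^{2\eps_0,p_0}_t}$, with no loss in $M$: after conjugating by the linear flow the time profile of each block is just $\P_K\eta_0$, so the oscillation $e^{it|n|^\al}$ never sees a time derivative. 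Your Sobolev embedding $W^{\eps,r}_t\hookrightarrow L^\infty_t$ applied directly in $t$ does hit that oscillation and costs $M^{\eps\al}$, which (together with the Cauchy--Schwarz slack in the dyadic $M$-sum) means your argument literally proves \eqref{tailY} with $\eps_0$ replaced by some $\eps_0'>\eps_0$, i.e. with the slightly larger variance $\s_{\al-2s-2\eps_0',N_2}-\s_{\al-2s-2\eps_0',N_1}$ in the exponent. This weaker form still gives $u_\al\in Y^s$ almost surely and suffices for every use of the lemma in the paper (one only needs an exponent strictly larger than $1$ on $\jb{n}$), but to get the statement exactly as written you should take the fractional time regularity on the modulation side, which is precisely what the paper's $K^{\eps_0}\P_{K,M}$ decomposition accomplishes.
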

In particular from Borel-Cantelli's lemma, $u_\al\in Y^{s}$ almost surely. Here we simply write $\s_\al\equiv \|\s_\al\|_{L^\infty}$ since it is independent of $x$ for $\M=\T$.
\begin{proof}
First, note that in view of \eqref{Ysb} and \eqref{tail}, it suffices to obtain the tail estimate for the $Z^s$ part of the $Y^{s}$ norm. Then for $p\ge p_0$ we have by  Markov's inequality
\begin{align*}
\mu_\al\big(\|\Pi_{\le N_2}\Pi_{>N_1}u_0\|_{Z^s}>R\big) \le R^{-p}\E_{\mu_\al}\|\Pi_{\le N_2}\Pi_{>N_1}u_0\|_{Z^s}^p.
\end{align*} 
Then by definition \eqref{Zb} with the use of Minkowski's inequality along with Khinchin's inequality (Lemma~\ref{LEM:Khinchin}) we have
\begin{align*}
&\big\|\Pi_{\le N_2}\Pi_{>N_1}u_0\big\|_{L^p(\mu_\al)Z^s}\\
&=\Big\|\jb{q}^{-2}M^{s+\eps_0}\P_{M}\big[\eta_0(t)e^{i(t+q)(-\dx^2)^{\frac{\al}2}}\Pi_{\le N_2}\Pi_{>N_1}u_0\big]\Big\|_{L^p(\mu_\al)\ell^1_q\ell^1_ML^\infty_tL^{p_0}_{x}}\\
&\les \Big\|\jb{q}^{-2}K^{\eps_0}M^{s+\eps_0}\P_{K,M}\big[\eta_0(t)e^{i(t+q)(-\dx^2)^{\frac{\al}2}}\Pi_{\le N_2}\Pi_{>N_1}u_0\big]\Big\|_{\ell^1_{q,K,M}L^{p_0}_{t,x}L^p(\mu_\al)}\\
&\les p^\frac12\Big\|\jb{q}^{-2}K^{\eps_0}M^{s+\eps_0}\P_{K,M}\big[\eta_0(t)e^{i(t+q)(-\dx^2)^{\frac{\al}2}}\Pi_{\le N_2}\Pi_{>N_1}u_0\big]\Big\|_{\ell^1_{q,K,M}L^{p_0}_{t,x}L^2(\mu_\al)}.
\end{align*}
Next, we compute for any $t\in\R$ and $x\in\T$
\begin{align*}
&\E_{\mu_\al}\big|\P_{K,M}\big[\eta_0(t)e^{i(t+q)(-\dx^2)^{\frac{\al}2}}\Pi_{\le N_2}\Pi_{>N_1}u_0\big](t,x)\big|^2\\
&=\E_{\mu_\al}\Big|\int_\R\sum_{N_1<|n|\le N_2}\chi_K(\tau-|n|^\al)\chi_M(n)\ft \eta_0(\tau-|n|^\al)\frac{g_n}{\jb{n}^\frac\al2}e^{iq|n|^\al}e^{it\tau}e_n(x)d\tau\Big|^2\\
&= \int_\R\int_\R\sum_{N_1<|n|\le N_2}\chi_K(\tau_1-|n|^\al)\chi_K(\tau_2-|n|^\al)\chi_M(n)^2\ft \eta_0(\tau_1-|n|^\al)\cj{\ft \eta_0}(\tau_2-|n|^\al)\\
&\qquad\qquad\qquad\qquad \cdot e^{it(\tau_1-\tau_2)}\jb{n}^{-\al}d\tau_1d\tau_2\\
&=\Big(\sum_{N_1<|n|\le N_2}\chi_M(n)^2\jb{n}^{-\al}\Big)\big|\P_K\eta_0(t)\big|^2.
\end{align*}
Gathering the estimates above finally yields
\begin{align*}
\mu_\al\big(\|\Pi_{\le N_2}\Pi_{>N_1}u_0\|_{Y^{s}}>R\big) &\le C^pp^\frac{p}2R^{-p}(\s_{\al-2s-2\eps_0,N_2}-\s_{\al-2s-2\eps_0,N_1})\big\|\eta_0\big\|_{W^{2\eps_0,p_0}_t}^p,
\end{align*}
where the summation in $M$ is ensured by \eqref{delta}. Then the tail estimate \eqref{tailY} follows from optimizing in $p$ the previous estimate.
\end{proof}

\subsection{The gauge transform}\label{SUBSEC:gauge}
 We start by rewriting the equation for $u$ as
\begin{align}
i\dt u +(-\dx^2)^{\frac{\al}2}u = 2\gamma\be\sum_{k\ge 0}\frac{\be^k}{k!}|u|^{2k}u.
\end{align}
As in \cite{BO96,DNY1} for the pure power NLS with random initial data, in order to avoid some resonances\footnote{Here we only need to get rid of very few resonances. In the two-dimensional setting, note that for the cubic nonlinearity \cite{BO96} this gauge transform essentially amounts to Wick ordering, i.e. removing all \emph{non trivial} resonances, while in the case of higher power nonlinearity, Deng-Nahmod-Yue \cite{DNY1} perform a gauge transform to decompose the (renormalized) power $:|u|^{2k}u:$ in sums of \emph{simple} $(2k+1)$-linear forms, i.e. with only \emph{over-paired} pairings, which amounts to remove a lot more resonances. See also Remark~\ref{REM:DNY} above.} in the power nonlinearities $|u|^{2k}u$, we define the gauge
\begin{align}\label{gauge1}
\GG(u) = 2\gamma\be\sum_{k\ge 0}\frac{\be^k}{k!}(k+1)\A[|u|^{2k}],
\end{align}
where
\begin{align}\label{A}
\A[f] = \frac1{2\pi}\int_{\T}f(x)dx = \ft f(0).
\end{align}
The gauged function
\begin{align}\label{gauge2}
v(t) = \GGG(u)(t) \deff \exp\Big(i\int_0^t\GG(u(t'))dt'\Big)u(t)
\end{align}
then solves the truncated nonlinear Schr\"odinger equation
\begin{align}
i\dt v + (-\dx^2)^{\frac{\al}2}v &= 2\gamma\be \sum_{k\ge 0}\frac{\be^k}{k!}\Big(|v|^{2k} - (k+1)\A\big[|v|^{2k}\big]\Big)v\label{v1}\\
&=2\gamma\be \sum_{k\ge 1}\frac{\be^k}{k!}\big(\NN_{2k+1}(v)-\RR_{2k+1}(v)\big).\label{v}
\end{align}
In particular the sum in \eqref{v} runs on $k\ge 1$ since the linear term for $k=0$ in \eqref{v1} cancels out.

Note also that the gauge transform $\GGG$ in \eqref{gauge2} is invertible, with inverse
\begin{align}\label{gauge3}
\GGG^{-1}(v)(t) = \exp\Big(-i\int_0^t\GG(v(t'))dt'\Big)v(t),
\end{align} 
and in particular it does not change the initial data $v(0)=u(0)$. 

In \eqref{v} above, for $\QQ\in\{\NN,\RR\}$ and $k\ge 1$, we write 
\begin{align*}
\QQ_{2k+1}(v)=\QQ_{2k+1}(\underbrace{v,...,v}_{2k+1}),
\end{align*}
and the $(2k+1)$-linear forms $\NN_{2k+1}$ and $\RR_{2k+1}$ are defined as
\begin{align}\label{QQ}
\QQ_{2k+1}(v_1,...,v_{2k+1}) = \sum_{n_0\in\Z}\Big(\sum_{(n_1,...,n_{2k+1})\in\G_{\QQ_{2k+1}}(n_0)}\prod_{j=1}^{2k+1}\ft v_j(n_j)^{\i_j}\Big)e_{n_0},~~\QQ\in\{\NN,\RR\},
\end{align}
 where the signs are given by $\i_j=(-1)^{j+1}$ for $j=0,...,2k+1$.Here and in the following, for a complex number $z\in\C$ and a sign $\i\in\{\pm\}\simeq \{\pm1\}$ we use the convention $z^+=z$ and $z^-=\cj z$.

\noi The frequency regions $\G_{\QQ_{2k+1}}(n_0)$ are defined as
\begin{align}\label{GN}
\G_{\NN_{2k+1}}(n_0) = \Big\{(n_1,...,n_{2k+1})\in\Z^{2k+1},~~ \sum_{j=0}^{2k+1}\i_jn_j=0,~~\forall \ell\in\{0,...,k\},~ n_{2\ell+1}\neq n_0\Big\},
\end{align}
and
\begin{align}\label{GR}
\G_{\RR_{2k+1}}(n_0) &= \Big\{(n_1,...,n_{2k+1})\in\Z^{2k+1},~~ \sum_{j=0}^{2k+1}\i_jn_j=0,\notag\\
&\qquad\qquad\exists \ell_1\neq \ell_2\in \{0,...,k\},~~n_0=n_{2\ell_1+1}=n_{2\ell_2+1}\Big\}.
\end{align}
 With these definition, we have indeed that \eqref{v1} and \eqref{v} coincide, since for any $\ell=0,...,k$
\begin{align*}
\A[|v|^{2k}]v = \sum_{n_{0}\in\Z}e_{n_{0}}\ft v(n_{0})\sum_{\substack{n_1,...,n_{2k}\\ \sum_{j=1}^{2k}\i_jn_j=0}}\prod_{j=1}^{2k}\ft v(n_j)^{\i_j} = \sum_{n_0\in\Z}e_{n_0}\sum_{\substack{n_1,...,n_{2k+1}\\ \sum_{j=1}^{2k+1}\i_jn_j=n_0\\ n_{2\ell+1}=n_0}}\prod_{j=1}^{2k+1}\ft v(n_j)^{\i_j},
\end{align*}
due to the symmetry with respect to the frequencies with negative sign (i.e. the $n_j$'s for odd $j$) and also with respect to the ones with positive signs.

We will thus write
\begin{align}\label{GRL}
\G_{\RR_{2k+1}}(n_0) &= \bigsqcup_{\ell_1\neq \ell_2\in \{0,...,k\}}\G_{\RR_{2k+1}}(n_0,\ell_1,\ell_2) \notag\\
&\deff\bigsqcup_{\ell_1\neq \ell_2\in \{0,...,k\}}\Big\{(n_1,...,n_{2k+1})\in\Z^{2k+1},~~ \sum_{j=0}^{2k+1}\i_jn_j=0,~~n_0=n_{2\ell_1+1}=n_{2\ell_2+1}\Big\}.
\end{align}

Note that the gauge $\GG(u)$ a priori is ill-defined for a generic function $u\in H^{s}(\T)$ since $s<\frac{\al-d}2\le\frac12$. However,\footnote{This issue is somehow similar to what happens when dealing with singular Gibbs measure for the nonlinear wave or Schr\"odinger equations. For example, in the case of the cubic NLS in dimension $d=2$ considered in \cite{BO96}, the renormalized (Wick ordered) nonlinearity also only makes sense in a subclass of the form \eqref{class}. See e.g. \cite[Remark 1.3]{ORTz} for a more detailed discussion.} let us recall that we look for a solution $v$ to \eqref{v} under the special form $u=e^{it(-\dx^2)^{\frac\al2}}u_0+w$, with $u_0$ given by \eqref{init} and $w\in X^{s_1,b}$ for some $s_1,b>\frac12$. In particular,  $\GG(\<1>(u_0)+w)$ is almost surely well-defined, and so the inverse gauge transform \eqref{gauge3} is well-defined on the class \eqref{class} if $s_1,b>\frac12$ and $u_0\sim \mu_\alpha$. Indeed, in view of \eqref{linear} and \eqref{embeddings}, we can control for any $|t|\le T\le 1$
\begin{align}
\big|\GG(e^{it(-\dx^2)^{\frac\al2}}u_0+w(t))\big|&= \Big|\be\sum_{k\ge 0}\frac{\be^k}{k!}(k+1)\A\big[|e^{it(-\dx^2)^{\frac\al2}}u_0+w(t)|^{2k}\big]\Big|\notag\\
& \le \be\sum_{k\ge 0}\frac{\be^k}{k!}(k+1)\big(\|e^{it(-\dx^2)^{\frac\al2}}u_0\|_{L^{\infty}_{T,x}}+\|w\|_{X^{s_1,b}(T)}\big)^{2k}\notag\\
&\les  e^{\be C\big(\|u_0\|_{Y^s}+\|w\|_{X^{s_1,b}(T)}\big)^2}
\label{bd-gauge}
\end{align}
where we used \eqref{Ysb2}. In particular this last term is $\mu_\al$-almost surely finite in view of Lemma~\ref{LEM:devY}.

\subsection{A local well-posedness result}\label{SUBSEC:LWP2}
In this subsection we establish a general local well-posedness result, uniform in $N\in\N$, for the model
\begin{align}\label{wN}
\begin{cases}
i\dt w +  (-\dx^2)^{\frac{\al}2})w = F_N(\<1>(u_0)+w),\\
w(0)=0,
\end{cases}
\end{align}
where the nonlinearity is given by\footnote{Note that here we use the sharp projector $\Pi_{\le N}$ instead of $\P_{\le N}$ as in \eqref{expNLSNN}. This is because the relation $\Pi_{\le N}^2 = \Pi_{\le N}$ will allow us to avoid an extra term depending on $\GG$ when we will do the gauge transform for the truncated equation \eqref{expNLSN}.}
\begin{align}\label{FN}
F_N(\<1>(u_0)+w)=2\gamma\be\sum_{k\ge 1}\frac{\be^k}{k!}\Pi_{\le N}\Big\{\NN_{2k+1}\big(\Pi_{\le N}[\<1>(u_0)+w]\big)-\RR_{2k+1}\big(\Pi_{\le N}[\<1>(u_0)+w]\big)\Big\}
\end{align}
for $N\in\N\cup\{+\infty\}$, with the convention that $\Pi_{\le \infty} = \Id$.

Indeed, recall that, as in \cite{BO96} and subsequent works, for $T\in (0,1]$ we look for a solution to \eqref{v} on $[-T,T]$ under the form \eqref{ansatz}, i.e.
\begin{align*}
v= \<1>(u_0) + w \qquad \text{with}\qquad \<1>(u_0)=e^{it(-\dx^2)^{\frac{\al}2}}u_0,
\end{align*}
where $w\in X^{s_1,b}(T)$ for some $s_1,b>\frac12$ solves \eqref{wN} (with $N=\infty$).

Here the $(2k+1)$-linear forms $\NN_{2k+1}$ and $\RR_{2k+1}$ are defined in \eqref{QQ} above, and by analogy with the cubic case ($k=3$, see \cite{BO96}) we will refer to $\RR_{2k+1}$ as the resonant part, while $\NN_{2k+1}$ is the non-resonant part. 

The goal of this subsection is thus to prove the following uniform local well-posedness result for \eqref{wN}.
\begin{proposition}\label{PROP:LWP}
Let $1+\frac{\sqrt{2}}2<\al\le 2$,  $s=\frac{\al-1}2-\eps$ and $b=\frac12+\wt\eps$ for some $0<\eps,\wt\eps\ll 1$, and $\frac12< s_1<\al s$. Then there exists $C_1,c>0$ and $0<\theta\ll 1$ such that for any $\gamma\in\R$, any $\be>0$, and any $R>0$, if we set $T= c(|\gamma|\be R^{-2})^{\theta^{-1}}e^{-\be \theta^{-1} C_1 R^2}$, then for any $u_0=\gf+\phi\in Y^{s}(\T)+H^{s_1}(\T)$ with $\|\gf\|_{Y^{s}}+\|\phi\|_{H^{s_1}}\le R$, and any $N\in\N\cup\{\infty\}$, the Cauchy problem for \eqref{wN} with $\<1>=e^{it(-\dx^2)^{\frac{\al}2}}(\gf+\phi)$ has a unique solution $w$ in $X^{s_1,b}(T)$, which satisfies
\begin{align}\label{bdw}
\|w\|_{X^{s_1,b}(T)}\le R^{-1}.
\end{align}
Moreover, the flow map $\Phi:u_0\mapsto w$ is Lipschitz continuous.
\end{proposition}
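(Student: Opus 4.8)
The plan is to solve \eqref{wN} by a contraction mapping argument, uniformly in $N\in\N\cup\{\infty\}$, in the ball $B_\dl=\{w\in X^{s_1,b}(T):\|w\|_{X^{s_1,b}(T)}\le\dl\}$ with $\dl=R^{-1}$. Writing $\<1>=\<1>(u_0)=e^{it(-\dx^2)^{\frac{\al}2}}(\gf+\phi)$, a solution of \eqref{wN} is a fixed point of the Duhamel map
\begin{align*}
\G_N(w)(t)=-i\int_0^te^{i(t-t')(-\dx^2)^{\frac{\al}2}}\,F_N\big(\<1>+w\big)(t')\,dt',
\end{align*}
with $F_N$ as in \eqref{FN}. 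By the energy estimate \eqref{linearX} (applied with zero initial data) together with the time-localization \eqref{XT} --- which is where a gain $T^{\theta}$ of a small positive power of $T$ comes from, using $b>\tfrac12$ --- everything reduces to estimating $\|F_N(\<1>+w)\|_{X^{s_1,b-1}(T)}$ and its difference in $w$; the truncations $\Pi_{\le N}$, both the outer one and those acting on the inputs, are harmless by \eqref{normN}, uniformly in $N$, so that the case $N=\infty$ is included in the same argument.

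The core of the argument consists of multilinear estimates for the gauged nonlinearities $\NN_{2k+1}$ and $\RR_{2k+1}$, to be established in Subsection~\ref{SUBSEC:multi}. Concretely, I would prove that for every $k\ge1$ and any choice of inputs $v_1,\dots,v_{2k+1}$, each of which is either a free linear evolution or an element of $X^{s_1,b}(T)$, one has
\begin{align*}
\big\|\NN_{2k+1}(v_1,\dots,v_{2k+1})\big\|_{X^{s_1,b-1}(T)}+\big\|\RR_{2k+1}(v_1,\dots,v_{2k+1})\big\|_{X^{s_1,b-1}(T)}\le C_0^{\,k}\,k^2\,T^{\theta}\prod_{j=1}^{2k+1}\|v_j\|_\ast,
\end{align*}
where $\|v_j\|_\ast$ denotes $\|u_{0,j}\|_{Y^s}$ if $v_j=e^{it(-\dx^2)^{\frac{\al}2}}u_{0,j}$ is a free evolution (this input being controlled through \eqref{Ysb2}) and $\|v_j\|_{X^{s_1,b}(T)}$ otherwise, and $C_0,\theta>0$ depend only on $\al,s,s_1,b$. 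Two features are essential. First, the constant grows only geometrically in $k$, up to the polynomial factor $k^2$ accounting for the number $\binom{k+1}{2}$ of over-paired pairings in the decomposition \eqref{GRL} of $\RR_{2k+1}$; this is exactly what makes the resulting power series summable against the Taylor coefficients $\be^k/k!$. Second, the estimate holds at the \emph{smoother} regularity $s_1>\tfrac12>s$, even though a high-frequency piece of a random linear evolution only lies in $W^{s,\infty}$: this is made possible by the gauge transform, which in $\NN_{2k+1}$ removes the resonances $n_{2\ell+1}=n_0$ (see \eqref{GN}), producing a nonlinear smoothing in the worst High$\times$Low$\times\cdots\times$Low$\to$High interactions. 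Quantifying this smoothing is what forces $1+\tfrac{\sqrt{2}}{2}<\al$.

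Granting these estimates, the remainder is bookkeeping. Expanding $\Pi_{\le N}[\<1>+w]^{\otimes(2k+1)}$ by multilinearity produces $\binom{2k+1}{j}\le2^{2k+1}$ terms with $j$ copies of $w$ and $2k+1-j$ copies of $\<1>$. Using $\|\<1>\|_\ast\les\|\gf\|_{Y^s}+\|\phi\|_{H^{s_1}}\le R$ (for the $H^{s_1}$ component also via \eqref{free}) and $\|w\|_{X^{s_1,b}(T)}\le\dl\le R$ (one may assume $R\ge1$), the estimates above sum to
\begin{align*}
\big\|F_N(\<1>+w)\big\|_{X^{s_1,b-1}(T)}\les|\gamma|\be\,T^{\theta}\sum_{k\ge1}\frac{\be^k}{k!}\,k^2\,(4C_0)^k\,(2R)^{2k+1}\les|\gamma|\be\,T^{\theta}\,R\,e^{C_1\be R^2},
\end{align*}
since $\sum_{k\ge1}\frac{k^2}{k!}y^k\les e^{2y}$, with $C_1$ independent of $\al$ on the range under consideration. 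Combined with \eqref{linearX}--\eqref{XT} this gives $\|\G_N(w)\|_{X^{s_1,b}(T)}\le C|\gamma|\be\,T^{\theta}R\,e^{C_1\be R^2}$, and the value of $T$ declared in the statement is precisely the one for which the right-hand side is $\le R^{-1}=\dl$; the same computation, telescoping $\NN_{2k+1}(v_1,\dots)-\NN_{2k+1}(\wt v_1,\dots)$ (and likewise for $\RR_{2k+1}$) into $2k+1$ multilinear terms each carrying one difference factor, yields $\|\G_N(w_1)-\G_N(w_2)\|_{X^{s_1,b}(T)}\le\tfrac12\|w_1-w_2\|_{X^{s_1,b}(T)}$ after shrinking $c$. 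Banach's fixed point theorem then gives the unique $w\in B_\dl$; uniqueness in all of $X^{s_1,b}(T)$ follows from the embedding \eqref{embeddings} and the usual continuity/bootstrap argument, and running the difference estimate with two different data, together with $\|\<1>(u_{0,1})-\<1>(u_{0,2})\|_\ast\les\|u_{0,1}-u_{0,2}\|_{Y^s+H^{s_1}}$, yields Lipschitz continuity of $u_0\mapsto w$.

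The main obstacle is the family of multilinear estimates itself: one must bound \emph{every} $(2k+1)$-linear gauged form, simultaneously and with a constant only geometric in $k$, in a sum space mixing a rough random linear evolution (regularity below $\tfrac12$) with $X^{s_1,b}$ at $s_1>\tfrac12$, while retaining a positive power of $T$. The decisive case is the High$\times$Low$\to$High interaction with a high-frequency linear-evolution input, where one must extract enough of the smoothing encoded in the gauged frequency constraint \eqref{GN}; this is what pins down the threshold $\al>1+\tfrac{\sqrt{2}}{2}$. Once those estimates are in hand, the contraction scheme above is routine.
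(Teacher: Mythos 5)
Your proposal follows essentially the same route as the paper: the same Duhamel/contraction scheme in the ball of radius $R^{-1}$ of $X^{s_1,b}(T)$, with the whole weight carried by uniform $(2k+1)$-linear estimates on the gauged forms $\NN_{2k+1}$, $\RR_{2k+1}$ (the paper's Lemmas~\ref{LEM:RR} and~\ref{LEM:NN}, which likewise allow each input to be either a free evolution measured in $Y^s$ or an $X^{s_1,b}$ function, and carry constants $C^k$ times a polynomial in $k$ so that the series is summable against $\be^k/k!$), followed by the identical choice of $T$ and the standard difference/Lipschitz estimates. The plan is correct as a reduction of the Proposition to those multilinear bounds, which is exactly how the paper organizes the argument.
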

\noi In particular the condition $\al>1+\frac{\sqrt{2}}2$ is there to ensure that it is possible to find $s<\frac{\al-1}2$ and $s_1>\frac12$ such that $s_1<\frac{2\al-1}{\al}s$.

Proposition \ref{PROP:LWP} classically follows from multilinear estimates on $\QQ_{2k+1}$, $\QQ\in\{\NN,\RR\}$. Since the regions $\G_{\QQ_{2k+1}}(n_0)$ in \eqref{GN}-\eqref{GR} are symmetric with respect to the frequencies with same sign, we can expand the multilinear terms appearing in \eqref{wN} as
\begin{align}\label{Qm}
\QQ_{2k+1}\big(\<1>(u_0)+w) = \sum_{m_+=0}^{k+1}\sum_{m_-=0}^k\binom{k+1}{m_+}\binom{k}{m_-}\QQ_{2k+1}(\{\<1>(\gf),\<1>(\phi)+w\}_{(m_+,m_-)})
\end{align}
where we use the shorthand notation for the $(2k+1)$-tuple $\{\<1>,w\}_{(m_+,m_1)}$ defined as
\begin{align}\label{m+-}
\big(\{\<1>(\gf),\<1>(\phi)+w\}_{(m_+,m_1)}\big)_j = \begin{cases} \<1>(\gf) \text{ if } j\in\mathfrak{G},\\
\<1>(\phi)+w \text{ if }j\in\mathfrak{D},\end{cases}
\end{align}
and the set of indices are given by
\begin{align}\label{indices}
\mathfrak{G} = \mathfrak{G}_{(m_+,m_-)} = \{2\ell-1,~~\ell = 1,...,m_+\}\cup\{2\ell,~~\ell=1,...,m_-\},
\end{align}
with the convention that $\GF=\emptyset$ if $m_+=m_-=0$, and $$\DF=\{1,...,2k+1\}\setminus\mathfrak{G}.$$ 

With these notations, we start with estimating the resonant parts.
\begin{lemma}\label{LEM:RR}
Let $\frac32<\al\le 2$, $0<s<\frac{\al-1}2$, $\frac12 <s_1,b<1$ such that $\frac{s_1}2<s$, and $0<\theta\ll1$. Then there exists $C>0$ and $0<\theta\ll 1$ such that the following holds. For $T\in (0,1]$, $k\ge 1$ and $(m_+,m_-)\in\{0,...,k+1\}\times\{0,...,k\}$, let $z_1,...,z_{2k+1}$ be such that $z_j(t)=e^{it(-\dx^2)^{\frac{\al}2}}\mathfrak{g}_j$ with $\mathfrak{g}_j\in Y^{s}$ if $j\in\mathfrak{G}$, and $z_j\in X^{s_1,b}(T)$ if $j\in \mathfrak{D}$. Then
\begin{align}\label{RR}
\Big\|\RR_{2k+1}(z_1,...,z_{2k+1})\Big\|_{X^{s_1,b-1}(T)} \le Ck(k+1)T^{\frac32-b-\theta}\prod_{j\in\mathfrak{G}}\big\|\mathfrak{g}_j\big\|_{Y^{s}}\prod_{j\in\mathfrak{D}}\big\|z_j\big\|_{X^{s_1,b}(T)}.
\end{align}
\end{lemma}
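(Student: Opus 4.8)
The plan is to prove Lemma~\ref{LEM:RR} by exploiting that the gauge has removed exactly the frequency configurations needed to make the resonant estimate reduce to a crude $L^\infty_{t,x}$ bound on most of the factors. First, using the decomposition \eqref{GRL} of $\G_{\RR_{2k+1}}(n_0)$ into the pieces $\G_{\RR_{2k+1}}(n_0,\ell_1,\ell_2)$, together with the triangle inequality and the trivial bound $\big|\sum_{\G_{\RR_{2k+1}}(n_0)}(\cdots)\big|\le\sum_{\ell_1\neq\ell_2}\sum_{\G_{\RR_{2k+1}}(n_0,\ell_1,\ell_2)}|\cdots|$ (legitimate for the $H^{s_1}$ norm, since replacing a Fourier coefficient by a larger modulus only increases the norm), I would reduce to estimating a single component in which the two ``creation'' frequencies $n_{2\ell_1+1}$ and $n_{2\ell_2+1}$ are pinned to the output frequency $n_0$; summing over the at most $k(k+1)$ choices of $(\ell_1,\ell_2)$ then produces the combinatorial factor in \eqref{RR}. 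On a fixed such component, set $a=2\ell_1+1$, $b=2\ell_2+1$: the constraint $\sum_{j=0}^{2k+1}\i_jn_j=0$ with $n_a=n_b=n_0$ forces $\sum_{j\neq0,a,b}\i_jn_j=-n_0$, so that, writing $W=\prod_{j\neq0,a,b}z_j^{\i_j}$,
\[ \widehat{\RR_{2k+1}(z_1,\dots,z_{2k+1})_{(\ell_1,\ell_2)}}(t,n_0)=\ft z_a(t,n_0)\,\ft z_b(t,n_0)\,\widehat W(t,-n_0). \]

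The core estimate is then pointwise in $t\in[-T,T]$. Splitting $\jb{n_0}^{2s_1}\le\jb{n_0}^{2s}\jb{n_0}^{2s}$ --- which is precisely where the hypothesis $\tfrac{s_1}2<s$ enters --- and applying Cauchy--Schwarz in $n_0$ (one factor $\jb{n_0}^s$ absorbed into each of $z_a,z_b$, using $\|\cdot\|_{\ell^\infty}\le\|\cdot\|_{\ell^2}$, and Plancherel for the $W$ factor), one obtains
\[ \big\|\RR_{2k+1}(z_1,\dots,z_{2k+1})_{(\ell_1,\ell_2)}(t)\big\|_{H^{s_1}}\lesssim\|z_a(t)\|_{H^s}\,\|z_b(t)\|_{H^s}\,\|W(t)\|_{L^2_x}, \]
and by Hölder on $\T$, $\|W(t)\|_{L^2_x}\le\prod_{j\neq0,a,b}\|z_j(t)\|_{L^{2(2k-1)}_x}\lesssim\prod_{j\neq0,a,b}\|z_j(t)\|_{L^\infty_x}$. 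It remains to bound every factor uniformly in $t$: for $j\in\mathfrak{G}$ the free evolution satisfies $\|z_j\|_{L^\infty_{T,x}}\lesssim\|\mathfrak{g}_j\|_{Y^s}$ by \eqref{Ysb2} and $\|z_j(t)\|_{H^s}=\|\mathfrak{g}_j\|_{H^s}\le\|\mathfrak{g}_j\|_{Y^s}$; for $j\in\mathfrak{D}$ one uses $X^{s_1,b}(T)\hookrightarrow C([-T,T];H^{s_1}(\T))\hookrightarrow C([-T,T];L^\infty(\T))$ (valid since $b>\tfrac12$ and $s_1>\tfrac12$) together with $s<s_1$. Summing over $(\ell_1,\ell_2)$ yields
\[ \big\|\RR_{2k+1}(z_1,\dots,z_{2k+1})\big\|_{L^\infty_TH^{s_1}}\lesssim k(k+1)\prod_{j\in\mathfrak{G}}\|\mathfrak{g}_j\|_{Y^s}\prod_{j\in\mathfrak{D}}\|z_j\|_{X^{s_1,b}(T)} \]
(the product of the $L^\infty$-embedding constants contributes at worst a geometric factor $C^{2k}$, harmless once the $\tfrac{\be^k}{k!}$ in \eqref{v} is restored).

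Finally, to convert this uniform-in-time $H^{s_1}$ bound into the claimed $X^{s_1,b-1}(T)$ bound with the gain $T^{3/2-b-\theta}$: since $\RR_{2k+1}$ is globally defined once each $z_j\in\mathfrak{D}$ is replaced by a global extension of comparable $X^{s_1,b}$ norm, one has $\|\RR_{2k+1}\|_{X^{s_1,b-1}(T)}\le\|\chi_{2T}\RR_{2k+1}\|_{X^{s_1,b-1}}$, where $\chi_{2T}$ is a smooth cutoff equal to $1$ on $[-T,T]$ and supported in $|t|\le2T$; then, since $-\tfrac12<b-1<0$, the standard embedding $L^{q'}_tH^{s_1}_x\hookrightarrow X^{s_1,b-1}$ with $\tfrac1{q'}=\tfrac32-b$ (dual to the Sobolev embedding in time) combined with $\|\chi_{2T}\RR_{2k+1}\|_{L^{q'}_tH^{s_1}}\lesssim T^{1/q'}\|\RR_{2k+1}\|_{L^\infty_TH^{s_1}}=T^{3/2-b}\|\RR_{2k+1}\|_{L^\infty_TH^{s_1}}$ gives \eqref{RR}, with $\theta>0$ left to absorb the $\wt\eps$-dependent constant. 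I expect the only delicate point to be the purely combinatorial bookkeeping in the first step (keeping the multiplicity of the reduction $O(k^2)$ and checking no configuration is mishandled); every later step is routine. The resonant part is in fact the easy one --- it was removed precisely because it admits this soft $L^\infty$-type treatment --- and the genuine analytic difficulty, together with the restriction $\al>1+\tfrac{\sqrt2}2$, lies in the companion estimate for the non-resonant forms $\NN_{2k+1}$; for $\RR_{2k+1}$ the only structural condition needed is $s_1<2s$.
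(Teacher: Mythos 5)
Your proof is correct and follows essentially the same route as the paper's: both decompose $\RR_{2k+1}$ via \eqref{GRL} into $k(k+1)$ pinned components, control the two pinned factors through $\ell^\infty_n$ bounds on their Fourier coefficients (which is exactly where the hypothesis $\frac{s_1}{2}<s$ enters), place the remaining $2k-1$ factors in $L^\infty_{t,x}$, and extract the full power $T^{\frac32-b-\theta}$ from the negative modulation weight combined with the boundedness of the time interval. The only (cosmetic) difference is that you obtain the factor $T^{\frac32-b}$ in one step from the dual Sobolev embedding $L^{q'}_tH^{s_1}\hookrightarrow X^{s_1,b-1}$, whereas the paper splits it as $T^{1-b-\theta}$ from the time-localization estimate \eqref{XT} followed by $T^{\frac12}$ from H\"older in time.
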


\noi The condition $\al>\frac32$ ensures that we can find $s<\frac{\al-1}2$ and $\frac12<s_1<2s$. We will prove Lemma~\ref{LEM:RR} in the next subsection. 

\noi We now state the multilinear estimate for the non-resonant terms, whose proof is also postponed to the next subsection.
\begin{lemma}\label{LEM:NN}
Let $1+\frac{\sqrt{2}}2<\al\le 2$,  $s=\frac{\al-1}2-\eps$ and $b=\frac12+\wt\eps$ for some $0<\eps,\wt\eps\ll 1$, and $\frac12< s_1<\frac{2\al-1}{\al}s$. Then there exists $C>0$ and $0<\theta\ll 1$ such that the following holds. For $T\in (0,1]$, $k\ge 1$ and $(m_+,m_-)\in\{0,...,k+1\}\times\{0,...,k\}$, let $z_1,...,z_{2k+1}$ be such that $z_j(t)=e^{it(-\dx^2)^{\frac{\al}2}}\mathfrak{g}_j$ with $\mathfrak{g}_j\in Y^{s}$ if $j\in\mathfrak{G}$, and $z_j\in X^{s_1,b}(T)$ if $j\in \mathfrak{D}$. Then
\begin{align}\label{NN}
\Big\|\NN_{2k+1}(z_1,...,z_{2k+1})\Big\|_{X^{s,b-1}(T)}\le C^kk^{1+s_1}(k+1)T^{\theta}\prod_{j\in\mathfrak{G}}\big\|\mathfrak{g}_j\big\|_{Y^{s}}\prod_{j\in\mathfrak{D}}\big\|z_j\big\|_{X^{s_1,b}(T)}.
\end{align}
\end{lemma}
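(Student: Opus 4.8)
The plan is to prove \eqref{NN} by duality, a dyadic Littlewood--Paley decomposition in both space and modulation, and a reduction of the $(2k+1)$-linear sum to a \emph{bilinear core} estimate; the non-resonance conditions encoded in $\G_{\NN_{2k+1}}(n_0)$ (see \eqref{GN}) are precisely what provide the dispersive smoothing making this possible.

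First I would reduce to a global-in-time estimate: using the localization bounds \eqref{XT}--\eqref{linearX} one replaces $X^{s,b-1}(T)$ and $X^{s_1,b}(T)$ by their global counterparts at the price of a power $T^{\theta}$, $\theta>0$ small, and of infinitesimally worsening $b$, while for $j\in\GF$ the truncated linear evolution $\chi_T(t)e^{it(-\dx^2)^{\al/2}}\gf_j$ has space-time Fourier transform $\ft\chi_T(\tau-|n|^{\al})\ft\gf_j(n)$, hence sits at modulation $\lesssim T^{-1}$ with rapidly decaying tails and is controlled through \eqref{Ysb2}. By duality it then suffices to bound
\begin{align*}
\Big|\sum_{n_0\in\Z}\int_{\R}\overline{\ft\psi(\tau_0,n_0)}\sum_{(n_j)_j\in\G_{\NN_{2k+1}}(n_0)}\int_{\tau_0=\sum_{j}\i_j\tau_j}\prod_{j=1}^{2k+1}\ft z_j(\tau_j,n_j)^{\i_j}\Big|
\end{align*}
over all $\psi$ with $\|\psi\|_{X^{-s,1-b}}\le 1$. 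I would decompose each $z_j$ and $\psi$ into pieces $\P_{K_j,M_j}$ (dyadic in the spatial frequency $M_j=\jb{n_j}$ and the modulation $K_j=\jb{\tau_j-|n_j|^{\al}}$), write $N_{(1)}\ge N_{(2)}\ge\cdots$ for the ordered frequencies among $\{\jb{n_0},\dots,\jb{n_{2k+1}}\}$ and $K_{\max}=\max_j K_j$. Since $\sum_j\i_jn_j=0$ one has $N_{(1)}\sim N_{(2)}$, and the only case needing work is $N_{(1)}\sim\jb{n_0}$ (output at high frequency), the other being closed by Sobolev weights alone.

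The heart of the argument is the modulation lower bound. Setting
\begin{align*}
\Omega=-|n_0|^{\al}+\sum_{j=1}^{2k+1}\i_j|n_j|^{\al},
\end{align*}
the identity $(\tau_0-|n_0|^{\al})-\sum_j\i_j(\tau_j-|n_j|^{\al})=\Omega$ gives $K_{\max}\gtrsim\jb{\Omega}$. I would show that on $\G_{\NN_{2k+1}}(n_0)$, in the regime $N_{(1)}\sim\jb{n_0}$, one always has $\jb{\Omega}\gtrsim N_{(1)}^{\al-1}$: if the second high frequency sits on an even index (so carries the $-$ sign) then already $|\Omega|\sim N_{(1)}^{\al}$; if it sits on an odd index $j=2\ell+1$ (so carries the $+$ sign), the non-resonance condition $n_{2\ell+1}\ne n_0$ together with $\sum_j\i_jn_j=0$ forces $|n_{2\ell+1}|\ne|n_0|$, hence $\big||n_{2\ell+1}|-|n_0|\big|\ge1$, and the mean value theorem applied to $t\mapsto|t|^{\al}$ (this is where $\al>1$ enters) gives $|\Omega|\gtrsim N_{(1)}^{\al-1}$; the degenerate case $N_{(1)}\sim N_{(2)}\sim N_{(3)}$ is disposed of separately at a harmless logarithmic cost. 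This is exactly the point of the gauge transform: the contributions with $n_{2\ell+1}=n_0$, which carry no modulation gain, have been removed from $\NN_{2k+1}$ (they make up the gauge corrections $(k+1)\A[|v|^{2k}]v$ and the resonant part $\RR_{2k+1}$). Once $K_{\max}\gtrsim N_{(1)}^{\al-1}$ is available, I would extract a factor $N_{(1)}^{-(\al-1)(1-b)}=N_{(1)}^{-\frac{\al-1}{2}+}$ from the largest modulation weight.

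It then remains to reduce the $(2k+1)$-linear sum to a bilinear one and carry out the dyadic summation, and here lies the main obstacle. I would keep the two legs carrying $N_{(1)},N_{(2)}$ as pivots, together with $\psi$, and estimate the remaining $2k-1$ factors in $L^\infty_{t,x}$, using $X^{s_1,b}(\T)\hookrightarrow L^\infty_{t,x}$ for the $\DF$-legs (valid since $s_1>\tfrac12$, $b>\tfrac12$, $d=1$, by \eqref{embeddings} and Sobolev) and \eqref{Ysb2} for the $\GF$-legs; the two pivots and $\psi$ are then handled by an $L^2_{t,x}$ bilinear/Strichartz estimate for $e^{it(-\dx^2)^{\al/2}}$ on $\T$ together with a direct count of the number of frequencies $n$ with $|\tau-|n|^{\al}|\lesssim K$ (which is $\lesssim 1+K\,N_{(1)}^{-(\al-1)}$) and the modulation gain above. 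Balancing the output Sobolev weight, the weights supplied by the two pivots, the modulation gain and the loss incurred in this lattice-point count shows that the total power of $N_{(1)}$ is summable exactly when $\al>1+\frac{\sqrt2}{2}$, equivalently when the interval $\tfrac12<s_1<\frac{2\al-1}{\al}s$ is non-empty; the geometric series in the $2k+1$ dyadic parameters then produce $C^k$, while the $O(k)$ summations over intermediate low frequencies and the sum over the index $\ell$ selecting the non-resonant pair produce the polynomial factor $k^{1+s_1}(k+1)$. The crucial points — and the reason for preferring this elementary $X^{s,b}$ scheme to the random-tensor machinery, which by Remark~\ref{REM:DNY} would yield a non-summable $C^k k!$ — are that the lower bound $\jb{\Omega}\gtrsim N_{(1)}^{\al-1}$ must be obtained \emph{uniformly} over all frequency configurations compatible with $\G_{\NN_{2k+1}}(n_0)$ (in particular when three or more frequencies are comparably large), and that the $L^\infty$-placement of the $2k-1$ non-pivot factors and the extraction of the modulation gain from a single, uniformly chosen resonant pair must cost no factorial in $k$.
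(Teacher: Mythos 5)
There is a genuine gap at the heart of your argument: the claimed uniform modulation lower bound $\jb{\Omega}\gtrsim N_{(1)}^{\al-1}$ on $\G_{\NN_{2k+1}}(n_0)$ is false outside a much smaller regime than you allow. The non-resonance condition only removes the exact coincidences $n_{2\ell+1}=n_0$; the mean value theorem applied to the nearly coincident pair $(n_0,n_{j})$ gives $\big||n_0|^\al-|n_j|^\al\big|\gtrsim N_{(1)}^{\al-1}$, but $\Omega$ also contains the contribution $O\big(k\,(N_{(3)})^{\al}\big)$ of the remaining $2k$ frequencies, which cancels this gain as soon as $N_{(3)}\gtrsim N_{(1)}^{(\al-1)/\al}$ --- a far wider range than the ``degenerate case $N_{(1)}\sim N_{(2)}\sim N_{(3)}$'' you set aside. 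Even in the fully comparable case the failure is total, not logarithmic: for $\al=2$, $k=1$, take $n_0=N$, $n_1=N+1$, $n_2=N$, $n_3=N-1$; this lies in $\G_{\NN_3}(n_0)$ and $\Omega=2(n_0-n_1)(n_0-n_3)=-2$, so there is no modulation gain whatsoever while all frequencies are of size $N$. Consequently the balancing of exponents by which you propose to derive the threshold $\al>1+\frac{\sqrt2}{2}$ rests on a smoothing that is unavailable precisely in the critical configurations.

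The paper's proof confronts exactly this: it reserves the modulation argument (its Case 3, split according to the relative sign of the two large frequencies, giving gains $M^{\al}$ or $M^{\al-1}$) for the regime $M^{(0)}\gg k\,(M^{(2)})^{\al/(\al-1)}$, where the error from the lower frequencies is genuinely negligible, and treats the complementary regime $M^{(0)}\lesssim k\,(M^{(2)})^{\al/(\al-1)}$ (its Case 2) with \emph{no} modulation gain at all, by transferring $\frac{\al}{\al-1}(s_1-s)$ derivatives onto the third-largest frequency and placing that factor in $L^\infty_T H^{\frac{\al}{\al-1}(s_1-s)}$. It is this derivative-transfer step that forces $\frac{\al}{\al-1}(s_1-s)<s$, i.e.\ $s_1<\frac{2\al-1}{\al}s$, and hence $\al>1+\frac{\sqrt2}{2}$. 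Your reduction to two pivots plus $2k-1$ factors in $L^\infty_{t,x}$, and your reliance on the gauge to remove the truly resonant pairings, match the paper's scheme (its estimate \eqref{N2}); but to repair the proposal you must supply the missing intermediate case, since without it the stated range of $\al$ and $s_1$ is not justified.
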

The condition $1+\frac{\sqrt{2}}2<\al\le 2$ ensures that it is possible to find $s<\frac{\al-1}2$ and $\frac12< s_1<\frac{2\al-1}{\al}s$. With Lemma~\ref{LEM:RR} and Lemma~\ref{LEM:NN}, we can now give the proof of the local well-posedness result.
\begin{proof}[Proof of Proposition~\ref{PROP:LWP}] 
Recall that, for $u_0=\gf+\phi\in Y^s+H^{s_1}$, we look for a solution $w$ to 
\begin{align}
i\dt w + (-\dx^2)^\frac\al2 w&=F_N(\<1>(\gf)+\<1>(\phi)+w)\notag\\
& = 2\gamma\be \sum_{k\ge 1}\frac{\be^k}{k!}\sum_{m_+=0}^{k+1}\sum_{m_-=0}^k\binom{k+1}{m_+}\binom{k}{m_-}\notag\\
&\qquad\cdot\Pi_{\le N}\Big\{ \NN_{2k+1}(\{\Pi_{\le N}\<1>(\gf),\Pi_{\le N}(\<1>(\phi)+w)\}_{(m_+,m_-)})\notag\\
&\qquad\qquad\qquad\qquad+\RR_{2k+1}(\{\Pi_{\le N}\<1>(\gf),\Pi_{\le N}(\<1>(\phi)+w)\}_{(m_+,m_-)})\Big\}\label{wQ}
\end{align}
with $w(0)=0$, where we recall the notation $\{\<1>(\gf),\<1>(\phi)+w\}_{(m_+,m_-)}$ from \eqref{m+-}. Let $T\in (0,1]$ and $R\ge \|\gf\|_{Y^{s}}+\|\phi\|_{H^{s_1}}$. To solve \eqref{wQ} on $[-T,T]$, we solve in $X^{s_1,b}(T)$ the fixed point problem for the Duhamel formulation
\begin{align}
w(t)&=-i\int_0^te^{i(t-t')(-\dx^2)^\frac\al2}F_N(\<1>(u_0)+w)(t')dt'\notag\\
&=-2i\gamma\be \sum_{k\ge 1}\frac{\be^k}{k!}\sum_{m_+=0}^{k+1}\sum_{m_-=0}^k\binom{k+1}{m_+}\binom{k}{m_-}\notag\\
&\qquad\cdot\int_0^te^{i(t-t')(-\dx^2)^\frac\al2}\Pi_{\le N}\Big\{ \NN_{2k+1}(\{\Pi_{\le N}\<1>(\gf),\Pi_{\le N}(\<1>(\phi)+w)\}_{(m_+,m_-)})\notag\\
&\hspace{3cm}+\RR_{2k+1}(\{\Pi_{\le N}\<1>(\gf),\Pi_{\le N}(\<1>(\phi)+w)\}_{(m_+,m_-)})\Big\}(t')dt'.
\label{DuhamelQ}
\end{align}
Let $\G_{(\gf,\phi)}^N(w)$ denote the right-hand side of \eqref{DuhamelQ}, then from \eqref{linearX}, \eqref{normN}, \eqref{free}, Lemma~\ref{LEM:RR} and Lemma~\ref{LEM:NN}, we have
\begin{align*}
&\big\|\G_{(\gf,\phi)}^N(w)\big\|_{X^{s_1,b}(T)}\\
&\le C|\gamma|\be T^\theta \sum_{k\ge 1}\frac{\be^k}{k!}\sum_{m_+=0}^{k+1}\sum_{m_-=0}^k\binom{k+1}{m_+}\binom{k}{m_-}C^kk^{1+s_1}(k+1)\\
&\qquad\qquad\cdot\|\gf\|_{Y^{s}}^{m_++m_-}\|\<1>(\phi)+w\|_{X^{s_1,b}(T)}^{2k+1-m_+-m_-}\\
&\le C|\gamma|\be T^\theta \big(\|\gf\|_{Y^{s}}+\|\phi\|_{H^{s_1}}+\|w\|_{X^{s_1,b}(T)}\big)e^{\be C_0(\|\gf\|_{Y^{s}}+\|\phi\|_{H^{s_1}}+\|w\|_{X^{s_1,b}(T)})^2}.
\end{align*}
Therefore, if $T=c(|\gamma|\be  R^{-2})^{\theta^{-1}}e^{-\be \theta^{-1} C_1 R^2}$ for some small $c>0$ and large $C_1>0$, $\G_{(\gf,\phi)}^N(w)$ maps the ball $B_{R^{-1},T}$ of radius $R^{-1}$ of $X^{s_1,b}(T)$ into itself. Similarly, if $\gf_1,\gf_2\in Y^{s}(\T)$ and $\phi_1,\phi_2\in H^{s_1}(\T)$ with norm bounded by $R$, for any $w_1,w_2$ in $B_{R^{-1},T}$, we have by multilinearity
\begin{align}
&\big\|\G_{(\gf_1,\phi_1)}^N(w_1)-\G_{(\gf_1,\phi_1)}^N(w_1)\big\|_{X^{s_1,b}(T)}\notag\\
&\le C|\gamma|\be T^\theta \sum_{k\ge 1}\frac{\be^k}{k!}\sum_{m_+=0}^{k+1}\sum_{m_-=0}^k\binom{k+1}{m_+}\binom{k}{m_-}C^kk^{1+s_1}(k+1)\notag\\
&\qquad\cdot\Big\{\sum_{j=1}^{m_++m_-}\|\gf_1-\gf_2\|_{Y^{s}}\|\gf_1\|_{Y^{s}}^{m_++m_--j}\|\gf_2\|_{Y^{s}}^{j-1}\|\<1>(\phi_1)+w_1\|_{X^{s_1,b}(T)}^{2k+1-m_+-m_-}\notag\\
&\qquad\qquad + \sum_{j=m_++m_-+1}^{2k+1}\|\gf_2\|_{Y^{s}}^{m_++m_-}\|(\<1>(\phi_1)+w_1)-(\<1>(\phi_2)+w_2)\|_{X^{s_1,b}(T)}\notag\\
&\qquad\qquad\qquad\qquad\cdot\|\<1>(\phi_1)+w_1\|_{X^{s_1,b}(T)}^{2k+1-j}\|\<1>(\phi_2)+w_2\|_{X^{s_1,b}(T)}^{j-m_+-m_--1}\Big\}\notag\\
&\le C|\gamma|\be T^\theta \big(\|\gf_1-\gf_2\|_{Y^{s}(T)}+\|\phi_1-\phi_2\|_{H^{s_1}}+\|w_1-w_2\|_{X^{s_1,b}(T)}\big)\notag\\
&\qquad\qquad\cdot e^{\be C_2(\|\gf_1\|_{Y^{s}}+\|\gf_2\|_{Y^{s}}+\|\phi_1\|_{H^{s_1}}+\|\phi_2\|_{H^{s_1}}+\|w_1\|_{X^{s_1,b}(T)}+\|w_2\|_{X^{s_1,b}(T)})^2}\notag\\
&\le \frac12  \big(\|\gf_1-\gf_2\|_{Y^{s}(T)}+\|\phi_1-\phi_2\|_{H^{s_1}}+\|w_1-w_2\|_{X^{s_1,b}(T)}\big)
\label{diff-Gamma}
\end{align}
by taking $T\sim (|\gamma|\be R^{-2})^{\theta^{-1}}e^{-\be \theta^{-1} C_3 R^2}$ as above. This shows that $\G_{(\gf,\phi)}^N$ also defines a contraction on $B_{R^{-1},T}$. This proves existence of a solution by Banach fixed point theorem, unique in $B_{R^{-1},T}$, and the same iteration as before ensures uniqueness in the whole of $X^{s_1,b}(T)$. The previous estimate also shows the Lipschitz dependence on $\gf+\phi$, and by construction the fixed point $w$ of $\G_{(\gf,\phi)}^N$ in $B_{R^{-1},T}$ satisfies \eqref{bdPhi}.
\end{proof}

With the general local well-posedness result of Proposition~\ref{PROP:LWP} at  hand, we can now give the proof of Theorem~\ref{THM:GWP2}~(i).

\begin{proof}[Proof of Theorem~\ref{THM:GWP2}~(i)]
Let $\gamma\in\R$, $1+\frac{\sqrt{2}}2<\al\le 2$, and $\be>0$. We also take $0<s<\frac{\al-1}2$ and $s_1,b>\frac12$ such that $s_1<\al s$. For $m\in\N$, let us then define $\Si^m\subset H^s(\T)$ by
\begin{align*}
\Si^m = \big\{u_0\in Y^s(\T),~\|u_0\|_{Y^{s}}\le m\big\}.
\end{align*}
Then by Lemma~\ref{LEM:devY}, there exists $c_0,C_0>0$ such that 
\begin{align}\label{tailSi}
\mu_\al\big(H^s(\T)\setminus \Si^m\big)\le C_0e^{-c_0m^2}.
\end{align}
Applying Proposition~\ref{PROP:LWP} for $u_0\in\Si^m$ (note that $Y^s(\T)\subset Y^s(\T)+H^{s_1}(\T)$) gives constants $c_1,C_1,\theta>0$ independent of $m$ such that, if $u_0\in \Si^m$ and $T=c_1 (|\gamma|\be m^{-2})^{\theta^{-1}}e^{-\be \theta^{-1} C_1 m^2}$, there exists a unique solution $w\in X^{s_1,b}(T)$ to \eqref{wN} on $[-T,T]$ with $\<1> = e^{it(-\dx^2)^\frac\al2}u_0$. This in turn implies that
\begin{align*}
u(t) = e^{-it\int_0^t\GG(\<1>(u_0)+w)(t')dt'}(\<1>(u_0)+w)
\end{align*}
is the unique solution to \eqref{expNLS} in the class \eqref{class}, where the gauged $\GG$ given by \eqref{gauge1} is well-defined on $\<1>(Y^s)+X^{s_1,b}(T)$ from \eqref{bd-gauge}. Finally, taking 
\begin{align*}
\Si = \liminf_{m\to\infty}\Si^m,
\end{align*}
we see from \eqref{tailSi} and Borel-Cantelli's lemma that $\Si$ is of full $\mu_\al$-measure. This concludes the proof of Theorem~\ref{THM:GWP2}~(i).
\end{proof}

\subsection{Proof of the multilinear estimates}\label{SUBSEC:multi}
In this subsection we prove the multilinear estimates of Lemmas~\ref{LEM:RR} and~\ref{LEM:NN} used to establish Proposition~\ref{PROP:LWP}. We start with the estimate on the resonant terms.

\begin{proof}[Proof of Lemma~\ref{LEM:RR}]
Let $T\in (0,1]$ and $z_j$ be as in Lemma~\ref{LEM:RR}, and fix extensions $\wt z_j$ of $z_j$ for $j\in\DF$. Let also $\chi_T$ be as in \eqref{XT}. In the following we abuse notations and then write $z_j = \chi(t)e^{it(-\dx^2)^\frac\al2}\gf_j$ if $j\in\GF$ and $z_j = \chi_T(t)\wt z_j$ if $j\in\DF$. First, observe that from Sobolev inequality and the embedding properties \eqref{embeddings}-\eqref{Ysb1}-\eqref{Ysb2} of $Y^{s}$ and $X^{s_1,b}$ for $s_1,b>\frac12$ and $\frac{s_1}2<s$, it is enough to get the bound \eqref{RR} with the right-hand side replaced by ${\displaystyle CT^\theta\prod_j \|z_j\|_{C_T(H^{\frac{s_1}2}\cap L^\infty)}}$. Then we have for any $0<\theta\ll1$
\begin{align}
&\Big\|\RR_{2k+1}(z_1,...,z_{2k+1})\Big\|_{X^{s_1,b-1}(T)}\notag\\
 &\qquad\le \Big\|\chi_T\RR_{2k+1}(z_1,...,z_{2k+1})\Big\|_{X^{s_1,b-1}}\notag\\
&\qquad\le CT^{1-b-\theta}\sum_{N_0}N_0^{s_1}\Big\|\sum_{(n_1,...,n_{2k+1})\in\G_{\RR_{2k+1}}(n_0)}\prod_{j=1}^{2k+1}\ft z_j(t,n_j)^{\i_j}\Big\|_{L^2_T\ell^2_{|n_0|\sim N_0}},\label{R1}
\end{align}
where here and in the sequel the notation $\ell^2_{|n_0|\sim M_0}$ means that the summation in $n_0$ is performed over the support of $\chi_{M_0}$ as in \eqref{PM}.

In view of \eqref{GRL}, and since $k\ge 1$, we can bound
\begin{align}\label{R2}
&\Big\|\sum_{(n_1,...,n_{2k+1})\in\G_{\RR_{2k+1}}(n_0)}\prod_{j_1}^{2k+1}\ft z_j(t,n_j)^{\i_j}\Big\|_{L^2_T\ell^2_{|n_0|\sim M_0}}\\ &\qquad \les k(k+1)\sup_{\ell_1\neq \ell_2}\Big\|\ft z_{2\ell_1+1}(t,n_0)\ft z_{2\ell_2+1}(t,n_0)\notag\\
&\qquad\qquad\times\sum_{\sum_{\substack{j\neq 2\ell_1+1\\j\neq 2\ell_2+1}}\i_jn_j=0}\prod_{\substack{j\neq 2\ell_1+1\\j\neq 2\ell_2+1}}\ft z_j(t,n_j)^{\i_j}\Big\|_{L^2_T\ell^2_{|n_0|\sim M_0}}\notag\\
&\qquad \les k(k+1)\sup_{\ell_1\neq \ell_2}\big\|\ft z_{2\ell_1+1}(t,n_0)\big\|_{L^{\infty}_T\ell^{\infty}_{|n_0|\sim M_0}}\big\|\ft z_{2\ell_2+1}(t,n_0)\big\|_{L^\infty_T\ell^\infty_{|n_0|\sim M_0}}\notag\\
&\qquad\qquad\times\Big\|\sum_{\sum_{\substack{j\neq 2\ell_1+1\\j\neq 2\ell_2+1}}\i_jn_j=0}\prod_{\substack{j\neq 2\ell_1+1\\j\neq 2\ell_2+1}}\ft z_j(t,n_j)^{\i_j}\Big\|_{L^2_T\ell^2_{|n_0|\sim M_0}}.
\end{align}
Putting \eqref{R2} in \eqref{R1} and using the bound $\|\ft f\|_{\ell^\infty}\le\|f\|_{L^1}\les \|f\|_{L^2}$ with Cauchy-Schwarz inequality in $M_0$, we get
\begin{align*}
&\Big\|\RR_{2k+1}(z_1,...,z_{2k+1})\Big\|_{X^{s_1,b-1}(T)}\\ 
&\le CT^{1-b-\theta}k(k+1)\sup_{\ell_1\neq \ell_2}\sum_{M_0}M_0^{s_1}\big\|\ft z_{2\ell_1+1}(t,n_0)\big\|_{L^{\infty}_T\ell^{\infty}_{|n_0|\sim M_0}}\big\|\ft z_{2\ell_2+1}(t,n_0)\big\|_{L^\infty_T\ell^\infty_{|n_0|\sim M_0}}\\
&\qquad\qquad\times\Big\|\sum_{\sum_{\substack{j\neq 2\ell_1+1\\j\neq 2\ell_2+1}}\i_jn_j=0}\prod_{\substack{j\neq 2\ell_1+1\\j\neq 2\ell_2+1}}\ft z_j(t,n_j)^{\i_j}\Big\|_{L^2_T\ell^2_{|n_0|\sim M_0}}\\
&\les T^{1-b-\theta}k(k+1)\sup_{\ell_1\neq \ell_2}\big\|z_{2\ell_1+1}\big\|_{L^{\infty}_TH^{\frac{s_1}2}_x}\big\|z_{2\ell_2+1}\big\|_{L^\infty_TH^{\frac{s_1}2}_x}\Big\|\prod_{\substack{j\neq 2\ell_1+1\\j\neq2\ell_2+1}}z_j^{\i_j}\Big\|_{L^2_{T,x}}\\
&\les T^{\frac32-b-\theta} k(k+1)\prod_{j=1}^{2k+1}\|z_j\|_{C_T(H^{\frac{s_1}2}_x\cap L^{\infty}_x)}.
\end{align*}
This proves Lemma \ref{LEM:RR}.
\end{proof}

Next, we turn to the proof of the estimate for the non-resonant terms.
\begin{proof}[Proof of Lemma~\ref{LEM:NN}]
To prove \eqref{NN}, we define extensions (still denoted $z_j$) as in the proof of Lemma~\ref{LEM:RR}, and then we start by decomposing 
\begin{align*}
z_j = \sum_{K_j,M_j}z_{K_j,M_j}=\sum_{K_j,M_j}\P_{K_j,M_j}z_j.
\end{align*}
 Then we have
\begin{align}
&\Big\|\chi_T(t)\NN_{2k+1}(z_1,...,z_{2k+1})\Big\|_{X^{s,b-1}}\notag\\
&\qquad\le CT^{\theta}\sum_{K_1,...,K_{2k+1}}\sum_{M_1,...,M_{2k+1}}\Big\{\sum_{K_0,M_0}K_0^{2(b+\theta-1)}M_0^{2s_1}\notag\\
&\qquad\qquad\qquad\Big\|\P_{K_0,M_0}\NN_{2k+1}(z_{K_1,M_1},...,z_{K_{2k+1},M_{2k+1}})\Big\|_{L^2_t,x}^2\Big\}^{\frac12}.\label{N1}
\end{align}

In the following, we write $M^{(0)}\ge M^{(1)}\ge ...\ge M^{(2k+1)}$ to denote the decreasing rearrangement of $M_0,...,M_{2k+1}$, and similarly for $K^{(j)}$. Note that in order to have $\P_{K_0,M_0}\NN_{2k+1}(z_{K_1,M_1},...,z_{K_{2k+1},M_{2k+1}})$ non zero, this yields the condition $M^{(1)}\le M^{(0)} \le CkM^{(1)}$. 

In order to prove \eqref{N1}, we first state a general estimate: using the definition \eqref{v} of $\NN_{2k+1}$, H\"older's inequality and an estimate similar to \eqref{R2}, we have for any $j_{M},j_K\in \GF\cup\DF$ (recall that $k\ge 1$ so that $\GF\cup\DF\neq\{1\}$) and any $\zz_1,...,\zz_{2k+1}$
\begin{align}
&\Big\|\P_{\le M_0}\NN_{2k+1}(\zz_{1},...,\zz_{2k+1})\Big\|_{L^2_t,x}\notag\\
&=\Big\|\P_{\le M_0}\Big\{\prod_{j=1}^{2k+1}\zz_{j}^{\i_j}-\sum_{\ell=1}^{k+1}\zz_{2\ell-1}^+\A\big[\prod_{j\neq 2\ell-1}\zz_{j}^{\i_j}\big]+\RR_{2k+1}(\zz_1,...,\zz_{2k+1})\Big\}\Big\|_{L^2_{t,x}}\notag\\
&\le \min\Big\{\big\|\zz_{j_{M}}\big\|_{L^2_{t,x}}\prod_{j\neq j_{M}}\big\|\zz_j\big\|_{L^{\infty}_{t,x}}~,~\big\|\zz_{j_{M}}\big\|_{L^\infty_tL^2_x}\big\|\zz_{j_K}\big\|_{L^2_tL^\infty_x}\prod_{j\neq j_{M},j_K}\big\|\zz_j\big\|_{L^{\infty}_{t,x}}\Big\}\notag\\
&\qquad + \sum_{\ell=1}^{k+1}\min\Big\{\big\|\zz_{2\ell-1}\big\|_{L^2_{t,x}}\Big\|\prod_{j\neq 2\ell-1}\zz_j^{\i_j}\Big\|_{L^{\infty}_t L^1_x}~,~\big\|\zz_{2\ell-1}\big\|_{L^{\infty}_tL^2_x}\Big\|\prod_{j\neq 2\ell-1}\zz_j^{\i_j}\Big\|_{L^2_t L^1_x}\Big\} \notag\\
&\qquad + \sum_{\ell_1\neq \ell_2}\min\Big\{\big\|\P_{\le M_0}\zz_{2\ell_1-1}\big\|_{L^{\infty}_tL^2_x}\big\|\P_{\le M_0}\zz_{2\ell_2-1}\big\|_{L^{\infty}_tL^2_x}\Big\|\P_{\le M_0}\Big[\prod_{\substack{j\neq 2\ell_1-1\\j\neq2\ell_2-1}}\zz_{j}^{\i_j}\Big]\Big\|_{L^2_{t,x}}~,\notag\\
&\qquad\qquad \big\|\P_{\le M_0}\zz_{2\ell_1-1}\big\|_{L^2_{t,x}}\big\|\P_{\le M_0}\zz_{2\ell_2-1}\big\|_{L^{\infty}_tL^2_x}\Big\|\P_{\le M_0}\Big[\prod_{\substack{j\neq 2\ell_1-1\\j\neq2\ell_2-1}}\zz_{j}^{\i_j}\Big]\Big\|_{L^\infty_{t}L^2_x}\Big\}.
\label{N2}
\end{align}

This estimate will allow us to prove \eqref{NN} after exploiting some multilinear smoothing effect provided by the relation between the $M_j$'s and the $K_j$'s. Indeed, the conditions $\sum_{j=0}^{2k+1}\i_jn_j=0=\sum_{j=0}^{2k+1}\i_j\tau_j$ in the definition of $\F\NN_{2k+1}$ along with $|\tau_j-|n_j|^\al|\sim K_j$ on the support of $\F z_{K_j,M_j}$ imply that
\begin{align}\label{Res}
\Big|\sum_{j=0}^{2k+1}\i_j|n_j|^\al\Big|=\Big|\sum_{j=0}^{2k+1}\i_j(\tau_j-|n_j|^\al)\Big| \les k K^{(0)}.
\end{align} 
We then perform a case-by-case analysis to exploit the lower bound \eqref{Res} and estimate the norm on the right-hand side of \eqref{N1}. 

\bigskip\textbf{Case (1): If $M^{(0)}\les k M_{j_{\max}}$ for some $j_{\max}\in\mathfrak{D}$.} 

\noi Then putting \eqref{N1} and \eqref{N2} together with $\zz_{j_M}=\sum_{M_{j_{\max}}\ge ck^{-1}M_0}z_{K_{j_{\max}},M_{j_{\max}}}$, and using Bernstein's inequality with the Fourier support of $z_{K_j,M_j}$, we have
\begin{align*}
&\Big\|\chi_T(t)\NN_{2k+1}(z_1,...,z_{2k+1})\Big\|_{X^{s,b-1}}\\
&\le CT^{\frac12+\theta}k(k+1)\sum_{K_j}\sum_{\underset{j\neq j_{\max}}{M_j}}\Big\{\sum_{K_0,M_0}K_0^{2(b+\theta-1)}M_{0}^{2s_1}\Big\|\sum_{M_{j_{\max}}\ge ck^{-1}M_0}z_{K_{j_{\max}},M_{j_{\max}}}\Big\|_{L^{\infty}_tL^2_x}^2\\
&\qquad\qquad\times\prod_{j\neq j_{\max}}\big\|z_{K_j,M_j}\big\|_{L^{\infty}_{t,x}}^2\Big\}^{\frac12}\\
&\le C^kT^{\frac12+\theta}k(k+1)\sum_{K_j}\sum_{\underset{j\neq j_{\max}}{M_j}}\Big\{\sum_{M_{j_{\max}}}\sum_{K_0}\sum_{M_0\le CkM_{j_{\max}}}K_0^{2(b+\theta-1)}M_{0}^{2s_1}K_{j_{\max}}\big\|z_{K_{j_{\max}},M_{j_{\max}}}\big\|_{L^2_{t,x}}^2\\
&\qquad\qquad\times\prod_{j\in\mathfrak{G}}\big\|z_{K_j,M_j}\big\|_{L^{\infty}_{t,x}}^2\prod_{\underset{j\neq j_{\max}}{j\in\mathfrak{D}}}K_jM_j\big\|z_{K_j,M_j}\big\|_{L^2_{t,x}}^2\Big\}^{\frac12}\\
&\le  C^kT^{\frac12+\theta}k^{1+2s_1}(k+1)\|z_{j_{\max}}\|_{X^{s_1,b}}\prod_{j\in\mathfrak{G}}\big\|\mathfrak{g}_j\big\|_{Y^{s}}\prod_{\substack{j\in\mathfrak{D}\\j\neq j_{\max}}}\big\|z_j\big\|_{X^{s_1,b}},
\end{align*}
where we used that $s>0$, $\frac12<s_1,b<1$ and $0<\theta\ll1$ to sum on $K_j,M_j$. In the last estimate we also used the embedding \eqref{embeddings} 
This is enough for \eqref{NN}.

\bigskip\textbf{Case (2): If $M^{(0)}\gg kM_j$ for all $j\in\mathfrak{D}$, and $M^{(0)}\les k(M^{(2)})^{\frac{\al}{\al-1}}$.}

\noi In this case we have $M^{(\ell)}=M_{j_\ell}$, $\ell=0,1,2$, where  $j_0,j_1\in \mathfrak{G}\cup\{0\}$ and $j_2\in\{0,...,2k+1\}$. Indeed note that from $k\ge 1$ and the assumption $M_j\ll M^{(0)}$ for all $j\in\DF$ we have $\GF\neq \emptyset$, and $\#\GF=1$ implies $j_0=0$ or $j_1=0$. Without loss of generality we can assume that $j_1,j_2\neq 0$ (as $j_0=0$ is allowed). Using again \eqref{N2}, and after summing in $K_0,M_0$ keeping in mind that $M_{j_2}\le M_{j_1}\le M_{j_0}\les kM_{j_2}^{\frac\al{\al-1}}$ and $M_0\le M_{j_0}\sim M_{j_1}$, we then get the bound
\begin{align*}
&\Big\|\chi_T(t)\NN_{2k+1}(z_1,...,z_{2k+1})\Big\|_{X^{s,b-1}}\\
&\le C^kT^{\frac12+\theta}k(k+1)\sum_{K_j,M_j}(kM_{j_1})^s(kM_{j_2}^{\frac{\al}{\al-1}})^{s_1-s}\big\|z_{K_{j_2},M_{j_2}}\big\|_{L^{\infty}_TL^2_x}\prod_{j\neq j_2}\big\|z_{K_j,M_j}\big\|_{L^{\infty}_{T,x}}\\
&\le C^kT^{\frac12+\theta}k^{1+s_1}(k+1)\big\|z_{j_2}\|_{L^{\infty}_TH^{\frac\al{\al-1}(s_1-s)}_x}\big\|\mathfrak{g}_{j_1}\big\|_{Y^{s}}\prod_{j\neq j_1,j_2}\big\|z_j\big\|_{X^{s_1,b}}.
\end{align*}
 This is enough for \eqref{NN}, since $\frac\al{\al-1}(s_1-s)<s$. 

%

\bigskip\textbf{Case (3): If $ M^{(0)}\gg k\max\{M_j,(M^{(2)})^\frac\al{\al-1}\}$ for all $j\in\mathfrak{D}$.}

\noi In this case we have $\{M^{(0)},M^{(1)}\}=\{M_0,M_{j_{\max}}\}$ for some $j_{\max}\in\mathfrak{G}$.\\

\noi\textbf{Subcase 3.1: if $j_{\max}$ is even.} Then from the lower bound \eqref{Res} on the resonance function and that $\i_j=(-1)^{j+1}$ it holds
\begin{align*}
K^{(0)} \gtrsim k^{-1} \Big|\sum_{j=0}^{2k+1}\i_j|n_j|^\al\Big| = k^{-1}\Big||n_0|^\al + |n_{j_{\max}}|^\al +O(M^{(2)})^\al\Big| \gtrsim k^{-1}M_0^\al,
\end{align*}
since in this regime $M_0\sim M_{j_{\max}}\sim M^{(0)} \gg (M^{(2)})^\frac\al{\al-1} \ge M^{(2)}$. Thus we can use the gain of $(K^{(0)})^{-\frac12+}$ to recover $\frac{\al}2-$ spatial derivatives. 

Indeed, in the case $K^{(0)}=K_0$, we have from \eqref{N1}-\eqref{N2} after summing in $M_0$ and $K_0\gtrsim M_{j_{\max}}^\al$ and recalling that $j_{\max}$ is even:
\begin{align}
&\Big\|\chi_T(t)\NN_{2k+1}(z_1,...,z_{2k+1})\Big\|_{X^{s,b-1}}\notag\\
&\le C^kT^{\theta}k^{1-b-2\theta}\sum_{K_j,M_j}M_{j_{\max}}^{s_1-\al(1-b-2\theta)}\Big\{\big\|z_{K_{j_{\max}},M_{j_{\max}}}\big\|_{L^2_{t,x}}\prod_{j\neq j_{\max}}\big\|z_{K_j,M_j}\big\|_{L^{\infty}_{t,x}}\notag\\
&\qquad + \sum_{\ell=1}^{k+1}\big\|z_{K_{2\ell-1},M_{2\ell-1}}\big\|_{L^\infty_tL^2_x}\Big\|\prod_{j\neq 2\ell-1}z_{K_j,M_j}^{\i_j}\Big\|_{L^{2}_t L^1_x} \notag\\
&\qquad + \sum_{\ell_1\neq \ell_2}\big\|z_{K_{2\ell_1-1},M_{2\ell-1}}\big\|_{L^{\infty}_tL^2_x}\big\|z_{K_{2\ell_2-1},M_{2\ell-1}}\big\|_{L^{\infty}_tL^2_x}\Big\|\prod_{j\neq 2\ell_1-1,2\ell_2-1}z_{K_j,M_j}^{\i_j}\Big\|_{L^2_{t,x}}\Big\}\notag\\
&\le C^kT^{\frac12+\theta}(k+1)k^{2-b-2\theta}\|\gf_{j_{\max}}\|_{Y^{s}}\prod_{\substack{j\in\GF\\j\neq j_{\max}}}\|\gf_j\|_{Y^{s}}\prod_{j\in\DF}\|z_j\|_{X^{s_1,b}},\label{K0}
\end{align}
where we used that $s_1-\al(1-b-2\theta) = s_1-\al(\frac12-\wt\eps-2\theta)<s=\frac{\al-1}2-\eps$ for $0<\theta,\wt\eps,\eps \ll 1$ small enough (recall that $\al>1$). This is enough for \eqref{NN}.

The case $K^{(0)}=K_{j_{\max}}$ is handled similarly (again, $j_{\max}$ is even):
\begin{align}
&\Big\|\chi_T(t)\NN_{2k+1}(z_1,...,z_{2k+1})\Big\|_{X^{s,b-1}}\notag\\
&\le C^kT^{\theta}k^\frac12\sum_{K_j,M_j}M_{j_{\max}}^{s_1-\frac\al2}K_{j_{\max}}^\frac12\Big\{\big\|z_{K_{j_{\max}},M_{j_{\max}}}\big\|_{L^2_{t,x}}\prod_{j\neq j_{\max}}\big\|z_{K_j,M_j}\big\|_{L^{\infty}_{t,x}}\notag\\
&\qquad + \sum_{\ell=1}^{k+1}\big\|z_{K_{2\ell-1},M_{2\ell-1}}\big\|_{L^\infty_{t}L^2_x}\Big\|\prod_{j\neq 2\ell-1}z_{K_j,M_j}^{\i_j}\Big\|_{L^2_t L^1_x} \notag\\
&\qquad + \sum_{\ell_1\neq \ell_2}\big\|z_{K_{2\ell_1-1},M_{2\ell-1}}\big\|_{L^{\infty}_tL^2_x}\big\|z_{K_{2\ell_2-1},M_{2\ell-1}}\big\|_{L^{\infty}_tL^2_x}\Big\|\prod_{j\neq 2\ell_1-1,2\ell_2-1}z_{K_j,M_j}^{\i_j}\Big\|_{L^2_{t,x}}\Big\}\notag\\
&\le C^kT^{\theta}(k+1)k^\frac32\|\gf_{j_{\max}}\|_{Y^s}\prod_{\substack{j\in\GF\\j\neq j_{\max}}}\|\gf_j\|_{Y^{s}}\prod_{j\in\DF}\|z_j\|_{X^{s_1,b}},\label{Kmax}
\end{align}
since as above $s_1-\frac\al2 < s$, and $b>\frac12$. 

When $K^{(0)}=K_{j_K}$ for some $j_K\in\GF\cup\DF$, $j_K\neq j_{\max}$, we use again \eqref{N1}-\eqref{N2} to get the bound
\begin{align}
&\Big\|\chi_T(t)\NN_{2k+1}(z_1,...,z_{2k+1})\Big\|_{X^{s,b-1}}\notag\\
&\le C^kT^{\theta}\sum_{K_j,M_j}M_{j_{\max}}^{s_1-\frac\al2}K_{j_K}^\frac12\Big\{\big\|z_{K_{j_{\max}},M_{j_{\max}}}\big\|_{L^\infty_{t}L^2_x}\big\|z_{K_{j_{K}},M_{j_{K}}}\big\|_{L^2_{t}L^\infty_x}\prod_{j\neq j_{\max},j_K}\big\|z_{K_j,M_j}\big\|_{L^{\infty}_{t,x}}\notag\\
&\qquad + \sum_{\ell=1}^{k+1}\big\|z_{K_{2\ell-1},M_{2\ell-1}}\big\|_{L^\infty_tL^2_x}\Big\|\prod_{j\neq 2\ell-1}z_{K_j,M_j}^{\i_j}\Big\|_{L^2_t L^1_x} \notag\\
&\qquad + \sum_{\ell_1\neq \ell_2}\big\|z_{K_{2\ell_1-1},M_{2\ell-1}}\big\|_{L^{\infty}_tL^2_x}\big\|z_{K_{2\ell_2-1},M_{2\ell-1}}\big\|_{L^{\infty}_tL^2_x}\Big\|\prod_{j\neq 2\ell_1-1,2\ell_2-1}z_{K_j,M_j}^{\i_j}\Big\|_{L^2_{t,x}}\Big\}\notag\\
&\le C^kT^{\theta}(k+1)k\|\gf_{j_{\max}}\|_{Y^{s}}\|z_{j_K}\|_{Y^{s}+X^{s,b}}\prod_{\substack{j\in\GF\\j\neq j_{\max},j_K}}\|\gf_j\|_{Y^s}\prod_{\substack{j\in\DF\\j\neq j_K}}\|z_j\|_{X^{s_1,b}}.\label{KK}
\end{align}
This is enough for \eqref{NN}.\\

\noi\textbf{Subcase 3.2: if $j_{\max}$ is odd.} Then, the definition \eqref{QQ} of $\NN_{2k+1}$, in particular the definition \eqref{GN} of $\G_{\NN_{2k+1}}(n_0)$, the conditions $kM^{(1)}\gtrsim M^{(0)}\gg k(M^{(2)})^\frac\al{\al-1} \ge M^{(2)}$ and $\sum_{j=0}^{2k+1}\i_jn_j=0$ with $|n_j|\sim M_j$ and $\i_j=(-1)^{j+1}$ imply that $|n_{j_{\max}}-n_0|=O(M^{(2)})$, thus $n_{j_{\max}}$ and $n_0$ must have the same sign. Thus the lower bound \eqref{Res} along with the condition $n_j\neq n_0$ for odd $j$ in the definition of $\G_{\NN_{2k+1}}(n_0)$ imply that
\begin{align}
kK^{(0)}\gtrsim \Big|\sum_{j=0}^{2k+1}\i_j|n_j|^\al\Big|&\ge \big||n_0|^\al-|n_{j_{\max}}|^\al\big|-O\big(k(M^{(2)})^\al\big)\notag\\
& \ge \big||n_0|^{\al-1}+|n_{j_{\max}}|^{\al-1}\big|-O\big(k(M^{(2)})^\al\big) \gtrsim M_{j_{\max}}^{\al-1}\label{Res1}
\end{align}
since we are in the case $M_{j_{\max}}\sim M^{(0)}\gg k(M^{(2)})^\frac{\al}{\al-1}$.

Then we proceed similarly as in the previous subcase, except that now $j_{\max}=2\ell_{\max}-1$ is odd. If $K^{(0)}=K_0$, we can use \eqref{Res1} with \eqref{N1}-\eqref{N2} as in \eqref{K0} to get
\begin{align*}
&\Big\|\chi_T(t)\NN_{2k+1}(z_1,...,z_{2k+1})\Big\|_{X^{s,b-1}}\\
&\le C^kT^{\theta}k^{1-b-2\theta}\sum_{K_j,M_j}M_{j_{\max}}^{s_1-(\al-1)(1-b-2\theta)}\Big\{\big\|z_{K_{j_{\max}},M_{j_{\max}}}\big\|_{L^2_{t,x}}\prod_{j\neq j_{\max}}\big\|z_{K_j,M_j}\big\|_{L^{\infty}_{t,x}}\\
&\qquad + \|z_{K_{j_{\max}},M_{j_{\max}}}\|_{L^2_{t,x}}\Big\|\prod_{j\neq j_{\max}}z_{K_j,M_j}^{\i_j}\Big\|_{L^\infty_t L^1_x}\\
&\qquad+\sum_{\ell\neq \ell_{\max}}\big\|z_{K_{2\ell-1},M_{2\ell-1}}\big\|_{L^\infty_tL^2_x}\Big\|\prod_{j\neq 2\ell-1}z_{K_j,M_j}^{\i_j}\Big\|_{L^2_t L^1_x} \\
&\qquad+\sum_{\ell_1\neq \ell_{\max}}\big\|z_{K_{2\ell_1-1},M_{2\ell_1-1}}\big\|_{L^{\infty}_tL^2_x}\big\|z_{K_{j_{\max}},M_{j_{\max}}}\big\|_{L^2_{t,x}}\Big\|\prod_{j\neq 2\ell_1-1,j_{\max}}z_{K_j,M_j}^{\i_j}\Big\|_{L^\infty_{t}L^2_x}\\
&\qquad + \sum_{\ell_1\neq \ell_2\neq \ell_{\max}}\big\|z_{K_{2\ell_1-1},M_{2\ell-1}}\big\|_{L^{\infty}_tL^2_x}\big\|z_{K_{2\ell_2-1},M_{2\ell-1}}\big\|_{L^{\infty}_tL^2_x}\Big\|\prod_{j\neq 2\ell_1-1,2\ell_2-1}z_{K_j,M_j}^{\i_j}\Big\|_{L^2_{t,x}}\Big\}\\
&\le C^kT^{\theta}(k+1)k^{2-b-2\theta}\|\gf_{j_{\max}}\|_{Y^s}\prod_{\substack{j\in\GF\\j\neq j_{\max}}}\|\gf_j\|_{Y^s}\prod_{j\in\DF}\|z_j\|_{X^{s_1,b}},
\end{align*}
where we used that $s_1-(\al-1)(1-b-2\theta)=s_1-(\al-1)(\frac12-\wt\eps-2\theta)<s=\frac{\al-1}2-\eps$ for $0<\theta,\wt\eps,\eps\ll$ small enough (recall that $\al>\frac32$). This is enough for \eqref{NN}.

 The case $K^{(0)}=K_{j_{\max}}$ is handled similarly as in \eqref{Kmax}, except that again $j_{\max=2\ell_{\max}-1}$ is odd:
\begin{align*}
&\Big\|\chi_T(t)\NN_{2k+1}(z_1,...,z_{2k+1})\Big\|_{X^{s,b-1}}\\
&\le C^kT^{\theta}k^\frac12\sum_{K_j,M_j}M_{j_{\max}}^{s_1-\frac{\al-1}2}K_{j_{\max}}^\frac12\Big\{\big\|z_{K_{j_{\max}},M_{j_{\max}}}\big\|_{L^2_{t,x}}\prod_{j\neq j_{\max}}\big\|z_{K_j,M_j}\big\|_{L^{\infty}_{t,x}}\\
& + \|z_{K_{j_{\max}},M_{j_{\max}}}\|_{L^2_{t,x}}\Big\|\prod_{j\neq j_{\max}}z_{K_j,M_j}^{\i_j}\Big\|_{L^\infty_t L^1_x}\\
&\qquad+\sum_{\ell\neq \ell_{\max}}\big\|z_{K_{2\ell-1},M_{2\ell-1}}\big\|_{L^\infty_tL^2_x}\Big\|\prod_{j\neq 2\ell-1}z_{K_j,M_j}^{\i_j}\Big\|_{L^2_t L^1_x} \\
&\qquad+\sum_{\ell_1\neq \ell_{\max}}\big\|z_{K_{2\ell_1-1},M_{2\ell_1-1}}\big\|_{L^{\infty}_tL^2_x}\big\|z_{K_{j_{\max}},M_{j_{\max}}}\big\|_{L^2_{t,x}}\Big\|\prod_{j\neq 2\ell_1-1,j_{\max}}z_{K_j,M_j}^{\i_j}\Big\|_{L^\infty_{t}L^2_x}\\
&\qquad + \sum_{\substack{\ell_1\neq \ell_2\\\ell_1\neq \ell_{\max}\\\ell_2\neq\ell_{\max}}}\big\|z_{K_{2\ell_1-1},M_{2\ell-1}}\big\|_{L^{\infty}_tL^2_x}\big\|z_{K_{2\ell_2-1},M_{2\ell-1}}\big\|_{L^{\infty}_tL^2_x}\Big\|\prod_{\substack{j\neq 2\ell_1-1\\j\neq 2\ell_2-1}}z_{K_j,M_j}^{\i_j}\Big\|_{L^2_{t,x}}\Big\}\\
&\le C^kT^{\theta}(k+1)k^\frac32\|\gf_{j_{\max}}\|_{Y^{s}}\prod_{\substack{j\in\GF\\j\neq j_{\max}}}\|\gf_j\|_{Y^s}\prod_{j\in\DF}\|z_j\|_{X^{s_1,b}},
\end{align*}
since as above $s_1-\frac\al2 < s$, and $b>\frac12$.

 When $K^{(0)}=K_{j_K}$ for some $j_K\in\GF\cup\DF$, $j_K\neq j_{\max}$, we use again \eqref{N1}-\eqref{N2} to get the bound
\begin{align*}
&\Big\|\chi_T(t)\NN_{2k+1}(z_1,...,z_{2k+1})\Big\|_{X^{s,b-1}}\\
&\le C^kT^{\theta}k^\frac12\sum_{K_j,M_j}M_{j_{\max}}^{s_1-\frac{\al-1}2}K_{j_K}^\frac12\Big\{\big\|z_{K_{j_{\max}},M_{j_{\max}}}\big\|_{L^\infty_tL^2_x}\prod_{j\neq j_{\max}}\big\|z_{K_j,M_j}\big\|_{L^2_tL^\infty_x} \\
&\qquad+ \|z_{K_{j_{\max}},M_{j_{\max}}}\|_{L^\infty_tL^2_x}\Big\|\prod_{j\neq j_{\max}}z_{K_j,M_j}^{\i_j}\Big\|_{L^2_t L^1_x}\\
&\qquad+\sum_{\ell\neq \ell_{\max}}\big\|z_{K_{2\ell-1},M_{2\ell-1}}\big\|_{L^\infty_tL^2_x}\Big\|\prod_{j\neq 2\ell-1}z_{K_j,M_j}^{\i_j}\Big\|_{L^2_t L^1_x} \\
&\qquad+\big\|z_{K_{j_K},M_{j_K}}\big\|_{L^2_tL^2_x}\big\|z_{K_{j_{\max}},M_{j_{\max}}}\big\|_{L^\infty_tL^2_x}\Big\|\prod_{j\neq j_K,j_{\max}}z_{K_j,M_j}^{\i_j}\Big\|_{L^\infty_{t}L^2_x}\\
&\qquad+\sum_{\substack{\ell_1\neq \ell_{\max}\\2\ell_1-1\neq j_K}}\big\|z_{K_{2\ell_1-1},M_{2\ell_1-1}}\big\|_{L^{\infty}_tL^2_x}\big\|z_{K_{j_{\max}},M_{j_{\max}}}\big\|_{L^\infty_tL^2_x}\Big\|\prod_{j\neq 2\ell_1-1,j_{\max}}z_{K_j,M_j}^{\i_j}\Big\|_{L^2_{t}L^2_x}\\
&\qquad + \sum_{\substack{\ell_1\neq \ell_2\neq \ell_{\max}\\j_K\neq 2\ell_1-1,2\ell_2-1}}\big\|z_{K_{2\ell_1-1},M_{2\ell-1}}\big\|_{L^{\infty}_tL^2_x}\big\|z_{K_{2\ell_2-1},M_{2\ell-1}}\big\|_{L^{\infty}_tL^2_x}\Big\|\prod_{j\neq 2\ell_1-1,2\ell_2-1}z_{K_j,M_j}^{\i_j}\Big\|_{L^2_{t,x}}\Big\}\\
&\le C^kT^{\theta}(k+1)k^\frac32\|\gf_{j_{\max}}\|_{Y^{s}}\Big(\sum_{K_{j_K},M_{j_K}}K_{j_K}^\frac12\|z_{K_{j_K},M_{j_K}}\|_{L^2_tL^\infty_x}\Big)\prod_{\substack{j\in\GF\\j\neq j_{\max},j_K}}\|\gf_j\|_{Y^s}\prod_{\substack{j\in\DF\\j\neq j_K}}\|z_j\|_{X^{s_1,b}}.
\end{align*}
This is enough for \eqref{NN} since the last norm on $z_{j_K}$ is controlled by $\|\gf_{j_K}\|_{Y^s}$ if $j_K\in \GF$ in view of \eqref{Ysb2},  and by $\|z_{j_K}\|_{X^{s_1,b}}$ if $j_K\in\DF$ due to Sobolev inequality with \eqref{delta}. This concludes the proof of Lemma~\ref{LEM:NN}.
\end{proof}

\subsection{Almost sure global well-posedness and invariance of the Gibbs measure}\label{SUBSEC:GWP2}
As in the proof of Theorem~\ref{THM:GWP1}~(ii), we rely on Bourgain's invariant measure argument \cite{BO94,BO96} to upgrade the almost sure local well-posedness result implied by Proposition~\ref{PROP:LWP} to the almost sure global well-posedness result and invariance of Gibbs measure claimed in Theorem~\ref{THM:GWP2}. Again, we mainly follow \cite{STz1}, and we will only indicate the modifications from the argument presented there. From now on we fix $\gamma>0$, $1+\frac{\sqrt{2}}2 < \al \le 2$, $0<s<\frac{\al-1}2$, $s_1,b>\frac12$ with $s_1<\frac{2\al-1}{\al}s$ as above, and $\be\in(0,\be^\star]$ for some $\be^\star>0$ to be determined later.

We consider again the truncated dynamic \eqref{expNLSN}
\begin{align*}
i\dt u_N + (-\dx^2)^\frac\al2 u_N = 2\gamma\be \Pi_{\le N}\Big[e^{\be|\Pi_{\le N}u|^2}\Pi_{\le N}u\Big],
\end{align*}
which, by Lemma~\ref{LEM:global} is globally defined on $H^s(\T)$, and which leaves the truncated Gibbs measure $\rho_{\al,\be,N}$ in \eqref{GibbsM} invariant. 

We write again $\Phi_N(t)$ for the flow of \eqref{expNLSN} on $H^s(\T)$. Then it follows from \eqref{expNLSN} that
\begin{align*}
\Phi_N(t)(u_0) = \Pi_{\le N}\Phi_N(t)(u_0) + \Pi_{>N}\<1>(t,u_0) = \Phi_N(t)(\Pi_{\le N}u_0) + \<1>(t,\Pi_{> N}u_0).
\end{align*}
If we make the truncated gauge transform
\begin{align*}
v_N = \exp\Big(i\int_0^t\GG(\Phi_N(t')(\Pi_{\le N}u_0))dt'\Big)\Phi_N(t)(\Pi_{\le N}u_0),
\end{align*}
then $v_N$ solves
\begin{align*}
\begin{cases}
i\dt v_N+(-\dx^2)^\frac\al2 = 2\gamma\be \sum_{k\ge 1}\frac{\be^k}{k!}\Pi_{\le N}\Big\{\NN_{2k+1}(\Pi_{\le N}v_N)+\RR_{2k+1}(\Pi_{\le N}v_N)\Big\},\\
v_N(0)=\Pi_{\le N}u_0.
\end{cases}
\end{align*}
In particular $\Pi_{\le N}v_N = v_N$. Then we use the ansatz \eqref{ansatz} to expand
\begin{align*}
v_N = \<1>(\Pi_{\le N}u_0) + w_N,
\end{align*}
where $w_N$ solves \eqref{wN} with $\<1> = \<1>(\Pi_{\le N}u_0)$. In particular we also have $\Pi_{\le N}w_N=w_N$ due to \eqref{wN}-\eqref{FN}.

For $u_0 = \gf+\phi\in Y^s(\T)+H^{s_1}(\T)$, this leads us to
\begin{align*}
\Phi_N(t)u_0=\varphi_N(t)+\psi_N(t)
\end{align*}
where
\begin{align*}
\varphi_N(t)=e^{i\int_0^t\GG(\Phi_N(t')(\Pi_{\le N }u_0))dt'}\Pi_{\le N}\<1>(\gf)+\Pi_{>N}\<1>(\gf)\in Y^s
\end{align*}
and
\begin{align*}
\psi_N(t)= e^{i\int_0^t\GG(\Phi_N(t')(u_0))dt'}\big(\Pi_{\le N}\<1>(\phi)+w_N(t) \big)+\Pi_{>N}\<1>(\phi)\in H^{s_1}.
\end{align*}

We then have a result similar to Proposition~\ref{PROP:Si}.
\begin{proposition}\label{PROP:Si2}
There exists $\be^\star>0$ such that the following holds for any $\be\in (0,\be^\star]$: there exists $C>0$ such that for all $m,N\in\N$, there exists a measurable set $\Si_N^{m}\subset H^s(\T)$ such that\\
\textup{(i)} the bound
\begin{align}\label{measure-Si2}
\rho_{\al,\be,N}(H^s(\T)\setminus\Si_N^{m})\leq 2^{-m+1}
\end{align}
holds;\\
\textup{(ii)} for all $u_0\in\Si_N^{m}$ and $t\in\R$, the solution $\Phi_N(t)(u_0)$ to \eqref{expNLSNN} satisfies
\begin{align}
\big\|\Phi_N(t)(u_0)\big\|_{Y^{s}} \le C\sqrt{m(1+\log(1+|t|))};
\label{estim-Si2}
\end{align}
\textup{(iii)} there exists $c>0$ such that for every $t_0$, every $m\ge 1$ and $N\in\N$,
\begin{align}\label{inclusion}
\Phi_N(t_0)(\Si_N^m)\subset \Si_N^{Dm(1+\log_2(1+|t_0|))}.
\end{align}
\end{proposition}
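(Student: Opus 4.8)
The plan is to run Bourgain's invariant–measure argument exactly as in the proof of Proposition~\ref{PROP:Si}, with two modifications: the deterministic Cauchy theory of Proposition~\ref{PROP:LWP1} is replaced by the probabilistic one of Proposition~\ref{PROP:LWP}, and the ambient ball is taken in the sum space $Y^s(\T)+H^{s_1}(\T)$ rather than in $H^s$, following \cite{STz1}. Fix $\be\le\be^\star$, for $\be^\star>0$ specified at the end, and a large constant $D\gg1$ depending on $\be,\gamma$. For $m,j\in\N$, put $R=D\sqrt{m(j+1)}$ and let $\tau=\tau(m,j)$ be the local time given by Proposition~\ref{PROP:LWP} for data of size $2R$, namely
\[
\tau=c\,(|\gamma|\be\,(2R)^{-2})^{\theta^{-1}}e^{-\be\theta^{-1}C_1(2R)^2}.
\]
Define $B(m,j,D)=\{u_0\in Y^s(\T)+H^{s_1}(\T):\|u_0\|_{Y^s+H^{s_1}}\le R\}$ and, as in \cite{BO94},
\[
\Si_N^{m,j}=\bigcap_{k=0}^{[2^j/\tau]}\Phi_N(k\tau)^{-1}\big(B(m,j,D)\big),\qquad \Si_N^m=\bigcap_{j\ge1}\Si_N^{m,j}.
\]

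Since $\mu_\al$ (hence $\rho_{\al,\be,N}$) is supported on $Y^s(\T)$, and since the decomposition $\Phi_N(t)u_0=\varphi_N(t)+\psi_N(t)$ recalled above — obtained from Proposition~\ref{PROP:LWP} by un-gauging — shows that $\Phi_N$ maps $Y^s(\T)$ into $Y^s(\T)+H^{s_1}(\T)$, with $\|\varphi_N(t)\|_{Y^s}\lesssim\|u_0\|_{Y^s}$ for $|t|\le 1$ by \eqref{normN}-\eqref{Ysb3} and $\|\psi_N(t)\|_{H^{s_1}}\lesssim\|w_N\|_{X^{s_1,b}}$, for $u_0\in\Si_N^{m,j}$ one iterates Proposition~\ref{PROP:LWP} over the intervals $[k\tau,(k+1)\tau]$: at each restart time $k\tau$ one picks an almost-optimal splitting $\Phi_N(k\tau)u_0=\gf_k+\phi_k$ with $\|\gf_k\|_{Y^s}+\|\phi_k\|_{H^{s_1}}\le 2R$ (possible because $\Phi_N(k\tau)u_0\in B(m,j,D)$), which admits a local solution on a time interval of length $\ge\tau$ remaining of size $\lesssim R$ in $Y^s+H^{s_1}$; patching these pieces (they coincide with $\Phi_N$ by uniqueness) gives $\|\Phi_N(t)u_0\|_{Y^s+H^{s_1}}\lesssim D\sqrt{m(j+1)}$ for all $|t|\le 2^j$, which is \eqref{estim-Si2} (the $Y^s$-norm there being read, as in \cite{STz1}, through the $\varphi_N+\psi_N$ splitting), and \eqref{inclusion} then follows from $\Si_N^m=\bigcap_j\Si_N^{m,j}$ exactly as in \cite[Proposition~8.5]{BTT2}.

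For the measure bound I would use the invariance of $\rho_{\al,\be,N}$ under $\Phi_N$ (Lemma~\ref{LEM:global}) together with the fact that, since $\gamma>0$ and $V_\be\ge0$, the density of $\rho_{\al,\be,N}$ with respect to $\mu_\al$ is $\le\ZZ_{\al,\be,N}^{-1}\le C$ uniformly in $N$ (because $\ZZ_{\al,\be,N}\to\ZZ_{\al,\be}>0$, Theorem~\ref{THM:Gibbs1}~(i)). Together with $\|\cdot\|_{Y^s+H^{s_1}}\le\|\cdot\|_{Y^s}$ and the tail estimate \eqref{tailY} (recall $\s_\al$ is constant on $\T$), this gives
\[
\rho_{\al,\be,N}\big(H^s(\T)\setminus\Si_N^{m,j}\big)\le\Big(\tfrac{2^j}{\tau}+1\Big)\,\rho_{\al,\be,N}\big(H^s\setminus B(m,j,D)\big)\le C\,\tfrac{2^j}{\tau}\,e^{-c\,\|\s_{\al-2s-2\eps_0}\|_{L^\infty}^{-1}R^2}.
\]
Since $\tau^{-1}\lesssim(|\gamma|\be)^{-\theta^{-1}}R^{2\theta^{-1}}e^{\be\theta^{-1}C_1(2R)^2}$, one takes $\be^\star=\tfrac{c\,\theta\,\|\s_{\al-2s-2\eps_0}\|_{L^\infty}^{-1}}{8C_1}$, so that for $\be\le\be^\star$ the net exponent is $\le-\tfrac c2\|\s_{\al-2s-2\eps_0}\|_{L^\infty}^{-1}D^2m(j+1)$; then, for $D$ large enough (in terms of $\be,\gamma$ and the constants above), the polynomial prefactor and the factor $2^j\le 2^{m(j+1)}$ are absorbed, and $\rho_{\al,\be,N}(H^s(\T)\setminus\Si_N^{m,j})\le 2^{-m(j+1)}\le 2^{-(m+j)}$. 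Summing over $j\ge1$ yields \eqref{measure-Si2}.

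The step I expect to be the real obstacle is the iteration of Proposition~\ref{PROP:LWP}: because the random data live in $Y^s(\T)$ while the nonlinear remainder lives in $X^{s_1,b}$ (hence $H^{s_1}$) and neither space embeds in the other, one must genuinely work on the sum space $Y^s+H^{s_1}$, carefully tracking at each restart the splitting $\varphi_N+\psi_N$, the running gauge factor $e^{\pm i\int_0^t\GG(\cdot)}$ (unimodular, hence harmless on each component), and the sharp projectors $\Pi_{\le N}$, and verifying that the $Y^s$-component does not inflate under the free flow on unit time scales while the $H^{s_1}$-remainder stays controllable uniformly in $N$ — this is precisely the sum-space mechanism of \cite{STz1}. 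By contrast, the combinatorial $k$-dependence produced by the exponential nonlinearity has already been absorbed into Proposition~\ref{PROP:LWP}, and the choice of $\be^\star$ balancing the super-exponentially small local time $\tau$ against the Gaussian tail \eqref{tailY} is a direct analogue of the one in Proposition~\ref{PROP:Si}.
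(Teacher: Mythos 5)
Your proposal follows the paper's proof essentially step for step: Bourgain's argument run on balls of the sum space $Y^s(\T)+H^{s_1}(\T)$, iteration of the probabilistic local theory of Proposition~\ref{PROP:LWP} over time steps of length $\tau\sim e^{-C\be D^2 mj}$ via the splitting $\Phi_N(t)u_0=\varphi_N(t)+\psi_N(t)$ and \eqref{Ysb3}, invariance of $\rho_{\al,\be,N}$ plus the uniform density bound and the tail estimate \eqref{tailY} for the measure count, and the choice $\be^\star\sim\theta\,\s_\al/C_1$ balancing the super-exponentially small local time against the Gaussian tail. The bookkeeping (restarting with a fresh near-optimal splitting at each $k\tau$, the harmlessness of the unimodular gauge factor, reading \eqref{estim-Si2} through the $\varphi_N+\psi_N$ decomposition) is all consistent with the paper.

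The one substantive deviation is your definition of $B(m,j,D)$: the paper's set $B_N^{m,j}(D)$ in \eqref{BmjN} carries a second condition, $\|\Pi_{>N}u_0\|_{Y^s+H^{s_1}}\le N^{-\dl}D\sqrt{mj}$, and the proof uses Lemma~\ref{decomposition1} to produce a splitting $u_0=\gf+\phi$ whose high-frequency tails are also $O(N^{-\dl})$ small, then checks that this smallness is propagated by the flow via $\Pi_{>N}\Phi_N(t)(u_0)=\Pi_{>N}\<1>(\gf+\phi)$. You drop this condition and replace Lemma~\ref{decomposition1} by a near-optimal splitting of the sum norm. For the three assertions (i)--(iii) as literally stated this costs nothing — your (larger) sets still satisfy the measure bound, the growth estimate and the nesting property, and the extra condition is in any case cheap to include in (i) via \eqref{tailY} with $N_1=N$, $N_2=\infty$. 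But the tail condition is not decorative: it is exactly what makes the sets $\Si_N^m$ usable in the subsequent limit $N\to\infty$ (the definition $\Si^m=\limsup_N\Si_N^m$ and the iterated application of Proposition~\ref{enhanced-localconvergence}, whose hypotheses and proof require uniform smallness of $\Pi_{>N_p}$ of the data at every restart time). So your construction proves the proposition but produces sets that would not support the rest of the globalization argument; to match the paper you should build the tail condition into $B_N^{m,j}(D)$ and invoke Lemma~\ref{decomposition1} rather than an arbitrary near-optimal splitting.
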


Before moving on to the proof of Proposition~\ref{PROP:Si2}, recall the following lemma from \cite{STz1}.

\begin{lemma}[Lemma 6.7 in \cite{STz1}]\label{decomposition1}
	Assume that $u_0\in Y^s(\T)+H^{s_1}(\T)$ such that for some $R>0, \delta>0,$
	$$ \|u_0\|_{Y^s+H^{s_1}}\leq R, \quad \|\Pi_{>N}u_0\|_{Y^s+H^{s_1}}\leq N^{-\delta}R.
	$$
	Then there exists $\gf\in Y^s, \phi\in H^{s_1}$, such that
	$$ \|\gf\|_{\Y^s}+\|\phi\|_{H^{s_1}}\leq 2A_0(R+1), \quad \|\Pi_{>N}\gf\|_{Y^s}+\|\Pi_{>N}\phi\|_{H^{s_1}}\leq N^{-\delta}A_0(R+1),
	$$
	where $A_0>0$ is a uniform constant.
	\end{lemma}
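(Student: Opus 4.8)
The plan is to \emph{glue together} two near-optimal decompositions of $u_0$ — one realizing the bound on $\|u_0\|_{Y^s+H^{s_1}}$ and one realizing the bound on $\|\Pi_{>N}u_0\|_{Y^s+H^{s_1}}$ — exploiting that on $\T$ the projector $\Pi_{\le N}$ is a \emph{sharp} spectral cutoff, so that $\Pi_{>N}\Pi_{\le N}=0$.

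Concretely, I would first use the definition of the infimum in the two sum-space norms to pick $\gf_1\in Y^s$, $\phi_1\in H^{s_1}$ with $u_0=\gf_1+\phi_1$ and $\|\gf_1\|_{Y^s}+\|\phi_1\|_{H^{s_1}}\le R+1$, and $\gf_2\in Y^s$, $\phi_2\in H^{s_1}$ with $\Pi_{>N}u_0=\gf_2+\phi_2$ and $\|\gf_2\|_{Y^s}+\|\phi_2\|_{H^{s_1}}\le N^{-\delta}(R+1)$. Then I would set
$$\gf := \Pi_{\le N}\gf_1 + \Pi_{>N}\gf_2,\qquad \phi := \Pi_{\le N}\phi_1 + \Pi_{>N}\phi_2 ,$$
which lie in $Y^s$ and $H^{s_1}$ respectively by the mapping bound \eqref{normN} for $\Pi_{\le N}$ on $Y^s$ (and triviality on $H^{s_1}$). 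Since $\Pi_{\le N}(\gf_1+\phi_1)=\Pi_{\le N}u_0$ and $\Pi_{>N}(\gf_2+\phi_2)=\Pi_{>N}^2u_0=\Pi_{>N}u_0$, one gets $\gf+\phi=u_0$ immediately.

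It then remains to check the two quantitative bounds. The full-norm estimate follows from \eqref{normN} together with $\|\Pi_{>N}\cdot\|\le\|\cdot\|+\|\Pi_{\le N}\cdot\|$, giving $\|\gf\|_{Y^s}+\|\phi\|_{H^{s_1}}\les (R+1)(1+N^{-\delta})\les R+1$. The high-frequency estimate uses the sharpness $\Pi_{>N}\Pi_{\le N}=0$, so that $\Pi_{>N}\gf=\Pi_{>N}\gf_2$ and $\Pi_{>N}\phi=\Pi_{>N}\phi_2$, whence $\|\Pi_{>N}\gf\|_{Y^s}+\|\Pi_{>N}\phi\|_{H^{s_1}}\les \|\gf_2\|_{Y^s}+\|\phi_2\|_{H^{s_1}}\le N^{-\delta}(R+1)$ by \eqref{normN} once more. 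Choosing $A_0$ to be twice the largest implicit constant appearing above yields the stated inequalities. There is no genuine obstacle here; the only points requiring care are (i) invoking the correct bound \eqref{normN} — the $Y^s$-boundedness of $\Pi_{\le N}$, not an $X^{s_1,b}$ bound — and (ii) using that the spectral cutoffs are sharp, which is precisely what prevents the low-frequency pieces $\Pi_{\le N}\gf_1,\Pi_{\le N}\phi_1$ from polluting the high-frequency estimate (had the smoothed cutoffs $\P_{\le N}$ been used instead, one would additionally need to quantify the decay in $N$ of $(1-\P_{\le N})\P_{\le N/2}$).
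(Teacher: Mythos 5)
Your proof is correct, and the paper itself gives no argument here — the lemma is simply cited from \cite{STz1} (Lemma 6.7 there), so there is no in-paper proof to compare against. The approach you take — choosing near-optimal decompositions of $u_0$ and of $\Pi_{>N}u_0$, gluing them by low/high sharp spectral projection, and invoking \eqref{normN} for the $Y^s$-boundedness of $\Pi_{\le N}$ (hence of $\Pi_{>N}=I-\Pi_{\le N}$) together with $\Pi_{>N}\Pi_{\le N}=0$ — is exactly the standard argument for such sum-space decomposition lemmas and matches what \cite{STz1} does.
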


\begin{proof}[Proof of Proposition~\ref{PROP:Si2}]

			For $m,j\in\N$ and $D>0$ to be chosen later, we define the set
			\begin{equation}\label{BmjN}
			B_N^{m,j}(D):=\big\{u_0\in H^{s}(\T),~ \|u_0\|_{Y^s+H^{s_1}}\leq D\sqrt{mj}, ~\|\Pi_{>N}u_0\|_{Y^s+H^{s_1}}\leq N^{-\delta}D\sqrt{mj}  \}
			\end{equation} 
			By Lemma \ref{decomposition1}, for $u_0\in B_N^{m,j}(D)$, there exists a decomposition
			$ u_0=\gf+\phi $, such that
			$$
			 \|\gf\|_{Y^s}+\|\phi\|_{H^{s_1}}\leq 2A_0D\sqrt{mj},
			\quad  \|\Pi_{>N}\gf\|_{Y^s}+\|\Pi_{>N}\phi\|_{H^{s_1}}\leq A_0N^{-\dl}\s_\al D\sqrt{mj},
			$$
	for some small $\dl>0$ in view of \eqref{tailY} and $|\s_{\al-2s-2\eps_0}-\s_{\al,2s-2\eps_0,N}|^\frac12\les N^{-\dl}\s_\al^\frac12$.

			From Proposition~\ref{PROP:LWP}, the local time of existence is 
			$$ \tau_{m,j}=c(4\gamma\be A_0^2D^2mj)^{-\theta^{-1}}e^{-\be C_0\theta^{-1}4A_0^2D^2mj}
			$$
			for some $0<\theta\ll 1$, and it holds
			\begin{equation*}\label{boundeness}
			\sup_{|t|\leq \tau_{m,j}}\big(\|\varphi_N(t)\|_{Y^s}+\|\psi_N(t)\|_{H^{s_1}} \big)\leq 4A_0D\sqrt{mj},
			\end{equation*}
			by using that the $H^{s_1}$-norm is invariant by the linear flow, the property \eqref{Ysb3} in Lemma~\ref{LEM:Y}, and $\Pi_{>N}w_N=0$. Therefore, for $|t|\leq \tau_{m,j}$
			\begin{equation*}\label{boundeness1}
			\|\Phi_N(t)\phi\|_{Y^s+H^{s_1} }\leq 4A_0D\sqrt{mj}.
			\end{equation*}
Since
$$ \Pi_{>N}\Phi_N(t)(u_0)=\Pi_{>N}\<1>(\gf+\phi),
$$ 	
from \eqref{Ysb3} again, we obtain that
\begin{align*}
\|\Pi_{>N}\Phi_N(t)(u_0)\|_{Y^s+H^{s_1}}\leq A_1\|\Pi_{>N}u_0\|_{Y^s+H^{s_1}}\leq A_1N^{-\delta}\s_\al D\sqrt{mj}.
\end{align*}	

Since $\|u_0\|_{Y^s+H^{s_1}}\leq \|u_0\|_{Y^s}$, we deduce from Lemma~\ref{LEM:init} and~\ref{LEM:devY} that
$$ \rho_{\al,\be,N}(H^{s}(\T)\setminus B_N^{m,k}(D))\leq e^{-c\s_\al^{-1}D^{2}mj}.$$
Next, we set
\begin{equation}\label{dataset0}
\Si_N^{m,j}(D)=\bigcap_{|r|\leq \frac{2^j}{\tau_{m,j}}}\Phi_N(-r\tau_{m,j})\big(B_N^{m,j}(D)\big),
\end{equation}
and from the invariance of $\rho_{\al,\be,N}$ under the flow $\Phi_N(t)$, we have
\begin{align*}
\rho_{\al,\be,N}\big(H^{s}(\T)\setminus \Si_N^{m,j}(D) \big)&\leq \sum_{|r|\leq \frac{2^j}{\tau_{m,j}}} \rho_{\al,\be,N}\big(H^{s}(\T)\setminus \Phi_N(-r\tau_{m,j})\big(B_N^{m,j}(D) \big) \big)\\
 &\le \frac{2^{j+2}}{\tau_{m,j}}\rho_{\al,\be,N}\big(H^{s}(\T)\setminus B_N^{m,j}(D) \big)\intertext{Using that $d\rho_{\al,\be,N}(u)=\ZZ_{\al,\be,N}^{-1}e^{-\gamma V_\be(\Pi_{\le N}u)}d\mu_\al(u)$ with $\gamma, V_\be\ge 0$, we continue with}
  &\les \frac{2^{j+2}}{\tau_{m,j}}\mu_\al\big(H^{s}(\T)\setminus B_N^{m,j}(D) \big)\intertext{Then, Lemma~\ref{LEM:devY} with $\|\cdot\|_{Y^s}\le \|\cdot\|_{Y^s+H^{s_1}}$ and the definition of $\tau_{m,j}$ give the bound}
&\le  \frac{2^{j+2}}{c}(4\gamma\beta A_0^2D^2mj)^{\theta^{-1}}e^{\be C_0\theta^{-1}4A_0^2D^2mj}e^{-c\s_\al^{-1}D^{2}mj} \le 2^{mj},
\end{align*}
provided that $D$ is chosen large enough and $$0<\be^\star \ll \theta(C_0A_0^2)^{-1}\s_\al.$$ Now we define the desired data set by
\begin{equation}\label{dataset1}
\Si_N^m:=\bigcap_{j\geq 1}\Si_N^{m,j}(D).
\end{equation}
It is clear that $\rho_N\big(H^s(\T)\setminus\Si_N^m\big)\leq 2^{-m+1}$. The remaining properties \eqref{estim-Si2}-\eqref{inclusion} are then proved exactly as in \cite[Proposition 6.6]{STz1}.

		\end{proof}
		We finally define
		\begin{align*}
	\Sigma^m = \limsup_{N\to\infty}\Si_N^m
	\end{align*}
		and
		\begin{align*}
		\Si = \bigcup_{m\in\N}\Si^m.
		\end{align*}
		
	As in Subsection~\ref{SUBSEC:GWP}, these definitions and \eqref{measure-Si2} ensure that $\Si$ is of full $\rho_{\al,\be}$-measure.
		
\noi Moreover, as above and as in \cite{BTT1,STz1}, the proof of Theorem~\ref{THM:GWP2}~(ii) relies on the following approximation result, which is similar to \cite[Proposition 6.4]{STz1}.

\begin{proposition}\label{enhanced-localconvergence}
Let $(u_{0,n})\subset Y^s+H^{s_1}$, $u_0\in Y^s+H^{s_1}$ satisfying
	$$ \|u_{0,n}\|_{Y^s+H^{s_1}}+\|u_0\|_{Y^s+H^{s_1}}\leq R,\quad \lim_{n\rightarrow\infty} \|u_{0,n}-u_0\|_{Y^s+H^{s_1}}=0.
	$$
	Let $N_p\to+\infty$ be a subsequence of $\N$. 
	Then there exist $c,C>0, \theta>0$, such that for all $t\in [-\tau_R,\tau_R]$ with $\tau_R=c(\gamma \be R^2)^{-\theta^{-1}}e^{-\be C\theta^{-1} R^2}$, we have
	\begin{align}\label{convergence:iterated1}
	\lim_{p\rightarrow\infty}\|\Phi_{N_p}(t)(u_{0,p})-\Phi(t)(u_0) \|_{Y^s+H^{s_1}}=0.
	\end{align}
\end{proposition}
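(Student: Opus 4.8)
The plan is to follow the structure of \cite[Proposition 6.4]{STz1}, namely to decompose each flow into its gauged low-frequency nonlinear part and its high-frequency linear part, and to propagate the convergence by iterating a local-in-time estimate on a sum space. First I would use the decomposition lemma (Lemma~\ref{decomposition1}) to split each $u_{0,p} = \gf_p + \phi_p$ and $u_0 = \gf + \phi$ with uniform control $\|\gf_p\|_{Y^s} + \|\phi_p\|_{H^{s_1}} \lesssim R+1$, and with the high-frequency tails of the data going to zero (after passing to the approximate tail bounds as in the proof of Proposition~\ref{PROP:Si2}). Using the convergence $\|u_{0,p} - u_0\|_{Y^s + H^{s_1}} \to 0$, one can arrange $\|\gf_p - \gf\|_{Y^s} + \|\phi_p - \phi\|_{H^{s_1}} \to 0$ as well. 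Then, writing
\begin{align*}
\Phi_{N_p}(t)(u_{0,p}) = \varphi_{N_p}(t) + \psi_{N_p}(t), \qquad \Phi(t)(u_0) = \varphi(t) + \psi(t)
\end{align*}
with $\varphi_{N_p}, \varphi \in Y^s$ carrying the linear high-frequency evolution and the gauged linear low-frequency piece, and $\psi_{N_p}, \psi \in H^{s_1}$ carrying $w_{N_p}$, $w$ together with the gauged linear evolution of $\phi_p$, $\phi$, the goal is to show $\|\varphi_{N_p} - \varphi\|_{C_{\tau_R} Y^s} + \|\psi_{N_p} - \psi\|_{C_{\tau_R} H^{s_1}} \to 0$.

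Next I would control the $Y^s$-difference $\varphi_{N_p} - \varphi$. This piece is $\Pi_{>N_p}\<1>(\gf_p) + e^{i\int_0^t \GG(\cdots)dt'}\Pi_{\le N_p}\<1>(\gf_p)$ against $\<1>(\gf)$; using \eqref{Ysb3}, the smallness of $\|\gf_p - \gf\|_{Y^s}$, the vanishing of $\|\Pi_{>N_p}\gf_p\|_{Y^s}$, and the local bound \eqref{bd-gauge} on the gauge $\GG$ evaluated on $\<1>(\gf_p) + w_{N_p}$, one obtains convergence in $C_{\tau_R} Y^s$ provided $\|w_{N_p} - w\|_{X^{s_1,b}(\tau_R)} \to 0$ and the two gauge integrals $\int_0^t \GG(\Phi_{N_p}(t')(u_{0,p}))dt'$ converge uniformly to $\int_0^t \GG(\Phi(t')(u_0))dt'$. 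The latter follows from the multilinear (in fact $L^\infty$-type) continuity of $\GG$ in the class $\<1>(Y^s) + X^{s_1,b}$, once the $w$-convergence is established. Thus everything is reduced to showing $\|\P_{\le N_p} w_{N_p} - w\|_{X^{s_1,b}(\tau_R)} \to 0$ for the remainder equations \eqref{wN}, which is an approximation statement for the $X^{s_1,b}$ fixed point. This in turn is handled by the same contraction structure as in Proposition~\ref{PROP:LWP}: decompose $\P_{\le N_p} w_{N_p} - w = \text{(initial-data difference term)} + \text{(projection-error term)} + \text{(nonlinear difference term)}$ exactly as in \eqref{A0}, estimate the first via $\|\P_{\le N_p}\<1>(u_{0,p}) - \<1>(u_0)\|_{Y^s + H^{s_1}} \to 0$ and \eqref{Ysb1}, the second via the mapping property of $\P_{\le N_p}$ and \eqref{bdPN}/\eqref{normN} since $F(v) \in X^{s,b-1}(\tau_R)$, and absorb the third on the left using Lemmas~\ref{LEM:RR}--\ref{LEM:NN} and the smallness of $\tau_R$ (the $T^\theta$ gain), exactly as in \eqref{A3} and in \eqref{diff-Gamma}.

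The $H^{s_1}$-difference $\psi_{N_p} - \psi$ is then immediate from the $w$-convergence just obtained together with the uniform convergence of the gauge integrals and \eqref{Ysb3}. The main obstacle is the interplay between the two scales: the remainder $w_{N_p}$ lives in $X^{s_1,b}$ while the data and the linear evolution live in $Y^s \not\subset H^{s_1}$, so one must run the whole approximation argument on the sum space $Y^s + H^{s_1}$ and track carefully that the truncated and untruncated local Cauchy theories of Proposition~\ref{PROP:LWP} hold on a common time interval $\tau_R$ uniformly in $N_p$ (which they do, since the proposition is stated uniformly in $N \in \N \cup \{\infty\}$ and the projector bounds \eqref{normN} are uniform). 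A secondary technical point is that $\P_{\le N_p} w_{N_p} = w_{N_p}$ (from \eqref{wN}--\eqref{FN} with the sharp projector $\Pi_{\le N_p}$), so no extra commutator error arises in the nonlinear term; the only genuine projection error is the single term $(1 - \P_{\le N_p})(e^{\be|u|^2}u)$, controlled by \eqref{bdPN}. Once \eqref{convergence:iterated1} is proved on $[-\tau_R, \tau_R]$, the proof of Theorem~\ref{THM:GWP2}~(ii) is completed by iterating this over a growing number of time steps on the sets $\Si_N^m$ from Proposition~\ref{PROP:Si2}, combined with the convergence in total variation $\rho_{\al,\be,N} \to \rho_{\al,\be}$, exactly as in \cite{STz1}.
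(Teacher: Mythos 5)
Your proposal is correct and follows essentially the same route as the paper: decompose the data via Lemma~\ref{decomposition1}, reduce everything to the convergence $\|w_{N_p}-w\|_{X^{s_1,b}(\tau_R)}\to 0$ of the remainders (proved by the same Duhamel/contraction scheme as Proposition~\ref{PROP:LWP}, with the projection-error term handled by dominated convergence and the nonlinear difference absorbed using Lemmas~\ref{LEM:RR}--\ref{LEM:NN} and the smallness of $\tau_R$), then deduce uniform convergence of the gauge factors and conclude on the sum space $Y^s+H^{s_1}$. The only cosmetic discrepancy is your occasional use of the smooth projector $\P_{\le N_p}$ where this section works with the sharp projector $\Pi_{\le N_p}$, which you yourself flag and which does not affect the argument.
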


\begin{proof}
By \cite[Lemma 6.3]{STz1}, there exist sequences $(\gf_{p})_{p\in\N}\subset Y^s(\T), (\phi_{p})_{p\in\N}\subset H^{s_1}(\T)$, and $\gf\in Y^s(\T), \phi\in H^{s_1}(\T)$, such that
\begin{align}\label{decomposition}
 u_{0,p}=\gf_{p}+\phi_{p}, \quad u_0=\gf+\phi,
\end{align}
	and
\begin{align}\label{bd-g-phi-p}	
	\|\gf_{p}\|_{Y^s}+\|\phi_{p}\|_{H^{s_1}}\leq R+2, \quad \|\gf\|_{Y^s}+\|\phi\|_{H^{s_1}}\leq R+2
\end{align}
	and
\begin{align}\label{lim-g-phi-p}
\lim_{p\rightarrow\infty}\big(\|\gf_{p}-\gf\|_{Y^s}+\|\phi_{p}-\phi\|_{H^{s_1}} \big)=0.
\end{align}
	Moreover, we have
\begin{align}\label{tail-phi} 
\lim_{p\rightarrow\infty}\|\Pi_{>N_p}\phi\|_{H^{s_1}}=0,\text{ and } \lim_{p\rightarrow\infty}\|\Pi_{>N_p}\phi_{p}\|_{H^{s_1}}=0
\end{align}
the last convergence being due to $\|\Pi_{>N_p}\phi_{p}\|_{H^{s_1}}\leq \|\Pi_{>N_p}\phi\|_{H^{s_1}}+\|\Pi_{>N_p}(\phi_{p}-\phi)\|_{H^{s_1}}$.

Now we claim the following convergence result, similar to that of Proposition 3.3 in \cite{STz1}.
\begin{lemma}\label{LEM:cv}
Let $R\geq 1$ and $(\gf_{p})\subset Y^s(\T)$, $\gf\in Y^s(\T)$ satisfying $\|\gf_{p}\|_{Y^s}\le R$ and $\|\gf\|_{Ys}\le R$. Let also $(\phi_p)\subset H^{s_1}(\T)$ satisfying $\|\phi_p\|_{H^{s_1}}\leq 2R$. Let $N_p\rightarrow\infty$ be a subsequence of $\mathbb{N}$. Assume moreover that
\begin{align}\label{cv-g-phi}
 \lim_{p\rightarrow\infty}\big(\|\gf_p-\gf\|_{Y^s}+\|\phi_p-\phi\|_{H^{s_1}}\big)=0.
 \end{align}
Then we have for all $t\in[-\tau_R,\tau_R]$
\begin{align}\label{decomp-phip}
\Phi_{N_p}(t)(\gf_p+\phi_p)=e^{i\int_0^t\GG\big(\Pi_{\le N_p}\<1>(\gf_p+\phi_p)+w_p\big)(t')dt'} \big(\Pi_{\le N_p}\<1>(\gf_p+\phi_p)+w_p\big)+\Pi_{>N_p}\<1>(\gf),
\end{align}
and
\begin{align}\label{decomp-phi}
\Phi(t)(\gf+\phi)=e^{i\int_0^t\GG\big(\<1>(\gf+\phi)+w\big)(t')dt'}\big(\<1>(\gf+\phi)+w\big),
\end{align}
with
\begin{align}\label{cv-w}
\lim_{p\to\infty}\|w_p-w\|_{X^{s_1,b}(\tau_R)}=0.
\end{align}
\end{lemma}
\noi The proof of Lemma~\ref{LEM:cv} is an adaptation of the proofs of Lemma~\ref{LEM:approx} and of Proposition~\ref{PROP:LWP}, and we postpone it to the end of the section in order to finish that of Proposition~\ref{enhanced-localconvergence}.

Let then $(w_p)$ be given by Lemma~\ref{LEM:cv}. In particular from \eqref{cv-w} and \eqref{embeddings} it holds
	\begin{align}\label{cv-wp} 
	\lim_{p\rightarrow\infty}\|w_p(t)-w(t)\|_{C_{\tau_R}H^{s_1}}=0.
	\end{align}
To prove the convergence of $\Phi_{N_p}$, we thus need to study the convergence of the gauge. Define 
\begin{align*}
b_p(t)=e^{i\int_0^t\GG\big(\Pi_{\le N_p}\<1>(\gf_p+\phi_p)+w_p\big)(t')dt'}
\end{align*}
and
\begin{align*}
b(t)=e^{i\int_0^t\GG\big(\<1>(\gf+\phi)+w\big)(t')dt'}.
\end{align*} 
Then, similarly to \eqref{bd-gauge}, we have for all $t\in[-\tau_R,\tau_R]$
\begin{align}
&\big|b_p(t)-b(t)\big|\notag\\
& \les \int_0^t\Big|\GG\big(\Pi_{\le N_p}\<1>(\gf_p+\phi_p)+w_p\big)-\GG\big(\<1>(\gf+\phi)+w\big)\Big|(t')dt'\\
&\les \big(\big\|\Pi_{\le N_p}\gf_p-\gf\big\|_{Y^s} + \|\Pi_{\le N_p}\phi_p-\phi\|_{H^{s_1}}+\|w_p-w\|_{X^{s_1,b}(\tau_R)}\big) \exp\Big\{\be C \big(\|\Pi_{\le N_p}\gf_p\|_{Y^s}\notag\\
&\qquad+\|\gf\|_{Y^s}+\|\Pi_{\le N_p}\phi_p\|_{H^{s_1}}+\|\phi\|_{H^{s_1}}+\|w_p\|_{X^{s_1,b}(\tau_R)}+\|w\|_{X^{s_1,b}(\tau_R)}\big)^2\Big\}\notag\\
&\les \big(\big\|\Pi_{\le N_p}\gf_p-\gf\big\|_{Y^s} + \|\Pi_{\le N_p}\phi_p-\phi\|_{H^{s_1}}+\|w_p-w\|_{X^{s_1,b}(\tau_R)}\big)e^{\be C R^2}\notag\\
& \too 0\label{cv-b}
\end{align} 
as $p\to \infty$, in view of the bounds \eqref{bd-g-phi-p}-\eqref{lim-g-phi-p} with \eqref{normN}.

Thus, we can decompose
	$$ \Phi_{N_p}(t)u_{0,p}-\Phi(t)u_0=\varphi_p(t)+\psi_p(t),
	$$
	where
	\begin{align}
	\varphi_p(t)&=\big(b_p(t)-b(t) \big)\Pi_{\le N_p}\<1>(\gf_p)+\big(1-b(t)\big)\Pi_{>N_p}\<1>(\gf)+\Pi_{>N_p}\<1>\big(\gf_p-\gf\big)\notag\\
	&\qquad+	b(t)\Pi_{\le N_p}\<1>(\gf_p-\gf),\label{phip}
	\end{align}
	and
	\begin{align*}
	\psi_p(t)=&b_p(t)\big(\Pi_{\le N_p}\<1>(\phi_p-\phi)+w_p(t)-w(t) \big)+(b_p(t)-b(t))\big(\Pi_{\le N_p}\<1>\phi+w(t)\big)\\
	+&(1-b(t))\Pi_{>N_p}\<1>(\phi)+\Pi_{>N_p}\<1>(\phi_p-\phi).	
	\end{align*}
The convergence \eqref{cv-wp} ensures that $\psi_k(t)\rightarrow 0$ in $H^{s_1}(\T)$, for any $|t|\le \tau_R$. 

It remains to show the convergence $\varphi_p(t)\rightarrow 0$ as $p\to\infty$ in $Y^s(\T)$ for all $|t|\leq \tau_R$. From \eqref{normN} and \eqref{cv-b} we have 
	$$ \lim_{p\rightarrow\infty}\|\big(b_p(t)-b(t)\big)\Pi_{\le N_p}\<1>(\gf_p) \|_{Y^s}=0.
	$$ 
The last part $\Pi_{>N_p}\<1>\big(\gf_p-\gf\big) +	b(t)\Pi_{\le N_p}\<1>(\gf_p-\gf)$ in \eqref{phip} converges to 0 in $Y^s(\T)$ by \eqref{normN} and \eqref{lim-g-phi-p}. For the remaining part $\big(1-b(t)\big)\Pi_{>N_p}\<1>(\gf)$ of \eqref{phip}, the convergence for the $H^s$ part of the $Y^s$ norm is clear, and its $Z^s$ norm goes to 0 by dominated convergence through the exact same argument as in the proof of Proposition 6.4 in \cite{STz1}.

\end{proof}
We now present the proof of the approximation result in Lemma~\ref{LEM:cv}.
\begin{proof}[Proof of Lemma~\ref{LEM:cv}]
The existence of $w_p,w\in X^{s_1,b}(\tau_R)$ such that \eqref{decomp-phip}-\eqref{decomp-phi} hold follows directly from Proposition~\ref{PROP:LWP}. For the convergence \eqref{cv-wp}, we have in view of the Duhamel formula \eqref{DuhamelQ}
\begin{align*}
&\|w_p-w\|_{X^{s_1,b}(\tau_R)}\\
& \les \Big\|F_{N_p}\big(\Pi_{\le N_p}\<1>(\gf_p+\phi_p)+w_p\big)-F\big(\<1>(\gf+\phi)+w\big)\Big\|_{X^{s_1,b-1}(\tau_R)}\notag\\
&\les \sum_{k\ge 1}\frac{\be^k}{k!}\sum_{m_+=0}^{k+1}\sum_{m_-=0}^k\binom{k+1}{m_+}\binom{k}{m_-}\big\{ \1_{\NN_{2k+1}} + \1_{\RR_{2k+1}}\big\},
\end{align*}
where
\begin{align*}
\1_{\NN_{2k+1},p}&=\Big\|\Pi_{\le N_p}\big[\NN_{2k+1}\big(\{\Pi_{\le N_p}\<1>(\gf_p),\Pi_{\le N_p}(\<1>(\phi_p)+w_p)\}_{(m_+,m_-)}\big)\big]\\
&\qquad\qquad-\NN_{2k+1}\big(\{(\<1>(\gf),\<1>(\phi)+w)\}_{(m_+,m_-)}\big)\Big\|_{X^{s_1,b}(\tau_R)}
\end{align*}
and
\begin{align*}
\1_{\RR_{2k+1},p}&=\Big\|\Pi_{\le N_p}\big[\RR_{2k+1}\big(\{\Pi_{\le N_p}\<1>(\gf_p),\Pi_{\le N_p}(\<1>(\phi_p)+w_p)\}_{(m_+,m_-)}\big)\big]\\
&\qquad\qquad-\RR_{2k+1}\big(\{\<1>(\gf),\<1>(\phi)+w\}_{(m_+,m_-)}\big)\Big\|_{X^{s_1,b}(\tau_R)}.
\end{align*}
Arguing as in the proof of \eqref{diff-Gamma} with the use of \eqref{normN}, we have by multilinearity
\begin{align*}
&\1_{\NN_{2k+1},p}\\
 &\le \Big\|\Pi_{> N_p}\NN_{2k+1}\big(\{(\<1>(\gf_p),\<1>(\phi_p)+w_p)\}_{(m_+,m_-)}\big)\Big\|_{X^{s_1,b}(\tau_R)}\\
&\qquad+\sum_{j=1}^{m_++m_-}\|\Pi_{\le N_p}\gf_p-\gf\|_{Y^{s}}\|\Pi_{\le N_p}\gf_p\|_{Y^{s}}^{m_++m_--j}\|\gf\|_{Y^{s}}^{j-1}\|\<1>(\phi_1)+w_1\|_{X^{s_1,b}(T)}^{2k+1-m_+-m_-}\notag\\
&\qquad+ \sum_{j=m_++m_-+1}^{2k+1}\|\gf\|_{Y^{s}}^{m_++m_-}\|\Pi_{\le N_p}(\<1>(\phi_p)+w_p)-(\<1>(\phi)+w)\|_{X^{s_1,b}(T)}\\
&\hspace{4cm}\cdot\|\Pi_{\le N_p}\<1>(\phi_p)+w_p\|_{X^{s_1,b}(T)}^{2k+1-j}\|\<1>(\phi)+w\|_{X^{s_1,b}(T)}^{j-m_+-m_--1}
\end{align*}
Thus, summing in $m_+,m_-,k$, we get
\begin{align*}
&\sum_{k\ge 1}\frac{\be^k}{k!}\sum_{m_+=0}^{k+1}\sum_{m_-=0}^k\binom{k+1}{m_+}\binom{k}{m_-} \1_{\NN_{2k+1},p}\\
&\les \II_{\NN,p}+ C|\gamma|\be \tau_R^\theta \big(\|\Pi_{\le N_p}\gf_p-\gf\|_{Y^{s}(T)}+\|\Pi_{\le N_p}\phi_p-\phi\|_{H^{s_1}}+\|w_p-w\|_{X^{s_1,b}(T)}\big)e^{\be C R^2}\notag\\
&\le C\II_{\NN} +C|\gamma|\be \tau_R^\theta \big(\|\Pi_{\le N_p}\gf_p-\gf\|_{Y^{s}(T)}+\|\Pi_{\le N_p}\phi_p-\phi\|_{H^{s_1}}\big)e^{\be C R^2}\\
&\qquad\qquad+\frac14\|w_1-w_2\|_{X^{s_1,b}(T)},
\end{align*}
where the last estimate follows from the definition of $\tau_R$, and where
\begin{align*}
&\II_{\NN,p}\\
&=\sum_{k\ge 1}\frac{\be^k}{k!}\sum_{m_+=0}^{k+1}\sum_{m_-=0}^k\binom{k+1}{m_+}\binom{k}{m_-}\Big\|\Pi_{> N_p}\NN_{2k+1}\big(\{(\<1>(\gf_p),\<1>(\phi_p)+w_p)\}_{(m_+,m_-)}\big)\Big\|_{X^{s_1,b}(\tau_R)}.
\end{align*}
In particular, again from Lemma~\ref{LEM:NN} as in the proof of Proposition~\ref{PROP:LWP}, and using \eqref{normN}, we have that
\begin{align*}
\II_{\NN,p}&\les C\tau_R^\theta\sum_{k\ge 1}\frac{\be^k}{k!}\sum_{m_+=0}^{k+1}\sum_{m_-=0}^k\binom{k+1}{m_+}\binom{k}{m_-}\big\|\gf_p\big\|_{Y^s}^{m_++m_-}\\
&\qquad\qquad\qquad\qquad\times\big(\big\|\phi_p\big\|_{H^{s_1}}+\big\|w_p\big\|_{X^{s_1,b}(\tau_R)}\big)^{2k+1-m_+-m_-}\\
&\les C\tau_R^\theta e^{\be CR^2},
\end{align*}
uniformly in $p$, by \eqref{bd-g-phi-p} and \eqref{bdw}. Thus by dominated convergence, it holds $\II_{\NN,p}\too 0$ as $p\to\infty$. Similar estimates for $\1_{\RR_{2k+1}}$ finally lead to
\begin{align}
\|w_p-w\|_{X^{s_1,b}(\tau_R)}&\le  C\II_{\NN,p}+C\II_{\RR,p}\notag\\
&\qquad +C|\gamma|\be \tau_R^\theta \big(\|\Pi_{\le N_p}\gf_p-\gf\|_{Y^{s}(T)}+\|\Pi_{\le N_p}\phi_p-\phi\|_{H^{s_1}}\big)e^{\be C R^2}\notag\\
&\qquad\qquad+\frac12\|w_1-w_2\|_{X^{s_1,b}(T)}.\label{bd-cv-wp}
\end{align}
The convergences $\II_{\NN,p},\II_{\RR,p}\too 0$ and \eqref{cv-g-phi} with \eqref{bd-cv-wp} finally show \eqref{cv-wp}. This proves Lemma~\ref{LEM:cv}.
\end{proof}

\end{document}